\newtheoremstyle{mystyle}
	{\baselineskip}
	{\baselineskip}
	{\itshape}
	{}
	{\bfseries}
	{.}
	{1em}
	{}
\theoremstyle{mystyle}
	\newtheorem{lem}{Lemma}[section]
	\newtheorem{prop}[lem]{Proposition}
	\newtheorem{thm}[lem]{Theorem}
\newtheoremstyle{mystyle2}
	{\baselineskip}
	{\baselineskip}
	{\normalfont}
	{}
	{\bfseries}
	{.}
	{1em}
	{}
\theoremstyle{mystyle2}
	\newtheorem{rem}[lem]{Remark}
	\newtheorem{definition}[lem]{Definition}
\newcommand{\R}{\mathbb{R}}
\newcommand{\Z}{\mathbb{Z}}
\renewcommand{\AA}{\mathcal{A}}
\newcommand{\MM}{\mathcal{M}}
\newcommand{\RR}{\mathcal{R}}
\newcommand{\abs}[1]{\left\lvert #1 \right\rvert}
\newcommand{\norm}[1]{\left\lVert #1 \right\rVert}
\numberwithin{equation}{section}
\begin{document}
\title{
	Higher order asymptotic expansions for the convection-diffusion equation in the Fujita-subcritical case
}

\author{
	Ryunosuke Kusaba\footnote{e-mail: ryu2411501@akane.waseda.jp} \bigskip \\
	Department of Pure and Applied Physics, \\
	Graduate School of Advanced Science and Engineering, \\
	Waseda University, 3-4-1 Okubo, Shinjuku-ku, Tokyo 169-8555, JAPAN
}

\date{\color{red} \textbf{November 2, 2024}}
\date{}

\maketitle
\begin{abstract}
	This paper is devoted to the asymptotic behavior of global solutions to the convection-diffusion equation in the Fujita-subcritical case.
	We improve the result by Zuazua (1993) and establish higher order asymptotic expansions with decay estimates of the remainders.
	We also discuss the optimality for the decay rates of the remainders.
\end{abstract}

\bigskip
\textbf{Keywords}: Convection-diffusion equation, large time behavior, asymptotic expansion.

\bigskip
\textbf{2020 Mathematics Subject Classification}: 35K58, 35B40, 35C20.

\tableofcontents
\newpage
\section{Introduction}

We consider the large time behavior of global solutions to the following Cauchy problem for the convection-diffusion equation:
\begin{align*}
	\tag{P} \label{P}
	\begin{cases}
		\partial_{t} u- \Delta u=a \cdot \nabla f \left( u \right), &\qquad \left( t, x \right) \in \left( 0, + \infty \right) \times \R^{n}, \\
		u \left( 0 \right) =u_{0}, &\qquad x \in \R^{n},
	\end{cases}
\end{align*}
where $u \colon \left[ 0, + \infty \right) \times \R^{n} \to \R$ is an unknown function, $u_{0} \colon \R^{n} \to \R$ is a given data at $t=0$, $a= \left( a_{1}, \ldots, a_{n} \right) \in \R^{n} \setminus \left\{ 0 \right\}$ is a given constant vector, and $f \in C^{1} \left( \R \right) \setminus \left\{ 0 \right\}$ is a nonlinear function satisfying $f \left( 0 \right) =0$ and
\begin{align*}
	\abs{f \left( \xi \right) -f \left( \eta \right)} \leq C \left( \abs{\xi}^{p-1} + \abs{\eta}^{p-1} \right) \abs{\xi - \eta}, \qquad \forall \xi, \eta \in \R
\end{align*}
for some $C>0$ and $p \in \left( 1, + \infty \right)$.
Typical examples are given by homogeneous functions of order $p$ such as
\begin{align*}
	f \left( \xi \right) = \xi^{p}, {\ } \abs{\xi}^{p}, {\ } \abs{\xi}^{p-1} \xi.
\end{align*}
We remark that the first example above makes sense if $p$ is an integer or if $u_{0}$ is non-negative and has a sufficient regularity by virtue of the comparison principle.
The convection-diffusion equation is known as a model of fluid dynamics with the conservation law of the mass:
\begin{align*}
	\int_{\R^{n}} u \left( t, x \right) dx= \int_{\R^{n}} u_{0} \left( x \right) dx, \qquad \forall t>0.
\end{align*}

To review the previous works, we introduce some notation.
For each $q \in \left[ 1, + \infty \right]$, let $L^{q} \left( \R^{n} \right)$ denote the standard Lebesgue space with the norm denoted by $\norm{\,\cdot\,}_{q}$.
We also use the weighted $L^{1}$-space defined by
\begin{align*}
	L_{m}^{1} \left( \R^{n} \right) \coloneqq \left\{ \varphi \in L^{1} \left( \R^{n} \right); {\,} \text{$x^{\alpha} \varphi \in L^{1} \left( \R^{n} \right)$ for all $\alpha \in \Z_{\geq 0}^{n}$ with $\abs{\alpha} \leq m$} \right\}
\end{align*}
for each $m \in \Z_{>0}$, where $x^{\alpha} \varphi$ means the function $\R^{n} \ni x \mapsto x^{\alpha} \varphi \left( x \right) \in \R$.
We define the heat semigroup $\left( e^{t \Delta}; t \geq 0 \right)$ by
\begin{align*}
	e^{t \Delta} \varphi \coloneqq \begin{cases}
		G_{t} \ast \varphi, &\qquad t>0, \\
		\varphi, &\qquad t=0
	\end{cases}
\end{align*}
for $\varphi \in L^{q} \left( \R^{n} \right)$ with $q \in \left[ 1, + \infty \right]$, where $G_{t} \colon \R^{n} \to \R$ is the Gauss kernel given by
\begin{align*}
	G_{t} \left( x \right) = \left( 4 \pi t \right)^{- \frac{n}{2}} \exp \left( - \frac{\abs{x}^{2}}{4t} \right), \qquad x \in \R^{n}
\end{align*}
and $\ast$ is the convolution in $\R^{n}$.

We first give the definition of global solutions to \eqref{P}.

\begin{definition}
	Let $u_{0} \in \left( L^{1} \cap L^{\infty} \right) \left( \R^{n} \right)$.
	A function
	\begin{align*}
		u \in X \coloneqq \left( C \cap L^{\infty} \right) \left( \left[ 0, + \infty \right); L^{1} \left( \R^{n} \right) \right) \cap \left( C \cap L^{\infty} \right) \left( \left( 0, + \infty \right); L^{\infty} \left( \R^{n} \right) \right)
	\end{align*}
	is said to be a global solution to \eqref{P} if the integral equation
	\begin{align*}
		\tag{I} \label{I}
		u \left( t \right) =e^{t \Delta} u_{0} + \int_{0}^{t} a \cdot \nabla e^{\left( t-s \right) \Delta} f \left( u \left( s \right) \right) ds
	\end{align*}
	holds in $\left( L^{1} \cap L^{\infty} \right) \left( \R^{n} \right)$ for all $t>0$.
\end{definition}

The global well-posedness of \eqref{P} and the decay estimate of the global solution were established by Escobedo--Zuazua \cite{Escobedo-Zuazua}.
See also the lecture note by Zuazua \cite{Zuazua2020}.

\begin{prop}[{\cite[Proposition 1]{Escobedo-Zuazua}, \cite[Theorems 4.1 and 4.2]{Zuazua2020}}] \label{pro:P_global}
	Let $p \in \left( 1, + \infty \right)$ and let $u_{0} \in \left( L^{1} \cap L^{\infty} \right) \left( \R^{n} \right)$.
	Then, \eqref{P} has a unique global solution $u \in X$, which satisfies
	\begin{align}
		\label{eq:P_decay}
		\sup_{q \in \left[ 1, + \infty \right]} \sup_{t>0} t^{\frac{n}{2} \left( 1- \frac{1}{q} \right)} \norm{u \left( t \right)}_{q} \leq C \norm{u_{0}}_{1}
	\end{align}
	for some $C>0$ independent of $u_{0}$.
	Moreover, we have
	\begin{align}
		\label{eq:P_regular}
		u \in \bigcap_{q \in \left( 1, + \infty \right)} \left[ C \left( \left( 0, + \infty \right); W^{2, q} \left( \R^{n} \right) \right) \cap C^{1} \left( \left( 0, + \infty \right); L^{q} \left( \R^{n} \right) \right) \right].
	\end{align}
\end{prop}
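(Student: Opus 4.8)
The plan is to construct the solution by a fixed-point argument for the integral equation \eqref{I}, to obtain global existence from a priori bounds on the $L^{q}$-norms produced by the divergence form of the nonlinearity, and to read off the decay rates \eqref{eq:P_decay} by a bootstrap over Lebesgue exponents. Throughout I would use the heat-kernel estimates
\begin{align*}
	\norm{e^{t \Delta} \varphi}_{q} \leq C t^{- \frac{n}{2} \left( \frac{1}{r} - \frac{1}{q} \right)} \norm{\varphi}_{r}, \qquad \norm{\nabla e^{t \Delta} \varphi}_{q} \leq C t^{- \frac{1}{2} - \frac{n}{2} \left( \frac{1}{r} - \frac{1}{q} \right)} \norm{\varphi}_{r} \qquad \left( 1 \leq r \leq q \leq + \infty \right),
\end{align*}
together with the elementary consequence of the hypothesis on $f$ that, by Hölder's inequality, $\norm{f \left( u \right) - f \left( v \right)}_{r} \leq C \bigl( \norm{u}_{pr}^{p-1} + \norm{v}_{pr}^{p-1} \bigr) \norm{u - v}_{pr}$, and in particular $\norm{f \left( u \right)}_{r} \leq C \norm{u}_{pr}^{p}$. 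For $T > 0$ small I would then apply the contraction mapping principle to $\Phi \left[ u \right] \left( t \right) \coloneqq e^{t \Delta} u_{0} + \int_{0}^{t} a \cdot \nabla e^{\left( t - s \right) \Delta} f \left( u \left( s \right) \right) ds$ on a closed ball of $C \left( \left[ 0, T \right]; L^{1} \left( \R^{n} \right) \right) \cap C \left( \left[ 0, T \right]; L^{\infty} \left( \R^{n} \right) \right)$; the only term needing care is the Duhamel integral, where estimating $\norm{\nabla e^{\left( t - s \right) \Delta} f \left( u \left( s \right) \right)}_{q}$ by $C \left( t - s \right)^{-1/2} \norm{f \left( u \left( s \right) \right)}_{q}$ and then bounding $\norm{f \left( u \left( s \right) \right)}_{q}$ by a product of powers of $\norm{u \left( s \right)}_{1}$ and $\norm{u \left( s \right)}_{\infty}$ (via the growth hypothesis and, for $q = 1$, interpolation of $L^{p}$ between $L^{1}$ and $L^{\infty}$) leaves an integrable singularity at $s = t$, so $\Phi$ is a self-contraction. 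This produces a unique mild solution on a maximal interval $\left[ 0, T_{\max} \right)$, which by parabolic smoothing applied to \eqref{I} is a strong solution enjoying \eqref{eq:P_regular} on every compact subinterval of $\left( 0, T_{\max} \right)$.

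The next step is the a priori bound $\norm{u \left( t \right)}_{q} \leq \norm{u_{0}}_{q}$ for all $q \in \left[ 1, + \infty \right]$ and $t \in \left[ 0, T_{\max} \right)$. Multiplying \eqref{P} by $\abs{u}^{q-2} u$ and integrating over $\R^{n}$, the diffusion term contributes a non-positive gradient term while the convection term equals $\int_{\R^{n}} a \cdot \nabla \Psi \left( u \right) dx = 0$, where $\Psi$ is the primitive of $\xi \mapsto f \left( \xi \right) \abs{\xi}^{q-2}$ vanishing at $0$; the case $q = + \infty$ follows by letting $q \to + \infty$, and for $u_{0}$ merely in $L^{1} \cap L^{\infty}$ one first regularizes the data and passes to the limit. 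Because the $\left( L^{1} \cap L^{\infty} \right)$-norm of $u \left( t \right)$ thus stays bounded, the maximal mild solution cannot blow up in finite time, so $T_{\max} = + \infty$; uniqueness in $X$ follows from the contraction estimate and Gronwall's inequality.

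The heart of the matter is the decay estimate. Writing $M_{q} \left( t \right) \coloneqq \sup_{0 < s \leq t} s^{\frac{n}{2} \left( 1 - \frac{1}{q} \right)} \norm{u \left( s \right)}_{q}$ and using \eqref{I},
\begin{align*}
	\norm{u \left( t \right)}_{q} \leq C t^{- \frac{n}{2} \left( 1 - \frac{1}{q} \right)} \norm{u_{0}}_{1} + C \int_{0}^{t} \left( t - s \right)^{- \frac{1}{2} - \frac{n}{2} \left( \frac{1}{r} - \frac{1}{q} \right)} \norm{u \left( s \right)}_{pr}^{p} \, ds,
\end{align*}
I would bootstrap along a finite increasing chain of exponents (trivially controlled at $q = 1$, since $M_{1} \left( t \right) \leq \norm{u_{0}}_{1}$): at each rung $r$ is chosen so that $\frac{1}{2} + \frac{n}{2} \left( \frac{1}{r} - \frac{1}{q} \right) < 1$, and $\norm{u \left( s \right)}_{pr}$ is controlled by interpolating between an exponent whose decay rate is already known and $L^{\infty}$ (bounded by the previous paragraph), so that the integrand is dominated by a constant times $s^{- \beta} \left( t - s \right)^{- \alpha}$ with $\alpha, \beta < 1$ and the resulting Beta-function integral reproduces the factor $t^{- \frac{n}{2} \left( 1 - \frac{1}{q} \right)}$; after finitely many rungs one reaches $q = + \infty$, and interpolation then covers every $q \in \left[ 1, + \infty \right]$. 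This yields \eqref{eq:P_decay} with a constant depending a priori on $\norm{u_{0}}_{1}$ and $\norm{u_{0}}_{\infty}$; to reduce it to $C \norm{u_{0}}_{1}$ alone I would exploit the scaling $u \left( t, x \right) \mapsto \lambda^{n} u \left( \lambda^{2} t, \lambda x \right)$, which fixes $\norm{u_{0}}_{1}$ and the quantity $\sup_{t > 0} t^{\frac{n}{2} \left( 1 - \frac{1}{q} \right)} \norm{u \left( t \right)}_{q}$ while, in the subcritical range of $p$, sending both $\norm{u_{0}}_{\infty}$ and the effective nonlinearity constant to $0$ as $\lambda \to 0$, so that the estimate converges to the linear bound $\sup_{t > 0} t^{\frac{n}{2} \left( 1 - \frac{1}{q} \right)} \norm{e^{t \Delta} u_{0}}_{q} \leq C \norm{u_{0}}_{1}$. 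The main obstacle is precisely this decay step: one must verify that the chain of exponents can be arranged so that \emph{every} time integral converges at both endpoints, and that the rescaling can be organized to eliminate the dependence on $\norm{u_{0}}_{\infty}$ --- this bookkeeping is where the structure of \eqref{P} is genuinely used.

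Finally, since $T_{\max} = + \infty$, the local regularity obtained in the first step holds on all of $\left( 0, + \infty \right)$, so \eqref{eq:P_regular} follows: once $s \mapsto f \left( u \left( s \right) \right)$ is seen to be continuous into each $L^{q} \left( \R^{n} \right)$ for $s$ bounded away from $0$ --- a consequence of $f \in C^{1}$, the growth hypothesis and dominated convergence --- the standard analytic-semigroup (equivalently, maximal $L^{q}$) regularity theory applied to \eqref{I} gives $u \in C \left( \left( 0, + \infty \right); W^{2, q} \left( \R^{n} \right) \right) \cap C^{1} \left( \left( 0, + \infty \right); L^{q} \left( \R^{n} \right) \right)$ for every $q \in \left( 1, + \infty \right)$.
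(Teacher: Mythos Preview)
The paper does not give its own proof of this proposition: it is quoted verbatim from Escobedo--Zuazua \cite{Escobedo-Zuazua} and Zuazua's lecture notes \cite{Zuazua2020}, and is used as a black box throughout. Your outline---local existence by contraction, the $L^{q}$ non-increase obtained by testing against $\abs{u}^{q-2}u$ and exploiting the divergence structure, and a bootstrap over Lebesgue exponents to get the heat-like decay---does match the strategy in those references, so at the level of ``what approach'' there is nothing to compare.

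There is, however, a genuine gap in your last step, where you try to upgrade the decay constant from one depending on $\norm{u_{0}}_{1}$ and $\norm{u_{0}}_{\infty}$ to one depending only on $\norm{u_{0}}_{1}$ by the rescaling $u_{\lambda}(t,x)=\lambda^{n}u(\lambda^{2}t,\lambda x)$. That rescaling preserves $\norm{u_{0}}_{1}$ and the quantity $\sup_{t>0}t^{\frac{n}{2}(1-\frac{1}{q})}\norm{u(t)}_{q}$, as you say, but it is a symmetry of \eqref{P} only when $p=1+\tfrac{1}{n}$: for general $p$ the rescaled function solves the equation with nonlinearity multiplied by $\lambda^{\,1-n(p-1)}$. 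Hence for $p>1+\tfrac{1}{n}$---the entire range relevant to this paper---sending $\lambda\to 0$ makes the effective nonlinearity blow up, not vanish, so the argument does not converge to the linear bound. (For $p<1+\tfrac{1}{n}$ the coefficient does go to zero, but then the proposition covers a regime where the large-time behaviour is not heat-like, and in any case you need all $p>1$.) In \cite{Escobedo-Zuazua,Zuazua2020} the uniform constant is obtained differently: the bootstrap is arranged so that at each rung the bound on $\norm{u(s)}_{pr}$ comes from the decay already established at a lower exponent (which carries only $\norm{u_{0}}_{1}$), and the a priori $L^{\infty}$ bound is used only qualitatively to guarantee global existence, never quantitatively in the constants. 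An equivalent fix is to first prove the decay with a non-uniform constant, then restart the estimate from a large time $T$ at which $\norm{u(T)}_{\infty}$ is already controlled by $\norm{u_{0}}_{1}$ via that preliminary decay; either route avoids the rescaling entirely.
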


We emphasize that for the global well-posedness of \eqref{P}, the sizes of the exponent $p$ and the initial data $u_{0}$ are not of concern.
This is a crucial difference between the convection-diffusion equation and the Fujita equation:
\begin{align}
	\label{eq:Fujita}
	\begin{cases}
		\partial_{t} u- \Delta u=u^{p}, &\qquad \left( t, x \right) \in \left( 0, + \infty \right) \times \R^{n}, \\
		u \left( 0 \right) =u_{0} \geq 0, &\qquad x \in \R^{n}.
	\end{cases}
\end{align}
As for the Fujita equation, we have some thresholds for the sizes of the exponent $p$ and the initial data $u_{0}$ which separate the existence and nonexistence of global solutions to \eqref{eq:Fujita} (cf. \cite{Fujita, Kawanago1996}).

In this paper, we are interested in the asymptotic behavior of the global solution to \eqref{P} given by Proposition \ref{pro:P_global}.
The starting point of our study is the result by Escobedo--Zuazua \cite{Escobedo-Zuazua}, which describes the 0th order asymptotic expansion with decay estimates of the remainder.

\begin{prop}[{\cite[Theorem 2]{Escobedo-Zuazua}, \cite[Theorem 5.3]{Zuazua2020}}] \label{pro:P_asymp-0}
	Let $p>1+1/n$.
	Let $u_{0} \in \left( L^{1}_{1} \cap L^{\infty} \right) \left( \R^{n} \right)$ and let $u \in X$ be the global solution to \eqref{P} given in Proposition \ref{pro:P_global}.
	Then, for any $q \in \left[ 1, + \infty \right]$, there exists $C>0$ such that the estimates
	\begin{align*}
		t^{\frac{n}{2} \left( 1- \frac{1}{q} \right)} \norm{u \left( t \right) - \AA_{0} \left( t \right)}_{q} \leq \begin{dcases}
			Ct^{- \sigma} &\qquad \text{if} \quad 1+ \frac{1}{n} <p<1+ \frac{2}{n}, \\
			Ct^{- \frac{1}{2}} \log \left( 2+t \right) &\qquad \text{if} \quad p=1+ \frac{2}{n}, \\
			Ct^{- \frac{1}{2}} &\qquad \text{if} \quad p>1+ \frac{2}{n}
		\end{dcases}
	\end{align*}
	hold for all $t>1$, where
	\begin{align*}
		\sigma &\coloneqq \frac{n}{2} \left( p-1 \right) - \frac{1}{2} >0, \\
		\AA_{0} \left( t \right) &\coloneqq \Lambda_{0, 0} \left( t; u_{0} \right) = \MM_{0} \left( u_{0} \right) \delta_{t} G_{1}, \\
		\MM_{0} \left( u_{0} \right) &\coloneqq \int_{\R^{n}} u_{0} \left( x \right) dx,
	\end{align*}
	and $\delta_{t}$ is the dilation acting on functions $\varphi$ on $\R^{n}$ as
	\begin{align*}
		\left( \delta_{t} \varphi \right) \left( x \right) =t^{- \frac{n}{2}} \varphi \left( t^{- \frac{1}{2}} x \right), \qquad x \in \R^{n}.
	\end{align*}
\end{prop}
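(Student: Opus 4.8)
The plan is to work directly with the Duhamel formula \eqref{I} and compare $u(t)$ with the linear profile $\AA_{0}(t) = e^{t\Delta} u_{0}$ replaced by its asymptotic limit $\MM_{0}(u_{0}) \delta_{t} G_{1}$. I would split the error into two pieces:
\begin{align*}
	u(t) - \AA_{0}(t) = \left( e^{t\Delta} u_{0} - \MM_{0}(u_{0}) \delta_{t} G_{1} \right) + \int_{0}^{t} a \cdot \nabla e^{(t-s)\Delta} f(u(s))\, ds.
\end{align*}
The first bracket is the classical linear heat estimate: for $u_{0} \in L^{1}_{1}(\R^{n})$ one has $t^{\frac{n}{2}(1-\frac1q)} \norm{e^{t\Delta} u_{0} - \MM_{0}(u_{0}) \delta_{t} G_{1}}_{q} \leq C t^{-1/2}$ for all $t > 1$, which follows from a first-order Taylor expansion of the Gauss kernel together with the finiteness of the first moment $\int |x||u_{0}(x)|\,dx$. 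This term contributes only the $Ct^{-1/2}$ rate and so is never the dominant one in the subcritical range.

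The core of the argument is estimating the Duhamel integral $N(t) \coloneqq \int_{0}^{t} a \cdot \nabla e^{(t-s)\Delta} f(u(s))\, ds$ in $L^{q}$. Using the smoothing estimate $\norm{\nabla e^{\tau \Delta} g}_{q} \leq C \tau^{-1/2 - \frac{n}{2}(\frac1r - \frac1q)} \norm{g}_{r}$ together with the growth bound $|f(u(s))| \leq C |u(s)|^{p}$ (from the hypothesis on $f$ with $\eta = 0$, $f(0)=0$), and then feeding in the decay estimate \eqref{eq:P_decay} in the form $\norm{u(s)}_{p}^{p} \leq C \norm{u_{0}}_{1}^{p} s^{-\frac{n}{2}(p-1)}$, one obtains after choosing $r$ appropriately
\begin{align*}
	t^{\frac{n}{2}(1-\frac1q)} \norm{N(t)}_{q} \leq C \int_{0}^{t} (t-s)^{-\frac12 - \frac{n}{2}(1 - \frac1q) + \frac{n}{2}(1-\frac1q)}\, \langle s \rangle^{\cdots}\, ds,
\end{align*}
where the time integral, split at $s = t/2$, is controlled by the usual beta-function-type bookkeeping. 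The near-$s=t$ half gives the factor $t^{-1/2}$ from the $(t-s)^{-1/2}$ singularity (integrable), while the near-$s=0$ half gives $t^{-\sigma}$ with $\sigma = \frac{n}{2}(p-1) - \frac12$ in the range $1 + \frac1n < p < 1 + \frac2n$ (so $\sigma < \frac12$), the logarithmic correction exactly at $\sigma = \frac12$, and the saturated rate $t^{-1/2}$ when $\sigma > \frac12$. One subtlety near $s=0$: the bound $\norm{u(s)}_{p} \lesssim s^{-\frac{n}{2}(1-1/p)}$ is not integrable enough by itself, so for small $s$ one instead uses $\norm{u(s)}_{p} \leq \norm{u(s)}_{p}$ bounded via $L^{1}$–$L^{p}$ smoothing of \eqref{I} or simply $\norm{u(s)}_{\infty} \leq C s^{-n/2}\norm{u_0}_1$ combined with the uniform $L^{1}$ bound; care with the exponent of $s$ at the origin is needed to ensure convergence of the integral, which is where the condition $p > 1 + 1/n$ enters.

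The main obstacle I expect is the bookkeeping of the double parametrized integral: one must choose the intermediate Lebesgue exponent $r$ (depending on $q$ and $p$) so that \emph{both} the $(t-s)$-exponent stays above $-1$ near $s=t$ \emph{and} the $s$-exponent stays above $-1$ near $s=0$, and then verify that the resulting exponent of $t$ is exactly $-\min\{\sigma, 1/2\}$ with the borderline log. This is routine but delicate, and the endpoint cases $q=1$ and $q=\infty$ need to be checked separately (or handled by an interpolation/limiting argument). No contraction or bootstrap is needed here since the global solution and its basic decay are already supplied by Propositions \ref{pro:P_global}; the estimate is a direct \emph{a posteriori} computation on the known solution.
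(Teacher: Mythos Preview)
The paper does not give its own proof of this proposition; it is quoted from \cite{Escobedo-Zuazua, Zuazua2020}. Your outline is nonetheless the standard one and matches the techniques the paper uses in Appendix~\ref{app:Zuazua} for the closely related Propositions~\ref{pro:P_asymp_F-critical} and~\ref{pro:P_asymp_F-super}: Duhamel decomposition, the heat-kernel expansion (Proposition~\ref{pro:heat_asymp} with $m=0$) for the linear piece, and a $t/2$-split of the nonlinear integral using $L^{1}\!\to\!L^{q}$ smoothing on $[0,t/2]$ and $L^{q}\!\to\!L^{q}$ on $[t/2,t]$.

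One step needs fixing. Your proposed remedy for the small-$s$ singularity, namely $\norm{u(s)}_{\infty} \leq Cs^{-n/2}\norm{u_{0}}_{1}$ together with the uniform $L^{1}$ bound, still gives $\norm{f(u(s))}_{1} \lesssim s^{-\frac{n}{2}(p-1)}$, which is \emph{not} integrable at $s=0$ precisely in the range $p \geq 1+2/n$ where the issue arises. The correct cure is the uniform $L^{\infty}$ bound coming from $u \in X$ and $u_{0} \in L^{\infty}$: one has $\sup_{s>0}(1+s)^{\frac{n}{2}(1-1/q)}\norm{u(s)}_{q} < +\infty$ for every $q$ (this is \eqref{eq:P_decay_reg} in the paper), whence $\norm{f(u(s))}_{1} \leq C(1+s)^{-\frac{n}{2}(p-1)}$ is bounded near $s=0$. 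The paper remarks explicitly (end of Section~2) that this is exactly why the hypothesis $u_{0} \in L^{\infty}$ is needed for $p \geq 1+2/n$. A smaller point: the condition $p>1+1/n$ does not enter through integrability at $s=0$ but rather through $\sigma>0$, i.e., it guarantees that the resulting power $t^{-\sigma}$ is an actual decay.
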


We remark that the exponent $1+1/n$ is the critical exponent which gives a threshold for the asymptotic behavior of the global solution to \eqref{P}.
In fact, as for the case where $f \left( \xi \right) = \abs{\xi}^{p-1} \xi$, we have the following classification:
\begin{itemize}
	\item[(1)]
		If $p<1+1/n$, then the global solution to \eqref{P} behaves in the large time like the self-similar solution to the following reduced equation:
		\begin{align*}
			\partial_{t} v- \Delta_{\perp a} v=a \cdot \nabla \left( \abs{v}^{p-1} v \right), \qquad \left( t, x \right) \in \left( 0, + \infty \right) \times \R^{n}
		\end{align*}
		with $\MM_{0} \left( v \left( t \right) \right) = \MM_{0} \left( u_{0} \right)$ for all $t>0$, where $\Delta_{\perp a}$ is the Laplacian on the hyperplane orthogonal to the constant vector $a$ (see \cite{Escobedo-Vazquez-Zuazua_1, Escobedo-Vazquez-Zuazua_2, Carpio}).
	\item[(2)]
		If $p=1+1/n$, then the global solution to \eqref{P} behaves in the large time like the self-similar solution to the (nonlinear) convection-diffusion equation
		\begin{align*}
			\partial_{t} v- \Delta v=a \cdot \nabla \left( \abs{v}^{\frac{1}{n}} v \right), \qquad \left( t, x \right) \in \left( 0, + \infty \right) \times \R^{n}
		\end{align*}
		with $\MM_{0} \left( v \left( t \right) \right) = \MM_{0} \left( u_{0} \right)$ for all $t>0$ (see \cite{Escobedo-Zuazua, Zuazua2020}).
	\item[(3)]
		If $p>1+1/n$, then the global solution to \eqref{P} behaves in the large time like the self-similar solution to the linear heat equation
		\begin{align*}
			\partial_{t} v- \Delta v=0, \qquad \left( t, x \right) \in \left( 0, + \infty \right) \times \R^{n}
		\end{align*}
		with $\MM_{0} \left( v \left( t \right) \right) = \MM_{0} \left( u_{0} \right)$ for all $t>0$ (see \cite{Escobedo-Zuazua, Zuazua2020}).
\end{itemize}
We can also determine the exponent $1+1/n$ using the rescaling argument which leaves both the equation in \eqref{P} and the $L^{1}$-norm of the solution at $t=0$ invariant (see \cite{Escobedo-Zuazua}).
Proposition \ref{pro:P_asymp-0} implies (3) by taking into account the fact that $\AA_{0} \left( t \right)$ has the self-similarity described as $\AA_{0} \left( t \right) = \delta_{t} \AA_{0} \left( 1 \right)$.
We can understand (2) more explicitly in the context of the viscous Burgers equation:
\begin{align}
	\label{eq:Burgers}
	\begin{cases}
		\partial_{t} u- \partial_{x}^{2} u=a \partial_{x} \left( u^{2} \right), &\qquad \left( t, x \right) \in \left( 0, + \infty \right) \times \R, \\
		u \left( 0 \right) =u_{0}, &\qquad x \in \R.
	\end{cases}
\end{align}
It is known that the global solution to \eqref{eq:Burgers} converges to the nonlinear diffusion wave $\chi_{t}$ given by
\begin{align*}
	\chi_{t} \left( x \right) =- \frac{1}{a} \frac{\left( \exp \left( -a \MM_{0} \left( u_{0} \right) \right) -1 \right) G_{t} \left( x \right)}{1+ \left( \exp \left( -a \MM_{0} \left( u_{0} \right) \right) -1 \right) \int_{x}^{+ \infty} G_{t} \left( y \right) dy},
\end{align*}
which has the self-similarity described as $\chi_{t} = \delta_{t} \chi_{1}$ (cf. \cite{Hopf, Cole}).

From Proposition \ref{pro:P_asymp-0}, we see that the decay rate of the remainder $u \left( t \right) - \AA_{0} \left( t \right)$ may change depending on the size of the exponent $p$ relative to the Fujita exponent $1+2/n$.
In particular, since the asymptotic profile $\AA_{0} \left( t \right)$ is completely determined by the moment of the initial data, the effect of the nonlinear convection may appear only in the decay rate of the remainder.
We remark that $p<1+2/n$ if and only if $\sigma <1/2$.
To be certain that this observation is indeed true, we have to prove the optimality for the decay rates of the remainder given in Proposition \ref{pro:P_asymp-0}.
Here, the decay rate of the remainder is said to be optimal if we obtain a lower estimate of the remainder with the same decay rate for some initial data.

After the result by \cite{Escobedo-Zuazua}, Zuazua \cite{Zuazua1993} derived the first order asymptotics for each case: $1+1/n<p<1+2/n$; $p=1+2/n$; $p>1+2/n$.
Similar results can be seen in \cite{Karch, Duro-Carpio, Fukuda} for related equations.

\begin{prop}[{\cite[Theorem 1]{Zuazua1993}}] \label{pro:P_asymp_F-sub}
	Let $1+1/n<p<1+2/n$.
	Let $u_{0} \in \left( L^{1}_{1} \cap L^{\infty} \right) \left( \R^{n} \right)$ and let $u \in X$ be the global solution to \eqref{P} given in Proposition \ref{pro:P_global}.
	Then,
	\begin{align*}
		\lim_{t \to + \infty} t^{\frac{n}{2} \left( 1- \frac{1}{q} \right) + \sigma} \norm{u \left( t \right) - \AA_{0, 1} \left( t \right)}_{q} =0
	\end{align*}
	holds for any $q \in \left[ 1, + \infty \right]$, where
	\begin{align*}
		\AA_{0, 1} \left( t \right) \coloneqq \AA_{0} \left( t \right) + \int_{0}^{t} a \cdot \nabla e^{\left( t-s \right) \Delta} f \left( \AA_{0} \left( s \right) \right) ds.
	\end{align*}
\end{prop}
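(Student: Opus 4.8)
The plan is to start from the integral equation \eqref{I} and compare it term by term with the definition of $\AA_{0,1}(t)$. Writing $u(t) - \AA_{0,1}(t) = \bigl(e^{t\Delta}u_0 - \AA_0(t)\bigr) + \int_0^t a\cdot\nabla e^{(t-s)\Delta}\bigl(f(u(s)) - f(\AA_0(s))\bigr)\,ds$, the first bracket is handled by the classical linear heat asymptotics: since $u_0 \in L^1_1(\R^n)$, one has $t^{\frac n2(1-\frac1q)}\norm{e^{t\Delta}u_0 - \MM_0(u_0)\delta_t G_1}_q = o(t^{-1/2})$, which is stronger than the required $o(t^{-\sigma})$ because $\sigma < 1/2$ in the Fujita-subcritical range. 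So the entire difficulty is to show that the nonlinear remainder integral, call it $R(t)$, satisfies $t^{\frac n2(1-\frac1q)}\norm{R(t)}_q = o(t^{-\sigma})$.

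For $R(t)$ I would split the time integral at $s = t/2$. On $[t/2, t]$ I use the smoothing estimate $\norm{\nabla e^{(t-s)\Delta}g}_q \le C(t-s)^{-1/2}\norm{g}_q$ together with the Lipschitz-type bound on $f$, which gives $\norm{f(u(s)) - f(\AA_0(s))}_q \le C(\norm{u(s)}_{\infty}^{p-1} + \norm{\AA_0(s)}_{\infty}^{p-1})\norm{u(s) - \AA_0(s)}_q$; here I invoke the decay estimate \eqref{eq:P_decay} for $u$, the explicit decay of $\AA_0$, and crucially Proposition \ref{pro:P_asymp-0}, which tells me $\norm{u(s) - \AA_0(s)}_q$ already decays like $s^{-\frac n2(1-\frac1q)-\sigma}$ up to the stated logarithmic/borderline corrections (in the subcritical range it is the clean power $s^{-\sigma}$). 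Tracking the powers of $s$ and integrating $(t-s)^{-1/2}$ over $[t/2,t]$ produces exactly the rate $t^{-\frac n2(1-\frac1q)-\sigma}$, but to upgrade this to little-$o$ I would feed in the refined input that $s^{\sigma}\norm{u(s)-\AA_0(s)}_1 \to 0$ — this is the place where one must either cite a sharpened version of Proposition \ref{pro:P_asymp-0} or run a short bootstrap, and it is the main obstacle of the argument. On $[0, t/2]$ I instead move the gradient onto the heat kernel only partially, using $\norm{\nabla e^{(t-s)\Delta}g}_q \le C(t-s)^{-\frac12-\frac n2(\frac1r - \frac1q)}\norm{g}_r$ with a well-chosen $r \in (1,q]$ so that the large-$(t-s)$ decay compensates the possible non-integrability of $\norm{f(u(s))-f(\AA_0(s))}_r$ near $s=0$; the $L^1$-mass cancellation is not needed on this piece because $t-s \sim t$ is large and the gradient alone supplies enough decay.

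Finally I would assemble the two pieces and check uniformity in $q \in [1,+\infty]$: the endpoint $q=\infty$ is reached by interpolation or by repeating the estimate with the $L^\infty$ smoothing bound, and $q=1$ is the most delicate for the near-diagonal piece, which is precisely why the first-moment hypothesis $u_0 \in L^1_1$ and the corresponding control of $\AA_0$ enter. The key steps, in order: (i) isolate the linear error and dispatch it via heat-kernel moment expansion; (ii) split $R(t)$ at $t/2$; (iii) on $[t/2,t]$ use full gradient smoothing $+$ the $f$-Lipschitz bound $+$ Proposition \ref{pro:P_asymp-0}, then promote $O$ to $o$ using the vanishing of $s^{\sigma}\norm{u(s)-\AA_0(s)}_1$; (iv) on $[0,t/2]$ use $L^r$–$L^q$ gradient smoothing with an exponent chosen to kill the singularity at $s=0$; (v) combine and verify the bounds are uniform for $q \in [1,+\infty]$. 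I expect step (iii)'s passage to little-$o$ — equivalently, establishing that the zeroth-order remainder genuinely vanishes after rescaling, not merely stays bounded — to be where the real work lies, since everything else is a bookkeeping exercise in parabolic smoothing estimates.
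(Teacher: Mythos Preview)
Your overall decomposition is exactly what the paper does (this is the $k=1$ case of Theorem~\ref{th:P_asymp_sub_gene}, whose proof the paper gives explicitly and which immediately implies Proposition~\ref{pro:P_asymp_F-sub}): isolate the linear error via Proposition~\ref{pro:heat_asymp}, split the Duhamel remainder at $t/2$ (the paper also separates $[0,1]$ to handle the singularity of $\AA_0(s)$ at $s=0$, which is the clean version of your step~(iv)), and on $[1,t/2]$ and $[t/2,t]$ feed in Proposition~\ref{pro:P_asymp-0}.

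However, your step~(iii) contains an arithmetic slip that creates a phantom obstacle. On $[t/2,t]$ you correctly combine $\norm{u(s)}_\infty^{p-1}+\norm{\AA_0(s)}_\infty^{p-1}\le Cs^{-\frac{n}{2}(p-1)}$ with $\norm{u(s)-\AA_0(s)}_q\le Cs^{-\frac{n}{2}(1-\frac1q)-\sigma}$ from Proposition~\ref{pro:P_asymp-0}. But since $\frac{n}{2}(p-1)=\sigma+\tfrac12$, integrating $(t-s)^{-1/2}$ yields
\[
\int_{t/2}^t (t-s)^{-\frac12}\,s^{-\frac{n}{2}(p-1)-\frac{n}{2}(1-\frac1q)-\sigma}\,ds
\le Ct^{-\frac{n}{2}(1-\frac1q)-2\sigma},
\]
not $t^{-\frac{n}{2}(1-\frac1q)-\sigma}$ as you wrote. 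The same computation on $[1,t/2]$ (with $r=1$) gives $Ct^{-\frac{n}{2}(1-\frac1q)-\frac12}\int_1^{t/2}s^{-\frac12-2\sigma}\,ds$, which is the trichotomy in Theorem~\ref{th:P_asymp_sub_gene}. In every case the rate is strictly faster than $t^{-\sigma}$, so the big-$O$ bound from Proposition~\ref{pro:P_asymp-0} already delivers the little-$o$ conclusion with no bootstrap needed. Your anticipated ``real work'' in upgrading $O$ to $o$ simply does not arise; the argument is, as you put it, bookkeeping in parabolic smoothing once the arithmetic is corrected.
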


\begin{prop}[{\cite[Theorem 2]{Zuazua1993}}] \label{pro:P_asymp_F-critical}
	Let $p=1+2/n$.
	Let $u_{0} \in \left( L^{1}_{1} \cap L^{\infty} \right) \left( \R^{n} \right)$ and let $u \in X$ be the global solution to \eqref{P} given in Proposition \ref{pro:P_global}.
	Then, for any $q \in \left[ 1, + \infty \right]$, there exists $C>0$ such that the estimate
	\begin{align*}
		t^{\frac{n}{2} \left( 1- \frac{1}{q} \right)} \norm{u \left( t \right) - \widetilde{\AA}_{0, 1} \left( t \right)}_{q} \leq Ct^{- \frac{1}{2}}
	\end{align*}
	holds for all $t>2$, where
	\begin{align*}
		\widetilde{\AA}_{0, 1} \left( t \right) &\coloneqq \AA_{0} \left( t \right) + \sum_{j=1}^{n} a_{j} \int_{1}^{t} \Lambda_{e_{j}, 0} \left( t; f \left( \AA_{0} \left( s \right) \right) \right) ds, \\
		\Lambda_{e_{j}, 0} \left( t; \varphi \right) &\coloneqq - \frac{1}{2} t^{- \frac{1}{2}} \MM_{0} \left( \varphi \right) \delta_{t} \left( x_{j} G_{1} \right).
	\end{align*}
\end{prop}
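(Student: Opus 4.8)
The plan is to substitute the integral equation \eqref{I} into $u(t) - \tilde{\AA}_{0,1}(t)$ and, writing $\AA_0(t) = \MM_0(u_0)G_t$ and noting that $\Lambda_{e_j, 0}(t; \varphi) = \MM_0(\varphi)\,\partial_{x_j}G_t$, to split the difference into four pieces:
\begin{align*}
	u(t) - \tilde{\AA}_{0, 1}(t)
	&= \bigl( e^{t\Delta}u_0 - \AA_0(t) \bigr)
	+ \int_{0}^{1} a\cdot\nabla e^{(t-s)\Delta} f(u(s))\,ds \\
	&\quad + \int_{1}^{t} a\cdot\nabla e^{(t-s)\Delta} \bigl( f(u(s)) - f(\AA_0(s)) \bigr)\,ds \\
	&\quad + \sum_{j=1}^{n} a_j \int_{1}^{t} \bigl( \partial_{x_j} e^{(t-s)\Delta} f(\AA_0(s)) - \MM_0(f(\AA_0(s)))\, \partial_{x_j}G_t \bigr)\,ds \\
	&=: R_0(t) + R_1(t) + R_2(t) + R_3(t),
\end{align*}
and to prove $\norm{R_i(t)}_q \le C t^{-\frac{n}{2}(1 - \frac1q) - \frac12}$ for all $t > 2$ and all $q \in [1, +\infty]$, which is exactly the assertion. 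The ingredients are: the scaling identities $\norm{\partial^{\alpha}G_\tau}_q = C\tau^{-\frac{\abs\alpha}{2} - \frac{n}{2}(1 - \frac1q)}$; the smoothing bounds $\norm{\partial_{x_j} e^{\tau\Delta}g}_q \le C\tau^{-\frac12}\norm{g}_q$ and $\norm{\partial_{x_j}e^{\tau\Delta}g}_q \le C\tau^{-\frac12 - \frac n2(1 - \frac1q)}\norm{g}_1$; the classical first order expansion $\norm{e^{\tau\Delta}\varphi - \MM_0(\varphi)G_\tau}_q \le C\tau^{-\frac12 - \frac n2(1 - \frac1q)}\norm{\varphi}_{L^{1}_{1}}$ for $\varphi \in L^{1}_{1}$; the a priori decay \eqref{eq:P_decay} and the $0$th order expansion of Proposition \ref{pro:P_asymp-0}; and, decisively, the identity $n(p - 1)/2 = 1$ valid at $p = 1 + 2/n$, which (via $\abs{f(\xi)} \le C\abs{\xi}^p$) yields $\norm{f(\AA_0(s))}_1 \le C\norm{\AA_0(s)}_p^p \le Cs^{-1}$, $\norm{x\,f(\AA_0(s))}_1 \le Cs^{-1/2}$ and $\norm{f(\AA_0(s))}_q \le C\norm{\AA_0(s)}_{pq}^p \le Cs^{-1 - \frac n2(1 - \frac1q)}$ for $s \ge 1$.

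The terms $R_0$, $R_1$, $R_2$ are comparatively routine. Since $u_0 \in L^{1}_{1}$, the first order expansion applied to $R_0$ gives $\norm{R_0(t)}_q \le Ct^{-\frac n2(1 - \frac1q) - \frac12}$. For $R_1$, on $[0, 1]$ one has $t - s \ge t/2$ when $t > 2$, and $\norm{f(u(s))}_1 \le C\norm{u(s)}_p^p \le C$ because $u \in X$ forces $\sup_{s > 0}(\norm{u(s)}_1 + \norm{u(s)}_\infty) < +\infty$; integrating the $L^1 \to L^q$ smoothing bound over $[0, 1]$ gives $\norm{R_1(t)}_q \le Ct^{-\frac n2(1 - \frac1q) - \frac12}$. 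For $R_2$ I would split $\int_1^t = \int_1^{t/2} + \int_{t/2}^t$, use the hypothesis on $f$ as $\abs{f(u(s)) - f(\AA_0(s))} \le C(\norm{u(s)}_\infty^{p - 1} + \norm{\AA_0(s)}_\infty^{p - 1})\abs{u(s) - \AA_0(s)}$ with $\norm{u(s)}_\infty, \norm{\AA_0(s)}_\infty \le Cs^{-n/2}$ (so the prefactor is $\le Cs^{-1}$), together with the $0$th order remainder bound $\norm{u(s) - \AA_0(s)}_r \le Cs^{-\frac n2(1 - \frac1r) - \frac12}\log(2 + s)$ from Proposition \ref{pro:P_asymp-0}. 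On $[1, t/2]$ the $L^1 \to L^q$ smoothing and $\int_1^{+\infty} s^{-3/2}\log(2 + s)\,ds < +\infty$ close the bound; on $[t/2, t]$ the $L^q \to L^q$ smoothing gives $\le Ct^{-1 - \frac n2(1 - \frac1q)}\log(2 + t) \le Ct^{-\frac12 - \frac n2(1 - \frac1q)}$.

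The heart of the argument is $R_3$. On $[1, t/2]$ I would insert $\partial_{x_j}G_{t - s}$ and write the integrand as $\bigl( \partial_{x_j}e^{(t - s)\Delta}f(\AA_0(s)) - \MM_0(f(\AA_0(s)))\partial_{x_j}G_{t - s} \bigr) + \MM_0(f(\AA_0(s)))\bigl( \partial_{x_j}G_{t - s} - \partial_{x_j}G_t \bigr)$. The first bracket is estimated by the first order expansion together with one derivative's worth of parabolic smoothing, with $\varphi = f(\AA_0(s)) \in L^{1}_{1}$, giving $L^q$-norm $\le C(t - s)^{-1 - \frac n2(1 - \frac1q)}\norm{f(\AA_0(s))}_{L^{1}_{1}} \le Ct^{-1 - \frac n2(1 - \frac1q)}s^{-1/2}$, and $\int_1^{t/2}s^{-1/2}\,ds \le Ct^{1/2}$ supplies the missing half-power. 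The second bracket is treated via $\partial_{x_j}G_{t - s} - \partial_{x_j}G_t = -\int_{t - s}^{t}\partial_{x_j}\Delta G_\tau\,d\tau$, whose $L^q$-norm is $\le C\int_{t - s}^{t}\tau^{-\frac32 - \frac n2(1 - \frac1q)}\,d\tau \le Cs\,t^{-\frac32 - \frac n2(1 - \frac1q)}$; combined with $\abs{\MM_0(f(\AA_0(s)))} \le Cs^{-1}$ and $\int_1^{t/2} ds \le t$ this again gives the right rate. On $[t/2, t]$ I would bound the two terms of the integrand separately: $\norm{\partial_{x_j}e^{(t - s)\Delta}f(\AA_0(s))}_q \le C(t - s)^{-1/2}\norm{f(\AA_0(s))}_q \le C(t - s)^{-1/2}s^{-1 - \frac n2(1 - \frac1q)}$ and $\norm{\MM_0(f(\AA_0(s)))\partial_{x_j}G_t}_q \le Cs^{-1}t^{-\frac12 - \frac n2(1 - \frac1q)}$; since $s \ge t/2$ and $\int_{t/2}^{t}(t - s)^{-1/2}ds \le Ct^{1/2}$, $\int_{t/2}^{t}ds \le t$, both are $\le Ct^{-\frac12 - \frac n2(1 - \frac1q)}$. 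Adding the four contributions yields the claim.

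The main obstacle, and the reason the statement is pinned at $p = 1 + 2/n$, is the borderline nature of this bookkeeping: on $[t/2, t]$ of $R_3$ the exponent balance $\frac12 - \frac n2(p - \frac1q) = -\frac12 - \frac n2(1 - \frac1q)$ is an identity \emph{precisely} in the critical case, and on $[1, t/2]$ the half-power produced by $\int_1^{t/2}s^{-1/2}ds \sim t^{1/2}$ must be matched exactly against the $s^{-1/2}$ decay coming either from the first moment of $f(\AA_0(s))$ or from the time shift from $G_{t - s}$ to $G_t$ of the kernel. The care required is to route each region of each Duhamel integral through the estimate that does not generate a logarithm --- $L^1 \to L^q$ versus $L^q \to L^q$ smoothing far from, respectively near, $s = t$, and moment cancellation versus kernel time-shift --- and then to verify that in the critical case all the resulting powers of $t$ collapse to exactly $t^{-\frac12 - \frac n2(1 - \frac1q)}$ with no $\log$ surviving; this elimination of the logarithm present in Proposition \ref{pro:P_asymp-0} is the whole content of the improvement. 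One should also note at the outset that all differentiations under the integral and the identity $\partial_{x_j}e^{\tau\Delta} = e^{\tau\Delta}\partial_{x_j}$ used above are legitimate by the regularity \eqref{eq:P_regular}.
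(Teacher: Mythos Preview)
Your proof is correct and follows essentially the same approach as the paper's: both use the integral equation, split the Duhamel term into near-time and far-time regions, and exploit the moment expansion of the heat semigroup together with the critical identity $\frac{n}{2}(p-1)=1$. The only cosmetic difference is in the handling of $R_3$ on $[1,t/2]$: you insert $\partial_{x_j}G_{t-s}$ as intermediate (moment expansion at time $t-s$, then time shift $t-s\to t$ on the kernel), whereas the paper inserts $\partial_{x_j}e^{t\Delta}$ as intermediate via the operator identity $e^{(t-s)\Delta}=e^{t\Delta}+\int_0^1(-s\Delta)e^{(t-s\theta)\Delta}\,d\theta$ (time shift first, then moment expansion at time $t$); the two orderings lead to the same estimates.
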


\begin{prop}[{\cite[Theorem 3]{Zuazua1993}}] \label{pro:P_asymp_F-super}
	Let $p>1+2/n$.
	Let $u_{0} \in \left( L^{1}_{1} \cap L^{\infty} \right) \left( \R^{n} \right)$ and let $u \in X$ be the global solution to \eqref{P} given in Proposition \ref{pro:P_global}.
	Then,
	\begin{align*}
		\lim_{t \to + \infty} t^{\frac{n}{2} \left( 1- \frac{1}{q} \right) + \frac{1}{2}} \norm{u \left( t \right) - \AA_{1, 0} \left( t \right)}_{q} =0
	\end{align*}
	holds for any $q \in \left[ 1, + \infty \right]$, where
	\begin{align*}
		\AA_{1, 0} \left( t \right) &\coloneqq \Lambda_{0, 1} \left( t; u_{0} \right) + \sum_{j=1}^{n} a_{j} \Lambda_{e_{j}, 0} \left( t; \psi_{0, 0} \right), \\
		\Lambda_{0, 1} \left( t; u_{0} \right) &\coloneqq \Lambda_{0, 0} \left( t; u_{0} \right) + \frac{1}{2} t^{- \frac{1}{2}} \sum_{j=1}^{n} \MM_{e_{j}} \left( u_{0} \right) \delta_{t} \left( x_{j} G_{1} \right), \\
		\MM_{e_{j}} \left( u_{0} \right) &\coloneqq \int_{\R^{n}} x_{j} u_{0} \left( x \right) dx, \\
		\psi_{0, 0} &\coloneqq \int_{0}^{+ \infty} f \left( u \left( s \right) \right) ds.
	\end{align*}
\end{prop}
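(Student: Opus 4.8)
The plan is to work from the integral equation \eqref{I} and decompose
\[
u(t)-\AA_{1,0}(t)=\bigl(e^{t\Delta}u_{0}-\Lambda_{0,1}(t;u_{0})\bigr)+\Bigl(\int_{0}^{t}a\cdot\nabla e^{(t-s)\Delta}f(u(s))\,ds-\sum_{j=1}^{n}a_{j}\Lambda_{e_{j},0}(t;\psi_{0,0})\Bigr).
\]
For the first term I would invoke the standard first-order asymptotic expansion of the heat semigroup: since $u_{0}\in L^{1}_{1}(\R^{n})$,
\[
\norm{e^{t\Delta}u_{0}-\MM_{0}(u_{0})G_{t}+\sum_{j=1}^{n}\MM_{e_{j}}(u_{0})\partial_{j}G_{t}}_{q}=o\bigl(t^{-\frac n2(1-\frac1q)-\frac12}\bigr),
\]
and since $\partial_{j}G_{t}=-\frac{x_{j}}{2t}G_{t}=-\frac12t^{-\frac12}\delta_{t}(x_{j}G_{1})$, the subtracted profile is exactly $\Lambda_{0,1}(t;u_{0})$. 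Likewise $\sum_{j}a_{j}\Lambda_{e_{j},0}(t;\psi_{0,0})=\MM_{0}(\psi_{0,0})\,a\cdot\nabla G_{t}$, the leading term of $a\cdot\nabla e^{t\Delta}\psi_{0,0}$, so the problem reduces to comparing the Duhamel term with $a\cdot\nabla e^{t\Delta}\psi_{0,0}$. First, though, one must verify that $\psi_{0,0}=\int_{0}^{\infty}f(u(s))\,ds$ is a well-defined element of $L^{1}(\R^{n})$: from $\abs{f(\xi)}\le C\abs{\xi}^{p}$ and \eqref{eq:P_decay} one has $\norm{f(u(s))}_{1}\le C\norm{u(s)}_{p}^{p}\le Cs^{-\frac n2(p-1)}$, while for $s\in(0,1]$ the bound $\norm{f(u(s))}_{1}\le C\norm{u(s)}_{\infty}^{p-1}\norm{u(s)}_{1}$ together with $u\in X$ gives $\norm{f(u(s))}_{1}\le C$; since $p>1+2/n$ the resulting bound is integrable on $(0,\infty)$.

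The core estimate is carried out by splitting $\int_{0}^{t}=\int_{t/2}^{t}+\int_{0}^{t/2}$ for large $t$. For the near-diagonal piece I would combine the gradient smoothing $\norm{a\cdot\nabla e^{\tau\Delta}g}_{q}\le C\tau^{-\frac12}\norm{g}_{q}$ with $\norm{f(u(s))}_{q}\le C\norm{u(s)}_{pq}^{p}\le Cs^{-\frac n2(p-\frac1q)}$ to obtain
\[
\norm{\int_{t/2}^{t}a\cdot\nabla e^{(t-s)\Delta}f(u(s))\,ds}_{q}\le Ct^{-\frac n2(p-\frac1q)}\int_{t/2}^{t}(t-s)^{-\frac12}\,ds\le Ct^{\frac12-\frac n2(p-\frac1q)},
\]
which is $o\bigl(t^{-\frac n2(1-\frac1q)-\frac12}\bigr)$ precisely because $\frac n2(p-1)>1$. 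For $\int_{0}^{t/2}$ I freeze the semigroup, $e^{(t-s)\Delta}=e^{t\Delta}+\bigl(e^{(t-s)\Delta}-e^{t\Delta}\bigr)$. The frozen part equals $a\cdot\nabla e^{t\Delta}\bigl(\psi_{0,0}-\int_{t/2}^{\infty}f(u(s))\,ds\bigr)$; the tail obeys $\norm{a\cdot\nabla e^{t\Delta}\int_{t/2}^{\infty}f(u(s))\,ds}_{q}\le Ct^{-\frac12-\frac n2(1-\frac1q)+1-\frac n2(p-1)}=o\bigl(t^{-\frac n2(1-\frac1q)-\frac12}\bigr)$, again by $\frac n2(p-1)>1$, and $a\cdot\nabla e^{t\Delta}\psi_{0,0}=\MM_{0}(\psi_{0,0})\,a\cdot\nabla G_{t}+r(t)$, where, writing $e^{t\Delta}=e^{\frac t2\Delta}e^{\frac t2\Delta}$, $\norm{r(t)}_{q}\le Ct^{-\frac12}\norm{e^{\frac t2\Delta}\psi_{0,0}-\MM_{0}(\psi_{0,0})G_{t/2}}_{q}=o\bigl(t^{-\frac n2(1-\frac1q)-\frac12}\bigr)$ by the zeroth-order heat expansion of $\psi_{0,0}\in L^{1}$ (the gradient gain cannot be read off the $o(\cdot)$-estimate directly, which is why the semigroup is split).

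It remains to treat the semigroup-difference part. Using $e^{(t-s)\Delta}-e^{t\Delta}=-\int_{t-s}^{t}\Delta e^{\tau\Delta}\,d\tau$ and $\norm{a\cdot\nabla\Delta e^{\tau\Delta}g}_{q}\le C\tau^{-\frac32-\frac n2(1-\frac1q)}\norm{g}_{1}$, and noting that $\tau\ge t-s\ge t/2$ on the relevant range, one gets
\[
\norm{\int_{0}^{t/2}a\cdot\nabla\bigl(e^{(t-s)\Delta}-e^{t\Delta}\bigr)f(u(s))\,ds}_{q}\le Ct^{-\frac32-\frac n2(1-\frac1q)}\int_{0}^{t/2}s\,\norm{f(u(s))}_{1}\,ds,
\]
and $\int_{0}^{t/2}s\,\norm{f(u(s))}_{1}\,ds$ is $O(1)$, $O(\log t)$, or $O\bigl(t^{2-\frac n2(p-1)}\bigr)$ according to whether $\frac n2(p-1)$ is larger than, equal to, or smaller than $2$; in each case the product is $o\bigl(t^{-\frac n2(1-\frac1q)-\frac12}\bigr)$ because $\frac n2(p-1)>1$. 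Adding the contributions yields $\norm{u(t)-\AA_{1,0}(t)}_{q}=o\bigl(t^{-\frac n2(1-\frac1q)-\frac12}\bigr)$, which is the assertion.

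The main obstacle, and the place where the hypothesis $p>1+2/n$ is genuinely used, is the control of the two nonlinear remainders not absorbed into $\psi_{0,0}$ — the near-diagonal Duhamel piece $\int_{t/2}^{t}$ and the semigroup-difference piece — each of which has to be shown to decay strictly faster than $t^{-\frac n2(1-\frac1q)-\frac12}$; for the difference piece this entails the somewhat delicate case distinction $\frac n2(p-1)\lessgtr 2$. A secondary technical point, needed for $\psi_{0,0}$ to be meaningful at all, is the boundedness of the solution near $t=0$, which follows from $u\in X$ and Proposition \ref{pro:P_global}.
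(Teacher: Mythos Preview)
Your proposal is correct and follows essentially the same route as the paper: the Duhamel term is split into a near-diagonal piece $\int_{t/2}^{t}$, a tail $\int_{t/2}^{\infty}a\cdot\nabla e^{t\Delta}f(u(s))\,ds$, and a semigroup-difference piece handled via $e^{(t-s)\Delta}-e^{t\Delta}=-\int_{t-s}^{t}\Delta e^{\tau\Delta}\,d\tau$ (the paper writes this as $\int_{0}^{1}(-s\Delta)e^{(t-s\theta)\Delta}\,d\theta$), with the same case split on $\tfrac{n}{2}(p-1)\lessgtr 2$. The only organizational difference is that the paper first proves the intermediate statement $\bigl\lVert\int_{0}^{t}a\cdot\nabla e^{(t-s)\Delta}f(u(s))\,ds-a\cdot\nabla e^{t\Delta}\psi_{0,0}\bigr\rVert_{q}=o\bigl(t^{-\frac{n}{2}(1-\frac{1}{q})-\frac12}\bigr)$ as a separate lemma and then invokes Proposition~\ref{pro:heat_asymp_lim} with $\alpha=e_{j}$, $m=0$ to pass from $\partial_{j}e^{t\Delta}\psi_{0,0}$ to $\Lambda_{e_{j},0}(t;\psi_{0,0})$ directly, so your semigroup-splitting trick $e^{t\Delta}=e^{\frac{t}{2}\Delta}e^{\frac{t}{2}\Delta}$ for the remainder $r(t)$ is unnecessary (though perfectly valid).
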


\begin{rem}
	Propositions \ref{pro:P_asymp_F-critical} and \ref{pro:P_asymp_F-super} are improved versions of the results in \cite{Zuazua1993}.
	The author in \cite{Zuazua1993} obtained the corresponding results under a stronger assumption for the initial data, namely, $u_{0} \in L^{2} {\,} ( \R^{n}; e^{\abs{x}^{2} /4} dx ) \cap L^{\infty} \left( \R^{n} \right)$.
	In particular, for the Fujita-critical case, the author showed only that
	\begin{align*}
		\lim_{t \to + \infty} \frac{t^{\frac{n}{2} \left( 1- \frac{1}{q} \right) + \frac{1}{2}}}{\log t} \norm{u \left( t \right) - \widetilde{\AA}_{0, 1} \left( t \right)}_{q} =0
	\end{align*}
	holds for any $q \in \left[ 1, + \infty \right]$.
	In Appendix \ref{app:Zuazua}, we give the proofs of Propositions \ref{pro:P_asymp_F-critical} and \ref{pro:P_asymp_F-super} by more direct approaches than those in \cite{Zuazua1993}.
\end{rem}

By using not only Propositions \ref{pro:P_asymp_F-sub}, \ref{pro:P_asymp_F-critical}, and \ref{pro:P_asymp_F-super} but also the self-similar structures in the asymptotic profiles, we can show that the decay rates of the remainder $u \left( t \right) - \AA_{0} \left( t \right)$ given in Proposition \ref{pro:P_asymp-0} are optimal under a suitable assumption for $f$.
For the reader's convenience, we give the proof of the optimality in Appendix \ref{app:optimal}.
As a result, we conclude that the Fujita exponent $1+2/n$ is the critical exponent for the asymptotic behavior of the remainder $u \left( t \right) - \AA_{0} \left( t \right)$.

The above discussion leads us to the next question on the optimality for the decay rates of the remainders for the first order asymptotic expansions given in Propositions \ref{pro:P_asymp_F-sub}, \ref{pro:P_asymp_F-critical}, and \ref{pro:P_asymp_F-super}.
In the Fujita-supercritical case, the decay rate of the remainder $u \left( t \right) - \AA_{1, 0} \left( t \right)$ given in Proposition \ref{pro:P_asymp_F-super} seems optimal since it is the same as that of the first order asymptotic expansion for the heat semigroup.
In fact, if $\varphi \in L^{1}_{1} \left( \R^{n} \right)$, then
\begin{align*}
	\lim_{t \to + \infty} t^{\frac{n}{2} \left( 1- \frac{1}{q} \right) + \frac{1}{2}} \norm{e^{t \Delta} \varphi - \Lambda_{0, 1} \left( t; \varphi \right)}_{q} =0
\end{align*}
holds for any $q \in \left[ 1, + \infty \right]$ (see Proposition \ref{pro:heat_asymp_lim} in Section \ref{sec:heat}).
In the Fujita-critical case, Fukuda--Sato \cite{Fukuda-Sato} established the second order asymptotic expansion whose remainder vanishes as fast as that of the first order asymptotic expansion for the heat semigroup.
In addition, they showed that the decay rate of the remainder $u \left( t \right) - \widetilde{\AA}_{0, 1} \left( t \right)$ given in Proposition \ref{pro:P_asymp_F-critical} is optimal provided that $f$ is homogeneous of order $p$.
By considering the Fujita-subcritical case similarly, the decay rate of the remainder $u \left( t \right) - \AA_{0, 1} \left( t \right)$ given in Proposition \ref{pro:P_asymp_F-sub} does not seem optimal.
In fact, Ishige--Kawakami \cite{Ishige-Kawakami2013} obtained the following proposition.

\begin{prop}[{\cite[Theorem 6.1]{Ishige-Kawakami2013}}] \label{pro:Ishige-Kawakami}
	Let $p>1+1/n$.
	Let $u_{0} \in \left( L^{1}_{1} \cap L^{\infty} \right) \left( \R^{n} \right)$ and let $u \in X$ be the global solution to \eqref{P} given in Proposition \ref{pro:P_global}.
	Then,
	\begin{align*}
		t^{\frac{n}{2} \left( 1- \frac{1}{q} \right)} \norm{u \left( t \right) - \AA \left( t \right)}_{q} = \begin{dcases}
			O \left( t^{-2 \sigma} \right) &\qquad \text{if} \quad 2 \sigma < \frac{1}{2}, \\
			O \left( t^{- \frac{1}{2}} \log t \right) &\qquad \text{if} \quad 2 \sigma = \frac{1}{2}, \\
			o \left( t^{- \frac{1}{2}} \right) &\qquad \text{if} \quad 2 \sigma > \frac{1}{2}
		\end{dcases}
	\end{align*}
	hold for any $q \in \left[ 1, + \infty \right]$ as $t \to + \infty$, where
	\begin{align*}
		\AA \left( t \right) &\coloneqq \AA_{0} \left( 1+t \right) + \int_{0}^{t} a \cdot \nabla e^{\left( t-s \right) \Delta} f \left( \AA_{0} \left( 1+s \right) \right) ds+ \frac{1}{2} \left( 1+t \right)^{- \frac{1}{2}} \sum_{j=1}^{n} \MM_{e_{j}} \left( \Psi_{0} \left( t \right) \right) \delta_{1+t} \left( x_{j} G_{1} \right), \\
		\Psi_{0} \left( t \right) &\coloneqq u \left( t \right) - \int_{0}^{t} a \cdot \nabla f \left( \AA_{0} \left( 1+s \right) \right) ds.
	\end{align*}
\end{prop}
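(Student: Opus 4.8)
The strategy is to peel off from $u$ the ``zeroth-order nonlinear Duhamel iterate'' built on $\AA_{0}(1+\cdot\,)$ and then to recognize the $\delta_{1+t}(x_{j}G_{1})$-correction in $\AA(t)$ as precisely the first-order heat profile of the remaining error. Put
\begin{align*}
	v(t) \coloneqq u(t)- \AA_{0}(1+t)- \int_{0}^{t} a \cdot \nabla e^{(t-s) \Delta} f \left( \AA_{0}(1+s) \right) ds, \qquad h(t) \coloneqq f \left( u(t) \right) -f \left( \AA_{0}(1+t) \right).
\end{align*}
Since $\AA_{0}(1+\cdot\,)= \MM_{0}(u_{0})G_{1+\cdot\,}$ solves the heat equation and the Duhamel term solves $(\partial_{t}- \Delta)\,\cdot\,=a \cdot \nabla f(\AA_{0}(1+\cdot\,))$, the function $v$ solves $\partial_{t}v- \Delta v=a \cdot \nabla h$ with $v(0)=u_{0}- \MM_{0}(u_{0})G_{1}$, so $v(t)=e^{t \Delta}v(0)+ \int_{0}^{t} a \cdot \nabla e^{(t-s) \Delta}h(s)\,ds$. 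Using $\MM_{0}(\nabla \cdot ( \cdot ))=0$, the invariance of the zeroth and first moments under $e^{t \Delta}$, and $\MM_{e_{j}}(a \cdot \nabla \varphi)=-a_{j}\MM_{0}(\varphi)$, one checks $\MM_{0}(v(t)) \equiv 0$ and
\begin{align*}
	\MM_{e_{j}}(v(t))= \MM_{e_{j}}(u_{0})-a_{j} \int_{0}^{t} \MM_{0}(h(s))\,ds= \MM_{e_{j}}(\Psi_{0}(t)),
\end{align*}
the last quantity being exactly the moment appearing in $\AA(t)$ (using $\MM_{0}(u(t))= \MM_{0}(u_{0})$). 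Since $\tfrac{1}{2}(1+t)^{-1/2} \delta_{1+t}(x_{j}G_{1})=- \partial_{x_{j}}G_{1+t}$, we obtain the identity $u(t)- \AA(t)=v(t)+ \sum_{j=1}^{n} \MM_{e_{j}}(v(t))\, \partial_{x_{j}}G_{1+t}$, so it suffices to estimate the right-hand side in $L^{q}$.

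Next I would collect the needed decay of the forcing. Proposition \ref{pro:P_global} gives $\norm{u(s)}_{r} \leq C(1+s)^{-\frac{n}{2}(1-1/r)}$, and Proposition \ref{pro:P_asymp-0} (combined with $\norm{\AA_{0}(s)- \AA_{0}(1+s)}_{r} \leq Cs^{-1-\frac{n}{2}(1-1/r)}$) gives $\norm{u(s)- \AA_{0}(1+s)}_{r} \leq C(1+s)^{-\frac{n}{2}(1-1/r)- \min(\sigma,1/2)}$, with a logarithmic factor only when $\sigma=1/2$; the Lipschitz hypothesis on $f$ then yields, for $r \in [1,+ \infty]$, $\norm{h(s)}_{r} \leq C(1+s)^{- \frac{1}{2}- \frac{n}{2}(1-1/r)- \sigma - \min(\sigma, 1/2)}$ and $\abs{\MM_{0}(h(s))} \leq \norm{h(s)}_{1}$, so in the Fujita-subcritical/critical range ($\sigma \leq 1/2$) one has $\norm{h(s)}_{1} \leq C(1+s)^{-2 \sigma -1/2}$. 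In addition I would need the weighted bounds $\norm{\,\abs{x}u(s)}_{1} \leq C(1+s)^{1/2}$ and the weighted refinement $\norm{\,\abs{x}(u(s)- \AA_{0}(1+s))}_{r} \leq C(1+s)^{1/2 -\frac{n}{2}(1-1/r)-\min(\sigma,1/2)}$ of Proposition \ref{pro:P_asymp-0}, which combine via Hölder to give $\norm{\,\abs{x}h(s)}_{1} \leq C(1+s)^{- 2 \sigma}$ when $\sigma<1/2$ (a logarithm being admissible at $\sigma=1/2$, a bounded quantity sufficing when $\sigma>1/2$). These weighted estimates I would establish by bootstrapping \eqref{I}: multiplying by $\abs{x}$, using $\abs{x} \leq \abs{x-y}+\abs{y}$ in the convolutions, the kernel bounds $\norm{\,\abs{\cdot}\,G_{\tau}}_{1} \leq C \tau^{1/2}$, $\norm{\nabla G_{\tau}}_{1} \leq C \tau^{-1/2}$, $\norm{\,\abs{\cdot}\,(a \cdot \nabla G_{\tau})}_{1} \leq C$, and the unweighted decay of $u$, one is led to a singular Gronwall inequality whose time-kernel is integrable, giving the claimed growth.

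With these in hand I would split $v(t)=(A)+(B)$, $(A)=e^{t \Delta}v(0)$ and $(B)= \int_{0}^{t}a \cdot \nabla e^{(t-s) \Delta}h(s)\,ds$. For $(A)$: since $v(0) \in L^{1}_{1}$ with $\MM_{0}(v(0))=0$, the first-order heat asymptotics (Proposition \ref{pro:heat_asymp_lim}) give $\norm{e^{t \Delta}v(0)+ \sum_{j} \MM_{e_{j}}(v(0)) \partial_{x_{j}}G_{1+t}}_{q}=o(t^{-\frac{n}{2}(1-1/q)-1/2})$, below every target rate. For $(B)$ I would split at $s=t/2$: on $[t/2,t]$, where $1+s \sim 1+t$, crude smoothing $\norm{a \cdot \nabla e^{(t-s) \Delta}h(s)}_{q} \leq C(t-s)^{-1/2} \norm{h(s)}_{q}$ and the above bounds give $\norm{(B)|_{[t/2,t]}}_{q}+ \abs{\int_{t/2}^{t} \MM_{0}(h(s))\,ds}\, \norm{a \cdot \nabla G_{1+t}}_{q} \leq Ct^{-2 \sigma - \frac{n}{2}(1-1/q)}$ when $\sigma \leq 1/2$ (and $o(t^{-1/2-\frac{n}{2}(1-1/q)})$ when $\sigma>1/2$); on $[0,t/2]$, where $t-s \geq t/2$, I would expand $e^{(t-s) \Delta}h(s)= \MM_{0}(h(s))G_{t-s}+R(s,t)$ with $\norm{a \cdot \nabla R(s,t)}_{q} \leq C(t-s)^{-1-\frac{n}{2}(1-1/q)} \norm{\,\abs{x}h(s)}_{1}$, then replace $G_{t-s}$ by $G_{1+t}$ at cost $\norm{a \cdot \nabla(G_{t-s}-G_{1+t})}_{q} \leq C(1+s)\,t^{-3/2 -\frac{n}{2}(1-1/q)}$; integrating in $s$ with the weighted bounds yields $(B)|_{[0,t/2]}= \bigl( \int_{0}^{t/2} \MM_{0}(h(s))\,ds \bigr) a \cdot \nabla G_{1+t}+O(t^{-2 \sigma - \frac{n}{2}(1-1/q)})$, the logarithm at $2\sigma=1/2$ being harmlessly absorbed. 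Hence $(B)= \bigl( \int_{0}^{t} \MM_{0}(h(s))\,ds \bigr) a \cdot \nabla G_{1+t}+O(t^{-2 \sigma - \frac{n}{2}(1-1/q)})$. Since the Step-1 identity rewrites $\sum_{j} \MM_{e_{j}}(v(t)) \partial_{x_{j}}G_{1+t}= \sum_{j} \MM_{e_{j}}(v(0)) \partial_{x_{j}}G_{1+t}- \bigl( \int_{0}^{t} \MM_{0}(h(s))\,ds \bigr) a \cdot \nabla G_{1+t}$, adding this to $(A)+(B)$ cancels both $\partial_{x_{j}}G_{1+t}$-profiles and leaves $\norm{u(t)- \AA(t)}_{q} \leq o(t^{-\frac{n}{2}(1-1/q)-1/2})+O(t^{-2 \sigma - \frac{n}{2}(1-1/q)})$; multiplying by $t^{\frac{n}{2}(1-1/q)}$ and comparing $2 \sigma$ with $1/2$ gives the stated trichotomy (in the supercritical range the $O$-term is already $o(t^{-1/2-\frac{n}{2}(1-1/q)})$, so there the conclusion is just $o(t^{-1/2})$).

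The main obstacle is the weighted-norm bookkeeping of the second step: establishing $\norm{\,\abs{x}u(s)}_{1} \leq C(1+s)^{1/2}$ and the weighted refinement $\norm{\,\abs{x}(u(s)- \AA_{0}(1+s))}_{r} \leq C(1+s)^{1/2 -\frac{n}{2}(1-1/r)-\sigma}$ of Proposition \ref{pro:P_asymp-0}. In the Fujita-subcritical regime the first moments of $u$ and the mass flux $\int_{0}^{t} \MM_{0}(h(s))\,ds$ grow without bound, so $v(t)$ does not converge to a fixed multiple of $\partial_{x_{j}}G_{t}$; it is precisely the sharp exponents in the weighted estimates for $h$ — obtained from those for $u$ by a singular Gronwall argument on \eqref{I} that has to track the $(1+s)^{1/2}$-spread accurately — that force the term-by-term expansion above to close with the exponent $\min(2 \sigma,1/2)$ rather than something weaker.
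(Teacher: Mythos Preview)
This proposition is not proved in the paper at all: it is quoted verbatim from \cite[Theorem 6.1]{Ishige-Kawakami2013} as background and motivation, so there is no ``paper's own proof'' to compare against. Your outline is, in spirit, precisely the Ishige--Kawakami approach: rewrite $u-\AA$ through the auxiliary function $v$, identify the first moments of $v$ with $\MM_{e_{j}}(\Psi_{0}(t))$, and then close using weighted $L^{1}$-estimates of the solution and of $h=f(u)-f(\AA_{0}(1+\cdot))$. The moment identities and the decomposition $u(t)-\AA(t)=v(t)+\sum_{j}\MM_{e_{j}}(v(t))\,\partial_{x_{j}}G_{1+t}$ are correct, and the unweighted bounds on $h$ follow from Proposition~\ref{pro:P_asymp-0} exactly as you say.

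The substantive point is the one you flag yourself: everything hinges on the weighted estimates $\norm{\abs{x}u(s)}_{1}\leq C(1+s)^{1/2}$ and $\norm{\abs{x}(u(s)-\AA_{0}(1+s))}_{1}\leq C(1+s)^{1/2-\min(\sigma,1/2)}$. These are not available from the present paper; indeed, the paper emphasizes (see the discussion after Theorem~\ref{th:P_asymp_super_gene}) that one of its contributions is to \emph{avoid} the weighted solution estimates developed in \cite{Ishige-Kawakami2013}. Your sketched bootstrap via $\abs{x}\leq\abs{x-y}+\abs{y}$ and a singular Gronwall argument is the right mechanism and is essentially what Ishige--Kawakami carry out, but it is a genuine piece of work that is not reducible to the tools assembled here (Lemma~\ref{lem:heat_weight} handles only derivatives of $e^{t\Delta}$ acting on weighted data, not weighted bounds on the nonlinear solution itself). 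So your plan is sound and matches the cited reference, but the ``main obstacle'' you identify is exactly the content of that reference, not a routine step.
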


This proposition suggests that there may be room for improvement in the decay rate of the remainder $u \left( t \right) - \AA_{0, 1} \left( t \right)$ given by Proposition \ref{pro:P_asymp_F-sub}.
However, since there is no information in \cite{Ishige-Kawakami2013} on the relation between $\AA_{0, 1} \left( t \right)$ and $\AA \left( t \right)$, we do not know whether it is possible or not.
Even if we can do it, it seems impossible to show the optimality of the decay rate in any case, due to the lack of the self-similarity of the asymptotic profile $\AA \left( t \right)$.
For these reasons, we focus on the Fujita-subcritical case, namely, $1+1/n<p<1+2/n$.
In this paper, we improve the decay rate of the remainder $u \left( t \right) - \AA_{0, 1} \left( t \right)$ given by Proposition \ref{pro:P_asymp_F-sub} without reconstructing the asymptotic profile as in \cite{Ishige-Kawakami2013}.
We also establish higher order asymptotic expansions and discuss the optimality and non-optimality for the decay rates of the remainders in some cases.

The rest of this paper is organized as follows.
In the next section, we state our main results: Theorems \ref{th:P_asymp_sub_gene}, \ref{th:P_asymp_critical_gene}, \ref{th:P_asymp_super_gene}, \ref{th:P_asymp_super_optimal}, \ref{th:P_asymp_sub-1_optimal}, and \ref{th:P_asymp_critical-1_optimal} below.
In Section \ref{sec:heat}, we introduce basic estimates and asymptotic expansions of the heat semigroup.
In Sections \ref{sec:P_asymp_sub_gene}, \ref{sec:P_asymp_critical_gene}, \ref{sec:P_asymp_super_gene}, and \ref{sec:P_asymp_super_optimal}, we give the proofs of Theorems \ref{th:P_asymp_sub_gene}, \ref{th:P_asymp_critical_gene}, \ref{th:P_asymp_super_gene}, and \ref{th:P_asymp_super_optimal}, respectively.
In Section \ref{sec:P_asymp-1_optimal}, we present the proofs of Theorems \ref{th:P_asymp_sub-1_optimal} and \ref{th:P_asymp_critical-1_optimal}.

\section{Main results}

The following three theorems are improvements and generalizations of Propositions \ref{pro:P_asymp_F-sub}, \ref{pro:P_asymp_F-critical}, and \ref{pro:P_asymp_F-super}, respectively.

\begin{thm} \label{th:P_asymp_sub_gene}
	Let $k \in \Z_{>0}$ and let $1+ \frac{1}{n} <p<1+ \frac{k+1}{kn}$.
	Let $u_{0} \in \left( L^{1}_{1} \cap L^{\infty} \right) \left( \R^{n} \right)$ and let $u \in X$ be the global solution to \eqref{P} given in Proposition \ref{pro:P_global}.
	Then, for any $q \in \left[ 1, + \infty \right]$, there exists $C>0$ such that the estimates
	\begin{align*}
		t^{\frac{n}{2} \left( 1- \frac{1}{q} \right)} \norm{u \left( t \right) - \AA_{0, k} \left( t \right)}_{q} \leq \begin{dcases}
			Ct^{- \left( k+1 \right) \sigma} &\qquad \text{if} \quad \left( k+1 \right) \sigma < \frac{1}{2}, \\
			Ct^{- \frac{1}{2}} \log \left( 2+t \right) &\qquad \text{if} \quad \left( k+1 \right) \sigma = \frac{1}{2}, \\
			Ct^{- \frac{1}{2}} &\qquad \text{if} \quad \left( k+1 \right) \sigma > \frac{1}{2}
		\end{dcases}
	\end{align*}
	hold for all $t>2^{k}$, where
	\begin{align*}
		\sigma &\coloneqq \frac{n}{2} \left( p-1 \right) - \frac{1}{2} \in \left( 0, \frac{1}{2k} \right), \\
		\AA_{0, k} \left( t \right) &\coloneqq \begin{dcases}
			\AA_{0} \left( t \right), &\quad k=0, \\
			\AA_{0} \left( t \right) + \int_{0}^{1} a \cdot \nabla e^{\left( t-s \right) \Delta} f \left( \AA_{0} \left( s \right) \right) ds+ \int_{1}^{t} a \cdot \nabla e^{\left( t-s \right) \Delta} f \left( \AA_{0, k-1} \left( s \right) \right) ds, &\quad k \geq 1.
		\end{dcases}
	\end{align*}
\end{thm}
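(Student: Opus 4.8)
The plan is to prove the estimate by induction on $k$, with the base case $k=0$ being Proposition \ref{pro:P_asymp-0} (the $\sigma<1/2$ alternative, which holds since $\sigma<1/(2k)\le 1/2$ when $k\geq 1$, and trivially for $k=0$). Assume the bound holds for $k-1$, i.e. for any $q$ there is $C>0$ with $t^{\frac n2(1-\frac1q)}\norm{u(t)-\AA_{0,k-1}(t)}_q\le C t^{-k\sigma}$ for $t>2^{k-1}$ (since $\sigma<1/(2k)\le 1/(2(k-1))$ forces $k\sigma<1/2$, so only the first branch of the inductive hypothesis is active). Subtracting the integral equation \eqref{I} for $u$ from the defining formula for $\AA_{0,k}(t)$, one gets, for $t>1$,
\begin{align*}
	u(t)-\AA_{0,k}(t)=\int_1^t a\cdot\nabla e^{(t-s)\Delta}\bigl(f(u(s))-f(\AA_{0,k-1}(s))\bigr)\,ds.
\end{align*}
The Lipschitz-type bound on $f$, the decay \eqref{eq:P_decay} for $u$, the analogous decay for $\AA_{0,k-1}$ (which follows from the heat-semigroup estimates of Section \ref{sec:heat} applied inductively to its definition), and the inductive hypothesis together give a pointwise-in-$s$ estimate
\begin{align*}
	\norm{f(u(s))-f(\AA_{0,k-1}(s))}_1\le C\bigl(\norm{u(s)}_\infty^{p-1}+\norm{\AA_{0,k-1}(s)}_\infty^{p-1}\bigr)\norm{u(s)-\AA_{0,k-1}(s)}_1\le C s^{-\frac n2(p-1)-k\sigma}
\end{align*}
for $s>2^{k-1}$, i.e. $\norm{f(u(s))-f(\AA_{0,k-1}(s))}_1\le C s^{-\frac12-(k+1)\sigma}$.

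Next I would apply the gradient heat-kernel estimate $\norm{\nabla e^{(t-s)\Delta}g}_q\le C(t-s)^{-\frac12-\frac n2(1-\frac1q)}\norm g_1$ and split the time integral at $t/2$. On $(1,t/2)$ the factor $(t-s)^{-\frac12-\frac n2(1-\frac1q)}$ is comparable to $t^{-\frac12-\frac n2(1-\frac1q)}$ and comes out; the remaining $\int_1^{t/2}s^{-\frac12-(k+1)\sigma}\,ds$ is $O(t^{\frac12-(k+1)\sigma})$ when $(k+1)\sigma<\frac12$, $O(\log t)$ when $(k+1)\sigma=\frac12$, and $O(1)$ when $(k+1)\sigma>\frac12$, producing exactly the three stated rates after multiplying by $t^{\frac n2(1-\frac1q)}$. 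On $(t/2,t)$ one instead pulls out $s^{-\frac12-(k+1)\sigma}\sim t^{-\frac12-(k+1)\sigma}$ and integrates the singular kernel $\int_{t/2}^t(t-s)^{-\frac12-\frac n2(1-\frac1q)}\,ds$; for $q$ with $\frac n2(1-\frac1q)<\frac12$ this is $O(t^{\frac12-\frac n2(1-\frac1q)})$, giving a contribution $O(t^{-\frac n2(1-\frac1q)}\cdot t^{-(k+1)\sigma})$ or better, which is dominated by the first branch. For large $q$ the kernel is not integrable near $s=t$, so there one first estimates $\norm{f(u(s))-f(\AA_{0,k-1}(s))}_r$ in a suitable intermediate $L^r$ and uses $\norm{\nabla e^{(t-s)\Delta}g}_q\le C(t-s)^{-\frac12-\frac n2(\frac1r-\frac1q)}\norm g_r$ with $r$ chosen so the exponent stays below $1$ — the standard bootstrapping device already implicit in Proposition \ref{pro:P_asymp-0}; combining the two subintervals yields the claim for $t>2^k$ (the threshold is needed so that $t/2>2^{k-1}$, keeping all inductive estimates valid on the relevant range).

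The main obstacle is organizing the $L^q$-bookkeeping uniformly in $q\in[1,+\infty]$ near the temporal endpoint $s=t$, where the gradient heat kernel fails to be integrable for $q$ large: this forces the intermediate-$L^r$ splitting and one must check that the exponents $\frac12+\frac n2(\frac1r-\frac1q)$ can be kept strictly below $1$ while $r$ still lies in a range where both the decay \eqref{eq:P_decay} and the inductive hypothesis (stated there for all $q$, hence all $r$) apply — and that the resulting power of $t$ is never worse than $t^{-\frac n2(1-\frac1q)-\min\{(k+1)\sigma,\,1/2\}}$, with the logarithmic loss appearing exactly at equality. A secondary technical point is verifying that $\AA_{0,k-1}$ inherits the decay and regularity needed to run the $f$-difference estimate; this is routine from the heat-semigroup bounds in Section \ref{sec:heat} but should be recorded as a lemma before the induction.
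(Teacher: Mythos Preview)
Your overall inductive strategy is correct and matches the paper's, but the central identity you wrote down is wrong. Subtracting the definition of $\AA_{0,k}(t)$ from \eqref{I} does \emph{not} give
\[
u(t)-\AA_{0,k}(t)=\int_1^t a\cdot\nabla e^{(t-s)\Delta}\bigl(f(u(s))-f(\AA_{0,k-1}(s))\bigr)\,ds.
\]
You have dropped two terms: the linear error $e^{t\Delta}u_0-\AA_0(t)$ (recall $\AA_0(t)=\Lambda_{0,0}(t;u_0)$, not $e^{t\Delta}u_0$) and the short-time piece $\int_0^1 a\cdot\nabla e^{(t-s)\Delta}\bigl(f(u(s))-f(\AA_0(s))\bigr)\,ds$. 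Both contribute exactly at the $t^{-1/2}$ level after normalization; without them your argument would ``prove'' a rate coming only from the Duhamel tail, which is not a bound on $u(t)-\AA_{0,k}(t)$. The paper records the full five-term decomposition (linear error, the $[0,1]$ piece, and the $[1,t]$ integral split at $2^{k-1}$ and $t/2$) and estimates each separately; the first two pieces use Proposition~\ref{pro:heat_asymp} and the crude bound $\norm{u(s)}_p^p+\norm{\AA_0(s)}_p^p\le Cs^{-n(p-1)/2}$, which is integrable on $[0,1]$ since $p<1+2/n$.

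Two smaller points. First, your explanation of the threshold $t>2^k$ is slightly off: the condition $t/2>2^{k-1}$ ensures the split point lies above $2^{k-1}$, but on $[1,2^{k-1}]$ the inductive hypothesis (stated only for $s>2^{k-1}$) is not available, so you still need a separate crude estimate there --- this is why the paper splits $[1,t/2]$ further. Second, your proposed intermediate-$L^r$ device near $s=t$ is unnecessary: the paper simply uses the $L^q\to L^q$ bound $\norm{\nabla e^{(t-s)\Delta}g}_q\le C(t-s)^{-1/2}\norm{g}_q$ together with $\norm{f(u(s))-f(\AA_{0,k-1}(s))}_q\le C s^{-\frac n2(p-1)-\frac n2(1-\frac1q)-k\sigma}$, which keeps the time singularity at $-1/2$ uniformly in $q$ and avoids any bootstrapping. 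Your secondary point about recording the decay of $\AA_{0,k-1}$ as a preliminary lemma is exactly what the paper does (Lemma~\ref{lem:P_asymp_sub_gene}).
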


\begin{thm} \label{th:P_asymp_critical_gene}
	Let $k \in \Z_{>0}$ and let $p=1+ \frac{k+2}{\left( k+1 \right) n}$.
	Let $u_{0} \in \left( L^{1}_{1} \cap L^{\infty} \right) \left( \R^{n} \right)$ and let $u \in X$ be the global solution to \eqref{P} given in Proposition \ref{pro:P_global}.
	Then, for any $q \in \left[ 1, + \infty \right]$, there exists $C>0$ such that the estimate
	\begin{align*}
		t^{\frac{n}{2} \left( 1- \frac{1}{q} \right)} \norm{u \left( t \right) - \widetilde{\AA}_{0, k+1} \left( t \right)}_{q} \leq Ct^{- \frac{1}{2}}
	\end{align*}
	holds for all $t>2^{k+1}$, where
	\begin{align*}
		\widetilde{\AA}_{0, k+1} \left( t \right) &\coloneqq \AA_{0, k} \left( t \right) + \sum_{j=1}^{n} a_{j} \int_{1}^{t} \Lambda_{e_{j}, 0} \left( t; f \left( \AA_{0, k} \left( s \right) \right) -f \left( \AA_{0, k-1} \left( s \right) \right) \right) ds.
	\end{align*}
\end{thm}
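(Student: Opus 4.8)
The plan is to bootstrap from Theorem~\ref{th:P_asymp_sub_gene}. Observe first that the hypothesis $p=1+\frac{k+2}{(k+1)n}$ is exactly equivalent to $\sigma=\frac{1}{2(k+1)}$, i.e. $(k+1)\sigma=\frac12$, so Theorem~\ref{th:P_asymp_sub_gene} applies in its borderline case and gives, for every $q\in[1,+\infty]$ and $t>2^{k}$,
\begin{align*}
	\norm{u\left(t\right)-\AA_{0,k}\left(t\right)}_{q}\leq Ct^{-\frac{n}{2}\left(1-\frac{1}{q}\right)-\frac12}\log\left(2+t\right);
\end{align*}
in particular $\sigma<\frac12$ and $k\sigma<\frac12$. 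Writing $g\left(s\right):=f\left(\AA_{0,k}\left(s\right)\right)-f\left(\AA_{0,k-1}\left(s\right)\right)$, subtracting the recursion defining $\AA_{0,k}$ from \eqref{I} and inserting $\pm f\left(\AA_{0,k}\left(s\right)\right)$ in the last integral yields, for $t>2^{k+1}$,
\begin{align*}
	u\left(t\right)-\tilde{\AA}_{0,k+1}\left(t\right)
	&=\left[e^{t\Delta}u_{0}-\AA_{0}\left(t\right)\right]
	+\int_{0}^{1}a\cdot\nabla e^{\left(t-s\right)\Delta}\left[f\left(u\left(s\right)\right)-f\left(\AA_{0}\left(s\right)\right)\right]ds \\
	&\quad+\int_{1}^{t}a\cdot\nabla e^{\left(t-s\right)\Delta}\left[f\left(u\left(s\right)\right)-f\left(\AA_{0,k}\left(s\right)\right)\right]ds
	+\left[\int_{1}^{t}a\cdot\nabla e^{\left(t-s\right)\Delta}g\left(s\right)ds-\sum_{j=1}^{n}a_{j}\int_{1}^{t}\Lambda_{e_{j},0}\left(t;g\left(s\right)\right)ds\right].
\end{align*}
I will show that each of the four bracketed groups has $L^{q}$-norm $O\bigl(t^{-\frac{n}{2}\left(1-\frac{1}{q}\right)-\frac12}\bigr)$.

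For the first group this is the first-order asymptotics of the heat semigroup for $u_{0}\in L^{1}_{1}$ (Section~\ref{sec:heat}), since $\AA_{0}\left(t\right)=\MM_{0}\left(u_{0}\right)G_{t}$. For the second group, $\norm{f\left(u\left(s\right)\right)-f\left(\AA_{0}\left(s\right)\right)}_{1}\lesssim s^{-\sigma-\frac12}$ is integrable on $\left(0,1\right)$ because $\sigma<\frac12$, and $t-s\sim t$ supplies the gain $t^{-\frac12-\frac{n}{2}\left(1-\frac{1}{q}\right)}$. For the third group I split $\int_{1}^{t}=\int_{1}^{t/2}+\int_{t/2}^{t}$ and use the growth condition on $f$ together with \eqref{eq:P_decay} and the decay of $\AA_{0,k}$: the key point is that the borderline estimate above gives $\norm{f\left(u\left(s\right)\right)-f\left(\AA_{0,k}\left(s\right)\right)}_{1}\lesssim s^{-\sigma-1}\log\left(2+s\right)$, whose integral over $\left(1,+\infty\right)$ converges precisely because $\sigma>0$, so that combining it with $\norm{\nabla e^{\left(t-s\right)\Delta}}_{L^{1}\to L^{q}}\lesssim\left(t-s\right)^{-\frac12-\frac{n}{2}\left(1-\frac{1}{q}\right)}$ on $\left(1,t/2\right)$ and $\norm{\nabla e^{\left(t-s\right)\Delta}}_{L^{q}\to L^{q}}\lesssim\left(t-s\right)^{-\frac12}$ on $\left(t/2,t\right)$ produces the required bound with no surviving logarithm.

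The heart of the argument is the fourth group, where the two terms individually decay only like $t^{-\frac12-\frac{n}{2}\left(1-\frac{1}{q}\right)}\log t$ because $\norm{g\left(s\right)}_{1}\lesssim s^{-\left(k+1\right)\sigma-\frac12}=s^{-1}$, and the logarithms must cancel. Using $\Lambda_{e_{j},0}\left(t;\varphi\right)=\MM_{0}\left(\varphi\right)\partial_{j}G_{t}$, I split $\int_{1}^{t}=\int_{1}^{t/2}+\int_{t/2}^{t}$; for $s\leq t/2$,
\begin{align*}
	a\cdot\nabla e^{\left(t-s\right)\Delta}g\left(s\right)-\sum_{j=1}^{n}a_{j}\Lambda_{e_{j},0}\left(t;g\left(s\right)\right)
	=a\cdot\left[\nabla e^{\left(t-s\right)\Delta}g\left(s\right)-\MM_{0}\left(g\left(s\right)\right)\nabla G_{t-s}\right]+\MM_{0}\left(g\left(s\right)\right)a\cdot\left[\nabla G_{t-s}-\nabla G_{t}\right].
\end{align*}
The first term is bounded, via the first-order heat expansion, by $\left(t-s\right)^{-1-\frac{n}{2}\left(1-\frac{1}{q}\right)}\norm{g\left(s\right)}_{L^{1}_{1}}$, and the second by $\norm{\partial_{j}G_{t-s}-\partial_{j}G_{t}}_{q}\lesssim s\,t^{-\frac{3}{2}-\frac{n}{2}\left(1-\frac{1}{q}\right)}$ (from $\partial_{\tau}\nabla G_{\tau}=\Delta\nabla G_{\tau}$); since $t-s\sim t$, integrating each over $s\in\left(1,t/2\right)$ yields exactly $t^{-\frac12-\frac{n}{2}\left(1-\frac{1}{q}\right)}$, the logarithm having vanished because it was $\int_{1}^{t/2}\norm{g\left(s\right)}_{1}\,ds$ that generated it. On $\left(t/2,t\right)$ the two pieces are bounded separately and are each $O\bigl(t^{-\frac12-\frac{n}{2}\left(1-\frac{1}{q}\right)}\bigr)$, using $\norm{g\left(s\right)}_{q}\lesssim s^{-1-\frac{n}{2}\left(1-\frac{1}{q}\right)}$ there. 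The main obstacle, and the only place where the internal structure of the approximate profiles (built up while proving Theorem~\ref{th:P_asymp_sub_gene}) is needed beyond $L^{1}$–$L^{\infty}$ bounds, is the weighted estimate $\norm{g\left(s\right)}_{L^{1}_{1}}\lesssim s^{-\frac12}$: because $f$ is merely $C^{1}$ with $p$-growth, controlling $\abs{x}\bigl(f\left(\AA_{0,k}\left(s\right)\right)-f\left(\AA_{0,k-1}\left(s\right)\right)\bigr)$ in $L^{1}$ requires pointwise, Gaussian-type decay bounds on $\AA_{0,k}\left(s\right)$ and $\AA_{0,k-1}\left(s\right)$ together with $\norm{\AA_{0,k}\left(s\right)-\AA_{0,k-1}\left(s\right)}_{1}\lesssim s^{-k\sigma}$, which then combine to give $\norm{g\left(s\right)}_{L^{1}_{1}}\lesssim s^{-\sigma}\cdot s^{-k\sigma}=s^{-\left(k+1\right)\sigma}=s^{-\frac12}$.
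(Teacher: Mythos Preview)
Your decomposition and the overall strategy are correct and match the paper's proof very closely: the paper also writes $u-\tilde{\AA}_{0,k+1}$ as the heat remainder, the short-time piece on $(0,1)$, a Duhamel piece involving $f(u)-f(\AA_{0,k})$ (your Group~3 corresponds to the paper's $\tilde R_{0,k+1,6}$), and a ``cancellation'' block pairing $a\cdot\nabla e^{(t-s)\Delta}g(s)$ against $\sum_j a_j\Lambda_{e_j,0}(t;g(s))$; in the paper the latter is organised via the operator identity $e^{(t-s)\Delta}=e^{t\Delta}+\int_0^1(-s\Delta)e^{(t-s\theta)\Delta}d\theta$ (producing the terms $\tilde R_{0,k+1,5}$ and $\tilde R_{0,k+1,7}$), which is exactly your two-step comparison $\nabla e^{(t-s)\Delta}g\to\MM_0(g)\nabla G_{t-s}\to\MM_0(g)\nabla G_t$ in disguise. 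Your identification of the weighted estimate $\norm{g(s)}_{L^1_1}\lesssim s^{-1/2}$ as the one nontrivial ingredient is also exactly right.

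The only point where you diverge is \emph{how} you obtain that weighted bound. You propose putting the weight on the $(p-1)$-power factor via pointwise Gaussian decay of $\AA_{0,k}$ and $\AA_{0,k-1}$, combined with $\norm{\AA_{0,k}(s)-\AA_{0,k-1}(s)}_{1}\lesssim s^{-k\sigma}$. This is in principle doable, but establishing uniform Gaussian pointwise bounds on the $\AA_{0,k}$ requires a separate induction that you have not carried out. The paper instead puts the weight on the \emph{difference}: it proves (Lemma~\ref{lem:P_asymp_critical_gene}) by induction that $\norm{\abs{x}(\AA_{0,k}(s)-\AA_{0,k-1}(s))}_{1}\lesssim s^{1/2-k\sigma}$, using only the elementary weighted heat estimate $\sum_j\norm{\abs{x}\partial_j e^{t\Delta}\varphi}_1\lesssim t^{-1/2}\norm{\abs{x}\varphi}_1+\norm{\varphi}_1$ (Lemma~\ref{lem:heat_weight}) and the already known $L^1$--$L^\infty$ bounds on the profiles. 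One then multiplies by $\norm{\AA_{0,k}(s)}_\infty^{p-1}\lesssim s^{-n(p-1)/2}$ to reach the same $\norm{g(s)}_{L^1_1}\lesssim s^{-(k+1)\sigma}=s^{-1/2}$. This route is shorter and stays entirely within $L^p$-type estimates; no pointwise control of the profiles is needed.
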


\begin{thm} \label{th:P_asymp_super_gene}
	Let $k \in \Z_{>0}$ and let $1+ \frac{k+2}{\left( k+1 \right) n} <p<1+ \frac{k+1}{kn}$.
	Let $u_{0} \in \left( L^{1}_{1} \cap L^{\infty} \right) \left( \R^{n} \right)$ and let $u \in X$ be the global solution to \eqref{P} given in Proposition \ref{pro:P_global}.
	Then,
	\begin{align*}
		\lim_{t \to + \infty} t^{\frac{n}{2} \left( 1- \frac{1}{q} \right) + \frac{1}{2}} \norm{u \left( t \right) - \AA_{1, k} \left( t \right)}_{q} =0
	\end{align*}
	holds for any $q \in \left[ 1, + \infty \right]$, where
	\begin{align*}
		\AA_{1, k} \left( t \right) &\coloneqq \AA_{0, k} \left( t \right) + \frac{1}{2} t^{- \frac{1}{2}} \sum_{j=1}^{n} \MM_{e_{j}} \left( u_{0} \right) \delta_{t} \left( x_{j} G_{1} \right) + \sum_{j=1}^{n} a_{j} \Lambda_{e_{j}, 0} \left( t; \psi_{0, k} \right) \\
		&= \Lambda_{0, 1} \left( t; u_{0} \right) + \left( \AA_{0, k} \left( t \right) - \AA_{0} \left( t \right) \right) + \sum_{j=1}^{n} a_{j} \Lambda_{e_{j}, 0} \left( t; \psi_{0, k} \right), \\
		\psi_{0, k} &\coloneqq \int_{0}^{1} \left( f \left( u \left( s \right) \right) -f \left( \AA_{0} \left( s \right) \right) \right) ds+ \int_{1}^{+ \infty} \left( f \left( u \left( s \right) \right) -f \left( \AA_{0, k-1} \left( s \right) \right) \right) ds.
	\end{align*}
\end{thm}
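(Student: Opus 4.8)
The plan is to extract the $t^{-1/2}$-order correction to $\AA_{0,k}(t)$ directly from the integral equation \eqref{I}. Set
\[
g(s) \coloneqq \begin{cases} f(u(s)) - f(\AA_{0}(s)), & 0<s<1,\\ f(u(s)) - f(\AA_{0,k-1}(s)), & s\geq 1. \end{cases}
\]
Subtracting from \eqref{I} the identity $\AA_{0,k}(t) - \AA_{0}(t) = \int_{0}^{1} a\cdot\nabla e^{(t-s)\Delta} f(\AA_{0}(s))\,ds + \int_{1}^{t} a\cdot\nabla e^{(t-s)\Delta} f(\AA_{0,k-1}(s))\,ds$ and using $\AA_{1,k}(t) = \Lambda_{0,1}(t;u_{0}) + (\AA_{0,k}(t) - \AA_{0}(t)) + \sum_{j} a_{j}\Lambda_{e_{j},0}(t;\psi_{0,k})$, one gets
\[
u(t) - \AA_{1,k}(t) = \bigl( e^{t\Delta}u_{0} - \Lambda_{0,1}(t;u_{0}) \bigr) + \int_{0}^{t} a\cdot\nabla e^{(t-s)\Delta} g(s)\,ds - \sum_{j=1}^{n} a_{j}\Lambda_{e_{j},0}\bigl( t;\psi_{0,k} \bigr),
\]
with $\psi_{0,k} = \int_{0}^{\infty} g(s)\,ds$ (which is exactly the $\psi_{0,k}$ of the statement). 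I would then handle the three contributions on the right separately.

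\textbf{Preliminary estimates.} Since $u_{0}\in L^{1}_{1}(\R^{n})$, the heat expansion of Section \ref{sec:heat} gives $t^{\frac{n}{2}(1-1/q)+\frac12}\norm{e^{t\Delta}u_{0} - \Lambda_{0,1}(t;u_{0})}_{q}\to0$. Next, $p<1+\frac{k+1}{kn}$ forces $\sigma<\frac1{2k}$, hence $k\sigma<\frac12$, so Theorem \ref{th:P_asymp_sub_gene} with index $k-1$ applies and yields $\norm{u(s)-\AA_{0,k-1}(s)}_{r}\lesssim s^{-k\sigma-\frac n2(1-1/r)}$ for all $r\in[1,+\infty]$; combined with \eqref{eq:P_decay}, the bound on $f$, and $\tfrac n2(p-1)=\sigma+\tfrac12$, this gives
\[
\norm{g(s)}_{r} \lesssim \bigl( \norm{u(s)}_{\infty}^{p-1} + \norm{\AA_{0,k-1}(s)}_{\infty}^{p-1} \bigr)\norm{u(s)-\AA_{0,k-1}(s)}_{r} \lesssim s^{-(k+1)\sigma-\frac12-\frac n2(1-1/r)}, \qquad s\geq 1,
\]
while on $(0,1)$ one has $\norm{g(s)}_{1}\lesssim s^{-\frac n2(p-1)}$, which is integrable there because $p<1+\tfrac2n$. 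Since $p>1+\frac{k+2}{(k+1)n}$ forces $\sigma>\frac1{2(k+1)}$, i.e. $(k+1)\sigma>\tfrac12$, the exponent $(k+1)\sigma+\tfrac12$ exceeds $1$, so $g\in L^{1}((0,\infty);L^{1}(\R^{n}))$; in particular $\psi_{0,k}\in L^{1}(\R^{n})$, and from $\delta_{t}(x_{j}G_{1}) = t^{-1/2}x_{j}G_{t}$ and $\partial_{x_{j}}G_{t}=-\frac{x_{j}}{2t}G_{t}$ we have $\sum_{j}a_{j}\Lambda_{e_{j},0}(t;\varphi)=\MM_{0}(\varphi)\,a\cdot\nabla G_{t}$, hence $\sum_{j}a_{j}\Lambda_{e_{j},0}(t;\psi_{0,k}) = \bigl(\int_{0}^{\infty}\MM_{0}(g(s))\,ds\bigr)a\cdot\nabla G_{t}$.

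\textbf{Main step: the Duhamel term.} I would split $\int_{0}^{t} a\cdot\nabla e^{(t-s)\Delta}g(s)\,ds = \int_{0}^{t/2}+\int_{t/2}^{t}$ and, accordingly, $\int_{0}^{\infty}\MM_{0}(g(s))\,ds=\int_{0}^{t/2}+\int_{t/2}^{\infty}$. For the near-diagonal part, $\norm{a\cdot\nabla e^{\tau\Delta}\varphi}_{q}\lesssim\tau^{-1/2}\norm{\varphi}_{q}$ together with the decay of $\norm{g(s)}_{q}$ gives $\norm{\int_{t/2}^{t}a\cdot\nabla e^{(t-s)\Delta}g(s)\,ds}_{q}\lesssim t^{1/2}\cdot t^{-(k+1)\sigma-\frac12-\frac n2(1-1/q)} = o(t^{-\frac12-\frac n2(1-1/q)})$, using $(k+1)\sigma>\tfrac12$; the tail of the profile is controlled crudely by $t^{-\frac12-\frac n2(1-1/q)}\int_{t/2}^{\infty}\norm{g(s)}_{1}\,ds=o(t^{-\frac12-\frac n2(1-1/q)})$. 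It remains to show
\[
t^{\frac n2(1-1/q)+\frac12}\Bigl\| \int_{0}^{t/2}\bigl( a\cdot\nabla e^{(t-s)\Delta}g(s) - \MM_{0}(g(s))\,a\cdot\nabla G_{t} \bigr)\,ds \Bigr\|_{q}\longrightarrow 0.
\]
For $s\in(0,t/2]$ one has $t-s\geq t/2$, so the rescaled integrand is bounded, uniformly in $t$, by $C\norm{g(s)}_{1}\in L^{1}(0,\infty)$, while for each fixed $s$ it tends to $0$ as $t\to+\infty$ by the heat asymptotics of Section \ref{sec:heat} (namely $\norm{a\cdot\nabla e^{\tau\Delta}\varphi-\MM_{0}(\varphi)\,a\cdot\nabla G_{\tau}}_{q}=o(\tau^{-\frac12-\frac n2(1-1/q)})$ for $\varphi\in L^{1}(\R^{n})$, together with $\norm{a\cdot\nabla G_{t-s}-a\cdot\nabla G_{t}}_{q}=o(t^{-\frac12-\frac n2(1-1/q)})$ for fixed $s$). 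Dominated convergence then closes the argument, and combining the three contributions gives $t^{\frac n2(1-1/q)+\frac12}\norm{u(t)-\AA_{1,k}(t)}_{q}\to0$.

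\textbf{Main obstacle.} The delicate point is the last display: replacing $a\cdot\nabla e^{(t-s)\Delta}g(s)$ by its profile $\MM_{0}(g(s))\,a\cdot\nabla G_{t}$ inside the $s$-integral and showing the error is $o(t^{-\frac12-\frac n2(1-1/q)})$ rather than merely $O$. The dominated-convergence argument works precisely because $(k+1)\sigma+\tfrac12>1$, i.e. $p>1+\frac{k+2}{(k+1)n}$, which makes $s\mapsto\norm{g(s)}_{1}$ integrable up to $+\infty$; at the endpoint $p=1+\frac{k+2}{(k+1)n}$ this integral diverges logarithmically, which is exactly why the Fujita-critical Theorem \ref{th:P_asymp_critical_gene} must settle for an $O(t^{-1/2})$ bound against a different profile. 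I expect that, as in the treatment of Proposition \ref{pro:P_asymp_F-super} in Appendix \ref{app:Zuazua}, the cleanest implementation is to isolate the estimate of $\int_{0}^{t}a\cdot\nabla e^{(t-s)\Delta}h(s)\,ds$ for a generic $h\in L^{1}((0,\infty);L^{1}(\R^{n}))$ whose $L^{1}$-norm has a summable tail as a separate lemma; the rest is routine bookkeeping with the $L^{r}$--$L^{q}$ smoothing estimates for $e^{t\Delta}$ and $\nabla e^{t\Delta}$, and with \eqref{eq:P_decay} and Theorem \ref{th:P_asymp_sub_gene}.
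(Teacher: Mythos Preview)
Your argument is correct and leads to the same conclusion as the paper, but the implementation differs in a meaningful way. The paper isolates the intermediate quantitative statement (Lemma \ref{lem:P_asymp_super_gene})
\[
t^{\frac{n}{2}(1-\frac{1}{q})}\Bigl\|\int_{0}^{t}a\cdot\nabla e^{(t-s)\Delta}f(u(s))\,ds-\bigl(\AA_{0,k}(t)-\AA_{0}(t)\bigr)-a\cdot\nabla e^{t\Delta}\psi_{0,k}\Bigr\|_{q}\leq Ct^{-(k+1)\sigma},
\]
obtained by expanding $e^{(t-s)\Delta}=e^{t\Delta}+\int_{0}^{1}(-s\Delta)e^{(t-s\theta)\Delta}\,d\theta$ and estimating the five resulting pieces with explicit powers; the passage from $a\cdot\nabla e^{t\Delta}\psi_{0,k}$ to $\sum_{j}a_{j}\Lambda_{e_{j},0}(t;\psi_{0,k})$ is then a separate qualitative step via Proposition \ref{pro:heat_asymp_lim}. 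You instead merge the two steps, replacing $a\cdot\nabla e^{(t-s)\Delta}g(s)$ directly by $\MM_{0}(g(s))\,a\cdot\nabla G_{t}$ on $(0,t/2]$ and closing with dominated convergence. Your route is shorter and avoids the Taylor identity for the semigroup, but it yields only the qualitative $o(t^{-1/2})$ needed for the theorem; the paper's route produces the sharper rate $t^{-(k+1)\sigma}$ as a standalone lemma. One minor point: your decay bound $\norm{g(s)}_{r}\lesssim s^{-(k+1)\sigma-\frac12-\frac{n}{2}(1-1/r)}$ for $s\geq 1$ invokes Theorem \ref{th:P_asymp_sub_gene} at level $k-1$, which is stated only for $s>2^{k-1}$; on the compact range $[1,2^{k-1}]$ one should fall back on the crude bound coming from \eqref{eq:P_decay} and Lemma \ref{lem:P_asymp_sub_gene}, as the paper does, but this does not affect the argument.
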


Theorem \ref{th:P_asymp_sub_gene} with $k=1$ means that the decay rate of the remainder $u \left( t \right) - \AA_{0, 1} \left( t \right)$ given in Proposition \ref{pro:P_asymp_F-sub} can be improved and that it may change depending on the size of $2 \sigma$ relative to $1/2$, namely, the size of the exponent $p$ relative to $1+3/ \left( 2n \right)$.
This result leads us to the natural question whether the decay rates of the remainder $u \left( t \right) - \AA_{0, 1} \left( t \right)$ given in Theorem \ref{th:P_asymp_sub_gene} with $k=1$ are optimal or not.
To answer this question, we establish higher order asymptotic expansions for each case, which are described in the above theorems for arbitrary $k \in \Z_{>0}$.
In particular, we find that there may be many thresholds for the exponent $p$ determined by the size relations between an integer multiple of $\sigma$ and $1/2$, which separate the asymptotic behavior of the remainders.
Here, we note that
\begin{align*}
	p<1+ \frac{k+2}{\left( k+1 \right) n} \quad \Longleftrightarrow \quad \left( k+1 \right) \sigma < \frac{1}{2}.
\end{align*}
We also emphasize that we do not need to assume the exponential decay at the far field for the initial data as in \cite{Zuazua1993} and to use the weighted estimates of the global solution to \eqref{P} developed in \cite{Ishige-Kawakami2013}.

Now, we explain how to determine the asymptotic profiles $\AA_{0, k} \left( t \right)$, $\widetilde{\AA}_{0, k+1} \left( t \right)$, and $\AA_{1, k} \left( t \right)$.
For this purpose, we start with the integral equation \eqref{I} associated with \eqref{P} and focus on the Duhamel term in \eqref{I}:
\begin{align*}
	\int_{0}^{t} a \cdot \nabla e^{\left( t-s \right) \Delta} f \left( u \left( s \right) \right) ds.
\end{align*}
Roughly speaking, we have two choices to extract the leading term of the Duhamel term: The first is the approximation of the heat semigroup and the second is the approximation of the nonlinear term.
The asymptotic profile $\AA_{0, k} \left( t \right)$ given in Theorem \ref{th:P_asymp_sub_gene} comes from the approximation of the nonlinear term recursively.
We assume that the asymptotic profile $\AA_{0, k-1} \left( t \right)$ is determined for some $k \in \Z_{>0}$.
Since the difference $u \left( t \right) - \AA_{0, k-1} \left( t \right)$ decays faster than both $u \left( t \right)$ and $\AA_{0, k-1} \left( t \right)$, we determine the next leading term of the Duhamel term to make the difference $f \left( u \left( t \right) \right) -f \left( \AA_{0, k-1} \left( t \right) \right)$, which yields the difference $u \left( t \right) - \AA_{0, k-1} \left( t \right)$, in the representation of the remainder.
One of the candidates for the leading term is
\begin{align*}
	\int_{0}^{t} a \cdot \nabla e^{\left( t-s \right) \Delta} f \left( \AA_{0, k-1} \left( s \right) \right) ds,
\end{align*}
whose remainder is written as
\begin{align*}
	&\int_{0}^{t} a \cdot \nabla e^{\left( t-s \right) \Delta} f \left( u \left( s \right) \right) ds- \int_{0}^{t} a \cdot \nabla e^{\left( t-s \right) \Delta} f \left( \AA_{0, k-1} \left( s \right) \right) ds \\
	&\hspace{1cm} = \int_{0}^{t} a \cdot \nabla e^{\left( t-s \right) \Delta} \left( f \left( u \left( s \right) \right) -f \left( \AA_{0, k-1} \left( s \right) \right) \right) ds.
\end{align*}
On the other hand, the stronger decay in time makes the stronger singularity near $t=0$.
Moreover, for $k \geq 2$, the singularity of $\AA_{0, k-1} \left( t \right)$ near $t=0$ is too strong to ensure the integrability of
\begin{align*}
	\int_{0}^{1} a \cdot \nabla e^{\left( t-s \right) \Delta} f \left( \AA_{0, k-1} \left( s \right) \right) ds
\end{align*}
for $t>1$.
Therefore, we need to modify the leading term as
\begin{align*}
	\int_{0}^{1} a \cdot \nabla e^{\left( t-s \right) \Delta} f \left( \AA_{0} \left( s \right) \right) ds + \int_{1}^{t} a \cdot \nabla e^{\left( t-s \right) \Delta} f \left( \AA_{0, k-1} \left( s \right) \right) ds,
\end{align*}
which is just our asymptotic profile of the Duhamel term given in Theorem \ref{th:P_asymp_sub_gene}.
By this modification, the remainder is represented as
\begin{align*}
	&\int_{0}^{t} a \cdot \nabla e^{\left( t-s \right) \Delta} f \left( u \left( s \right) \right) ds- \left( \int_{0}^{1} a \cdot \nabla e^{\left( t-s \right) \Delta} f \left( \AA_{0} \left( s \right) \right) ds + \int_{1}^{t} a \cdot \nabla e^{\left( t-s \right) \Delta} f \left( \AA_{0, k-1} \left( s \right) \right) ds \right) \\
	&\hspace{1cm} = \int_{0}^{1} a \cdot \nabla e^{\left( t-s \right) \Delta} f \left( u \left( s \right) \right) ds- \int_{0}^{1} a \cdot \nabla e^{\left( t-s \right) \Delta} f \left( \AA_{0} \left( s \right) \right) ds \\
	&\hspace{3cm} + \int_{1}^{t} a \cdot \nabla e^{\left( t-s \right) \Delta} \left( f \left( u \left( s \right) \right) -f \left( \AA_{0, k-1} \left( s \right) \right) \right) ds.
\end{align*}
We emphasize that the first and second terms on the right hand side of the above identity should be estimated separately without using the decay estimate of the difference $u \left( t \right) - \AA_{0} \left( t \right)$.
This is the crucial point to improve the decay rate of the remainder $u \left( t \right) - \AA_{0, 1} \left( t \right)$ given by Proposition \ref{pro:P_asymp_F-sub} in particular.
Based on the above explicit formula of the remainder, we determine the asymptotic profiles $\widetilde{\AA}_{0, k+1} \left( t \right)$ and $\AA_{1, k} \left( t \right)$ given in Theorems \ref{th:P_asymp_critical_gene} and \ref{th:P_asymp_super_gene} by the approximation of the heat semigroup.
Concretely, we use the identity
\begin{align*}
	e^{\left( t-s \right) \Delta} =e^{t \Delta} + \int_{0}^{1} \left( -s \Delta \right) e^{\left( t-s \theta \right) \Delta} d \theta
\end{align*}
in the sense of operators.
To derive the profile $\widetilde{\AA}_{0, k+1} \left( t \right)$, we need to modify the leading term taking into account the fact that the difference $\AA_{k} \left( t \right) - \AA_{0, k-1} \left( t \right)$ decays as fast as the difference $u \left( t \right) - \AA_{0, k-1} \left( t \right)$.
We remark that approximations similar to those employed to determine the profiles $\AA_{0, k} \left( t \right)$ and $\AA_{1, k} \left( t \right)$ can be seen in \cite{Ishige-Kawakami2012, Ishige-Kawakami2013, Ishige-Kawakami-Kobayashi, Ishige-Kawakami2024}, while the approximation used to derive the profile $\widetilde{\AA}_{0, k+1} \left( t \right)$ cannot.

Once we succeed in constructing the higher order asymptotic expansions, we get a chance to show the optimality for the decay rates of the remainder $u \left( t \right) - \AA_{0, k} \left( t \right)$ given in Theorem \ref{th:P_asymp_sub_gene}.
Under suitable assumptions, we can derive
\begin{align*}
	t^{\frac{n}{2} \left( 1- \frac{1}{q} \right)} \norm{\AA_{0, k+1} \left( t \right) - \AA_{0, k} \left( t \right)}_{q} &\leq Ct^{- \left( k+1 \right) \sigma}, \\
	t^{\frac{n}{2} \left( 1- \frac{1}{q} \right)} \norm{\widetilde{\AA}_{0, k+1} \left( t \right) - \AA_{0, k} \left( t \right)}_{q} &\leq Ct^{- \frac{1}{2}} \log t, \\
	t^{\frac{n}{2} \left( 1- \frac{1}{q} \right)} \norm{\AA_{1, k} \left( t \right) - \AA_{0, k} \left( t \right)}_{q} &\leq Ct^{- \frac{1}{2}}
\end{align*}
for any $q \in \left[ 1, + \infty \right]$ and $t>1$.
For details, see Lemma \ref{lem:P_asymp_sub_gene}, Remarks \ref{rem:P_asymp_critical_gene} and \ref{rem:P_asymp_super_gene}, respectively.
From these observations, we infer that the decay rates of the remainder $u \left( t \right) - \AA_{0, k} \left( t \right)$ given in Theorem \ref{th:P_asymp_sub_gene} are optimal in any case.
We prove this conjecture in the case where $1+ \frac{k+2}{\left( k+1 \right) n} <p<1+ \frac{k+1}{kn}$.

\begin{thm} \label{th:P_asymp_super_optimal}
	Under the same assumption as in Theorem \ref{th:P_asymp_super_gene},
	\begin{align*}
		\lim_{t \to + \infty} t^{\frac{n}{2} \left( 1- \frac{1}{q} \right) + \frac{1}{2}} \norm{u \left( t \right) - \AA_{0, k} \left( t \right)}_{q} = \norm{\AA_{1, k} \left( 1 \right) - \AA_{0, k} \left( 1 \right)}_{q}
	\end{align*}
	holds for any $q \in \left[ 1, + \infty \right]$.
	In particular, if there exists $j \in \left\{ 1, \ldots, n \right\}$ such that
	\begin{align*}
		\MM_{e_{j}} \left( u_{0} \right) -a_{j} \MM_{0} \left( \psi_{0, k} \right) \neq 0,
	\end{align*}
	then $\AA_{1, k} \left( 1 \right) - \AA_{0, k} \left( 1 \right) \not\equiv 0$, and therefore
	\begin{align*}
		t^{\frac{n}{2} \left( 1- \frac{1}{q} \right)} \norm{u \left( t \right) - \AA_{0, k} \left( t \right)}_{q} =t^{- \frac{1}{2}} \norm{\AA_{1, k} \left( 1 \right) - \AA_{0, k} \left( 1 \right)}_{q} \left( 1+o \left( 1 \right) \right)
	\end{align*}
	for all $q \in \left[ 1, + \infty \right]$ as $t \to + \infty$.
\end{thm}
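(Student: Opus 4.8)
The plan is to reduce the statement to Theorem~\ref{th:P_asymp_super_gene} by exploiting the exact self-similarity of the corrector $\AA_{1,k}(t)-\AA_{0,k}(t)$.

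First I would compute this corrector explicitly. Using the second displayed expression for $\AA_{1,k}(t)$ in Theorem~\ref{th:P_asymp_super_gene}, the identity $\Lambda_{0,1}(t;u_0)=\AA_0(t)+\frac12 t^{-1/2}\sum_{j}\MM_{e_j}(u_0)\,\delta_t(x_jG_1)$, and the formula $\Lambda_{e_j,0}(t;\varphi)=-\frac12 t^{-1/2}\MM_0(\varphi)\,\delta_t(x_jG_1)$, one obtains
\begin{align*}
	\AA_{1,k}(t)-\AA_{0,k}(t)=\frac12 t^{-\frac12}\sum_{j=1}^{n}\bigl(\MM_{e_j}(u_0)-a_j\MM_0(\psi_{0,k})\bigr)\,\delta_t(x_jG_1).
\end{align*}
Writing $c_j:=\MM_{e_j}(u_0)-a_j\MM_0(\psi_{0,k})$ and $V:=\frac12\sum_{j=1}^{n}c_j\,x_jG_1$, so that $V=\AA_{1,k}(1)-\AA_{0,k}(1)$, a direct bookkeeping of the powers of $t$ in the dilation $\delta_t$ shows that $\AA_{1,k}(t)-\AA_{0,k}(t)=t^{-1/2}\,\delta_t V$ for every $t>0$; note also $V\in\left( L^1\cap L^\infty \right)(\R^n)$ since $x_jG_1\in L^q(\R^n)$ for all $q\in\left[ 1,+\infty \right]$.

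Next, applying the scaling identity $\norm{\delta_t g}_q=t^{-\frac{n}{2}\left( 1-\frac1q \right)}\norm{g}_q$, valid for all $q\in\left[ 1,+\infty \right]$, I would get
\begin{align*}
	t^{\frac{n}{2}\left( 1-\frac1q \right)+\frac12}\,\norm{\AA_{1,k}(t)-\AA_{0,k}(t)}_q=\norm{V}_q=\norm{\AA_{1,k}(1)-\AA_{0,k}(1)}_q
\end{align*}
for every $t>0$, i.e.\ this rescaled norm is \emph{constant} in $t$. The reverse triangle inequality then gives
\begin{align*}
	\Bigl|\,t^{\frac{n}{2}\left( 1-\frac1q \right)+\frac12}\norm{u(t)-\AA_{0,k}(t)}_q-\norm{V}_q\,\Bigr|\le t^{\frac{n}{2}\left( 1-\frac1q \right)+\frac12}\norm{u(t)-\AA_{1,k}(t)}_q,
\end{align*}
and the right-hand side tends to $0$ as $t\to+\infty$ by Theorem~\ref{th:P_asymp_super_gene}; this proves the first assertion.

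Finally, for the optimality part: if some $c_j\neq 0$, then $V=\frac12\bigl(\sum_{j}c_jx_j\bigr)G_1$ is a nonzero linear function times the strictly positive Gaussian $G_1$, hence $V\not\equiv0$ and $\norm{V}_q>0$ for every $q\in\left[ 1,+\infty \right]$; this is exactly $\AA_{1,k}(1)-\AA_{0,k}(1)\not\equiv0$. Dividing the limit just obtained by $t^{1/2}$ yields $t^{\frac{n}{2}\left( 1-\frac1q \right)}\norm{u(t)-\AA_{0,k}(t)}_q=t^{-1/2}\norm{V}_q\left( 1+o(1) \right)$, which is the claimed lower bound matching the upper bound of Theorem~\ref{th:P_asymp_sub_gene}. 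The only point requiring real attention is the consistent tracking of the extra factor $t^{-1/2}$ in the self-similarity identity $\AA_{1,k}(t)-\AA_{0,k}(t)=t^{-1/2}\delta_t V$ (the corrector is self-similar of a different homogeneity than $\AA_0(t)$); beyond this there is no analytic obstacle, since the vanishing of $u(t)-\AA_{1,k}(t)$ at the rate $t^{-\frac n2\left( 1-\frac1q \right)-\frac12}$ and the finiteness of $\psi_{0,k}$ and its moments are already contained in Theorem~\ref{th:P_asymp_super_gene}.
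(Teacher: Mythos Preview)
Your proof is correct and follows essentially the same approach as the paper: the paper isolates the self-similarity identity $\AA_{1,k}(t)-\AA_{0,k}(t)=t^{-1/2}\delta_t(\AA_{1,k}(1)-\AA_{0,k}(1))$ and the explicit formula for $\AA_{1,k}(1)-\AA_{0,k}(1)$ in a separate lemma, then combines it with Theorem~\ref{th:P_asymp_super_gene} via a limsup/liminf argument equivalent to your reverse triangle inequality. The only cosmetic difference is that the paper invokes orthogonality of Hermite polynomials for the non-vanishing step, whereas your direct observation that a nonzero linear form times $G_1$ cannot vanish identically is just as good (and arguably simpler).
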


For $j \in \left\{ 1, \ldots, n \right\}$, the condition $\MM_{e_{j}} \left( u_{0} \right) -a_{j} \MM_{0} \left( \psi_{0, k} \right) \neq 0$ holds for all $u_{0} \in \left( L^{1}_{1} \cap L^{\infty} \right) \left( \R^{n} \right)$ satisfying $\MM_{e_{j}} \left( u_{0} \right) \neq 0$ if $a_{j} =0$ and for sufficiently small $u_{0} \in \left( L^{1}_{1} \cap L^{\infty} \right) \left( \R^{n} \right)$ with $\MM_{e_{j}} \left( u_{0} \right) \neq 0$ if $a_{j} \neq 0$ (cf. \cite[Appendix A]{Kusaba-Ozawa_CGL}).
Hence, Theorem \ref{th:P_asymp_super_optimal} implies that the decay rate of the remainder $u \left( t \right) - \AA_{0, k} \left( t \right)$ given in Theorem \ref{th:P_asymp_sub_gene} with $1+ \frac{k+2}{\left( k+1 \right) n} <p<1+ \frac{k+1}{kn}$ is optimal.

Theorem \ref{th:P_asymp_super_optimal} follows from the stratification structure of the asymptotic profile:
\begin{align*}
	\AA_{1, k} \left( t \right) - \AA_{0, k} \left( t \right) =t^{- \frac{1}{2}} \delta_{t} \left( \AA_{1, k} \left( 1 \right) - \AA_{0, k} \left( 1 \right) \right).
\end{align*}
In the above identity, the negative power $t^{-1/2}$ means the decay of the asymptotic amplitude, the dilation $\delta_{t}$ means the self-similarity of the asymptotic profile, and the function $\AA_{1, k} \left( 1 \right) - \AA_{0, k} \left( 1 \right)$ on $\R^{n}$ means the shape of the asymptotic profile, which is represented explicitly as
\begin{align*}
	\AA_{1, k} \left( 1 \right) - \AA_{0, k} \left( 1 \right) = \frac{1}{2} \sum_{j=1}^{n} \left( \MM_{e_{j}} \left( u_{0} \right) -a_{j} \MM_{0} \left( \psi_{0, k} \right) \right) x_{j} G_{1}.
\end{align*}
For details, see Lemma \ref{lem:P_asymp_super_structure}.

We also prove the optimality for the decay rate of the remainder $u \left( t \right) - \AA_{0, 1} \left( t \right)$ given in Theorem \ref{th:P_asymp_sub_gene} with $k=1$ and $1+1/n<p<1+3/ \left( 2n \right)$.

\begin{thm} \label{th:P_asymp_sub-1_optimal}
	Let $1+1/n<p<1+3/ \left( 2n \right)$.
	Let $u_{0} \in \left( L^{1}_{1} \cap L^{\infty} \right) \left( \R^{n} \right)$ and let $u \in X$ be the global solution to \eqref{P} given in Proposition \ref{pro:P_global}.
	Assume that $f$ is given by $f \left( \xi \right) = \abs{\xi}^{p-1} \xi$ for all $\xi \in \R$.
	Then,
	\begin{align*}
		\lim_{t \to + \infty} t^{\frac{n}{2} \left( 1- \frac{1}{q} \right) +2 \sigma} \norm{u \left( t \right) - \AA_{0, 1} \left( t \right)}_{q} = \norm{\RR_{0, 2}^{*}}_{q}
	\end{align*}
	holds for any $q \in \left[ 1, + \infty \right]$, where
	\begin{align*}
		\RR_{0, 2}^{*} &\coloneqq \int_{0}^{1} a \cdot \nabla e^{\left( 1- \theta \right) \Delta} \left( \left( \AA_{0, 1} \left( \theta \right) - \AA_{0} \left( \theta \right) \right) f' \left( \AA_{0} \left( \theta \right) \right) \right) d \theta \\
		&=f \left( \MM_{0} \left( u_{0} \right) \right) f' \left( \MM_{0} \left( u_{0} \right) \right) \int_{0}^{1} a \cdot \nabla e^{\left( 1- \theta \right) \Delta} \left( G_{\theta}^{p-1} \int_{0}^{\theta} a \cdot \nabla e^{\left( \theta - \tau \right) \Delta} G_{\tau}^{p} d \tau \right) d \theta.
	\end{align*}
	In particular, if $f \left( \MM_{0} \left( u_{0} \right) \right) f' \left( \MM_{0} \left( u_{0} \right) \right) \neq 0$, then $\RR_{0, 2}^{*} \not\equiv 0$, and hence
	\begin{align*}
		t^{\frac{n}{2} \left( 1- \frac{1}{q} \right)} \norm{u \left( t \right) - \AA_{0, 1} \left( t \right)}_{q} =t^{-2 \sigma} \norm{\RR_{0, 2}^{*}}_{q} \left( 1+o \left( 1 \right) \right)
	\end{align*}
	for all $q \in \left[ 1, + \infty \right]$ as $t \to + \infty$.
\end{thm}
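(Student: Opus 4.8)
The plan is to start from the integral equation~\eqref{I}, subtract the profile $\AA_{0,1}(t) = \AA_{0}(t) + \int_{0}^{t} a \cdot \nabla e^{(t-s)\Delta} f(\AA_{0}(s))\,ds$ (the two integrals in its definition merge because $\AA_{0,0} = \AA_{0}$), and to Taylor-expand the nonlinearity about $\AA_{0}$. Setting $h(s) \coloneqq u(s) - \AA_{0}(s)$ and splitting $h(s) = \bigl( \AA_{0,1}(s) - \AA_{0}(s) \bigr) + \bigl( u(s) - \AA_{0,1}(s) \bigr)$ in the linear part of the expansion, I obtain
\begin{align*}
	u(t) - \AA_{0,1}(t)
	&= \RR_{0,2}(t) + \bigl( e^{t\Delta} u_{0} - \AA_{0}(t) \bigr) + \int_{0}^{t} a \cdot \nabla e^{(t-s)\Delta} E_{1}(s)\,ds \\
	&\qquad + \int_{0}^{t} a \cdot \nabla e^{(t-s)\Delta} \bigl[ \bigl( u(s) - \AA_{0,1}(s) \bigr) f'(\AA_{0}(s)) \bigr]\,ds ,
\end{align*}
where $\RR_{0,2}(t) \coloneqq \int_{0}^{t} a \cdot \nabla e^{(t-s)\Delta} \bigl[ \bigl( \AA_{0,1}(s) - \AA_{0}(s) \bigr) f'(\AA_{0}(s)) \bigr]\,ds$ and $E_{1}(s) \coloneqq h(s) \int_{0}^{1} \bigl( f'(\AA_{0}(s) + \theta h(s)) - f'(\AA_{0}(s)) \bigr)\,d\theta$ is the Taylor remainder. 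I expect $\RR_{0,2}(t)$ to carry the exact rate $t^{-2\sigma - \frac{n}{2}(1 - 1/q)}$ in $L^{q}$, and the other three terms to be $o$ of it.

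Next I would establish $\RR_{0,2}(t) = t^{-2\sigma}\delta_{t}\RR_{0,2}^{*}$. Since $\AA_{0}(s) = \MM_{0}(u_{0}) G_{s}$ and $f$ is homogeneous of degree $p$, the function $w(s) \coloneqq \AA_{0,1}(s) - \AA_{0}(s) = \int_{0}^{s} a \cdot \nabla e^{(s-\tau)\Delta} f(\AA_{0}(\tau))\,d\tau$ is self-similar up to a power, $w(s) = s^{-\sigma}\delta_{s} w(1)$; this follows from $e^{(s-\tau)\Delta} G_{\tau/p} = G_{s - \tau(p-1)/p}$, the commutation relations $e^{r\Delta}\delta_{\lambda} = \delta_{\lambda} e^{(r/\lambda)\Delta}$, $\nabla \delta_{\lambda} = \lambda^{-1/2}\delta_{\lambda}\nabla$, $\delta_{\lambda}\delta_{\mu} = \delta_{\lambda\mu}$, the substitution $\tau = s\theta$, and $\frac{n}{2}(p-1) = \sigma + \frac{1}{2}$. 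Hence $\bigl( \AA_{0,1}(s) - \AA_{0}(s) \bigr) f'(\AA_{0}(s)) = s^{-2\sigma - \frac{1}{2}}\delta_{s} g_{1}$ with $g_{1} = w(1) f'(\AA_{0}(1)) = f(\MM_{0}(u_{0})) f'(\MM_{0}(u_{0})) G_{1}^{p-1}\int_{0}^{1} a \cdot \nabla e^{(1-\tau)\Delta} G_{\tau}^{p}\,d\tau$, and the same scaling computation for the Duhamel integral (with $s = t\theta$) yields
\begin{align*}
	\RR_{0,2}(t) = t^{-2\sigma}\, \delta_{t}\!\left( \int_{0}^{1} a \cdot \nabla e^{(1-\theta)\Delta} \bigl( \theta^{-2\sigma - \frac{1}{2}}\delta_{\theta} g_{1} \bigr)\,d\theta \right) = t^{-2\sigma}\, \delta_{t}\, \RR_{0,2}^{*} .
\end{align*}
The hypothesis $2\sigma < \frac{1}{2}$, i.e.\ $p < 1 + \frac{3}{2n}$, is exactly what makes $\int_{0}^{t} (t-s)^{-1/2} s^{-2\sigma - 1/2}\,ds$ finite and equal to a constant times $t^{-2\sigma}$. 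Since $\norm{\delta_{t}\varphi}_{q} = t^{-\frac{n}{2}(1 - 1/q)}\norm{\varphi}_{q}$, this gives $t^{\frac{n}{2}(1 - 1/q) + 2\sigma}\norm{\RR_{0,2}(t)}_{q} = \norm{\RR_{0,2}^{*}}_{q}$, so the asserted limit will follow from the reverse triangle inequality once the three remainder terms are shown to be $o(t^{-2\sigma - \frac{n}{2}(1 - 1/q)})$ in $L^{q}$.

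To control the remainders I would use the standard smoothing bounds $\norm{a \cdot \nabla e^{r\Delta} g}_{q} \le C r^{-1/2 - \frac{n}{2}(1/\rho - 1/q)}\norm{g}_{\rho}$ (Section~\ref{sec:heat}), the $0$th-order decay of Proposition~\ref{pro:P_asymp-0}, the $0$th-order heat asymptotics $\norm{e^{t\Delta}u_{0} - \AA_{0}(t)}_{q} = O(t^{-1/2 - \frac{n}{2}(1 - 1/q)})$ (valid since $u_{0} \in L^{1}_{1}$), and Theorem~\ref{th:P_asymp_sub_gene} with $k = 1$, which gives $\norm{u(s) - \AA_{0,1}(s)}_{q} \lesssim s^{-2\sigma - \frac{n}{2}(1 - 1/q)}$. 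The term $e^{t\Delta}u_{0} - \AA_{0}(t)$ is then $o$ of the target since $2\sigma < \frac{1}{2}$. For the last Duhamel integral I split $\int_{0}^{1} + \int_{1}^{t}$: on $(0,1)$ the crude bound $\norm{( u(s) - \AA_{0,1}(s) ) f'(\AA_{0}(s))}_{1} \lesssim s^{-\sigma - 1/2}$ (from the uniform $L^{1}\cap L^{\infty}$ controls on $u$ and $\AA_{0,1}$, integrable at $s = 0$ because $\sigma < \frac{1}{2}$) contributes $O(t^{-1/2 - \frac{n}{2}(1 - 1/q)})$, while on $(1,t)$, using $f'(\AA_{0}(s)) = p\abs{\MM_{0}(u_{0})}^{p-1} G_{s}^{p-1}$ and Hölder, the integrand has size $\lesssim s^{-3\sigma - 1/2 - \frac{n}{2}(1 - 1/r)}$ against a kernel bound $(t-s)^{-1/2 - \frac{n}{2}(1/r - 1/q)}$, which integrates to $O(t^{-3\sigma - \frac{n}{2}(1 - 1/q)})$, or to $O(t^{-1/2 - \frac{n}{2}(1/r - 1/q)})$ when $3\sigma \ge \frac{1}{2}$ — negligible either way. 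The delicate term is $\int_{0}^{t} a \cdot \nabla e^{(t-s)\Delta} E_{1}(s)\,ds$. Because $f(\xi) = \abs{\xi}^{p-1}\xi$ is only $C^{1}$, I would use $\abs{f'(b) - f'(a)} \le C\abs{b - a}^{p-1}$ for $1 < p \le 2$ (and $\abs{f'(b) - f'(a)} \le C(\abs{a}^{p-2} + \abs{b}^{p-2})\abs{b - a}$ for $p > 2$, which occurs only when $n = 1$), so that $\abs{E_{1}(s)} \lesssim \abs{h(s)}^{p}$ (respectively $\lesssim \abs{h(s)}^{2}\abs{\AA_{0}(s)}^{p-2} + \abs{h(s)}^{p}$); combined with $\norm{h(s)}_{p} \lesssim s^{-\sigma - \frac{n}{2}(1 - 1/p)}$ this gives $\norm{E_{1}(s)}_{1} \lesssim s^{-(p+1)\sigma - 1/2}$ for $s > 1$ and $\lesssim s^{-\sigma - 1/2}$ for $s \in (0,1)$, and since $(p+1)\sigma > 2\sigma$ the corresponding Duhamel contribution is $o(t^{-2\sigma - \frac{n}{2}(1 - 1/q)})$ — the $L^{q}$-endpoint integrability forcing an $L^{r} \to L^{q}$ interpolation with $r$ near $1$ and a short case split according to whether $(p+1)\sigma$ is below or above $\frac{1}{2}$. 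I expect this $E_{1}$-estimate to be the main obstacle, as it is the only point where the limited regularity of $f$ meets the barely summable decay rate $s^{-\sigma}$ of $h$.

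Finally, to show that $f(\MM_{0}(u_{0})) f'(\MM_{0}(u_{0})) \ne 0$ forces $\RR_{0,2}^{*} \not\equiv 0$, I would note that $\int_{\R^{n}} \RR_{0,2}^{*}\,dx = 0$ (since $\RR_{0,2}^{*}$ is a divergence) and $\int_{\R^{n}} x_{j}\RR_{0,2}^{*}\,dx = 0$ (since $\RR_{0,2}^{*}$ is invariant under the reflection across the hyperplane $a^{\perp}$, every ingredient being a radial function hit by $a \cdot \nabla$, while $x_{j}$ is not), so the first informative functional is the second moment in the direction of $a$. Using $\int_{\R^{n}} (a \cdot x)^{2}(a \cdot \nabla \phi)\,dx = -2\abs{a}^{2}\int_{\R^{n}} (a \cdot x)\phi\,dx$ and the fact that $e^{r\Delta}$ preserves first moments, one finds
\begin{align*}
	\int_{\R^{n}} (a \cdot x)^{2}\RR_{0,2}^{*}\,dx
	&= -2\abs{a}^{2} f(\MM_{0}(u_{0})) f'(\MM_{0}(u_{0})) \\
	&\qquad \times \int_{0}^{1}\!\int_{0}^{\theta}\!\int_{\R^{n}} (a \cdot x)\, G_{\theta}^{p-1}(x) \bigl( a \cdot \nabla e^{(\theta - \tau)\Delta} G_{\tau}^{p} \bigr)(x)\,dx\,d\tau\,d\theta ;
\end{align*}
integrating by parts in $x$ (which replaces $a \cdot \nabla[(a\cdot x)G_{\theta}^{p-1}]$ by $\abs{a}^{2} G_{\theta}^{p-1} - \frac{p-1}{2\theta}(a\cdot x)^{2}G_{\theta}^{p-1}$), using $e^{(\theta - \tau)\Delta} G_{\tau}^{p} = c_{n,p}\tau^{-n(p-1)/2} G_{r}$ with $r = \theta - \tau\frac{p-1}{p} \in (\theta/p, \theta)$, and evaluating the resulting Gaussian integral, the innermost integral equals $-c_{n,p}\tau^{-n(p-1)/2} C(\theta,r)\abs{a}^{2}\frac{\theta}{(p-1)r + \theta}(4\pi/\mu)^{n/2}$ with $C(\theta,r) > 0$ and $\mu = \frac{p-1}{\theta} + \frac{1}{r} > 0$, hence is strictly negative for all $0 < \tau < \theta < 1$ (the iterated integral converging because $p < 1 + \frac{2}{n}$). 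Therefore $\int_{\R^{n}} (a \cdot x)^{2}\RR_{0,2}^{*}\,dx$ is a strictly positive multiple of $f(\MM_{0}(u_{0})) f'(\MM_{0}(u_{0}))$, so it is nonzero whenever the latter is; thus $\RR_{0,2}^{*} \not\equiv 0$, and the last display of the theorem follows from the limit already established by dividing.
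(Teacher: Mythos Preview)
Your strategy is sound and yields the theorem, but it differs from the paper's route in two notable ways.

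\textbf{Main decomposition.} The paper does \emph{not} Taylor-expand $f(u)-f(\AA_0)$. Instead it passes through the next profile $\AA_{0,2}$: it invokes Theorem~\ref{th:P_asymp_sub_gene} with $k=2$ to get $t^{\frac{n}{2}(1-\frac{1}{q})+2\sigma}\norm{u(t)-\AA_{0,2}(t)}_{q}\to 0$, and then shows separately that $\AA_{0,2}(t)-\AA_{0,1}(t)$ is, up to $o(t^{-2\sigma})$, equal to $\int_{1}^{t}a\cdot\nabla e^{(t-s)\Delta}\bigl(\RR_{0,1}(s)f'(\AA_0(s))\bigr)\,ds$, the Taylor remainder being applied to $f(\AA_{0,1})-f(\AA_0)$ rather than to $f(u)-f(\AA_0)$. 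The scaling identity $\int_{0}^{t}(\cdots)=t^{-2\sigma}\delta_{t}\RR_{0,2}^{*}$ is the same as yours. Your argument is more self-contained (it uses Theorem~\ref{th:P_asymp_sub_gene} only with $k=1$), at the cost of having to control the genuinely nonlinear remainder $E_{1}$ built from $h=u-\AA_0$; the paper's version of that step is cleaner because $\RR_{0,1}$ has an exact self-similar structure that $h$ lacks.

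\textbf{Nontriviality of $\RR_{0,2}^{*}$.} The paper does not compute a moment; it evaluates the profile at the origin, showing $S_{0,2}^{*}(0)<0$ directly from the explicit Gaussian formula (Remark~\ref{rem:S_02}). Your second-moment computation is a valid alternative and the sign analysis you sketch is correct.

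\textbf{One slip to fix.} The sentence ``uniform $L^{1}\cap L^{\infty}$ controls on $u$ and $\AA_{0,1}$'' is wrong near $s=0$: neither $\AA_{0}(s)$ nor $\AA_{0,1}(s)$ is bounded in $L^{\infty}$, and in fact $\norm{\AA_{0,1}(s)-\AA_{0}(s)}_{1}\sim s^{-\sigma}$ blows up. The correct crude bound on $(0,1)$ is
\[
\norm{(u(s)-\AA_{0,1}(s))f'(\AA_{0}(s))}_{1}\le\bigl(\norm{u(s)}_{1}+\norm{\AA_{0}(s)}_{1}+\norm{\AA_{0,1}(s)-\AA_{0}(s)}_{1}\bigr)\norm{f'(\AA_{0}(s))}_{\infty}\lesssim s^{-2\sigma-\frac{1}{2}},
\]
not $s^{-\sigma-\frac{1}{2}}$. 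This is still integrable at $s=0$ precisely because $2\sigma<\tfrac{1}{2}$ (the hypothesis $p<1+\tfrac{3}{2n}$), so the conclusion that the $(0,1)$-piece contributes $O(t^{-\frac{1}{2}-\frac{n}{2}(1-\frac{1}{q})})$ survives, but your stated justification does not.
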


The condition $f \left( \MM_{0} \left( u_{0} \right) \right) f' \left( \MM_{0} \left( u_{0} \right) \right) \neq 0$ holds for any $u_{0} \in \left( L^{1}_{1} \cap L^{\infty} \right) \left( \R^{n} \right)$ with $\MM_{0} \left( u_{0} \right) \neq 0$.
In practice, we have to show $S_{0, 2}^{*} \not\equiv 0$, where
\begin{align*}
	S_{0, 2}^{*} \left( x \right) \coloneqq \left( \int_{0}^{1} a \cdot \nabla e^{\left( 1- \theta \right) \Delta} \left( G_{\theta}^{p-1} \int_{0}^{\theta} a \cdot \nabla e^{\left( \theta - \tau \right) \Delta} G_{\tau}^{p} d \tau \right) d \theta \right) \left( x \right), \qquad x \in \R^{n}.
\end{align*}
This follows from the fact that $S_{0, 2}^{*} \left( 0 \right) <0$.
For details, see Remark \ref{rem:S_02}.
The function $t^{-2 \sigma} \delta_{t} \RR_{0, 2}^{*}$ is the asymptotic self-similar profile of the difference $\AA_{0, 2} \left( t \right) - \AA_{0, 1} \left( t \right)$.
More precisely, for any $q \in \left[ 1, + \infty \right]$, we obtain
\begin{align*}
	\AA_{0, 2} \left( t \right) - \AA_{0, 1} \left( t \right) =t^{-2 \sigma} \delta_{t} \RR_{0, 2}^{*} +o \left( t^{- \frac{n}{2} \left( 1- \frac{1}{q} \right) -2 \sigma} \right)
\end{align*}
in $L^{q} \left( \R^{n} \right)$ as $t \to + \infty$.
In the case where $f \left( \xi \right) = \abs{\xi}^{p-1} \xi$, we see that $f'$ is of class $C^{1}$ if $p>2$ and H\"{o}lder continuous of order $\left( p-1 \right)$ if $1<p \leq 2$, which plays an important role in the proof of Theorem \ref{th:P_asymp_sub-1_optimal}.
Conversely, if $f'$ has this property, we can derive the same result as in Theorem \ref{th:P_asymp_sub-1_optimal} (see Remark \ref{rem:th_optimal} below).

Under the same assumption for $f$, we have the following theorem.

\begin{thm} \label{th:P_asymp_critical-1_optimal}
	Let $p=1+3/ \left( 2n \right)$.
	Let $u_{0} \in \left( L^{1}_{1} \cap L^{\infty} \right) \left( \R^{n} \right)$ and let $u \in X$ be the global solution to \eqref{P} given in Proposition \ref{pro:P_global}.
	Assume that $f$ is given by $f \left( \xi \right) = \abs{\xi}^{p-1} \xi$ for all $\xi \in \R$.
	Then, for any $q \in \left[ 1, + \infty \right]$, there exists $C>0$ such that the estimate
	\begin{align*}
		t^{\frac{n}{2} \left( 1- \frac{1}{q} \right)} \norm{u \left( t \right) - \AA_{0, 1} \left( t \right)}_{q} \leq Ct^{- \frac{1}{2}}
	\end{align*}
	holds for all $t>4$.
\end{thm}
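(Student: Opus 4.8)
\medskip

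\noindent\textbf{Proof proposal.}
Write $R(t) \coloneqq u(t) - \AA_{0,1}(t)$, and note that in this case $\sigma = 1/4$, $\frac{n}{2}(p-1) = \frac12 + \sigma = \frac34$, and, since $\AA_{0,0} = \AA_0$,
\[
	\AA_{0,1}(t) = \AA_0(t) + \int_0^t a \cdot \nabla e^{(t-s)\Delta} f\bigl( \AA_0(s) \bigr) ds .
\]
Subtracting this from the integral equation \eqref{I} and inserting $f(\AA_{0,1}(s))$ between $f(u(s))$ and $f(\AA_0(s))$ on the interval $(1,t)$, I would start from
\begin{align*}
	R(t) = \bigl( e^{t\Delta} u_0 - \AA_0(t) \bigr)
	& + \int_0^1 a \cdot \nabla e^{(t-s)\Delta} \bigl( f(u(s)) - f(\AA_0(s)) \bigr) ds \\
	& + \int_1^t a \cdot \nabla e^{(t-s)\Delta} \bigl( f(u(s)) - f(\AA_{0,1}(s)) \bigr) ds + \int_1^t a \cdot \nabla e^{(t-s)\Delta} \bigl( f(\AA_{0,1}(s)) - f(\AA_0(s)) \bigr) ds ,
\end{align*}
and denote the four terms on the right by $\mathrm{I}$, $\mathrm{II}$, $\mathrm{III}$, $\mathrm{IV}$. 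The plan is to prove that each of $\mathrm{I}$, $\mathrm{II}$, $\mathrm{III}$ is $O\bigl( t^{-\frac{n}{2}(1-1/q)-1/2} \bigr)$ and that $\mathrm{IV}$ is $o\bigl( t^{-\frac{n}{2}(1-1/q)-1/2} \log t \bigr)$ (in fact also $O\bigl( t^{-\frac{n}{2}(1-1/q)-1/2} \bigr)$, so this route even gives the stronger bound $\norm{R(t)}_q = O( t^{-\frac{n}{2}(1-1/q)-1/2} )$); the whole difficulty sits in $\mathrm{IV}$.

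The terms $\mathrm{I}$, $\mathrm{II}$, $\mathrm{III}$ are handled by standard estimates. For $\mathrm{I}$ I would use the first order asymptotic expansion of the heat semigroup (Section \ref{sec:heat}) together with $u_0 \in L^1_1$. For $\mathrm{II}$ I would use $\norm{a \cdot \nabla e^{(t-s)\Delta} \varphi}_q \le C(t-s)^{-1/2-\frac{n}{2}(1-1/q)} \norm{\varphi}_1$ with $t-s \ge t/2$, together with $\int_0^1 \bigl( \norm{f(u(s))}_1 + \norm{f(\AA_0(s))}_1 \bigr) ds \le C \int_0^1 s^{-\frac{n}{2}(p-1)} ds < + \infty$, which is finite precisely because $\frac{n}{2}(p-1) = \frac34 < 1$ — this is the only place Fujita-subcriticality is used. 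For $\mathrm{III}$ I would use $\abs{f(u) - f(\AA_{0,1})} \le C\bigl( \abs{u}^{p-1} + \abs{\AA_{0,1}}^{p-1} \bigr) \abs{R}$, the decay estimate \eqref{eq:P_decay}, and the bound $\norm{R(s)}_q \le C s^{-\frac{n}{2}(1-1/q)-1/2} \log(2+s)$ already available from Theorem \ref{th:P_asymp_sub_gene} with $k = 1$, whose critical case is exactly $p = 1 + 3/(2n)$. Splitting $\int_1^t = \int_1^{t/2} + \int_{t/2}^t$, the first part has integrand decaying like $s^{-5/4}\log s$, hence contributes $O\bigl( t^{-\frac{n}{2}(1-1/q)-1/2} \bigr)$; the second, estimated with the weaker smoothing $\norm{a\cdot\nabla e^{(t-s)\Delta}\varphi}_q \le C(t-s)^{-1/2}\norm{\varphi}_q$, contributes $O\bigl( t^{-\frac{n}{2}(1-1/q)-3/4}\log t \bigr) = o\bigl( t^{-\frac{n}{2}(1-1/q)-1/2} \bigr)$.

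For $\mathrm{IV}$ I would split the heat semigroup via $e^{(t-s)\Delta} = e^{t\Delta} + \int_0^1 (-s\Delta) e^{(t-s\theta)\Delta} d\theta$. Using $\norm{\AA_{0,1}(s) - \AA_0(s)}_q \le C s^{-\frac{n}{2}(1-1/q)-\sigma}$ for $s \ge 1$ (from the explicit Duhamel formula and $\frac{n}{2}(p-1) < 1$), whence $\norm{f(\AA_{0,1}(s)) - f(\AA_0(s))}_1 \le C s^{-1}$ and $\norm{f(\AA_{0,1}(s)) - f(\AA_0(s))}_q \le C s^{-\frac{n}{2}(1-1/q)-1}$, one checks that the $(-s\Delta)$-part of $\mathrm{IV}$ is $O\bigl( t^{-\frac{n}{2}(1-1/q)-1/2} \bigr)$ (the extra $\Delta$ costs one more time derivative, which the factor $s$ compensates). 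There remains $a \cdot \nabla e^{t\Delta} H(t)$ with $H(t) \coloneqq \int_1^t \bigl( f(\AA_{0,1}(s)) - f(\AA_0(s)) \bigr) ds$; writing $e^{t\Delta} H(t) = \MM_0(H(t)) G_t + \bigl( e^{t\Delta} H(t) - \MM_0(H(t)) G_t \bigr)$, extracting $e^{(t/2)\Delta}$ from the second summand before applying $a \cdot \nabla$, and using $\norm{\abs{x}\, H(t)}_1 \le C \int_1^t s^{-1/2} ds \le C t^{1/2}$ (from the self-similar structure of $\AA_0(s)$ and of $\AA_{0,1}(s) - \AA_0(s)$), the correction term is again $O\bigl( t^{-\frac{n}{2}(1-1/q)-1/2} \bigr)$. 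Hence $\mathrm{IV} = \MM_0(H(t))\, a \cdot \nabla G_t + O\bigl( t^{-\frac{n}{2}(1-1/q)-1/2} \bigr)$, and everything reduces to controlling the scalar $\MM_0(H(t)) = \int_1^t \MM_0\bigl( f(\AA_{0,1}(s)) - f(\AA_0(s)) \bigr) ds$.

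The main obstacle, and the key lemma I would prove, is that $\MM_0\bigl( f(\AA_{0,1}(s)) - f(\AA_0(s)) \bigr) = O\bigl( s^{-1-\delta} \bigr)$ for some $\delta > 0$: a crude bound only gives $O(s^{-1})$, and it is this resonance that produces the logarithm in Theorem \ref{th:P_asymp_sub_gene} with $k=1$. The improvement rests on a parity cancellation. Expanding $f(\AA_{0,1}(s)) - f(\AA_0(s)) = f'(\AA_0(s))\bigl( \AA_{0,1}(s) - \AA_0(s) \bigr) + \RR_2(s)$ with $\RR_2(s)$ the second order Taylor remainder, one observes that $\AA_0(s,\cdot) = \MM_0(u_0) G_s$ is radial, so $f'(\AA_0(s,\cdot)) = p\abs{\MM_0(u_0)}^{p-1} G_s^{p-1}$ is even, while $\AA_{0,1}(s,\cdot) - \AA_0(s,\cdot) = \int_0^s a \cdot \nabla e^{(s-\tau)\Delta} f(\AA_0(\tau))\, d\tau$ is odd in $x$, because each $f(\AA_0(\tau))$ is radial, $e^{(s-\tau)\Delta}$ preserves radial symmetry, and $a\cdot\nabla$ of a radial function is odd. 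Therefore $\int_{\R^n} f'(\AA_0(s,x))\bigl( \AA_{0,1}(s,x) - \AA_0(s,x) \bigr) dx = 0$, so $\MM_0\bigl( f(\AA_{0,1}(s)) - f(\AA_0(s)) \bigr) = \int_{\R^n} \RR_2(s,x)\, dx$. Since $f(\xi) = \abs{\xi}^{p-1}\xi$ has $f' = p\abs{\cdot}^{p-1}$, which is H\"older continuous of order $p-1$ for $1 < p \le 2$ and of class $C^1$ for $p > 2$, one gets $\norm{\RR_2(s)}_1 \le C\norm{\AA_{0,1}(s) - \AA_0(s)}_p^p \le C s^{-\frac{n}{2}(p-1)-p\sigma}$ in the first case and $\norm{\RR_2(s)}_1 \le C\norm{\AA_0(s)}_\infty^{p-2}\norm{\AA_{0,1}(s) - \AA_0(s)}_2^2 \le C s^{-\frac{n}{2}(p-1)-2\sigma}$ in the second; with $\frac{n}{2}(p-1) = \frac34$ and $\sigma = \frac14$ both exponents strictly exceed $1$, so $\delta > 0$, and hence $\MM_0(H(t)) = O(1)$ and $\mathrm{IV} = O\bigl( t^{-\frac{n}{2}(1-1/q)-1/2} \bigr)$, giving $\norm{R(t)}_q = O\bigl( t^{-\frac{n}{2}(1-1/q)-1/2} \bigr)$. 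The remaining points — uniformity of all constants in $q \in [1,+\infty]$, the singularity at $s=t$ in the Duhamel integrals, and the self-similar bounds $\norm{\AA_{0,1}(s) - \AA_0(s)}_q \le C s^{-\frac{n}{2}(1-1/q)-\sigma}$ and $\norm{\abs{x}\bigl( f(\AA_{0,1}(s)) - f(\AA_0(s)) \bigr)}_1 \le C s^{-1/2}$ — are technical and follow from $\AA_0(s,x) = \MM_0(u_0) s^{-n/2} G_1(x/\sqrt{s})$ and the explicit Duhamel formula for $\AA_{0,1}(s) - \AA_0(s)$.
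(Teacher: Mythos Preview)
Your proposal is correct and rests on the same key idea as the paper: the parity cancellation $\int_{\R^n} f'(\AA_0(s))\bigl(\AA_{0,1}(s)-\AA_0(s)\bigr)\,dx=0$ (since $f'(\AA_0(s))$ is even and $\AA_{0,1}(s)-\AA_0(s)$ is odd), together with the H\"older/Lipschitz estimate on $f'$ to control the Taylor remainder $\RR_2(s)$. This is exactly what the paper does to establish \eqref{eq:critical} and then observe $\MM_0\bigl(\RR_{0,1}(1)f'(\AA_0(1))\bigr)=0$.

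The organizational difference is that the paper routes the argument through Theorem~\ref{th:P_asymp_critical_gene} with $k=1$: it first shows $\norm{u(t)-\tilde\AA_{0,2}(t)}_q\le Ct^{-\frac{n}{2}(1-1/q)-1/2}$ (the nine-term decomposition there absorbs what you call $\mathrm{I}$, $\mathrm{II}$, $\mathrm{III}$, and the non-leading pieces of $\mathrm{IV}$), and then reduces $\tilde\AA_{0,2}(t)-\AA_{0,1}(t)$ to the scalar $\MM_0(H(t))$, which is bounded by the parity argument. You instead work directly with $u(t)-\AA_{0,1}(t)$, apply the semigroup splitting and moment expansion to $\mathrm{IV}$ in situ, and recover the same scalar. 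Your route is more self-contained for this single statement, while the paper's modular use of Theorem~\ref{th:P_asymp_critical_gene} pays off because that theorem is stated and proved for general $k$. Both proofs in fact yield the stronger bound $\norm{u(t)-\AA_{0,1}(t)}_q=O\bigl(t^{-\frac{n}{2}(1-1/q)-1/2}\bigr)$, not just $o\bigl(t^{-\frac{n}{2}(1-1/q)-1/2}\log t\bigr)$, as you note.
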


Since there is no restriction on the initial data, Theorem \ref{th:P_asymp_critical-1_optimal} means the non-optimality for the decay rate of the remainder $u \left( t \right) - \AA_{0, 1} \left( t \right)$ given in Theorem \ref{th:P_asymp_sub_gene} with $k=1$ and $p=1+3/ \left( 2n \right)$.
This is a consequence of the facts that for any $q \in \left[ 1, + \infty \right]$,
\begin{align*}
	\widetilde{\AA}_{0, 2} \left( t \right) - \AA_{0, 1} \left( t \right) =t^{- \frac{1}{2}} \left( \log t \right) \delta_{t} \widetilde{\RR}_{0, 2}^{*} +O \left( t^{- \frac{n}{2} \left( 1- \frac{1}{q} \right) - \frac{1}{2}} \right)
\end{align*}
holds in $L^{q} \left( \R^{n} \right)$ as $t \to + \infty$ and that $\widetilde{\RR}_{0, 2}^{*} \equiv 0$, where
\begin{align*}
	\widetilde{\RR}_{0, 2}^{*} \coloneqq - \frac{1}{2} \MM_{0} \left( \left( \AA_{0, 1} \left( 1 \right) - \AA_{0} \left( 1 \right) \right) f' \left( \AA_{0} \left( 1 \right) \right) \right) \sum_{j=1}^{n} a_{j} x_{j} G_{1}.
\end{align*}
We can regard Theorem \ref{th:P_asymp_critical-1_optimal} as an improvement of the result in Proposition \ref{pro:Ishige-Kawakami} with $p=1+3/ \left( 2n \right)$ $\left( \Leftrightarrow 2 \sigma =1/2 \right)$, although there is no clear relation between $\AA_{0, 1} \left( t \right)$ and $\AA \left( t \right)$.
From Theorem \ref{th:P_asymp_super_optimal} with $k=1$ and from Theorem \ref{th:P_asymp_sub-1_optimal}, we see that the exponent $1+3/ \left( 2n \right)$ is the critical exponent for the asymptotic behavior of the remainder $u \left( t \right) - \AA_{0, 1} \left( t \right)$.
In addition, Theorem \ref{th:P_asymp_critical-1_optimal} suggests that it is not sufficient to check only the upper estimate of the remainder for the optimality.

It is still open whether the decay rates of the remainder $u \left( t \right) - \AA_{0, k} \left( t \right)$ given in Theorem \ref{th:P_asymp_sub_gene} with $k \geq 2$ and $1+ \frac{1}{n} <p \leq 1+ \frac{k+2}{\left( k+1 \right) n}$ are optimal or not, due to the lack of fine structures in the differences $\AA_{0, k+1} \left( t \right) - \AA_{0, k} \left( t \right)$ and $\widetilde{\AA}_{0, k+1} \left( t \right) - \AA_{0, k} \left( t \right)$.
In addition, the optimality for the decay rate of the remainder $u \left( t \right) - \widetilde{\AA}_{0, k+1} \left( t \right)$ given by Theorem \ref{th:P_asymp_critical_gene} and that of the remainder $u \left( t \right) - \AA_{0, 1} \left( t \right)$ given by Theorem \ref{th:P_asymp_critical-1_optimal} is also an open problem.
To solve this problem, we need to establish the asymptotic expansion of $u \left( t \right) - \widetilde{\AA}_{0, k+1} \left( t \right)$ as in \cite{Fukuda-Sato}.
We note that the decay rate of the remainder $u \left( t \right) - \AA_{1, k} \left( t \right)$ given in Theorem \ref{th:P_asymp_super_gene} seems optimal from the viewpoint of the first order asymptotic expansion for the heat semigroup.

\begin{rem}
	Even if we assume only $u_{0} \in L^{1} \left( \R^{n} \right)$, we can construct a unique global solution
	\begin{align*}
		u \in \left( C \cap L^{\infty} \right) \left( \left[ 0, + \infty \right); L^{1} \left( \R^{n} \right) \right) \cap C \left( \left( 0, + \infty \right); L^{\infty} \left( \R^{n} \right) \right)
	\end{align*}
	to \eqref{P}, which satisfies \eqref{eq:P_decay} and \eqref{eq:P_regular} (cf. \cite{Escobedo-Zuazua, Zuazua2020}).
	Therefore, in Proposition \ref{pro:P_asymp-0} with $1+1/n<p<1+2/n$ and in Theorems \ref{th:P_asymp_sub_gene}, \ref{th:P_asymp_critical_gene}, \ref{th:P_asymp_super_gene}, \ref{th:P_asymp_super_optimal}, \ref{th:P_asymp_sub-1_optimal}, and \ref{th:P_asymp_critical-1_optimal}, we can relax the assumption for the initial data from $u_{0} \in \left( L^{1}_{1} \cap L^{\infty} \right) \left( \R^{n} \right)$ to $u_{0} \in L^{1}_{1} \left( \R^{n} \right)$.
	However, to obtain the results in Proposition \ref{pro:P_asymp-0} with $p \geq 1+2/n$, Propositions \ref{pro:P_asymp_F-critical} and \ref{pro:P_asymp_F-super}, we need to assume $u_{0} \in \left( L^{1}_{1} \cap L^{\infty} \right) \left( \R^{n} \right)$ for example in order to remove the singularity in time of the global solution near $t=0$.
\end{rem}

\begin{rem}
	Under the same assumption as in Theorem \ref{th:P_asymp_sub_gene}, for any $q \in \left[ 1, + \infty \right]$, there exists $C>0$ such that the estimate
	\begin{align*}
		t^{\frac{n}{2} \left( 1- \frac{1}{q} \right)} \norm{\int_{0}^{1} a \cdot \nabla e^{\left( t-s \right) \Delta} f \left( \AA_{0} \left( s \right) \right) ds}_{q} \leq Ct^{- \frac{1}{2}}
	\end{align*}
	holds for all $t>2$.
	Hence, Theorems \ref{th:P_asymp_sub_gene}, \ref{th:P_asymp_critical_gene}, and \ref{th:P_asymp_super_gene} are true even if we define $\AA_{0, k} \left( t \right)$ as
	\begin{align*}
		\AA_{0, k} \left( t \right) \coloneqq \AA_{0} \left( t \right) + \int_{1}^{t} a \cdot \nabla e^{\left( t-s \right) \Delta} f \left( \AA_{0, k-1} \left( s \right) \right) ds
	\end{align*}
	for $k \geq 1$.
	In this case, $\psi_{0, k}$ in Theorem \ref{th:P_asymp_super_gene} is given by
	\begin{align*}
		\psi_{0, k} \coloneqq \int_{0}^{1} f \left( u \left( s \right) \right) ds+ \int_{1}^{+ \infty} \left( f \left( u \left( s \right) \right) -f \left( \AA_{0, k-1} \left( s \right) \right) \right) ds.
	\end{align*}
	On the other hand, by using $\AA_{0, k} \left( t \right)$ defined in Theorem \ref{th:P_asymp_sub_gene}, $\AA_{0, 1} \left( t \right)$ makes sense for all $t>0$.
	This gives a good structure of the difference $\AA_{0, 1} \left( t \right) - \AA_{0} \left( t \right)$, which plays a crucial role in the proofs of Theorems \ref{th:P_asymp_sub-1_optimal} and \ref{th:P_asymp_critical-1_optimal}.
\end{rem}

\begin{rem} \label{rem:th_optimal}
	Theorems \ref{th:P_asymp_sub-1_optimal} and \ref{th:P_asymp_critical-1_optimal} are valid for $f \in C^{1} \left( \R \right) \setminus \left\{ 0 \right\}$ satisfying the following two conditions:
	\begin{itemize}
		\item[(1)]
			$f$ is homogeneous of order $p$, namely,
			\begin{align*}
				f \left( \lambda \xi \right) = \lambda^{p} f \left( \xi \right)
			\end{align*}
			for any $\lambda >0$ and $\xi \in \R$.
		\item[(2)]
			There exist $C>0$ and $p \in \left( 1, + \infty \right)$ such that the estimates
			\begin{align*}
				\abs{f \left( \xi \right) -f \left( \eta \right)} &\leq C \left( \abs{\xi}^{p-1} + \abs{\eta}^{p-1} \right) \abs{\xi - \eta}, \\
				\abs{f' \left( \xi \right) -f' \left( \eta \right)} &\leq \begin{cases}
					C \left( \abs{\xi}^{p-2} + \abs{\eta}^{p-2} \right) \abs{\xi - \eta} &\qquad \text{if} \quad p>2, \\
					C \abs{\xi - \eta}^{p-1} &\qquad \text{if} \quad 1<p \leq 2
				\end{cases}
			\end{align*}
			hold for all $\xi, \eta \in \R$.
	\end{itemize}
	In particular, $f$ given by $f \left( \xi \right) = \abs{\xi}^{p}$ satisfies the above conditions.
\end{rem}

\section{Preliminaries} \label{sec:heat}

In this section, we introduce basic properties of the heat semigroup.
For details, we refer the readers to \cite{Kusaba-Ozawa2023, Kusaba-Ozawa_CGL} and the references therein.
Let $\Z_{>0}$ be the set of positive integers and let $\Z_{\geq 0} \coloneqq \Z_{>0} \cup \left\{ 0 \right\}$.
For $\alpha = \left( \alpha_{1}, \ldots, \alpha_{n} \right) \in \Z_{\geq 0}^{n}$ and $x= \left( x_{1}, \ldots, x_{n} \right) \in \R^{n}$, we define
\begin{align*}
	\abs{\alpha} \coloneqq \sum_{j=1}^{n} \alpha_{j}, \qquad \alpha ! \coloneqq \prod_{j=1}^{n} \alpha_{j} !, \qquad x^{\alpha} \coloneqq \prod_{j=1}^{n} x_{j}^{\alpha_{j}}, \qquad \partial^{\alpha} = \partial_{x}^{\alpha} \coloneqq \prod_{j=1}^{n} \partial_{j}^{\alpha_{j}}, \qquad \partial_{j} \coloneqq \frac{\partial}{\partial x_{j}}.
\end{align*}
For $\alpha = \left( \alpha_{1}, \ldots, \alpha_{n} \right)$, $\beta = \left( \beta_{1}, \ldots, \beta_{n} \right) \in \Z_{\geq 0}^{n}$, $\alpha \leq \beta$ means that $\alpha_{j} \leq \beta_{j}$ holds for any $j \in \left\{ 1, \ldots, n \right\}$.

The Gauss kernel $G_{t}$ has the self-similarity of the form
\begin{align*}
	G_{t} \left( x \right) =t^{- \frac{n}{2}} G_{1} \left( t^{- \frac{1}{2}} x \right).
\end{align*}
Based on this structure, we define the dilation $\delta_{t}$ by
\begin{align*}
	\left( \delta_{t} \varphi \right) \left( x \right) =t^{- \frac{n}{2}} \varphi \left( t^{- \frac{1}{2}} x \right), \qquad \varphi \in L_{\mathrm{loc}}^{1} \left( \R^{n} \right), {\ } x \in \R^{n}
\end{align*}
for each $t>0$.
Then, the family of the dilations $\left( \delta_{t}; t>0 \right)$ has the following properties:
\begin{itemize}
	\item[(1)]
		$\delta_{t} \delta_{s} = \delta_{ts}$ for any $t, s>0$.
	\item[(2)]
		$\norm{\delta_{t} \varphi}_{q} =t^{- \frac{n}{2} \left( 1- \frac{1}{q} \right)} \norm{\varphi}_{q}$ for any $t>0$, $q \in \left[ 1, + \infty \right]$, and $\varphi \in L^{q} \left( \R^{n} \right)$.
	\item[(3)]
		$\MM_{0} \left( \delta_{t} \varphi \right) = \MM_{0} \left( \varphi \right)$ for any $t>0$ and $\varphi \in L^{1} \left( \R^{n} \right)$.
\end{itemize}
By using the dilation $\delta_{t}$, the self-similarity of $G_{t}$ is described as
\begin{align*}
	G_{t} = \delta_{t} G_{1}.
\end{align*}
In addition, we have
\begin{align*}
	\partial^{\alpha} e^{t \Delta} \varphi = \left( \partial^{\alpha} G_{t} \right) \ast \varphi =t^{- \frac{\abs{\alpha}}{2}} \left( \delta_{t} \left( \partial^{\alpha} G_{1} \right) \right) \ast \varphi
\end{align*}
for any $t>0$ and $\alpha \in \Z_{\geq 0}^{n}$.
From the above identity, we obtain the following $L^{p}$-$L^{q}$ estimate of the heat semigroup.

\begin{lem}[{cf. \cite[Lemma 2.1]{Kusaba-Ozawa_CGL}}] \label{lem:heat_LpLq}
	Let $1 \leq q \leq p \leq + \infty$, $\alpha \in \Z_{\geq 0}^{n}$, and $\varphi \in L^{q} \left( \R^{n} \right)$.
	Then, the estimate
	\begin{align*}
		\norm{\partial^{\alpha} e^{t \Delta} \varphi}_{p} \leq t^{- \frac{n}{2} \left( \frac{1}{q} - \frac{1}{p} \right) - \frac{\abs{\alpha}}{2}} \norm{\partial^{\alpha} G_{1}}_{r} \norm{\varphi}_{q}
	\end{align*}
	holds for any $t>0$, where $r \in \left[ 1, + \infty \right]$ with $1/p+1=1/r+1/q$.
\end{lem}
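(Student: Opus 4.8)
The plan is to reduce everything to Young's convolution inequality combined with the scaling property of the dilation $\delta_{t}$. First I would use the identity recorded just before the statement, namely
\begin{align*}
	\partial^{\alpha} e^{t \Delta} \varphi = \left( \partial^{\alpha} G_{t} \right) \ast \varphi = t^{- \frac{\abs{\alpha}}{2}} \left( \delta_{t} \left( \partial^{\alpha} G_{1} \right) \right) \ast \varphi,
\end{align*}
valid for every $t>0$ and $\alpha \in \Z_{\geq 0}^{n}$. Since $G_{1}$ is a Schwartz function, so is $\partial^{\alpha} G_{1}$, and in particular $\partial^{\alpha} G_{1} \in L^{r} \left( \R^{n} \right)$ for every $r \in \left[ 1, + \infty \right]$; this is the only integrability fact needed.

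Next I would apply Young's inequality, $\norm{f \ast g}_{p} \leq \norm{f}_{r} \norm{g}_{q}$ whenever $1 \leq q \leq p \leq + \infty$, $r \in \left[ 1, + \infty \right]$, and $1/p+1=1/r+1/q$, to $f = \delta_{t} ( \partial^{\alpha} G_{1} )$ and $g = \varphi \in L^{q} ( \R^{n} )$. This gives
\begin{align*}
	\norm{\partial^{\alpha} e^{t \Delta} \varphi}_{p} \leq t^{- \frac{\abs{\alpha}}{2}} \norm{\delta_{t} \left( \partial^{\alpha} G_{1} \right)}_{r} \norm{\varphi}_{q}.
\end{align*}
Then I would invoke property (2) of the dilation family, $\norm{\delta_{t} \psi}_{r} = t^{- \frac{n}{2} \left( 1- \frac{1}{r} \right)} \norm{\psi}_{r}$, with $\psi = \partial^{\alpha} G_{1}$, and finally note that the constraint $1/p+1=1/r+1/q$ is equivalent to $1-1/r=1/q-1/p$, so that the two powers of $t$ combine into $t^{- \frac{n}{2} \left( \frac{1}{q} - \frac{1}{p} \right) - \frac{\abs{\alpha}}{2}}$, which is precisely the claimed exponent.

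There is essentially no genuine obstacle in this argument; the only points requiring a word of care are the membership $\partial^{\alpha} G_{1} \in L^{r} ( \R^{n} )$ for all $r$, which follows at once from the Gaussian decay of every derivative of $G_{1}$, and the elementary verification that the exponents of $t$ add up correctly. The endpoint cases $p = + \infty$ or $q = 1$ (hence $r = + \infty$) and $q = p$ (hence $r = 1$) are all covered by the same statement of Young's inequality, so no separate treatment is needed.
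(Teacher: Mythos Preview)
Your proof is correct and matches the paper's own approach: the paper does not spell out a proof but simply notes that the estimate follows from the identity $\partial^{\alpha} e^{t \Delta} \varphi = t^{- \abs{\alpha}/2} \left( \delta_{t} \left( \partial^{\alpha} G_{1} \right) \right) \ast \varphi$ together with the scaling property of $\delta_{t}$, which is exactly the Young--plus--dilation argument you wrote out.
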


We also need the following weighted $L^{1}$-estimate for the derivative of the heat semigroup, which can be shown by the same argument as in \cite[Lemma 2.1]{Ishige-Ishiwata-Kawakami}.

\begin{lem} \label{lem:heat_weight}
	There exists $C>0$ such that the estimate
	\begin{align*}
		\sum_{j=1}^{n} \norm{\abs{x} \partial_{j} e^{t \Delta} \varphi}_{1} \leq C \left( t^{- \frac{1}{2}} \norm{\abs{x} \varphi}_{1} + \norm{\varphi}_{1} \right)
	\end{align*}
	holds for any $\varphi \in L^{1}_{1} \left( \R^{n} \right)$ and $t>0$.
\end{lem}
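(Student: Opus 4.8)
\textbf{Proof proposal for Lemma \ref{lem:heat_weight}.}
The plan is to reduce everything to the convolution representation $\partial_{j} e^{t \Delta} \varphi = \left( \partial_{j} G_{t} \right) \ast \varphi$ and the triangle inequality $\abs{x} \leq \abs{x-y} + \abs{y}$, and then to read off the $t$-dependence from the self-similar structure of the Gauss kernel. First I would write, for fixed $j$ and $x \in \R^{n}$,
\begin{align*}
	\abs{x} \abs{\partial_{j} e^{t \Delta} \varphi \left( x \right)} \leq \int_{\R^{n}} \left( \abs{x-y} + \abs{y} \right) \abs{\partial_{j} G_{t} \left( x-y \right)} \abs{\varphi \left( y \right)} \, dy,
\end{align*}
integrate in $x$, and apply Tonelli's theorem (legitimate because every integrand is nonnegative and, as the computation below shows, the right-hand side is finite for $\varphi \in L^{1}_{1} \left( \R^{n} \right)$). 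Substituting $z = x-y$ in the inner integral of each term then gives
\begin{align*}
	\norm{\abs{x} \partial_{j} e^{t \Delta} \varphi}_{1} \leq \left( \int_{\R^{n}} \abs{z} \abs{\partial_{j} G_{t} \left( z \right)} \, dz \right) \norm{\varphi}_{1} + \norm{\partial_{j} G_{t}}_{1} \, \norm{\abs{x} \varphi}_{1}.
\end{align*}

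The remaining task is to estimate the two kernel quantities. Using the scaling relation $\partial_{j} G_{t} = t^{-1/2} \delta_{t} \left( \partial_{j} G_{1} \right)$ recorded just before the statement, together with $\norm{\delta_{t} \psi}_{1} = \norm{\psi}_{1}$ and the elementary identity $\abs{x} \delta_{t} \psi = t^{1/2} \delta_{t} \left( \abs{x} \psi \right)$ (hence $\norm{\abs{x} \delta_{t} \psi}_{1} = t^{1/2} \norm{\abs{x} \psi}_{1}$), I get $\norm{\partial_{j} G_{t}}_{1} = t^{-1/2} \norm{\partial_{j} G_{1}}_{1}$ and $\int_{\R^{n}} \abs{z} \abs{\partial_{j} G_{t} \left( z \right)} \, dz = \norm{\abs{x} \partial_{j} G_{1}}_{1}$, the latter being independent of $t$. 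Both numbers are finite: since $\partial_{j} G_{1} \left( z \right) = - \tfrac{1}{2} z_{j} G_{1} \left( z \right)$, one has $\abs{z} \abs{\partial_{j} G_{1} \left( z \right)} \leq \tfrac{1}{2} \abs{z}^{2} G_{1} \left( z \right) \in L^{1} \left( \R^{n} \right)$ by the standard second-moment computation for the Gaussian. Plugging these back in and summing over $1 \leq j \leq n$ yields the assertion with a constant $C$ depending only on $n$, independent of $\varphi$ and $t$.

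I do not expect a genuine obstacle here; the argument is essentially bookkeeping. The one place that needs care is correctly tracking the power of $t$ through the dilation identities, so that the unweighted norm $\norm{\varphi}_{1}$ comes with the bounded kernel factor $\norm{\abs{x}\partial_{j}G_{1}}_{1}$ and the weighted norm $\norm{\abs{x}\varphi}_{1}$ picks up the factor $t^{-1/2}\norm{\partial_{j}G_{1}}_{1}$, rather than the reverse; getting this backwards would produce a qualitatively different (and false) estimate. The only other subtlety, the interchange of integrals, is handled by Tonelli as noted above; alternatively one may first establish the inequality for $\varphi \in C_{c}^{\infty} \left( \R^{n} \right)$ and then invoke density of $C_{c}^{\infty} \left( \R^{n} \right)$ in $L^{1}_{1} \left( \R^{n} \right)$, since for fixed $t > 0$ the derived bound shows $\varphi \mapsto \abs{x} \partial_{j} e^{t \Delta} \varphi$ is bounded from $L^{1}_{1} \left( \R^{n} \right)$ into $L^{1} \left( \R^{n} \right)$.
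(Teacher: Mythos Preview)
Your argument is correct. The paper itself does not supply a proof of this lemma but only cites \cite[Lemma~2.1]{Ishige-Ishiwata-Kawakami}; the approach you describe---splitting the weight via $\abs{x}\leq\abs{x-y}+\abs{y}$ inside the convolution $\left(\partial_{j}G_{t}\right)\ast\varphi$ and then reading off the powers of $t$ from the self-similar scaling of $G_{t}$---is exactly the standard route and is what that reference does.
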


We next consider asymptotic expansions of the heat semigroup.
We can derive them from the Taylor expansions of the Gauss kernel appearing in the integral representation of the heat semigroup.
To write the asymptotic profiles explicitly, we introduce the multi-variable Hermite polynomial
\begin{align*}
	\bm{h}_{\alpha} \left( x \right) \coloneqq \sum_{2 \beta \leq \alpha} \frac{\left( -1 \right)^{\abs{\beta}} \alpha !}{\beta ! \left( \alpha -2 \beta \right) !} x^{\alpha -2 \beta}, \qquad x \in \R^{n}
\end{align*}
for each $\alpha \in \Z_{\geq 0}^{n}$.
We recall that the Hermite polynomials are orthogonal in the following sense:
\begin{align*}
	\int_{\R^{n}} \bm{h}_{\alpha} \left( x \right) \bm{h}_{\beta} \left( x \right) G_{1} \left( x \right) dx= \begin{cases}
		2^{\abs{\alpha}} \alpha ! &\qquad \text{if} \quad \alpha = \beta, \\
		0 &\qquad \text{if} \quad \alpha \neq \beta.
	\end{cases}
\end{align*}
By using the Hermite polynomial $\bm{h}_{\alpha}$, the derivative of the Gauss kernel $\partial^{\alpha} G_{t}$ is represented as
\begin{align*}
	\partial^{\alpha} G_{t} =t^{- \frac{\abs{\alpha}}{2}} \delta_{t} \left( \partial^{\alpha} G_{1} \right) = \left( -2 \right)^{- \abs{\alpha}} t^{- \frac{\abs{\alpha}}{2}} \delta_{t} \left( \bm{h}_{\alpha} G_{1} \right).
\end{align*}
Combining the above identity with the Taylor expansions of the Gauss kernel yields the following two propositions which describe the asymptotic expansions of the heat semigroup.

\begin{prop}[{\cite[Proposition 2.3]{Kusaba-Ozawa_CGL}}] \label{pro:heat_asymp}
	Let $m \in \Z_{\geq 0}$, $\varphi \in L^{1}_{m+1} \left( \R^{n} \right)$, and $\alpha \in \Z_{\geq 0}^{n}$.
	Then, the estimate
	\begin{align*}
		t^{\frac{n}{2} \left( 1- \frac{1}{q} \right) + \frac{\abs{\alpha} +m}{2}} \norm{\partial^{\alpha} e^{t \Delta} \varphi - \Lambda_{\alpha, m} \left( t; \varphi \right)}_{q} \leq 2^{- \left( \abs{\alpha} +m+1 \right)} t^{- \frac{1}{2}} \sum_{\abs{\beta} =m+1} \frac{1}{\beta !} \norm{\bm{h}_{\alpha + \beta} G_{1}}_{q} \lVert x^{\beta} \varphi \rVert_{1}
	\end{align*}
	holds for any $q \in \left[ 1, + \infty \right]$ and $t>0$, where
	\begin{align*}
		\Lambda_{\alpha, m} \left( t; \varphi \right) &\coloneqq \left( -2 \right)^{- \abs{\alpha}} t^{- \frac{\abs{\alpha}}{2}} \sum_{k=0}^{m} 2^{-k} t^{- \frac{k}{2}} \sum_{\abs{\beta} =k} \MM_{\beta} \left( \varphi \right) \delta_{t} \left( \bm{h}_{\alpha + \beta} G_{1} \right), \\
		\MM_{\beta} \left( \varphi \right) &\coloneqq \frac{1}{\beta !} \int_{\R^{n}} x^{\beta} \varphi \left( x \right) dx.
	\end{align*}
\end{prop}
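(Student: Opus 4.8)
The plan is to start from the convolution representation $\partial^{\alpha}e^{t\Delta}\varphi=(\partial^{\alpha}G_{t})\ast\varphi$, i.e.
\[
(\partial^{\alpha}e^{t\Delta}\varphi)(x)=\int_{\R^{n}}(\partial^{\alpha}G_{t})(x-y)\,\varphi(y)\,dy,\qquad x\in\R^{n},\ t>0,
\]
and to Taylor-expand the smooth function $y\mapsto(\partial^{\alpha}G_{t})(x-y)$ at $y=0$ to order $m$ with integral remainder:
\[
(\partial^{\alpha}G_{t})(x-y)=\sum_{\abs{\beta}\le m}\frac{(-1)^{\abs{\beta}}}{\beta!}y^{\beta}(\partial^{\alpha+\beta}G_{t})(x)+(m+1)\sum_{\abs{\beta}=m+1}\frac{(-1)^{\abs{\beta}}}{\beta!}y^{\beta}\int_{0}^{1}(1-\theta)^{m}(\partial^{\alpha+\beta}G_{t})(x-\theta y)\,d\theta.
\]
Multiplying by $\varphi(y)$ and integrating over $y\in\R^{n}$ --- which is legitimate by Fubini's theorem since each $\partial^{\alpha+\beta}G_{t}$ is bounded and $\varphi\in L^{1}_{m+1}(\R^{n})$ --- and using $\int_{\R^{n}}y^{\beta}\varphi(y)\,dy=\beta!\,\MM_{\beta}(\varphi)$, the leading sum becomes $\sum_{\abs{\beta}\le m}(-1)^{\abs{\beta}}\MM_{\beta}(\varphi)\,\partial^{\alpha+\beta}G_{t}$.

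Next I would identify this leading sum with $\Lambda_{\alpha,m}(t;\varphi)$. Inserting the identity $\partial^{\alpha+\beta}G_{t}=(-2)^{-\abs{\alpha+\beta}}t^{-\abs{\alpha+\beta}/2}\delta_{t}(\bm{h}_{\alpha+\beta}G_{1})$ recorded in the text and using $(-1)^{\abs{\beta}}(-2)^{-\abs{\beta}}=2^{-\abs{\beta}}$, a direct computation gives
\[
\sum_{\abs{\beta}\le m}(-1)^{\abs{\beta}}\MM_{\beta}(\varphi)\,\partial^{\alpha+\beta}G_{t}=(-2)^{-\abs{\alpha}}t^{-\frac{\abs{\alpha}}{2}}\sum_{k=0}^{m}2^{-k}t^{-\frac{k}{2}}\sum_{\abs{\beta}=k}\MM_{\beta}(\varphi)\,\delta_{t}(\bm{h}_{\alpha+\beta}G_{1})=\Lambda_{\alpha,m}(t;\varphi).
\]
Consequently the difference $\partial^{\alpha}e^{t\Delta}\varphi-\Lambda_{\alpha,m}(t;\varphi)$ equals the integrated remainder
\[
\RR(x)=(m+1)\sum_{\abs{\beta}=m+1}\frac{(-1)^{\abs{\beta}}}{\beta!}\int_{\R^{n}}\int_{0}^{1}(1-\theta)^{m}(\partial^{\alpha+\beta}G_{t})(x-\theta y)\,y^{\beta}\varphi(y)\,d\theta\,dy.
\]

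The last step is to estimate $\norm{\RR}_{q}$ by Minkowski's integral inequality in the $x$-variable, pulling the $L^{q}_{x}$-norm inside the integrals over $\theta\in[0,1]$ and $y\in\R^{n}$. Translation invariance gives $\norm{(\partial^{\alpha+\beta}G_{t})(\cdot-\theta y)}_{q}=\norm{\partial^{\alpha+\beta}G_{t}}_{q}$, and the scaling identity together with $\norm{\delta_{t}\psi}_{q}=t^{-\frac{n}{2}(1-\frac1q)}\norm{\psi}_{q}$ and $\abs{\alpha+\beta}=\abs{\alpha}+m+1$ yields $\norm{\partial^{\alpha+\beta}G_{t}}_{q}=2^{-(\abs{\alpha}+m+1)}t^{-\frac{\abs{\alpha}+m+1}{2}-\frac{n}{2}(1-\frac1q)}\norm{\bm{h}_{\alpha+\beta}G_{1}}_{q}$. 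Since $\int_{0}^{1}(1-\theta)^{m}\,d\theta=1/(m+1)$ cancels the prefactor $m+1$ and $\int_{\R^{n}}\abs{y^{\beta}}\abs{\varphi(y)}\,dy=\norm{x^{\beta}\varphi}_{1}$, multiplying through by $t^{\frac{n}{2}(1-\frac1q)+\frac{\abs{\alpha}+m}{2}}$ and using $\frac{\abs{\alpha}+m}{2}-\frac{\abs{\alpha}+m+1}{2}=-\frac12$ produces exactly the asserted bound. The computation is essentially bookkeeping; the only points that need a word of justification are the use of Fubini's theorem and of differentiation of the Gaussian kernel under the integral sign --- both immediate since $\partial^{\alpha+\beta}G_{t}$ is a Schwartz function and $\varphi$ has finite moments up to order $m+1$ --- together with careful tracking of the multi-index constants, so I do not expect any genuine obstacle beyond this standard Taylor-expansion argument for the heat kernel.
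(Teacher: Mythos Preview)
Your argument is correct and follows exactly the route the paper indicates: the paper does not give its own proof of this proposition but cites \cite{Kusaba-Ozawa_CGL} and remarks that the asymptotic expansions ``can [be derived] from the Taylor expansions of the Gauss kernel appearing in the integral representation of the heat semigroup,'' which is precisely your strategy. The bookkeeping with the Hermite representation $\partial^{\alpha+\beta}G_{t}=(-2)^{-\abs{\alpha+\beta}}t^{-\abs{\alpha+\beta}/2}\delta_{t}(\bm{h}_{\alpha+\beta}G_{1})$, the cancellation $(m+1)\int_{0}^{1}(1-\theta)^{m}\,d\theta=1$, and the Minkowski/translation-invariance estimate all check out and give the stated constant on the nose.
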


\begin{prop}[{\cite[Proposition 2.4]{Kusaba-Ozawa_CGL}}] \label{pro:heat_asymp_lim}
	Let $m \in \Z_{\geq 0}$, $\varphi \in L_{m}^{1} \left( \R^{n} \right)$, and $\alpha \in \Z_{\geq 0}^{n}$.
	Then,
	\begin{align*}
		\lim_{t \to + \infty} t^{\frac{n}{2} \left( 1- \frac{1}{q} \right) + \frac{\abs{\alpha} +m}{2}} \norm{\partial^{\alpha} e^{t \Delta} \varphi - \Lambda_{\alpha, m} \left( t; \varphi \right)}_{q} =0
	\end{align*}
	holds for any $q \in \left[ 1, + \infty \right]$.
\end{prop}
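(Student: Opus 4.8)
The plan is to derive Proposition~\ref{pro:heat_asymp_lim} from Proposition~\ref{pro:heat_asymp} by a density argument. First I would note that $C_{c}^{\infty}(\R^{n})$ is dense in $L_{m}^{1}(\R^{n})$ for the norm $\varphi \mapsto \sum_{\abs{\beta} \leq m} \norm{x^{\beta} \varphi}_{1}$ (truncate $\varphi$ to a large ball, then mollify; the polynomial weight is bounded on the supports involved, so only $x^{\beta} \varphi \in L^{1}(\R^{n})$ for $\abs{\beta} \leq m$ is used). Fix $q \in [1, +\infty]$ and $\varepsilon > 0$, and for a parameter $\eta > 0$ to be chosen later pick $\psi \in C_{c}^{\infty}(\R^{n})$ with $\norm{x^{\beta}(\varphi - \psi)}_{1} < \eta$ for all $\beta \in \Z_{\geq 0}^{n}$ with $\abs{\beta} \leq m$. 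Setting $g \coloneqq \varphi - \psi$ and using the linearity of $\Lambda_{\alpha, m}(t; \cdot)$, I would split
\begin{align*}
	\partial^{\alpha} e^{t \Delta} \varphi - \Lambda_{\alpha, m}(t; \varphi) = \bigl[ \partial^{\alpha} e^{t \Delta} \psi - \Lambda_{\alpha, m}(t; \psi) \bigr] + \bigl[ \partial^{\alpha} e^{t \Delta} g - \Lambda_{\alpha, m}(t; g) \bigr]
\end{align*}
and treat the two brackets separately.

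The first bracket is immediate: since $\psi \in C_{c}^{\infty}(\R^{n}) \subset L_{m+1}^{1}(\R^{n})$, Proposition~\ref{pro:heat_asymp} applied with the same $m$ bounds $t^{\frac{n}{2}(1-\frac{1}{q})+\frac{\abs{\alpha}+m}{2}} \norm{\partial^{\alpha} e^{t \Delta} \psi - \Lambda_{\alpha, m}(t; \psi)}_{q}$ by a ($\psi$-dependent) constant times $t^{-1/2}$, which tends to $0$ as $t \to +\infty$; hence there is $T > 0$ such that this contribution is $< \varepsilon/2$ for all $t > T$.

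The second bracket is the real point, and here I would aim for a bound uniform in $t$ and proportional to $\eta$. When $m \geq 1$ I would decompose it once more as
\begin{align*}
	\partial^{\alpha} e^{t \Delta} g - \Lambda_{\alpha, m}(t; g) = \bigl[ \partial^{\alpha} e^{t \Delta} g - \Lambda_{\alpha, m-1}(t; g) \bigr] - \bigl[ \Lambda_{\alpha, m}(t; g) - \Lambda_{\alpha, m-1}(t; g) \bigr],
\end{align*}
apply Proposition~\ref{pro:heat_asymp} \emph{at level $m-1$} to the first bracket (legitimate since $g \in L_{m}^{1}(\R^{n})$), whose $t^{-1/2}$ gain exactly absorbs the extra weight so that after multiplication it contributes $\leq C \sum_{\abs{\beta}=m} \norm{x^{\beta} g}_{1} < C\eta$, and observe that the second bracket is precisely the order-$m$ term $(-2)^{-\abs{\alpha}} 2^{-m} t^{-\frac{\abs{\alpha}+m}{2}} \sum_{\abs{\beta}=m} \MM_{\beta}(g)\, \delta_{t}(\bm{h}_{\alpha+\beta} G_{1})$, whose weighted $L^{q}$-norm I would bound using $\norm{\delta_{t} \varphi}_{q} = t^{-\frac{n}{2}(1-\frac{1}{q})} \norm{\varphi}_{q}$ and $\abs{\MM_{\beta}(g)} \leq \norm{x^{\beta} g}_{1} < \eta$, giving again $\leq C\eta$. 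Thus $t^{\frac{n}{2}(1-\frac{1}{q})+\frac{\abs{\alpha}+m}{2}} \norm{\partial^{\alpha} e^{t \Delta} g - \Lambda_{\alpha, m}(t; g)}_{q} \leq C(m, \alpha, q, n)\, \eta$ for all $t > 0$, with $C$ independent of $g$. For $m = 0$ the same holds with Lemma~\ref{lem:heat_LpLq} playing the role of Proposition~\ref{pro:heat_asymp} ``at level $-1$'' and $\Lambda_{\alpha, -1} \equiv 0$, using $\abs{\MM_{0}(g)} \leq \norm{g}_{1} < \eta$. Finally I would fix $\eta \coloneqq \varepsilon/(2C(m, \alpha, q, n))$; combining the two brackets then yields $t^{\frac{n}{2}(1-\frac{1}{q})+\frac{\abs{\alpha}+m}{2}} \norm{\partial^{\alpha} e^{t \Delta} \varphi - \Lambda_{\alpha, m}(t; \varphi)}_{q} < \varepsilon$ for all $t > T$, as required.

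The main obstacle is exactly the tail $g$: estimating $\partial^{\alpha} e^{t \Delta} g$ and $\Lambda_{\alpha, m}(t; g)$ separately is useless, because after multiplying by $t^{\frac{n}{2}(1-\frac{1}{q})+(\abs{\alpha}+m)/2}$ one is left with terms growing like $t^{(m-k)/2}$ for $k < m$. The trick that makes the argument work is to keep the order-$m$ parts of $e^{t \Delta} g$ and of $\Lambda_{\alpha, m}(t; g)$ paired together by invoking Proposition~\ref{pro:heat_asymp} one order lower, so that the $L_{m+1}^{1}$-norm of $g$ — which cannot be made small — never appears and only the small quantities $\norm{x^{\beta} g}_{1}$, $\abs{\beta} \leq m$, enter. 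Everything else reduces to the scaling identities for $\delta_{t}$ and the $L^{p}$--$L^{q}$ bound of Lemma~\ref{lem:heat_LpLq}.
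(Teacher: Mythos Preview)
The paper does not supply its own proof of Proposition~\ref{pro:heat_asymp_lim}; it simply quotes the result from \cite[Proposition~2.4]{Kusaba-Ozawa_CGL}. Your density argument is correct and is exactly the standard route to such ``little-$o$'' refinements of a quantitative estimate like Proposition~\ref{pro:heat_asymp}: approximate $\varphi$ by $\psi \in L_{m+1}^{1}$ so that Proposition~\ref{pro:heat_asymp} at level $m$ gives the $t^{-1/2}$ decay for the $\psi$-part, and for the tail $g=\varphi-\psi$ drop one order and use Proposition~\ref{pro:heat_asymp} at level $m-1$ together with the explicit $m$-th layer of $\Lambda_{\alpha,m}$, so that only the moments $\lVert x^{\beta}g\rVert_{1}$ with $\lvert\beta\rvert\le m$ appear, each of which is small. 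The bookkeeping you give (including the $m=0$ case handled via Lemma~\ref{lem:heat_LpLq}) is accurate, and the final $\varepsilon/2+\varepsilon/2$ assembly is fine.
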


\begin{rem}
	We easily see that $\bm{h}_{0} \equiv 1$ and $\bm{h}_{e_{j}} \equiv x_{j}$, where $e_{j}$ is a multi-index with $\abs{e_{j}} =1$ whose components are $0$ except for the $j$th coordinate.
	Therefore, the asymptotic profiles $\Lambda_{\alpha, m} \left( t; \varphi \right)$ with $\left( \alpha, m \right) = \left( 0, 0 \right), \left( e_{j}, 0 \right), \left( 0, 1 \right)$ are represented as
	\begin{align*}
		\Lambda_{0, 0} \left( t; \varphi \right) &= \MM_{0} \left( \varphi \right) \delta_{t} G_{1}, \\
		\Lambda_{e_{j}, 0} \left( t; \varphi \right) &=- \frac{1}{2} t^{- \frac{1}{2}} \MM_{0} \left( \varphi \right) \delta_{t} \left( x_{j} G_{1} \right), \\
		\Lambda_{0, 1} \left( t; \varphi \right) &= \MM_{0} \left( \varphi \right) \delta_{t} G_{1} + \frac{1}{2} t^{- \frac{1}{2}} \sum_{j=1}^{n} \MM_{e_{j}} \left( \varphi \right) \delta_{t} \left( x_{j} G_{1} \right),
	\end{align*}
	respectively.
\end{rem}

\section{Proofs of main results}
\subsection{Proof of Theorem \ref{th:P_asymp_sub_gene}} \label{sec:P_asymp_sub_gene}

We first prove the following lemma which gives the decay estimates of the asymptotic profiles.

\begin{lem} \label{lem:P_asymp_sub_gene}
	Let $k \in \Z_{>0}$, $1+ \frac{1}{n} <p<1+ \frac{k+1}{kn}$, and $u_{0} \in L^{1} \left( \R^{n} \right)$.
	Then, for any $q \in \left[ 1, + \infty \right]$, there exists $C>0$ such that the estimates
	\begin{align*}
		t^{\frac{n}{2} \left( 1- \frac{1}{q} \right)} \norm{\AA_{0, k} \left( t \right)}_{q} &\leq C, \\
		t^{\frac{n}{2} \left( 1- \frac{1}{q} \right)} \norm{\AA_{0, k} \left( t \right) - \AA_{0, k-1} \left( t \right)}_{q} &\leq Ct^{-k \sigma}
	\end{align*}
	hold for all $t>1$.
\end{lem}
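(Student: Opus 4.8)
The plan is to prove both estimates simultaneously by induction on $k$, using the $L^p$-$L^q$ estimate of Lemma \ref{lem:heat_LpLq}, the growth condition on $f$, and the decay estimate \eqref{eq:P_decay} together with the self-similar bound $\norm{\delta_t G_1}_q = C t^{-\frac{n}{2}(1-1/q)}$. For the base case $k=1$, recall $\AA_{0,1}(t) = \AA_0(t) + \int_0^1 a\cdot\nabla e^{(t-s)\Delta} f(\AA_0(s))\,ds + \int_1^t a\cdot\nabla e^{(t-s)\Delta} f(\AA_0(s))\,ds = \AA_0(t) + \int_0^t a\cdot\nabla e^{(t-s)\Delta} f(\AA_0(s))\,ds$. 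Since $\AA_0(s) = \MM_0(u_0)\delta_s G_1$ and $f(0)=0$, the growth condition gives $\abs{f(\AA_0(s))} \le C\abs{\AA_0(s)}^p$, hence $\norm{f(\AA_0(s))}_r \le C\norm{\AA_0(s)}_{pr}^p \le C s^{-\frac{n}{2}(p - \frac{1}{r}) }$ for suitable $r$. Applying Lemma \ref{lem:heat_LpLq} with one derivative and integrating in $s$ over $(0,t/2)$ and $(t/2,t)$ separately — controlling the singularity at $s=0$ using $p < 1 + \frac{k+1}{kn} \le 1 + \frac{2}{n}$ when $k=1$ so that $\frac{n}{2}(p-1) < 1$, and the singularity at $s=t$ using the half-power from $\nabla e^{(t-s)\Delta}$ being integrable — yields $t^{\frac{n}{2}(1-1/q)}\norm{\AA_{0,1}(t) - \AA_0(t)}_q \le C t^{-\sigma}$, which is the $k=1$ case of the second estimate; the first estimate for $k=1$ then follows since $t^{\frac{n}{2}(1-1/q)}\norm{\AA_0(t)}_q = \abs{\MM_0(u_0)}\norm{G_1}_q$ is constant.

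For the inductive step, assume both estimates hold for $k-1$. Write
\begin{align*}
\AA_{0,k}(t) - \AA_{0,k-1}(t) = \int_1^t a\cdot\nabla e^{(t-s)\Delta}\bigl(f(\AA_{0,k-1}(s)) - f(\AA_{0,k-2}(s))\bigr)\,ds,
\end{align*}
using that the $\int_0^1$ pieces and the $\AA_0(t)$ pieces coincide. By the Lipschitz-type bound on $f$,
\begin{align*}
\abs{f(\AA_{0,k-1}(s)) - f(\AA_{0,k-2}(s))} \le C\bigl(\abs{\AA_{0,k-1}(s)}^{p-1} + \abs{\AA_{0,k-2}(s)}^{p-1}\bigr)\abs{\AA_{0,k-1}(s) - \AA_{0,k-2}(s)},
\end{align*}
and by the inductive hypothesis the first factor is $O(s^{-\frac{n}{2}(p-1)(1-1/r_1)})$ in $L^{r_1}$ (uniform boundedness of the profiles after the dilation normalization) while the second is $O(s^{-(k-1)\sigma})$ after normalization. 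A Hölder split of the exponents, then Lemma \ref{lem:heat_LpLq} with one derivative, then splitting $\int_1^t = \int_1^{t/2} + \int_{t/2}^t$ as before, produces the gain of exactly one additional factor $s^{-\sigma}$ coming from the extra $\frac{n}{2}(p-1) - \frac{1}{2} = \sigma$ in the time decay of $f(\AA_{0,k-1}) - f(\AA_{0,k-2})$ relative to $f(\AA_0)$; this yields the bound $t^{-k\sigma}$ once one checks the relevant time exponent $k\sigma < \frac{1}{2}$ (equivalent to $p < 1 + \frac{k+1}{kn}$ when combined with $(k-1)\sigma + \sigma$), ensuring integrability of the singular part at $s=t/2$ endpoint and near the lower limit. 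The first estimate for $k$ then follows from $\norm{\AA_{0,k}(t)}_q \le \norm{\AA_{0,k-1}(t)}_q + \norm{\AA_{0,k}(t) - \AA_{0,k-1}(t)}_q$ and the two inductive/just-proved bounds.

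The main obstacle is the bookkeeping of the two competing singularities in the Duhamel integral: the factor $\abs{t-s}^{-1/2}$ from $\nabla e^{(t-s)\Delta}$ near $s=t$, and the time-decay power $s^{-\beta}$ of the integrand near $s=1$ (or $s=0$ for the base case). One must choose the intermediate Lebesgue exponent $r$ in Lemma \ref{lem:heat_LpLq} so that $\partial^\alpha G_1 \in L^r$ and simultaneously the $s$-integral near the lower endpoint converges — this is exactly where the hypothesis $1 + \frac{1}{n} < p < 1 + \frac{k+1}{kn}$ enters, the lower bound guaranteeing $\sigma > 0$ (so the integrand genuinely decays) and the upper bound guaranteeing $k\sigma < \frac{1}{2}$ (so the accumulated decay power stays below the threshold where the $\abs{t-s}^{-1/2}$ singularity would no longer be the dominant scaling). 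Care is also needed to verify that the profiles $\AA_{0,j}(s)$ for intermediate $j$ remain bounded after dilation normalization uniformly for $s > 1$, which is precisely the first estimate at level $j$, so the induction must carry both statements together; I do not expect any genuinely new idea beyond a careful iteration of the heat-kernel estimates.
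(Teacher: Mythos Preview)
Your proposal is correct and follows essentially the same route as the paper: induction on $k$, the difference formula $\AA_{0,k}(t)-\AA_{0,k-1}(t)=\int_1^t a\cdot\nabla e^{(t-s)\Delta}\bigl(f(\AA_{0,k-1}(s))-f(\AA_{0,k-2}(s))\bigr)\,ds$, the Lipschitz bound on $f$ combined with the inductive decay, and the split $\int_1^{t/2}+\int_{t/2}^t$ via Lemma~\ref{lem:heat_LpLq}. The only point the paper makes explicit that you leave implicit is that for $1<t\le 2$ the split at $t/2$ is vacuous (since $t/2\le 1$) and one instead bounds $\int_1^t$ directly by a trivial estimate; this is routine.
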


\begin{rem}
	The second estimate in Lemma \ref{lem:P_asymp_sub_gene} with $k=1$ holds for any $t>0$.
\end{rem}

\begin{proof}[Proof of Lemma \ref{lem:P_asymp_sub_gene}]
	We show the assertion by induction on $k \in \Z_{>0}$.
	First, we consider the case where $k=1$.
	Let $1+1/n<p<1+2/n$, $q \in \left[ 1, + \infty \right]$, and $t>0$.
	Since
	\begin{align*}
		\AA_{0, 1} \left( t \right) - \AA_{0, 0} \left( t \right) = \int_{0}^{t} a \cdot \nabla e^{\left( t-s \right) \Delta} f \left( \AA_{0} \left( s \right) \right) ds,
	\end{align*}
	Lemma \ref{lem:heat_LpLq} implies
	\begin{align*}
		\norm{\AA_{0, 1} \left( t \right) - \AA_{0, 0} \left( t \right)}_{q}
		&\leq C \left( \int_{0}^{t/2} + \int_{t/2}^{t} \right) \norm{\nabla e^{\left( t-s \right) \Delta} f \left( \AA_{0} \left( s \right) \right)}_{q} ds \\
		&\leq C \int_{0}^{t/2} \left( t-s \right)^{- \frac{n}{2} \left( 1- \frac{1}{q} \right) - \frac{1}{2}} \norm{\AA_{0} \left( s \right)}_{p}^{p} ds+C \int_{t/2}^{t} \left( t-s \right)^{- \frac{1}{2}} \norm{\AA_{0} \left( s \right)}_{pq}^{p} ds \\
		&\leq C \int_{0}^{t/2} \left( t-s \right)^{- \frac{n}{2} \left( 1- \frac{1}{q} \right) - \frac{1}{2}} s^{- \frac{n}{2} \left( p-1 \right)} ds+C \int_{t/2}^{t} \left( t-s \right)^{- \frac{1}{2}} s^{- \frac{n}{2} \left( 1- \frac{1}{q} \right) - \frac{n}{2} \left( p-1 \right)} ds \\
		&=C \int_{0}^{t/2} \left( t-s \right)^{- \frac{n}{2} \left( 1- \frac{1}{q} \right) - \frac{1}{2}} s^{- \frac{1}{2} - \sigma} ds+C \int_{t/2}^{t} \left( t-s \right)^{- \frac{1}{2}} s^{- \frac{n}{2} \left( 1- \frac{1}{q} \right) - \frac{1}{2} - \sigma} ds \\
		&\leq Ct^{- \frac{n}{2} \left( 1- \frac{1}{q} \right) - \frac{1}{2}} \int_{0}^{t/2} s^{- \frac{1}{2} - \sigma} ds+Ct^{- \frac{n}{2} \left( 1- \frac{1}{q} \right) - \frac{1}{2} - \sigma} \int_{t/2}^{t} \left( t-s \right)^{- \frac{1}{2}} ds \\
		&\leq Ct^{- \frac{n}{2} \left( 1- \frac{1}{q} \right) - \sigma}.
	\end{align*}
	Here, we have used the fact that the estimates
	\begin{align*}
		\norm{\AA_{0} \left( s \right)}_{r} = \abs{\MM_{0} \left( u_{0} \right)} \norm{\delta_{s} G_{1}}_{r} \leq \norm{u_{0}}_{1} \norm{G_{1}}_{r} s^{- \frac{n}{2} \left( 1- \frac{1}{r} \right)}
	\end{align*}
	hold for any $r \in \left[ 1, + \infty \right]$ and $s>0$.
	Furthermore, we obtain
	\begin{align*}
		\norm{\AA_{0, 1} \left( t \right)}_{q} &\leq \norm{\AA_{0, 0} \left( t \right)}_{q} + \norm{\AA_{0, 1} \left( t \right) - \AA_{0, 0} \left( t \right)}_{q} \\
		&\leq Ct^{- \frac{n}{2} \left( 1- \frac{1}{q} \right)} +Ct^{- \frac{n}{2} \left( 1- \frac{1}{q} \right) - \sigma} \\
		&\leq Ct^{- \frac{n}{2} \left( 1- \frac{1}{q} \right)}
	\end{align*}
	for $t>1$.

	Next, we assume that Lemma \ref{lem:P_asymp_sub_gene} is true for some $k \in \Z_{>0}$.
	Let $1+ \frac{1}{n} <p<1+ \frac{k+2}{\left( k+1 \right) n}$, $q \in \left[ 1, + \infty \right]$, and $t>1$.
	We note that
	\begin{align*}
		\AA_{0, k+1} \left( t \right) - \AA_{0, k} \left( t \right) = \int_{1}^{t} a \cdot \nabla e^{\left( t-s \right) \Delta} \left( f \left( \AA_{0, k} \left( s \right) \right) -f \left( \AA_{0, k-1} \left( s \right) \right) \right) ds.
	\end{align*}
	If $t>2$, then it follows from Lemma \ref{lem:heat_LpLq} and the inductive hypothesis that
	\begin{align*}
		&\norm{\AA_{0, k+1} \left( t \right) - \AA_{0, k} \left( t \right)}_{q} \\
		&\hspace{1cm} \leq C \left( \int_{1}^{t/2} + \int_{t/2}^{t} \right) \norm{\nabla e^{\left( t-s \right) \Delta} \left( f \left( \AA_{0, k} \left( s \right) \right) -f \left( \AA_{0, k-1} \left( s \right) \right) \right)}_{q} ds \\
		&\hspace{1cm} \leq C \int_{1}^{t/2} \left( t-s \right)^{- \frac{n}{2} \left( 1- \frac{1}{q} \right) - \frac{1}{2}} \norm{f \left( \AA_{0, k} \left( s \right) \right) -f \left( \AA_{0, k-1} \left( s \right) \right)}_{1} ds \\
		&\hspace{2cm} +C \int_{t/2}^{t} \left( t-s \right)^{- \frac{1}{2}} \norm{f \left( \AA_{0, k} \left( s \right) \right) -f \left( \AA_{0, k-1} \left( s \right) \right)}_{q} ds \\
		&\hspace{1cm} \leq C \int_{1}^{t/2} \left( t-s \right)^{- \frac{n}{2} \left( 1- \frac{1}{q} \right) - \frac{1}{2}} \left( \norm{\AA_{0, k} \left( s \right)}_{\infty}^{p-1} + \norm{\AA_{0, k-1} \left( s \right)}_{\infty}^{p-1} \right) \norm{\AA_{0, k} \left( s \right) - \AA_{0, k-1} \left( s \right)}_{1} ds \\
		&\hspace{2cm} +C \int_{t/2}^{t} \left( t-s \right)^{- \frac{1}{2}} \left( \norm{\AA_{0, k} \left( s \right)}_{\infty}^{p-1} + \norm{\AA_{0, k-1} \left( s \right)}_{\infty}^{p-1} \right) \norm{\AA_{0, k} \left( s \right) - \AA_{0, k-1} \left( s \right)}_{q} ds \\
		&\hspace{1cm} \leq C \int_{1}^{t/2} \left( t-s \right)^{- \frac{n}{2} \left( 1- \frac{1}{q} \right) - \frac{1}{2}} s^{- \frac{n}{2} \left( p-1 \right) -k \sigma} ds+C \int_{t/2}^{t} \left( t-s \right)^{- \frac{1}{2}} s^{- \frac{n}{2} \left( p-1 \right) - \frac{n}{2} \left( 1- \frac{1}{q} \right) -k \sigma} ds \\
		&\hspace{1cm} =C \int_{1}^{t/2} \left( t-s \right)^{- \frac{n}{2} \left( 1- \frac{1}{q} \right) - \frac{1}{2}} s^{- \frac{1}{2} - \left( k+1 \right) \sigma} ds+C \int_{t/2}^{t} \left( t-s \right)^{- \frac{1}{2}} s^{- \frac{n}{2} \left( 1- \frac{1}{q} \right) - \frac{1}{2} - \left( k+1 \right) \sigma} ds \\
		&\hspace{1cm} \leq Ct^{- \frac{n}{2} \left( 1- \frac{1}{q} \right) - \frac{1}{2}} \int_{1}^{t/2} s^{- \frac{1}{2} - \left( k+1 \right) \sigma} ds+Ct^{- \frac{n}{2} \left( 1- \frac{1}{q} \right) - \frac{1}{2} - \left( k+1 \right) \sigma} \int_{t/2}^{t} \left( t-s \right)^{- \frac{1}{2}} ds \\
		&\hspace{1cm} \leq Ct^{- \frac{n}{2} \left( 1- \frac{1}{q} \right) - \left( k+1 \right) \sigma}.
	\end{align*}
	Similarly, if $1<t \leq 2$, then we have
	\begin{align*}
		\norm{\AA_{0, k+1} \left( t \right) - \AA_{0, k} \left( t \right)}_{q}
		&\leq C \int_{1}^{t} \norm{\nabla e^{\left( t-s \right) \Delta} \left( f \left( \AA_{0, k} \left( s \right) \right) -f \left( \AA_{0, k-1} \left( s \right) \right) \right)}_{q} ds \\
		&\leq C \int_{1}^{t} \left( t-s \right)^{- \frac{1}{2}} \left( \norm{\AA_{0, k} \left( s \right)}_{pq}^{p} + \norm{\AA_{0, k-1} \left( s \right)}_{pq}^{p} \right) ds \\
		&\leq C \int_{1}^{t} \left( t-s \right)^{- \frac{1}{2}} s^{- \frac{n}{2} \left( 1- \frac{1}{q} \right) - \frac{n}{2} \left( p-1 \right)} ds \\
		&\leq C \int_{1}^{t} \left( t-s \right)^{- \frac{1}{2}} ds \\
		&\leq C \left( t-1 \right)^{\frac{1}{2}} \\
		&\leq C \\
		&\leq Ct^{- \frac{n}{2} \left( 1- \frac{1}{q} \right) - \left( k+1 \right) \sigma}.
	\end{align*}
	Moreover, we obtain
	\begin{align*}
		\norm{\AA_{0, k+1} \left( t \right)}_{q} &\leq \norm{\AA_{0, k} \left( t \right)}_{q} + \norm{\AA_{0, k+1} \left( t \right) - \AA_{0, k} \left( t \right)}_{q} \\
		&\leq Ct^{- \frac{n}{2} \left( 1- \frac{1}{q} \right)} +Ct^{- \frac{n}{2} \left( 1- \frac{1}{q} \right) - \left( k+1 \right) \sigma} \\
		&\leq Ct^{- \frac{n}{2} \left( 1- \frac{1}{q} \right)}.
	\end{align*}
	This completes the inductive argument.
\end{proof}

Now, we are ready to show Theorem \ref{th:P_asymp_sub_gene}.

\begin{proof}[Proof of Theorem \ref{th:P_asymp_sub_gene}]
	We show the assertion by induction on $k \in \Z_{>0}$.
	First, we assume that Theorem \ref{th:P_asymp_sub_gene} is true for some $k \in \Z_{>0}$.
	Let $1+ \frac{1}{n} <p<1+ \frac{k+2}{\left( k+1 \right) n}$, $q \in \left[ 1, + \infty \right]$, and $t>2^{k+1}$.
	By using \eqref{I}, we decompose the remainder $u \left( t \right) - \AA_{0, k+1} \left( t \right)$ into five parts:
	\begin{align*}
		u \left( t \right) - \AA_{0, k+1} \left( t \right)
		&= \left( e^{t \Delta} u_{0} - \Lambda_{0, 0} \left( t; u_{0} \right) \right) + \int_{0}^{1} a \cdot \nabla e^{\left( t-s \right) \Delta} \left( f \left( u \left( s \right) \right) -f \left( \AA_{0} \left( s \right) \right) \right) ds \\
		&\hspace{1cm} + \int_{1}^{2^{k}} a \cdot \nabla e^{\left( t-s \right) \Delta} \left( f \left( u \left( s \right) \right) -f \left( \AA_{0, k} \left( s \right) \right) \right) ds \\
		&\hspace{1cm} + \int_{2^{k}}^{t/2} a \cdot \nabla e^{\left( t-s \right) \Delta} \left( f \left( u \left( s \right) \right) -f \left( \AA_{0, k} \left( s \right) \right) \right) ds \\
		&\hspace{1cm} + \int_{t/2}^{t} a \cdot \nabla e^{\left( t-s \right) \Delta} \left( f \left( u \left( s \right) \right) -f \left( \AA_{0, k} \left( s \right) \right) \right) ds \\
		&\eqqcolon \sum_{\ell =1}^{5} R_{0, k+1, \ell} \left( t \right).
	\end{align*}
	From \eqref{eq:P_decay}, Proposition \ref{pro:heat_asymp}, Lemmas \ref{lem:heat_LpLq} and \ref{lem:P_asymp_sub_gene}, and the fact that
	\begin{align*}
		p<1+ \frac{k+2}{\left( k+1 \right) n} <1+ \frac{2}{n},
	\end{align*}
	we obtain
	\begin{align*}
		\norm{R_{0, k+1, 1} \left( t \right)}_{q}
		&\leq Ct^{- \frac{n}{2} \left( 1- \frac{1}{q} \right) - \frac{1}{2}} \sum_{\abs{\alpha} =1} \norm{x^{\alpha} u_{0}}_{1}, \\
		\norm{R_{0, k+1, 2} \left( t \right)}_{q}
		&\leq C \int_{0}^{1} \norm{\nabla e^{\left( t-s \right) \Delta} \left( f \left( u \left( s \right) \right) -f \left( \AA_{0} \left( s \right) \right) \right)}_{q} ds \\
		&\leq C \int_{0}^{1} \left( t-s \right)^{- \frac{n}{2} \left( 1- \frac{1}{q} \right) - \frac{1}{2}} \left( \norm{u \left( s \right)}_{p}^{p} + \norm{\AA_{0} \left( s \right)}_{p}^{p} \right) ds \\
		&\leq C \left( t-1 \right)^{- \frac{n}{2} \left( 1- \frac{1}{q} \right) - \frac{1}{2}} \int_{0}^{1} s^{- \frac{n}{2} \left( p-1 \right)} ds \\
		&\leq Ct^{- \frac{n}{2} \left( 1- \frac{1}{q} \right) - \frac{1}{2}}, \\
		\norm{R_{0, k+1, 3} \left( t \right)}_{q}
		&\leq C \int_{1}^{2^{k}} \norm{\nabla e^{\left( t-s \right) \Delta} \left( f \left( u \left( s \right) \right) -f \left( \AA_{0, k} \left( s \right) \right) \right)}_{q} ds \\
		&\leq C \int_{1}^{2^{k}} \left( t-s \right)^{- \frac{n}{2} \left( 1- \frac{1}{q} \right) - \frac{1}{2}} \left( \norm{u \left( s \right)}_{p}^{p} + \norm{\AA_{0, k} \left( s \right)}_{p}^{p} \right) ds \\
		&\leq C \left( t-2^{k} \right)^{- \frac{n}{2} \left( 1- \frac{1}{q} \right) - \frac{1}{2}} \int_{1}^{2^{k}} s^{- \frac{n}{2} \left( p-1 \right)} ds \\
		&\leq Ct^{- \frac{n}{2} \left( 1- \frac{1}{q} \right) - \frac{1}{2}}.
	\end{align*}
	By \eqref{eq:P_decay}, Lemmas \ref{lem:heat_LpLq} and \ref{lem:P_asymp_sub_gene}, and the inductive hypothesis, we have
	\begin{align*}
		\norm{R_{0, k+1, 4} \left( t \right)}_{q}
		&\leq C \int_{2^{k}}^{t/2} \norm{\nabla e^{\left( t-s \right) \Delta} \left( f \left( u \left( s \right) \right) -f \left( \AA_{0, k} \left( s \right) \right) \right)}_{q} ds \\
		&\leq C \int_{2^{k}}^{t/2} \left( t-s \right)^{- \frac{n}{2} \left( 1- \frac{1}{q} \right) - \frac{1}{2}} \norm{f \left( u \left( s \right) \right) -f \left( \AA_{0, k} \left( s \right) \right)}_{1} ds \\
		&\leq Ct^{- \frac{n}{2} \left( 1- \frac{1}{q} \right) - \frac{1}{2}} \int_{2^{k}}^{t/2} \left( \norm{u \left( s \right)}_{\infty}^{p-1} + \norm{\AA_{0, k} \left( s \right)}_{\infty}^{p-1} \right) \norm{u \left( s \right) - \AA_{0, k} \left( s \right)}_{1} ds \\
		&\leq Ct^{- \frac{n}{2} \left( 1- \frac{1}{q} \right) - \frac{1}{2}} \int_{2^{k}}^{t/2} s^{- \frac{n}{2} \left( p-1 \right) - \left( k+1 \right) \sigma} ds \\
		&=Ct^{- \frac{n}{2} \left( 1- \frac{1}{q} \right) - \frac{1}{2}} \int_{2^{k}}^{t/2} s^{- \frac{1}{2} - \left( k+2 \right) \sigma} ds \\
		&\leq Ct^{- \frac{n}{2} \left( 1- \frac{1}{q} \right) - \frac{1}{2}} \times \begin{dcases}
			t^{\frac{1}{2} - \left( k+2 \right) \sigma}, &\qquad \left( k+2 \right) \sigma < \frac{1}{2}, \\
			\log \left( 2+t \right), &\qquad \left( k+2 \right) \sigma = \frac{1}{2}, \\
			1, &\qquad \left( k+2 \right) \sigma > \frac{1}{2},
		\end{dcases} \\
		\norm{R_{0, k+1, 5} \left( t \right)}_{q}
		&\leq C \int_{t/2}^{t} \norm{\nabla e^{\left( t-s \right) \Delta} \left( f \left( u \left( s \right) \right) -f \left( \AA_{0, k} \left( s \right) \right) \right)}_{q} ds \\
		&\leq C \int_{t/2}^{t} \left( t-s \right)^{- \frac{1}{2}} \norm{f \left( u \left( s \right) \right) -f \left( \AA_{0, k} \left( s \right) \right)}_{q} ds \\
		&\leq C \int_{t/2}^{t} \left( t-s \right)^{- \frac{1}{2}} \left( \norm{u \left( s \right)}_{\infty}^{p-1} + \norm{\AA_{0, k} \left( s \right)}_{\infty}^{p-1} \right) \norm{u \left( s \right) - \AA_{0, k} \left( s \right)}_{q} ds \\
		&\leq C \int_{t/2}^{t} \left( t-s \right)^{- \frac{1}{2}} s^{- \frac{n}{2} \left( p-1 \right) - \frac{n}{2} \left( 1- \frac{1}{q} \right) - \left( k+1 \right) \sigma} ds \\
		&\leq Ct^{- \frac{n}{2} \left( 1- \frac{1}{q} \right) - \frac{n}{2} \left( p-1 \right) - \left( k+1 \right) \sigma} \int_{t/2}^{t} \left( t-s \right)^{- \frac{1}{2}} ds \\
		&\leq Ct^{- \frac{n}{2} \left( 1- \frac{1}{q} \right) - \frac{n}{2} \left( p-1 \right) - \left( k+1 \right) \sigma + \frac{1}{2}} \\
		&=Ct^{- \frac{n}{2} \left( 1- \frac{1}{q} \right) - \left( k+2 \right) \sigma}.
	\end{align*}
	Combining these estimates yields the desired result.
	The case where $k=1$ can be shown by the same argument with the aid of Proposition \ref{pro:P_asymp-0}.
\end{proof}

\subsection{Proof of Theorem \ref{th:P_asymp_critical_gene}} \label{sec:P_asymp_critical_gene}

To show Theorem \ref{th:P_asymp_critical_gene}, we need the weighted $L^{1}$-estimate of the difference between the asymptotic profiles given in Theorem \ref{th:P_asymp_sub_gene}.

\begin{lem} \label{lem:P_asymp_critical_gene}
	Let $k \in \Z_{>0}$, $1+ \frac{1}{n} <p<1+ \frac{k+1}{kn}$, and $u_{0} \in L^{1} \left( \R^{n} \right)$.
	Then, there exists $C>0$ such that the estimate
	\begin{align*}
		\norm{\abs{x} \left( \AA_{0, k} \left( t \right) - \AA_{0, k-1} \left( t \right) \right)}_{1} &\leq Ct^{\frac{1}{2} -k \sigma}
	\end{align*}
	holds for any $t>1$.
\end{lem}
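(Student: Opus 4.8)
The plan is to argue by induction on $k \in \Z_{>0}$, exploiting the recursive identities already obtained in the proof of Lemma \ref{lem:P_asymp_sub_gene}: namely $\AA_{0, 1} \left( t \right) - \AA_{0, 0} \left( t \right) = \int_{0}^{t} a \cdot \nabla e^{\left( t-s \right) \Delta} f \left( \AA_{0} \left( s \right) \right) ds$ and, for $k \geq 2$,
\[
	\AA_{0, k} \left( t \right) - \AA_{0, k-1} \left( t \right) = \int_{1}^{t} a \cdot \nabla e^{\left( t-s \right) \Delta} \left( f \left( \AA_{0, k-1} \left( s \right) \right) - f \left( \AA_{0, k-2} \left( s \right) \right) \right) ds .
\]
The main device is the weighted $L^{1}$-estimate in Lemma \ref{lem:heat_weight}: applying it under the time integral reduces $\norm{\abs{x} \left( \AA_{0, k} \left( t \right) - \AA_{0, k-1} \left( t \right) \right)}_{1}$ to time integrals of the weighted and unweighted $L^{1}$-norms of the nonlinear integrand, which are then controlled by Lemma \ref{lem:P_asymp_sub_gene} together with, for $k \geq 2$, the inductive hypothesis applied to $\AA_{0, k-1} - \AA_{0, k-2}$.

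For the base case $k = 1$ I would use the explicit form $\AA_{0} \left( s \right) = \MM_{0} \left( u_{0} \right) \delta_{s} G_{1}$ and the growth bound $\abs{f \left( \AA_{0} \left( s \right) \right)} \leq C \abs{\AA_{0} \left( s \right)}^{p}$; a change of variables gives $\norm{f \left( \AA_{0} \left( s \right) \right)}_{1} \leq C s^{- \frac{n}{2} \left( p-1 \right)} = C s^{- \frac{1}{2} - \sigma}$ and $\norm{\abs{x} f \left( \AA_{0} \left( s \right) \right)}_{1} \leq C s^{- \sigma}$. Inserting these into Lemma \ref{lem:heat_weight} yields
\[
	\norm{\abs{x} \left( \AA_{0, 1} \left( t \right) - \AA_{0, 0} \left( t \right) \right)}_{1} \leq C \int_{0}^{t} \left( \left( t-s \right)^{- \frac{1}{2}} s^{- \sigma} + s^{- \frac{1}{2} - \sigma} \right) ds ,
\]
and since $p < 1 + 2/n$ is equivalent to $\sigma < 1/2$, both terms are integrable near $s = 0$ and a direct computation (e.g.\ by scaling $s = t \rho$) gives the bound $C t^{\frac{1}{2} - \sigma}$, which is the assertion for $k = 1$ (indeed for all $t > 0$).

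For the induction step, assume the lemma for $k - 1 \geq 1$. Since the range $1 + 1/n < p < 1 + \frac{k+1}{kn}$ is contained in the $p$-ranges required by Lemma \ref{lem:P_asymp_sub_gene} and by the inductive hypothesis at indices $k-1$ and $k-2$ (the case $\AA_{0, 0} = \AA_{0}$ being explicit), one has for $s > 1$ the bounds $\norm{\AA_{0, j} \left( s \right)}_{\infty} \leq C s^{- n/2}$, $\norm{\AA_{0, k-1} \left( s \right) - \AA_{0, k-2} \left( s \right)}_{1} \leq C s^{- \left( k-1 \right) \sigma}$ (both from Lemma \ref{lem:P_asymp_sub_gene}) and $\norm{\abs{x} \left( \AA_{0, k-1} \left( s \right) - \AA_{0, k-2} \left( s \right) \right)}_{1} \leq C s^{\frac{1}{2} - \left( k-1 \right) \sigma}$ (inductive hypothesis). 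Combining these with the Lipschitz bound $\abs{f \left( \xi \right) - f \left( \eta \right)} \leq C \left( \abs{\xi}^{p-1} + \abs{\eta}^{p-1} \right) \abs{\xi - \eta}$ and the identity $\frac{n}{2} \left( p - 1 \right) = \frac{1}{2} + \sigma$ gives
\[
	\norm{f \left( \AA_{0, k-1} \left( s \right) \right) - f \left( \AA_{0, k-2} \left( s \right) \right)}_{1} \leq C s^{- \frac{1}{2} - k \sigma}, \qquad \norm{\abs{x} \left( f \left( \AA_{0, k-1} \left( s \right) \right) - f \left( \AA_{0, k-2} \left( s \right) \right) \right)}_{1} \leq C s^{- k \sigma}.
\]
Then Lemma \ref{lem:heat_weight} applied inside $\int_{1}^{t}$ gives $\norm{\abs{x} \left( \AA_{0, k} \left( t \right) - \AA_{0, k-1} \left( t \right) \right)}_{1} \leq C \int_{1}^{t} \left( \left( t-s \right)^{- 1/2} s^{- k \sigma} + s^{- 1/2 - k \sigma} \right) ds$, and the hypothesis $p < 1 + \frac{k+1}{kn}$ — equivalently $\sigma < \frac{1}{2k}$, so $k \sigma < \frac{1}{2}$ — ensures that, after splitting the first integral at $s = t/2$ for $t > 2$ and treating $1 < t \leq 2$ directly, every resulting term is bounded by $C t^{\frac{1}{2} - k \sigma}$.

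I expect the only delicate points to be the bookkeeping — one must keep the powers of $s$ and $t$ exact (this is where $\sigma < \frac{1}{2k}$ and $\frac{n}{2} \left( p-1 \right) = \frac{1}{2} + \sigma$ enter) so that the sharp rate $t^{\frac{1}{2} - k \sigma}$ emerges rather than a weaker power — and the verification that $f \left( \AA_{0, k-1} \left( s \right) \right) - f \left( \AA_{0, k-2} \left( s \right) \right)$ belongs to $L^{1}_{1} \left( \R^{n} \right)$ for $s > 1$, which is precisely the finiteness supplied by the inductive hypothesis and is what makes Lemma \ref{lem:heat_weight} legitimately applicable.
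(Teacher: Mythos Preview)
Your proposal is correct and follows essentially the same approach as the paper's proof: induction on $k$, with Lemma~\ref{lem:heat_weight} applied under the time integral to reduce matters to weighted and unweighted $L^{1}$-bounds on the nonlinear integrand, which are then controlled via Lemma~\ref{lem:P_asymp_sub_gene} and the inductive hypothesis. The only cosmetic differences are that the paper handles the resulting time integrals by the direct change of variables $s = t\theta$ rather than by splitting at $t/2$, and indexes the induction as $k \to k+1$ rather than $k-1 \to k$.
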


\begin{proof}
	We prove the assertion by induction on $k \in \Z_{>0}$.
	First, we consider the case where $k=1$.
	Let $1+1/n<p<1+2/n$ and let $t>0$.
	By Lemma \ref{lem:heat_weight}, we have
	\begin{align*}
		\norm{\abs{x} \left( \AA_{0, 1} \left( t \right) - \AA_{0, 0} \left( t \right) \right)}_{1}
		&\leq C \sum_{j=1}^{n} \int_{0}^{t} \norm{\abs{x} \partial_{j} e^{\left( t-s \right) \Delta} f \left( \AA_{0} \left( s \right) \right)}_{1} ds \\
		&\leq C \int_{0}^{t} \left( t-s \right)^{- \frac{1}{2}} \norm{\AA_{0} \left( s \right)}_{\infty}^{p-1} \norm{\abs{x} \AA_{0} \left( s \right)}_{1} ds+C \int_{0}^{t} \norm{\AA_{0} \left( s \right)}_{p}^{p} ds \\
		&\leq C \int_{0}^{t} \left( t-s \right)^{- \frac{1}{2}} s^{\frac{1}{2} - \frac{n}{2} \left( p-1 \right)} ds+C \int_{0}^{t} s^{- \frac{n}{2} \left( p-1 \right)} ds \\
		&=Ct^{\frac{1}{2} - \sigma} \int_{0}^{1} \left( 1- \theta \right)^{- \frac{1}{2}} \theta^{- \sigma} d \theta +Ct^{\frac{1}{2} - \sigma} \int_{0}^{1} \theta^{- \frac{1}{2} - \sigma} d \theta \\
		&\leq Ct^{\frac{1}{2} - \sigma}.
	\end{align*}
	Here, we have used the fact that the estimates
	\begin{align*}
		\norm{\abs{x} \AA_{0} \left( s \right)}_{1} = \abs{\MM_{0} \left( u_{0} \right)} \norm{\abs{x} \delta_{s} G_{1}}_{1} \leq \norm{u_{0}}_{1} \norm{\abs{x} G_{1}}_{1} s^{\frac{1}{2}}
	\end{align*}
	hold for any $s>0$.

	Next, we assume that Lemma \ref{lem:P_asymp_critical_gene} is true for some $k \in \Z_{>0}$.
	Let $1+ \frac{1}{n} <p<1+ \frac{k+2}{\left( k+1 \right) n}$ and let $t>1$.
	Then, it follows from the inductive hypothesis, Lemmas \ref{lem:heat_weight} and \ref{lem:P_asymp_sub_gene} that
	\begin{align*}
		&\norm{\abs{x} \left( \AA_{0, k+1} \left( t \right) - \AA_{0, k} \left( t \right) \right)}_{1} \\
		&\hspace{1cm} \leq C \sum_{j=1}^{n} \int_{1}^{t} \norm{\abs{x} \partial_{j} e^{\left( t-s \right) \Delta} \left( f \left( \AA_{0, k} \left( s \right) \right) -f \left( \AA_{0, k-1} \left( s \right) \right) \right)}_{1} ds \\
		&\hspace{1cm} \leq C \int_{1}^{t} \left( t-s \right)^{- \frac{1}{2}} \norm{\abs{x} \left( f \left( \AA_{0, k} \left( s \right) \right) -f \left( \AA_{0, k-1} \left( s \right) \right) \right)}_{1} ds \\
		&\hspace{2cm} +C \int_{1}^{t} \norm{f \left( \AA_{0, k} \left( s \right) \right) -f \left( \AA_{0, k-1} \left( s \right) \right)}_{1} ds \\
		&\hspace{1cm} \leq C \int_{1}^{t} \left( t-s \right)^{- \frac{1}{2}} \left( \norm{\AA_{0, k} \left( s \right)}_{\infty}^{p-1} + \norm{\AA_{0, k-1} \left( s \right)}_{\infty}^{p-1} \right) \norm{\abs{x} \left( \AA_{0, k} \left( s \right) - \AA_{0, k-1} \left( s \right) \right)}_{1} ds \\
		&\hspace{2cm} +C \int_{1}^{t} \left( \norm{\AA_{0, k} \left( s \right)}_{\infty}^{p-1} + \norm{\AA_{0, k-1} \left( s \right)}_{\infty}^{p-1} \right) \norm{\AA_{0, k} \left( s \right) - \AA_{0, k-1} \left( s \right)}_{1} ds \\
		&\hspace{1cm} \leq C \int_{1}^{t} \left( t-s \right)^{- \frac{1}{2}} s^{- \frac{n}{2} \left( p-1 \right) + \frac{1}{2} -k \sigma} ds+C \int_{1}^{t} s^{- \frac{n}{2} \left( p-1 \right) -k \sigma} ds \\
		&\hspace{1cm} =Ct^{\frac{1}{2} - \left( k+1 \right) \sigma} \int_{1/t}^{1} \left( 1- \theta \right)^{- \frac{1}{2}} \theta^{- \left( k+1 \right) \sigma} d \theta +Ct^{\frac{1}{2} - \left( k+1 \right) \sigma} \int_{1/t}^{1} \theta^{- \frac{1}{2} - \left( k+1 \right) \sigma} d \theta \\
		&\hspace{1cm} \leq Ct^{\frac{1}{2} - \left( k+1 \right) \sigma} \int_{0}^{1} \left( 1- \theta \right)^{- \frac{1}{2}} \theta^{- \left( k+1 \right) \sigma} d \theta +Ct^{\frac{1}{2} - \left( k+1 \right) \sigma} \int_{0}^{1} \theta^{- \frac{1}{2} - \left( k+1 \right) \sigma} d \theta \\
		&\hspace{1cm} \leq Ct^{\frac{1}{2} - \left( k+1 \right) \sigma}.
	\end{align*}
	This completes the inductive argument.
\end{proof}

We next give the proof of Theorem \ref{th:P_asymp_critical_gene}.

\begin{proof}[Proof of Theorem \ref{th:P_asymp_critical_gene}]
	Let $q \in \left[ 1, + \infty \right]$ and let $t>2^{k+1}$.
	Using \eqref{I}, we decompose the remainder $u \left( t \right) - \widetilde{\AA}_{0, k+1} \left( t \right)$ into nine parts:
	\begin{align*}
		&u \left( t \right) - \widetilde{\AA}_{0, k+1} \left( t \right) \\
		&\hspace{0.5cm} = \left( e^{t \Delta} u_{0} - \Lambda_{0, 0} \left( t; u_{0} \right) \right) + \int_{0}^{1} a \cdot \nabla e^{\left( t-s \right) \Delta} \left( f \left( u \left( s \right) \right) -f \left( \AA_{0} \left( s \right) \right) \right) ds \\
		&\hspace{1cm} + \int_{1}^{2^{k}} a \cdot \nabla e^{\left( t-s \right) \Delta} \left( f \left( u \left( s \right) \right) -f \left( \AA_{0, k-1} \left( s \right) \right) \right) ds \\
		&\hspace{1cm} + \int_{t/2}^{t} a \cdot \nabla e^{\left( t-s \right) \Delta} \left( f \left( u \left( s \right) \right) -f \left( \AA_{0, k-1} \left( s \right) \right) \right) ds \\
		&\hspace{1cm} + \sum_{j=1}^{n} a_{j} \int_{2^{k}}^{t/2} \int_{0}^{1} \left( -s \Delta \right) \partial_{j} e^{\left( t-s \theta \right) \Delta} \left( f \left( u \left( s \right) \right) -f \left( \AA_{0, k-1} \left( s \right) \right) \right) d \theta ds \\
		&\hspace{1cm} + \int_{2^{k}}^{t/2} a \cdot \nabla e^{t \Delta} \left( f \left( u \left( s \right) \right) -f \left( \AA_{0, k} \left( s \right) \right) \right) ds \\
		&\hspace{1cm} + \sum_{j=1}^{n} a_{j} \int_{2^{k}}^{t/2} \left( \partial_{j} e^{t \Delta} \left( f \left( \AA_{0, k} \left( s \right) \right) -f \left( \AA_{0, k-1} \left( s \right) \right) \right) - \Lambda_{e_{j}, 0} \left( t; f \left( \AA_{0, k} \left( s \right) \right) -f \left( \AA_{0, k-1} \left( s \right) \right) \right) \right) ds \\
		&\hspace{1cm} - \sum_{j=1}^{n} a_{j} \int_{1}^{2^{k}} \Lambda_{e_{j}, 0} \left( t; f \left( \AA_{0, k} \left( s \right) \right) -f \left( \AA_{0, k-1} \left( s \right) \right) \right) ds \\
		&\hspace{1cm} - \sum_{j=1}^{n} a_{j} \int_{t/2}^{t} \Lambda_{e_{j}, 0} \left( t; f \left( \AA_{0, k} \left( s \right) \right) -f \left( \AA_{0, k-1} \left( s \right) \right) \right) ds \\
		&\hspace{0.5cm} \eqqcolon \sum_{\ell =1}^{9} \widetilde{R}_{0, k+1, \ell} \left( t \right).
	\end{align*}
	It follows from \eqref{eq:P_decay}, Lemmas \ref{lem:heat_LpLq} and \ref{lem:P_asymp_sub_gene} that
	\begin{align*}
		\norm{\widetilde{R}_{0, k+1, 2} \left( t \right)}_{q}
		&\leq C \int_{0}^{1} \norm{\nabla e^{\left( t-s \right) \Delta} \left( f \left( u \left( s \right) \right) -f \left( \AA_{0} \left( s \right) \right) \right)}_{q} ds \\
		&\leq C \int_{0}^{1} \left( t-s \right)^{- \frac{n}{2} \left( 1- \frac{1}{q} \right) - \frac{1}{2}} \left( \norm{u \left( s \right)}_{p}^{p} + \norm{\AA_{0} \left( s \right)}_{p}^{p} \right) ds \\
		&\leq C \left( t-1 \right)^{- \frac{n}{2} \left( 1- \frac{1}{q} \right) - \frac{1}{2}} \int_{0}^{1} s^{- \frac{n}{2} \left( p-1 \right)} ds \\
		&\leq Ct^{- \frac{n}{2} \left( 1- \frac{1}{q} \right) - \frac{1}{2}}, \\
		\norm{\widetilde{R}_{0, k+1, 3} \left( t \right)}_{q}
		&\leq C \int_{1}^{2^{k}} \norm{\nabla e^{\left( t-s \right) \Delta} \left( f \left( u \left( s \right) \right) -f \left( \AA_{0, k-1} \left( s \right) \right) \right)}_{q} ds \\
		&\leq C \int_{1}^{2^{k}} \left( t-s \right)^{- \frac{n}{2} \left( 1- \frac{1}{q} \right) - \frac{1}{2}} \left( \norm{u \left( s \right)}_{p}^{p} + \norm{\AA_{0, k-1} \left( s \right)}_{p}^{p} \right) ds \\
		&\leq C \left( t-2^{k} \right)^{- \frac{n}{2} \left( 1- \frac{1}{q} \right) - \frac{1}{2}} \int_{1}^{2^{k}} s^{- \frac{n}{2} \left( p-1 \right)} ds \\
		&\leq Ct^{- \frac{n}{2} \left( 1- \frac{1}{q} \right) - \frac{1}{2}}, \\
		\norm{\widetilde{R}_{0, k+1, 8} \left( t \right)}_{q}
		&\leq C \sum_{j=1}^{n} \int_{1}^{2^{k}} \norm{\Lambda_{e_{j}, 0} \left( t; f \left( \AA_{0, k} \left( s \right) \right) -f \left( \AA_{0, k-1} \left( s \right) \right) \right)}_{q} ds \\
		&\leq Ct^{- \frac{1}{2}} \sum_{j=1}^{n} \int_{1}^{2^{k}} \norm{f \left( \AA_{0, k} \left( s \right) \right) - f \left( \AA_{0, k-1} \left( s \right) \right)}_{1} \norm{\delta_{t} \left( x_{j} G_{1} \right)}_{q} ds \\
		&\leq Ct^{- \frac{n}{2} \left( 1- \frac{1}{q} \right) - \frac{1}{2}} \int_{1}^{2^{k}} \left( \norm{\AA_{0, k} \left( s \right)}_{p}^{p} + \norm{\AA_{0, k-1} \left( s \right)}_{p}^{p} \right) ds \\
		&\leq Ct^{- \frac{n}{2} \left( 1- \frac{1}{q} \right) - \frac{1}{2}} \int_{1}^{2^{k}} s^{- \frac{n}{2} \left( p-1 \right)} ds \\
		&\leq Ct^{- \frac{n}{2} \left( 1- \frac{1}{q} \right) - \frac{1}{2}}, \\
		\norm{\widetilde{R}_{0, k+1, 9} \left( t \right)}_{q}
		&\leq C \sum_{j=1}^{n} \int_{t/2}^{t} \norm{\Lambda_{e_{j}, 0} \left( t; f \left( \AA_{0, k} \left( s \right) \right) -f \left( \AA_{0, k-1} \left( s \right) \right) \right)}_{q} ds \\
		&\leq Ct^{- \frac{1}{2}} \sum_{j=1}^{n} \int_{t/2}^{t} \norm{f \left( \AA_{0, k} \left( s \right) \right) - f \left( \AA_{0, k-1} \left( s \right) \right)}_{1} \norm{\delta_{t} \left( x_{j} G_{1} \right)}_{q} ds \\
		&\leq Ct^{- \frac{n}{2} \left( 1- \frac{1}{q} \right) - \frac{1}{2}} \int_{t/2}^{t} \left( \norm{\AA_{0, k} \left( s \right)}_{\infty}^{p-1} + \norm{\AA_{0, k-1} \left( s \right)}_{\infty}^{p-1} \right) \norm{\AA_{0, k} \left( s \right) - \AA_{0, k-1} \left( s \right)}_{1} ds \\
		&\leq Ct^{- \frac{n}{2} \left( 1- \frac{1}{q} \right) - \frac{1}{2}} \int_{t/2}^{t} s^{- \frac{n}{2} \left( p-1 \right) -k \sigma} ds \\
		&=Ct^{- \frac{n}{2} \left( 1- \frac{1}{q} \right) - \frac{1}{2}} \int_{t/2}^{t} s^{-1} ds \\
		&\leq Ct^{- \frac{n}{2} \left( 1- \frac{1}{q} \right) - \frac{1}{2}}.
	\end{align*}
	By \eqref{eq:P_decay}, Theorem \ref{th:P_asymp_sub_gene}, and Lemmas \ref{lem:heat_LpLq} and \ref{lem:P_asymp_sub_gene}, we have
	\begin{align*}
		\norm{\widetilde{R}_{0, k+1, 4} \left( t \right)}_{q}
		&\leq C \int_{t/2}^{t} \norm{\nabla e^{\left( t-s \right) \Delta} \left( f \left( u \left( s \right) \right) -f \left( \AA_{0, k-1} \left( s \right) \right) \right)}_{q} ds \\
		&\leq C \int_{t/2}^{t} \left( t-s \right)^{- \frac{1}{2}} \norm{f \left( u \left( s \right) \right) -f \left( \AA_{0, k-1} \left( s \right) \right)}_{q} ds \\
		&\leq C \int_{t/2}^{t} \left( t-s \right)^{- \frac{1}{2}} \left( \norm{u \left( s \right)}_{\infty}^{p-1} + \norm{\AA_{0, k-1} \left( s \right)}_{\infty}^{p-1} \right) \norm{u \left( s \right) - \AA_{0, k-1} \left( s \right)}_{q} ds \\
		&\leq C \int_{t/2}^{t} \left( t-s \right)^{- \frac{1}{2}} s^{- \frac{n}{2} \left( p-1 \right) - \frac{n}{2} \left( 1- \frac{1}{q} \right) -k \sigma} ds \\
		&\leq Ct^{- \frac{n}{2} \left( 1- \frac{1}{q} \right) - \frac{n}{2} \left( p-1 \right) -k \sigma} \int_{t/2}^{t} \left( t-s \right)^{- \frac{1}{2}} ds \\
		&\leq Ct^{- \frac{n}{2} \left( 1- \frac{1}{q} \right) - \frac{n}{2} \left( p-1 \right) -k \sigma + \frac{1}{2}} \\
		&=Ct^{- \frac{n}{2} \left( 1- \frac{1}{q} \right) - \frac{1}{2}}, \\
		\norm{\widetilde{R}_{0, k+1, 5} \left( t \right)}_{q}
		&\leq C \sum_{j=1}^{n} \int_{2^{k}}^{t/2} \int_{0}^{1} s \norm{\partial_{j} \Delta e^{\left( t-s \theta \right) \Delta} \left( f \left( u \left( s \right) \right) -f \left( \AA_{0, k-1} \left( s \right) \right) \right)}_{q} d \theta ds \\
		&\leq C \int_{2^{k}}^{t/2} \int_{0}^{1} s \left( t-s \theta \right)^{- \frac{n}{2} \left( 1- \frac{1}{q} \right) - \frac{3}{2}} \norm{f \left( u \left( s \right) \right) -f \left( \AA_{0, k-1} \left( s \right) \right)}_{1} d \theta ds \\
		&\leq Ct^{- \frac{n}{2} \left( 1- \frac{1}{q} \right) - \frac{3}{2}} \int_{2^{k}}^{t/2} s \left( \norm{u \left( s \right)}_{\infty}^{p-1} + \norm{\AA_{0, k-1} \left( s \right)}_{\infty}^{p-1} \right) \norm{u \left( s \right) - \AA_{0, k-1} \left( s \right)}_{1} ds \\
		&\leq Ct^{- \frac{n}{2} \left( 1- \frac{1}{q} \right) - \frac{3}{2}} \int_{2^{k}}^{t/2} s^{1- \frac{n}{2} \left( p-1 \right) -k \sigma} ds \\
		&=Ct^{- \frac{n}{2} \left( 1- \frac{1}{q} \right) - \frac{3}{2}} \int_{2^{k}}^{t/2} ds \\
		&\leq Ct^{- \frac{n}{2} \left( 1- \frac{1}{q} \right) - \frac{1}{2}}, \\
		\norm{\widetilde{R}_{0, k+1, 6} \left( t \right)}_{q}
		&\leq C \int_{2^{k}}^{t/2} \norm{\nabla e^{t \Delta} \left( f \left( u \left( s \right) \right) -f \left( \AA_{0, k} \left( s \right) \right) \right)}_{q} ds \\
		&\leq Ct^{- \frac{n}{2} \left( 1- \frac{1}{q} \right) - \frac{1}{2}} \int_{2^{k}}^{t/2} \norm{f \left( u \left( s \right) \right) -f \left( \AA_{0, k} \left( s \right) \right)}_{1} ds \\
		&\leq Ct^{- \frac{n}{2} \left( 1- \frac{1}{q} \right) - \frac{1}{2}} \int_{2^{k}}^{t/2} \left( \norm{u \left( s \right)}_{\infty}^{p-1} + \norm{\AA_{0, k} \left( s \right)}_{\infty}^{p-1} \right) \norm{u \left( s \right) - \AA_{0, k} \left( s \right)}_{1} ds \\
		&\leq Ct^{- \frac{n}{2} \left( 1- \frac{1}{q} \right) - \frac{1}{2}} \int_{2^{k}}^{t/2} s^{- \frac{n}{2} \left( p-1 \right) - \frac{1}{2}} \log \left( 2+s \right) ds \\
		&=Ct^{- \frac{n}{2} \left( 1- \frac{1}{q} \right) - \frac{1}{2}} \int_{2^{k}}^{t/2} s^{-1- \sigma} \log \left( 2+s \right) ds \\
		&\leq Ct^{- \frac{n}{2} \left( 1- \frac{1}{q} \right) - \frac{1}{2}} \int_{2^{k}}^{t/2} s^{-1- \frac{\sigma}{2}} ds \\
		&\leq Ct^{- \frac{n}{2} \left( 1- \frac{1}{q} \right) - \frac{1}{2}}.
	\end{align*}
	Here, we have used the fact that
	\begin{align*}
		\sup_{s>1} s^{- \frac{\sigma}{2}} \log \left( 2+s \right) <+ \infty.
	\end{align*}
	In the case where $k=1$, we apply Proposition \ref{pro:P_asymp-0} to the estimates of $\widetilde{R}_{0, k+1, 4} \left( t \right)$ and $\widetilde{R}_{0, k+1, 5} \left( t \right)$ instead of Theorem \ref{th:P_asymp_sub_gene}.
	Due to Proposition \ref{pro:heat_asymp}, Lemmas \ref{lem:P_asymp_sub_gene} and \ref{lem:P_asymp_critical_gene}, we obtain
	\begin{align*}
		\norm{\widetilde{R}_{0, k+1, 1} \left( t \right)}_{q}
		&\leq Ct^{- \frac{n}{2} \left( 1- \frac{1}{q} \right) - \frac{1}{2}} \sum_{\abs{\alpha} =1} \norm{x^{\alpha} u_{0}}_{1}, \\
		\norm{\widetilde{R}_{0, k+1, 7} \left( t \right)}_{q}
		&\leq Ct^{- \frac{n}{2} \left( 1- \frac{1}{q} \right) -1} \sum_{\abs{\alpha} =1} \int_{2^{k}}^{t/2} \norm{x^{\alpha} \left( f \left( \AA_{0, k} \left( s \right) \right) -f \left( \AA_{0, k-1} \left( s \right) \right) \right)}_{1} ds \\
		&\leq Ct^{- \frac{n}{2} \left( 1- \frac{1}{q} \right) -1} \int_{2^{k}}^{t/2} \left( \norm{\AA_{0, k} \left( s \right)}_{\infty}^{p-1} + \norm{\AA_{0, k-1} \left( s \right)}_{\infty}^{p-1} \right) \\
		&\hspace{6cm} \times \norm{\abs{x} \left( \AA_{0, k} \left( s \right) - \AA_{0, k-1} \left( s \right) \right)}_{1} ds \\
		&\leq Ct^{- \frac{n}{2} \left( 1- \frac{1}{q} \right) -1} \int_{2^{k}}^{t/2} s^{- \frac{n}{2} \left( p-1 \right) + \frac{1}{2} -k \sigma} ds \\
		&=Ct^{- \frac{n}{2} \left( 1- \frac{1}{q} \right) -1} \int_{2^{k}}^{t/2} s^{- \frac{1}{2}} ds \\
		&\leq Ct^{- \frac{n}{2} \left( 1- \frac{1}{q} \right) - \frac{1}{2}}.
	\end{align*}
	This completes the proof of Theorem \ref{th:P_asymp_critical_gene}.
\end{proof}

\begin{rem} \label{rem:P_asymp_critical_gene}
	Under the same assumption as in Theorem \ref{th:P_asymp_critical_gene}, for any $q \in \left[ 1, + \infty \right]$, there exists $C>0$ such that the estimate
	\begin{align*}
		t^{\frac{n}{2} \left( 1- \frac{1}{q} \right)} \norm{\widetilde{\AA}_{0, k+1} \left( t \right) - \AA_{0, k} \left( t \right)}_{q} &\leq Ct^{- \frac{1}{2}} \log t
	\end{align*}
	holds for all $t>1$.
	In fact, by Lemma \ref{lem:P_asymp_sub_gene}, we have
	\begin{align*}
		&\norm{\widetilde{\AA}_{0, k+1} \left( t \right) - \AA_{0, k} \left( t \right)}_{q} \\
		&\hspace{1cm} \leq C \sum_{j=1}^{n} \int_{1}^{t} \norm{\Lambda_{e_{j}, 0} \left( t; f \left( \AA_{0, k} \left( s \right) \right) -f \left( \AA_{0, k-1} \left( s \right) \right) \right)}_{q} ds \\
		&\hspace{1cm} \leq Ct^{- \frac{1}{2}} \sum_{j=1}^{n} \int_{1}^{t} \norm{f \left( \AA_{0, k} \left( s \right) \right) -f \left( \AA_{0, k-1} \left( s \right) \right)}_{1} \norm{\delta_{t} \left( x_{j} G_{1} \right)}_{q} ds \\
		&\hspace{1cm} \leq Ct^{- \frac{n}{2} \left( 1- \frac{1}{q} \right) - \frac{1}{2}} \int_{1}^{t} \left( \norm{\AA_{0, k} \left( s \right)}_{\infty}^{p-1} + \norm{\AA_{0, k-1} \left( s \right)}_{\infty}^{p-1} \right) \norm{\AA_{0, k} \left( s \right) - \AA_{0, k-1} \left( s \right)}_{1} ds \\
		&\hspace{1cm} \leq Ct^{- \frac{n}{2} \left( 1- \frac{1}{q} \right) - \frac{1}{2}} \int_{1}^{t} s^{- \frac{n}{2} \left( p-1 \right) -k \sigma} ds \\
		&\hspace{1cm} =Ct^{- \frac{n}{2} \left( 1- \frac{1}{q} \right) - \frac{1}{2}} \int_{1}^{t} s^{-1} ds \\
		&\hspace{1cm} =Ct^{- \frac{n}{2} \left( 1- \frac{1}{q} \right) - \frac{1}{2}} \log t.
	\end{align*}
\end{rem}

\subsection{Proof of Theorem \ref{th:P_asymp_super_gene}} \label{sec:P_asymp_super_gene}

The following lemma gives the asymptotic behavior of the Duhamel term in \eqref{I}.

\begin{lem} \label{lem:P_asymp_super_gene}
	Under the same assumption as in Theorem \ref{th:P_asymp_super_gene}, for any $q \in \left[ 1, + \infty \right]$, there exists $C>0$ such that the estimate
	\begin{align*}
		t^{\frac{n}{2} \left( 1- \frac{1}{q} \right)} \norm{\int_{0}^{t} a \cdot \nabla e^{\left( t-s \right) \Delta} f \left( u \left( s \right) \right) ds- \left( \AA_{0, k} \left( t \right) - \AA_{0} \left( t \right) \right) - a \cdot \nabla e^{t \Delta} \psi_{0, k}}_{q} \leq Ct^{- \left( k+1 \right) \sigma}
	\end{align*}
	holds for all $t>2^{k}$.
\end{lem}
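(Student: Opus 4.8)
\medskip
\noindent\emph{Proof plan.} The idea is to isolate the cancellation between $e^{(t-s)\Delta}$ and $e^{t\Delta}$ and then estimate the resulting pieces with Lemma~\ref{lem:heat_LpLq} and the decay bounds already established. Write $g(s) \coloneqq f(u(s)) - f(\AA_0(s))$ for $s \in (0,1)$ and $h(s) \coloneqq f(u(s)) - f(\AA_{0,k-1}(s))$ for $s \ge 1$. Unfolding the definitions of $\AA_{0,k}(t)$ and $\psi_{0,k}$ gives
\[
\int_0^t a\cdot\nabla e^{(t-s)\Delta} f(u(s))\,ds - \bigl(\AA_{0,k}(t) - \AA_0(t)\bigr) - a\cdot\nabla e^{t\Delta}\psi_{0,k} = I(t) + II(t) - III(t),
\]
with $I(t) \coloneqq \int_0^1 a\cdot\nabla\bigl(e^{(t-s)\Delta} - e^{t\Delta}\bigr) g(s)\,ds$, $II(t) \coloneqq \int_1^t a\cdot\nabla\bigl(e^{(t-s)\Delta} - e^{t\Delta}\bigr) h(s)\,ds$, and $III(t) \coloneqq \int_t^{+\infty} a\cdot\nabla e^{t\Delta} h(s)\,ds$. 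I would first record the two bounds that drive everything: on $(0,1)$, $\norm{g(s)}_1 \le C\bigl(\norm{u(s)}_p^p + \norm{\AA_0(s)}_p^p\bigr) \le C s^{-1/2-\sigma}$, which is integrable there since $\sigma < 1/2$; and, applying Theorem~\ref{th:P_asymp_sub_gene} with index $k-1$ (admissible because $p < 1 + \frac{k+1}{kn} < 1 + \frac{k}{(k-1)n}$ and $k\sigma < 1/2$), or Proposition~\ref{pro:P_asymp-0} when $k=1$, together with $\norm{u(s)}_\infty, \norm{\AA_{0,k-1}(s)}_\infty \le C s^{-n/2}$ from Lemma~\ref{lem:P_asymp_sub_gene}, one gets $\norm{h(s)}_q \le C s^{-1/2 - \frac{n}{2}(1-1/q) - (k+1)\sigma}$ for $s > 2^{k-1}$, while $\norm{h(s)}_1 \le C$ for $1 \le s \le 2^{k-1}$. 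In particular $\psi_{0,k} \in L^1(\R^n)$, the tail $\int_1^{+\infty}\norm{h(s)}_1\,ds$ converging precisely because $(k+1)\sigma > 1/2$, so $a\cdot\nabla e^{t\Delta}\psi_{0,k}$ is meaningful and commutes with the $s$-integral.

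For $I(t)$ and for the part $\int_1^{t/2}$ of $II(t)$ I would use the operator identity $e^{(t-s)\Delta} - e^{t\Delta} = \int_0^1 (-s\Delta) e^{(t-s\theta)\Delta}\,d\theta$, which extracts a factor $s$ and a third spatial derivative of the Gauss kernel; on these ranges $t - s\theta \ge t/2$ for $t > 2^k$, so Lemma~\ref{lem:heat_LpLq} yields
\[
\norm{I(t)}_q \le C t^{-\frac{n}{2}(1-1/q) - \frac{3}{2}} \int_0^1 s\,\norm{g(s)}_1\,ds \le C t^{-\frac{n}{2}(1-1/q) - \frac{3}{2}} \le C t^{-\frac{n}{2}(1-1/q) - (k+1)\sigma}
\]
(the last step because $(k+1)\sigma < \frac{k+1}{2k} \le 1 < \frac{3}{2}$), and likewise the $\int_1^{t/2}$ part of $II(t)$ is bounded by $C t^{-\frac{n}{2}(1-1/q) - \frac{3}{2}} \int_1^{t/2} s\,\norm{h(s)}_1\,ds$, where $\int_1^{t/2} s\,\norm{h(s)}_1\,ds \le C + C\int_1^{t/2} s^{1/2 - (k+1)\sigma}\,ds \le C t^{3/2 - (k+1)\sigma}$; here $\sigma < 1/(2k)$ is used to guarantee $1/2 - (k+1)\sigma > -1/(2k) > -1$, and the additive constant is absorbed since $3/2 - (k+1)\sigma > 1/2$. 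Both pieces are thus $\le C t^{-\frac{n}{2}(1-1/q) - (k+1)\sigma}$.

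On the near-diagonal range $s \in [t/2, t]$ the smoothing identity gains nothing, so for that part of $II(t)$ I would estimate $a\cdot\nabla e^{(t-s)\Delta} h(s)$ and $a\cdot\nabla e^{t\Delta} h(s)$ separately: by Lemma~\ref{lem:heat_LpLq} the first contributes $\int_{t/2}^t (t-s)^{-1/2}\norm{h(s)}_q\,ds \le C t^{-\frac{n}{2}(1-1/q) - (k+1)\sigma}$ (using $\norm{h(s)}_q \le C t^{-1/2 - \frac{n}{2}(1-1/q) - (k+1)\sigma}$ and $\int_{t/2}^t (t-s)^{-1/2}\,ds \le C t^{1/2}$), and the second contributes $\le C t^{-\frac{n}{2}(1-1/q) - 1/2}\int_{t/2}^t \norm{h(s)}_1\,ds \le C t^{-\frac{n}{2}(1-1/q) - (k+1)\sigma}$. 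Finally $\norm{III(t)}_q \le C t^{-\frac{n}{2}(1-1/q) - 1/2}\int_t^{+\infty}\norm{h(s)}_1\,ds \le C t^{-\frac{n}{2}(1-1/q) - 1/2}\int_t^{+\infty} s^{-1/2 - (k+1)\sigma}\,ds = C t^{-\frac{n}{2}(1-1/q) - (k+1)\sigma}$. Summing the contributions of $I$, $II$, $III$ gives the assertion for all $t > 2^k$.

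The step I expect to be the main obstacle is this last one, the near-diagonal part of $II(t)$ together with $III(t)$: the semigroup cancellation is useless there, so the full rate $t^{-(k+1)\sigma}$ must be extracted from the decay of $h(s)$ itself, which is exactly what Theorem~\ref{th:P_asymp_sub_gene} at level $k-1$ supplies. It is also the place where both sides of the standing hypothesis are needed: the supercriticality $(k+1)\sigma > 1/2$ for the convergence of $III(t)$ and of the tail defining $\psi_{0,k}$, and $\sigma < 1/(2k)$ so that $\int_1^{t/2} s^{1/2 - (k+1)\sigma}\,ds$ grows like $t^{3/2 - (k+1)\sigma}$ rather than remaining bounded.
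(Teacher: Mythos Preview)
Your proof is correct and follows essentially the same route as the paper. The paper writes the same decomposition $I+II-III$ (labelled $R_{1,k,1},\ldots,R_{1,k,5}$ there), applies the identity $e^{(t-s)\Delta}-e^{t\Delta}=\int_0^1(-s\Delta)e^{(t-s\theta)\Delta}\,d\theta$ on $[0,t/2]$, and handles the near-diagonal and tail pieces directly using Theorem~\ref{th:P_asymp_sub_gene} at level $k-1$; the only cosmetic difference is that the paper names the subintervals $[1,2^{k-1}]$ and $[2^{k-1},t/2]$ as separate terms and puts the split between $II$ and $III$ at $t/2$ rather than at $t$.
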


\begin{proof}
	By \eqref{eq:P_decay}, Theorem \ref{th:P_asymp_sub_gene}, and Lemma \ref{lem:P_asymp_sub_gene}, we have
	\begin{align*}
		\norm{\psi_{0, k}}_{1} &\leq \int_{0}^{1} \left( \norm{f \left( u \left( s \right) \right)}_{1} + \norm{f \left( \AA_{0} \left( s \right) \right)}_{1} \right) ds+ \int_{1}^{+ \infty} \norm{f \left( u \left( s \right) \right) -f \left( \AA_{0, k-1} \left( s \right) \right)}_{1} ds \\
		&\leq C \int_{0}^{1} \left( \norm{u \left( s \right)}_{p}^{p} + \norm{\AA_{0} \left( s \right)}_{p}^{p} \right) ds \\
		&\hspace{1cm} +C \int_{1}^{+ \infty} \left( \norm{u \left( s \right)}_{\infty}^{p-1} + \norm{\AA_{0, k-1} \left( s \right)}_{\infty}^{p-1} \right) \norm{u \left( s \right) - \AA_{0, k-1} \left( s \right)}_{1} ds \\
		&\leq C \int_{0}^{1} s^{- \frac{n}{2} \left( p-1 \right)} ds+C \int_{1}^{+ \infty} s^{- \frac{n}{2} \left( p-1 \right) -k \sigma} ds \\
		&=C \int_{0}^{1} s^{- \frac{1}{2} - \sigma} ds+C \int_{1}^{+ \infty} s^{- \frac{1}{2} - \left( k+1 \right) \sigma} ds \\
		&\leq C,
	\end{align*}
	which implies $\psi_{0, k} \in L^{1} \left( \R^{n} \right)$.
	Let $q \in \left[ 1, + \infty \right]$ and let $t>2^{k}$.
	We decompose the remainder into five parts:
	\begin{align*}
		&\int_{0}^{t} a \cdot \nabla e^{\left( t-s \right) \Delta} f \left( u \left( s \right) \right) ds- \left( \AA_{0, k} \left( t \right) - \AA_{0} \left( t \right) \right) - a \cdot \nabla e^{t \Delta} \psi_{0, k} \\
		&\hspace{1cm} = \sum_{j=1}^{n} a_{j} \int_{0}^{1} \int_{0}^{1} \left( -s \Delta \right) \partial_{j} e^{\left( t-s \theta \right) \Delta} \left( f \left( u \left( s \right) \right) -f \left( \AA_{0} \left( s \right) \right) \right) d \theta ds \\
		&\hspace{2cm} + \sum_{j=1}^{n} a_{j} \int_{1}^{2^{k-1}} \int_{0}^{1} \left( -s \Delta \right) \partial_{j} e^{\left( t-s \theta \right) \Delta} \left( f \left( u \left( s \right) \right) -f \left( \AA_{0, k-1} \left( s \right) \right) \right) d \theta ds \\
		&\hspace{2cm} + \sum_{j=1}^{n} a_{j} \int_{2^{k-1}}^{t/2} \int_{0}^{1} \left( -s \Delta \right) \partial_{j} e^{\left( t-s \theta \right) \Delta} \left( f \left( u \left( s \right) \right) -f \left( \AA_{0, k-1} \left( s \right) \right) \right) d \theta ds \\
		&\hspace{2cm} + \int_{t/2}^{t} a \cdot \nabla e^{\left( t-s \right) \Delta} \left( f \left( u \left( s \right) \right) -f \left( \AA_{0, k-1} \left( s \right) \right) \right) ds \\
		&\hspace{2cm} - \int_{t/2}^{+ \infty} a \cdot \nabla e^{t \Delta} \left( f \left( u \left( s \right) \right) -f \left( \AA_{0, k-1} \left( s \right) \right) \right) ds \\
		&\hspace{1cm} \eqqcolon \sum_{\ell =1}^{5} R_{1, k, \ell} \left( t \right).
	\end{align*}
	We note that
	\begin{align*}
		1+ \frac{k+2}{\left( k+1 \right) n} <p<1+ \frac{k+1}{kn} \quad \Longleftrightarrow \quad k \sigma < \frac{1}{2} < \left( k+1 \right) \sigma < \frac{3}{2}.
	\end{align*}
	By virtue of \eqref{eq:P_decay}, Theorem \ref{th:P_asymp_sub_gene}, and Lemmas \ref{lem:heat_LpLq} and \ref{lem:P_asymp_sub_gene}, each term is estimated as
	\begin{align*}
		\norm{R_{1, k, 1} \left( t \right)}_{q}
		&\leq C \sum_{j=1}^{n} \int_{0}^{1} \int_{0}^{1} s \norm{\partial_{j} \Delta e^{\left( t-s \theta \right) \Delta} \left( f \left( u \left( s \right) \right) -f \left( \AA_{0} \left( s \right) \right) \right)}_{q} d \theta ds \\
		&\leq C \int_{0}^{1} \int_{0}^{1} s \left( t-s \theta \right)^{- \frac{n}{2} \left( 1- \frac{1}{q} \right) - \frac{3}{2}} \left( \norm{u \left( s \right)}_{p}^{p} + \norm{\AA_{0} \left( s \right)}_{p}^{p} \right) d \theta ds \\
		&\leq C \left( t-1 \right)^{- \frac{n}{2} \left( 1- \frac{1}{q} \right) - \frac{3}{2}} \int_{0}^{1} s^{1- \frac{n}{2} \left( p-1 \right)} ds \\
		&\leq Ct^{- \frac{n}{2} \left( 1- \frac{1}{q} \right) - \frac{3}{2}}, \\
		\norm{R_{1, k, 2} \left( t \right)}_{q}
		&\leq C \sum_{j=1}^{n} \int_{1}^{2^{k-1}} \int_{0}^{1} s \norm{\partial_{j} \Delta e^{\left( t-s \theta \right) \Delta} \left( f \left( u \left( s \right) \right) -f \left( \AA_{0, k-1} \left( s \right) \right) \right)}_{q} d \theta ds \\
		&\leq C \int_{1}^{2^{k-1}} \int_{0}^{1} s \left( t-s \theta \right)^{- \frac{n}{2} \left( 1- \frac{1}{q} \right) - \frac{3}{2}} \left( \norm{u \left( s \right)}_{p}^{p} + \norm{\AA_{0, k-1} \left( s \right)}_{p}^{p} \right) d \theta ds \\
		&\leq C \left( t-2^{k-1} \right)^{- \frac{n}{2} \left( 1- \frac{1}{q} \right) - \frac{3}{2}} \int_{1}^{2^{k-1}} s^{1- \frac{n}{2} \left( p-1 \right)} ds \\
		&\leq Ct^{- \frac{n}{2} \left( 1- \frac{1}{q} \right) - \frac{3}{2}}, \\
		\norm{R_{1, k, 3} \left( t \right)}_{q}
		&\leq C \sum_{j=1}^{n} \int_{2^{k-1}}^{t/2} \int_{0}^{1} s \norm{\partial_{j} \Delta e^{\left( t-s \theta \right) \Delta} \left( f \left( u \left( s \right) \right) -f \left( \AA_{0, k-1} \left( s \right) \right) \right)}_{q} d \theta ds \\
		&\leq C \int_{2^{k-1}}^{t/2} \int_{0}^{1} s \left( t-s \right)^{- \frac{n}{2} \left( 1- \frac{1}{q} \right) - \frac{3}{2}} \norm{f \left( u \left( s \right) \right) -f \left( \AA_{0, k-1} \left( s \right) \right)}_{1} d \theta ds \\
		&\leq Ct^{- \frac{n}{2} \left( 1- \frac{1}{q} \right) - \frac{3}{2}} \int_{2^{k-1}}^{t/2} s \left( \norm{u \left( s \right)}_{\infty}^{p-1} + \norm{\AA_{0, k-1} \left( s \right)}_{\infty}^{p-1} \right) \norm{u \left( s \right) - \AA_{0, k-1} \left( s \right)}_{1} ds \\
		&\leq Ct^{- \frac{n}{2} \left( 1- \frac{1}{q} \right) - \frac{3}{2}} \int_{2^{k-1}}^{t/2} s^{1- \frac{n}{2} \left( p-1 \right) -k \sigma} ds \\
		&=Ct^{- \frac{n}{2} \left( 1- \frac{1}{q} \right) - \frac{3}{2}} \int_{2^{k-1}}^{t/2} s^{\frac{1}{2} - \left( k+1 \right) \sigma} ds \\
		&\leq Ct^{- \frac{n}{2} \left( 1- \frac{1}{q} \right) - \left( k+1 \right) \sigma}, \\
		\norm{R_{1, k, 4} \left( t \right)}_{q}
		&\leq C \int_{t/2}^{t} \norm{\nabla e^{\left( t-s \right) \Delta} \left( f \left( u \left( s \right) \right) -f \left( \AA_{0, k-1} \left( s \right) \right) \right)}_{q} ds \\
		&\leq C \int_{t/2}^{t} \left( t-s \right)^{- \frac{1}{2}} \norm{f \left( u \left( s \right) \right) -f \left( \AA_{0, k-1} \left( s \right) \right)}_{q} ds \\
		&\leq C \int_{t/2}^{t} \left( t-s \right)^{- \frac{1}{2}} \left( \norm{u \left( s \right)}_{\infty}^{p-1} + \norm{\AA_{0, k-1} \left( s \right)}_{\infty}^{p-1} \right) \norm{u \left( s \right) - \AA_{0, k-1} \left( s \right)}_{q} ds \\
		&\leq C \int_{t/2}^{t} \left( t-s \right)^{- \frac{1}{2}} s^{- \frac{n}{2} \left( p-1 \right) - \frac{n}{2} \left( 1- \frac{1}{q} \right) -k \sigma} ds \\
		&\leq Ct^{- \frac{n}{2} \left( 1- \frac{1}{q} \right) - \frac{n}{2} \left( p-1 \right) -k \sigma} \int_{t/2}^{t} \left( t-s \right)^{- \frac{1}{2}} ds \\
		&\leq Ct^{- \frac{n}{2} \left( 1- \frac{1}{q} \right) - \frac{n}{2} \left( p-1 \right) -k \sigma + \frac{1}{2}} \\
		&=Ct^{- \frac{n}{2} \left( 1- \frac{1}{q} \right) - \left( k+1 \right) \sigma}, \\
		\norm{R_{1, k, 5} \left( t \right)}_{q}
		&\leq C \int_{t/2}^{+ \infty} \norm{\nabla e^{t \Delta} \left( f \left( u \left( s \right) \right) -f \left( \AA_{0, k-1} \left( s \right) \right) \right)}_{q} ds \\
		&\leq Ct^{- \frac{n}{2} \left( 1- \frac{1}{q} \right) - \frac{1}{2}} \int_{t/2}^{+ \infty} \norm{f \left( u \left( s \right) \right) -f \left( \AA_{0, k-1} \left( s \right) \right)}_{1} ds \\
		&\leq Ct^{- \frac{n}{2} \left( 1- \frac{1}{q} \right) - \frac{1}{2}} \int_{t/2}^{+ \infty} \left( \norm{u \left( s \right)}_{\infty}^{p-1} + \norm{\AA_{0, k-1} \left( s \right)}_{\infty}^{p-1} \right) \norm{u \left( s \right) - \AA_{0, k-1} \left( s \right)}_{1} ds \\
		&\leq Ct^{- \frac{n}{2} \left( 1- \frac{1}{q} \right) - \frac{1}{2}} \int_{t/2}^{+ \infty} s^{- \frac{n}{2} \left( p-1 \right) -k \sigma} ds \\
		&=Ct^{- \frac{n}{2} \left( 1- \frac{1}{q} \right) - \frac{1}{2}} \int_{t/2}^{+ \infty} s^{- \frac{1}{2} - \left( k+1 \right) \sigma} ds \\
		&\leq Ct^{- \frac{n}{2} \left( 1- \frac{1}{q} \right) - \left( k+1 \right) \sigma}.
	\end{align*}
	In the case where $k=1$, we use Proposition \ref{pro:P_asymp-0} instead of Theorem \ref{th:P_asymp_sub_gene}.
	Combining these estimates, we arrive at the desired result.
\end{proof}

Theorem \ref{th:P_asymp_super_gene} follows from Proposition \ref{pro:heat_asymp_lim} and Lemma \ref{lem:P_asymp_super_gene}.

\begin{proof}[Proof of Theorem \ref{th:P_asymp_super_gene}]
	Let $q \in \left[ 1, + \infty \right]$.
	By taking into account the fact that
	\begin{align*}
		p>1+ \frac{k+2}{\left( k+1 \right) n} \quad \Longleftrightarrow \quad \frac{1}{2} < \left( k+1 \right) \sigma,
	\end{align*}
	Lemma \ref{lem:P_asymp_super_gene} implies
	\begin{align*}
		\lim_{t \to + \infty} t^{\frac{n}{2} \left( 1- \frac{1}{q} \right) + \frac{1}{2}} \norm{\int_{0}^{t} a \cdot \nabla e^{\left( t-s \right) \Delta} f \left( u \left( s \right) \right) ds- \left( \AA_{0, k} \left( t \right) - \AA_{0} \left( t \right) \right) - a \cdot \nabla e^{t \Delta} \psi_{0, k}}_{q} =0.
	\end{align*}
	Therefore, from \eqref{I} and Proposition \ref{pro:heat_asymp_lim}, we have
	\begin{align*}
		&t^{\frac{n}{2} \left( 1- \frac{1}{q} \right) + \frac{1}{2}} \norm{u \left( t \right) - \AA_{1, k} \left( t \right)}_{q} \\
		&\hspace{1cm} \leq t^{\frac{n}{2} \left( 1- \frac{1}{q} \right) + \frac{1}{2}} \norm{e^{t \Delta} u_{0} - \Lambda_{0, 1} \left( t; u_{0} \right)}_{q} \\
		&\hspace{2cm} + t^{\frac{n}{2} \left( 1- \frac{1}{q} \right) + \frac{1}{2}} \norm{\int_{0}^{t} a \cdot \nabla e^{\left( t-s \right) \Delta} f \left( u \left( s \right) \right) ds- \left( \AA_{0, k} \left( t \right) - \AA_{0} \left( t \right) \right) - a \cdot \nabla e^{t \Delta} \psi_{0, k}}_{q} \\
		&\hspace{2cm} + \sum_{j=1}^{n} \abs{a_{j}} t^{\frac{n}{2} \left( 1- \frac{1}{q} \right) + \frac{1}{2}} \norm{\partial_{j} e^{t \Delta} \psi_{0, k} - \Lambda_{e_{j}, 0} \left( t; \psi_{0, k} \right)}_{q} \\
		&\hspace{1cm} \to 0
	\end{align*}
	as $t \to + \infty$.
\end{proof}

\subsection{Proof of Theorem \ref{th:P_asymp_super_optimal}} \label{sec:P_asymp_super_optimal}

We start with the following lemma which describes the structure of the asymptotic profile given in Theorem \ref{th:P_asymp_super_gene}.

\begin{lem} \label{lem:P_asymp_super_structure}
	Under the same assumption as in Theorem \ref{th:P_asymp_super_gene}, the identity
	\begin{align*}
		\AA_{1, k} \left( t \right) - \AA_{0, k} \left( t \right) =t^{- \frac{1}{2}} \delta_{t} \left( \AA_{1, k} \left( 1 \right) - \AA_{0, k} \left( 1 \right) \right)
	\end{align*}
	holds for any $t \geq 1$.
	Moreover, $\AA_{1, k} \left( 1 \right) - \AA_{0, k} \left( 1 \right)$ is represented as
	\begin{align*}
		\AA_{1, k} \left( 1 \right) - \AA_{0, k} \left( 1 \right) = \frac{1}{2} \sum_{j=1}^{n} \left( \MM_{e_{j}} \left( u_{0} \right) -a_{j} \MM_{0} \left( \psi_{0, k} \right) \right) x_{j} G_{1}.
	\end{align*}
	In particular, $\AA_{1, k} \left( 1 \right) - \AA_{0, k} \left( 1 \right) \not\equiv 0$ if and only if there exists $j \in \left\{ 1, \ldots, n \right\}$ such that
	\begin{align*}
		\MM_{e_{j}} \left( u_{0} \right) -a_{j} \MM_{0} \left( \psi_{0, k} \right) \neq 0.
	\end{align*}
\end{lem}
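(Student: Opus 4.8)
The plan is to prove the lemma by an elementary direct computation, starting from the definition of $\AA_{1,k}(t)$ recorded in Theorem \ref{th:P_asymp_super_gene}. First I would note that $\psi_{0,k} \in L^{1}(\R^{n})$, which was already established at the beginning of the proof of Lemma \ref{lem:P_asymp_super_gene}; hence $\MM_{0}(\psi_{0,k})$ is a well-defined real number and $\Lambda_{e_{j},0}(t;\psi_{0,k})$ is meaningful. Next I would recall, from the definition of $\Lambda_{\alpha,m}$ in Proposition \ref{pro:heat_asymp} specialized to $\alpha = e_{j}$, $m = 0$, together with $\bm{h}_{e_{j}} = x_{j}$, that
\[
\Lambda_{e_{j},0}(t;\varphi) = -\tfrac{1}{2}\, t^{-1/2}\, \MM_{0}(\varphi)\, \delta_{t}(x_{j} G_{1})
\]
for every $t>0$ and every $\varphi \in L^{1}(\R^{n})$.

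Substituting this identity into
\[
\AA_{1,k}(t) = \AA_{0,k}(t) + \tfrac{1}{2}\, t^{-1/2} \sum_{j=1}^{n} \MM_{e_{j}}(u_{0})\, \delta_{t}(x_{j} G_{1}) + \sum_{j=1}^{n} a_{j}\, \Lambda_{e_{j},0}(t;\psi_{0,k})
\]
and collecting the $\delta_{t}(x_{j} G_{1})$ contributions yields, for all $t \geq 1$,
\[
\AA_{1,k}(t) - \AA_{0,k}(t) = \tfrac{1}{2}\, t^{-1/2} \sum_{j=1}^{n} \bigl( \MM_{e_{j}}(u_{0}) - a_{j}\, \MM_{0}(\psi_{0,k}) \bigr)\, \delta_{t}(x_{j} G_{1}).
\]
Evaluating this at $t=1$ and using $\delta_{1} = \mathrm{id}$ gives the asserted explicit formula for $\AA_{1,k}(1) - \AA_{0,k}(1)$. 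Since $\delta_{t}$ is linear, applying $t^{-1/2}\delta_{t}$ to that formula and comparing with the displayed expression above immediately proves the self-similar identity $\AA_{1,k}(t) - \AA_{0,k}(t) = t^{-1/2}\delta_{t}(\AA_{1,k}(1) - \AA_{0,k}(1))$ for all $t \geq 1$.

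Finally, for the equivalence, set $c_{j} \coloneqq \MM_{e_{j}}(u_{0}) - a_{j}\MM_{0}(\psi_{0,k})$, so that $\AA_{1,k}(1) - \AA_{0,k}(1) = \tfrac{1}{2}\bigl(\sum_{j=1}^{n} c_{j} x_{j}\bigr) G_{1}$. Because $G_{1}$ is strictly positive on $\R^{n}$, this function vanishes identically exactly when the linear form $x \mapsto \sum_{j=1}^{n} c_{j} x_{j}$ vanishes identically, i.e. exactly when $c_{1} = \dots = c_{n} = 0$; equivalently, $\AA_{1,k}(1) - \AA_{0,k}(1) \not\equiv 0$ if and only if $\MM_{e_{j}}(u_{0}) - a_{j}\MM_{0}(\psi_{0,k}) \neq 0$ for some $j \in \{1,\dots,n\}$. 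I do not expect any genuine obstacle here: the argument is essentially bookkeeping, and the only points requiring slight care are the well-definedness of $\MM_{0}(\psi_{0,k})$ (already handled in Lemma \ref{lem:P_asymp_super_gene}) and keeping track of the $t^{-1/2}$ scaling carried by $\delta_{t}(x_{j}G_{1})$, which is what makes the two $\delta_{t}(x_{j}G_{1})$-type terms in $\AA_{1,k}(t) - \AA_{0,k}(t)$ have the same homogeneity and thus combine into a single self-similar profile.
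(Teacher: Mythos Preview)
Your proposal is correct and follows essentially the same direct computation as the paper's proof: expand $\Lambda_{e_j,0}(t;\psi_{0,k})$, collect terms, and read off the self-similar structure. The only minor variation is in the final ``if and only if'' step, where the paper invokes the orthogonality of the Hermite polynomials while you use the positivity of $G_1$ together with the fact that a nonzero linear form does not vanish identically; both arguments are equally valid and equally short.
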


\begin{rem} \label{rem:P_asymp_super_gene}
	Lemma \ref{lem:P_asymp_super_structure} implies that under the same assumption as in Theorem \ref{th:P_asymp_super_gene}, the estimate
	\begin{align*}
		t^{\frac{n}{2} \left( 1- \frac{1}{q} \right)} \norm{\AA_{1, k} \left( t \right) - \AA_{0, k} \left( t \right)}_{q} =t^{- \frac{1}{2}} \norm{\AA_{1, k} \left( 1 \right) - \AA_{0, k} \left( 1 \right)}_{q}
	\end{align*}
	holds for any $q \in \left[ 1, + \infty \right]$ and $t>1$.
\end{rem}

\begin{proof}[Proof of Lemma \ref{lem:P_asymp_super_structure}]
	For any $t \geq 1$, we have
	\begin{align*}
		\AA_{1, k} \left( t \right) - \AA_{0, k} \left( t \right) &= \frac{1}{2} t^{- \frac{1}{2}} \sum_{j=1}^{n} \MM_{e_{j}} \left( u_{0} \right) \delta_{t} \left( x_{j} G_{1} \right) + \sum_{j=1}^{n} a_{j} \Lambda_{e_{j}, 0} \left( t; \psi_{0, k} \right) \\
		&= \frac{1}{2} t^{- \frac{1}{2}} \sum_{j=1}^{n} \left( \MM_{e_{j}} \left( u_{0} \right) -a_{j} \MM_{0} \left( \psi_{0, k} \right) \right) \delta_{t} \left( x_{j} G_{1} \right) \\
		&=t^{- \frac{1}{2}} \delta_{t} \left[ \frac{1}{2} \sum_{j=1}^{n} \left( \MM_{e_{j}} \left( u_{0} \right) -a_{j} \MM_{0} \left( \psi_{0, k} \right) \right) x_{j} G_{1} \right].
	\end{align*}
	Substituting $t=1$, we obtain the explicit formula of $\AA_{1, k} \left( 1 \right) - \AA_{0, k} \left( 1 \right)$.
	The rest of the assertion in Lemma \ref{lem:P_asymp_super_structure} follows from the representation of $\AA_{1, k} \left( 1 \right) - \AA_{0, k} \left( 1 \right)$ and the orthogonality of the Hermite polynomials.
\end{proof}

By using Theorem \ref{th:P_asymp_super_gene} and Lemma \ref{lem:P_asymp_super_structure}, we can prove Theorem \ref{th:P_asymp_super_optimal}.

\begin{proof}[Proof of Theorem \ref{th:P_asymp_super_optimal}]
	By Lemma \ref{lem:P_asymp_super_structure}, we have
	\begin{align*}
		u \left( t \right) - \AA_{0, k} \left( t \right) &=u \left( t \right) - \AA_{1, k} \left( t \right) + \left( \AA_{1, k} \left( t \right) - \AA_{0, k} \left( t \right) \right) \\
		&=u \left( t \right) - \AA_{1, k} \left( t \right) +t^{- \frac{1}{2}} \delta_{t} \left( \AA_{1, k} \left( 1 \right) - \AA_{0, k} \left( 1 \right) \right)
	\end{align*}
	for any $t>1$.
	Theorem \ref{th:P_asymp_super_gene} implies
	\begin{align*}
		\limsup_{t \to + \infty} t^{\frac{n}{2} \left( 1- \frac{1}{q} \right) + \frac{1}{2}} \norm{u \left( t \right) - \AA_{0, k} \left( t \right)}_{q} &\leq \limsup_{t \to + \infty} t^{\frac{n}{2} \left( 1- \frac{1}{q} \right)} \norm{\delta_{t} \left( \AA_{1, k} \left( 1 \right) - \AA_{0, k} \left( 1 \right) \right)}_{q} \\
		&\hspace{1cm} + \limsup_{t \to + \infty} t^{\frac{n}{2} \left( 1- \frac{1}{q} \right) + \frac{1}{2}} \norm{u \left( t \right) - \AA_{1, k} \left( t \right)}_{q} \\
		&= \norm{\AA_{1, k} \left( 1 \right) - \AA_{0, k} \left( 1 \right)}_{q}, \\
		\liminf_{t \to + \infty} t^{\frac{n}{2} \left( 1- \frac{1}{q} \right) + \frac{1}{2}} \norm{u \left( t \right) - \AA_{0, k} \left( t \right)}_{q} &\geq \liminf_{t \to + \infty} t^{\frac{n}{2} \left( 1- \frac{1}{q} \right)} \norm{\delta_{t} \left( \AA_{1, k} \left( 1 \right) - \AA_{0, k} \left( 1 \right) \right)}_{q} \\
		&\hspace{1cm} - \limsup_{t \to + \infty} t^{\frac{n}{2} \left( 1- \frac{1}{q} \right) + \frac{1}{2}} \norm{u \left( t \right) - \AA_{1, k} \left( t \right)}_{q} \\
		&= \norm{\AA_{1, k} \left( 1 \right) - \AA_{0, k} \left( 1 \right)}_{q}
	\end{align*}
	for all $q \in \left[ 1, + \infty \right]$.
	Thus, we conclude that
	\begin{align*}
		\lim_{t \to + \infty} t^{\frac{n}{2} \left( 1- \frac{1}{q} \right) + \frac{1}{2}} \norm{u \left( t \right) - \AA_{0, k} \left( t \right)}_{q} = \norm{\AA_{1, k} \left( 1 \right) - \AA_{0, k} \left( 1 \right)}_{q}.
	\end{align*}
\end{proof}

\subsection{Proofs of Theorems \ref{th:P_asymp_sub-1_optimal} and \ref{th:P_asymp_critical-1_optimal}} \label{sec:P_asymp-1_optimal}

Under the same assumption as in Theorems \ref{th:P_asymp_sub-1_optimal} or \ref{th:P_asymp_critical-1_optimal}, we set
\begin{align*}
	\RR_{0, 1} \left( t \right) \coloneqq \AA_{0, 1} \left( t \right) - \AA_{0} \left( t \right)
\end{align*}
for each $t>0$.
Then, $\RR_{0, 1} \left( t \right)$ satisfies
\begin{align*}
	\RR_{0, 1} \left( t \right) =t^{- \sigma} \delta_{t} \RR_{0, 1} \left( 1 \right).
\end{align*}
Moreover, $\RR_{0, 1} \left( 1 \right)$ is represented as
\begin{align*}
	\RR_{0, 1} \left( 1 \right) =f \left( \MM_{0} \left( u_{0} \right) \right) \int_{0}^{1} a \cdot \nabla e^{\left( 1- \theta \right) \Delta} G_{\theta}^{p} d \theta,
\end{align*}
which is odd.
For the proof, see Lemma \ref{lem:P_asymp_F-sub_structure} in Appendix \ref{app:optimal}.

Since $f \left( \xi \right) = \abs{\xi}^{p-1} \xi$, we have $f' \left( \xi \right) =p \abs{\xi}^{p-1}$ and
\begin{align*}
	\abs{f' \left( \xi \right) -f' \left( \eta \right)} \leq \begin{cases}
		p \left( p-1 \right) \left( \abs{\xi}^{p-2} + \abs{\eta}^{p-2} \right) \abs{\xi - \eta} &\qquad \text{if} \quad p>2, \\
		p \abs{\xi - \eta}^{p-1} &\qquad \text{if} \quad 1<p \leq 2
	\end{cases}
\end{align*}
for all $\xi, \eta \in \R$.

\begin{proof}[Proof of Theorem \ref{th:P_asymp_sub-1_optimal}]
	Let $q \in \left[ 1, + \infty \right]$.
	We first show
	\begin{align}
		\label{eq:sub-1_1}
		\lim_{t \to + \infty} t^{\frac{n}{2} \left( 1- \frac{1}{q} \right) +2 \sigma} \norm{\AA_{0, 2} \left( t \right) - \AA_{0, 1} \left( t \right) - \int_{1}^{t} a \cdot \nabla e^{\left( t-s \right) \Delta} \left( \RR_{0, 1} \left( s \right) f' \left( \AA_{0} \left( s \right) \right) \right) ds}_{q} =0.
	\end{align}
	We note that
	\begin{align*}
		&\AA_{0, 2} \left( t \right) - \AA_{0, 1} \left( t \right) - \int_{1}^{t} a \cdot \nabla e^{\left( t-s \right) \Delta} \left( \RR_{0, 1} \left( s \right) f' \left( \AA_{0} \left( s \right) \right) \right) ds \\
		&\hspace{1cm} = \int_{1}^{t} a \cdot \nabla e^{\left( t-s \right) \Delta} \left[ f \left( \AA_{0, 1} \left( s \right) \right) -f \left( \AA_{0} \left( s \right) \right) - \RR_{0, 1} \left( s \right) f' \left( \AA_{0} \left( s \right) \right) \right] ds \\
		&\hspace{1cm} = \int_{1}^{t} a \cdot \nabla e^{\left( t-s \right) \Delta} \left[ \RR_{0, 1} \left( s \right) \int_{0}^{1} \left( f' \left( \AA_{0} \left( s \right) + \theta \RR_{0, 1} \left( s \right) \right) -f' \left( \AA_{0} \left( s \right) \right) \right) d \theta \right] ds.
	\end{align*}
	If $1<p \leq 2$, then it follows from Lemmas \ref{lem:heat_LpLq} and \ref{lem:P_asymp_sub_gene} that
	\begin{align*}
		&\norm{\AA_{0, 2} \left( t \right) - \AA_{0, 1} \left( t \right) - \int_{1}^{t} a \cdot \nabla e^{\left( t-s \right) \Delta} \left( \RR_{0, 1} \left( s \right) f' \left( \AA_{0} \left( s \right) \right) \right) ds}_{q} \\
		&\hspace{1cm} \leq C \left( \int_{1}^{t/2} + \int_{t/2}^{t} \right) \norm{\nabla e^{\left( t-s \right) \Delta} \left[ \RR_{0, 1} \left( s \right) \int_{0}^{1} \left( f' \left( \AA_{0} \left( s \right) + \theta \RR_{0, 1} \left( s \right) \right) -f' \left( \AA_{0} \left( s \right) \right) \right) d \theta \right]}_{q} ds \\
		&\hspace{1cm} \leq C \int_{1}^{t/2} \left( t-s \right)^{- \frac{n}{2} \left( 1- \frac{1}{q} \right) - \frac{1}{2}} \norm{\RR_{0, 1} \left( s \right) \int_{0}^{1} \left( f' \left( \AA_{0} \left( s \right) + \theta \RR_{0, 1} \left( s \right) \right) -f' \left( \AA_{0} \left( s \right) \right) \right) d \theta}_{1} ds \\
		&\hspace{2cm} +C \int_{t/2}^{t} \left( t-s \right)^{- \frac{1}{2}} \norm{\RR_{0, 1} \left( s \right) \int_{0}^{1} \left( f' \left( \AA_{0} \left( s \right) + \theta \RR_{0, 1} \left( s \right) \right) -f' \left( \AA_{0} \left( s \right) \right) \right) d \theta}_{q} ds \\
		&\hspace{1cm} \leq C \int_{1}^{t/2} \int_{0}^{1} \left( t-s \right)^{- \frac{n}{2} \left( 1- \frac{1}{q} \right) - \frac{1}{2}} \norm{\RR_{0, 1} \left( s \right)}_{1} \norm{f' \left( \AA_{0} \left( s \right) + \theta \RR_{0, 1} \left( s \right) \right) -f' \left( \AA_{0} \left( s \right) \right)}_{\infty} d \theta ds \\
		&\hspace{2cm} +C \int_{t/2}^{t} \int_{0}^{1} \left( t-s \right)^{- \frac{1}{2}} \norm{\RR_{0, 1} \left( s \right)}_{q} \norm{f' \left( \AA_{0} \left( s \right) + \theta \RR_{0, 1} \left( s \right) \right) -f' \left( \AA_{0} \left( s \right) \right)}_{\infty} d \theta ds \\
		&\hspace{1cm} \leq C \int_{1}^{t/2} \int_{0}^{1} \left( t-s \right)^{- \frac{n}{2} \left( 1- \frac{1}{q} \right) - \frac{1}{2}} \norm{\RR_{0, 1} \left( s \right)}_{1} \norm{\theta \RR_{0, 1} \left( s \right)}_{\infty}^{p-1} d \theta ds \\
		&\hspace{2cm} +C \int_{t/2}^{t} \int_{0}^{1} \left( t-s \right)^{- \frac{1}{2}} \norm{\RR_{0, 1} \left( s \right)}_{q} \norm{\theta \RR_{0, 1} \left( s \right)}_{\infty}^{p-1} d \theta ds \\
		&\hspace{1cm} \leq C \int_{1}^{t/2} \left( t-s \right)^{- \frac{n}{2} \left( 1- \frac{1}{q} \right) - \frac{1}{2}} s^{- \sigma - \frac{n}{2} \left( p-1 \right) - \sigma \left( p-1 \right)} ds \\
		&\hspace{2cm} +C \int_{t/2}^{t} \left( t-s \right)^{- \frac{1}{2}} s^{- \frac{n}{2} \left( 1- \frac{1}{q} \right) - \sigma - \frac{n}{2} \left( p-1 \right) - \sigma \left( p-1 \right)} ds \\
		&\hspace{1cm} \leq Ct^{- \frac{n}{2} \left( 1- \frac{1}{q} \right) - \frac{1}{2}} \int_{1}^{t/2} s^{- \frac{1}{2} - \sigma \left( p+1 \right)} ds+Ct^{- \frac{n}{2} \left( 1- \frac{1}{q} \right) - \frac{1}{2} - \sigma \left( p+1 \right)} \int_{t/2}^{t} \left( t-s \right)^{- \frac{1}{2}} ds \\
		&\hspace{1cm} \leq \begin{dcases}
			Ct^{- \frac{n}{2} \left( 1- \frac{1}{q} \right) - \sigma \left( p+1 \right)}, &\qquad \sigma \left( p+1 \right) < \frac{1}{2}, \\
			Ct^{- \frac{n}{2} \left( 1- \frac{1}{q} \right) - \frac{1}{2}} \log t+Ct^{- \frac{n}{2} \left( 1- \frac{1}{q} \right) - \sigma \left( p+1 \right)}, &\qquad \sigma \left( p+1 \right) = \frac{1}{2}, \\
			Ct^{- \frac{n}{2} \left( 1- \frac{1}{q} \right) - \frac{1}{2}} +Ct^{- \frac{n}{2} \left( 1- \frac{1}{q} \right) - \sigma \left( p+1 \right)}, &\qquad \sigma \left( p+1 \right) > \frac{1}{2}
		\end{dcases}
	\end{align*}
	for $t>2$.
	Similarly, if $p>2$, then we have
	\begin{align*}
		&\norm{\AA_{0, 2} \left( t \right) - \AA_{0, 1} \left( t \right) - \int_{1}^{t} a \cdot \nabla e^{\left( t-s \right) \Delta} \left( \RR_{0, 1} \left( s \right) f' \left( \AA_{0} \left( s \right) \right) \right) ds}_{q} \\
		&\hspace{1cm} \leq C \int_{1}^{t/2} \int_{0}^{1} \left( t-s \right)^{- \frac{n}{2} \left( 1- \frac{1}{q} \right) - \frac{1}{2}} \norm{\RR_{0, 1} \left( s \right)}_{1} \norm{f' \left( \AA_{0} \left( s \right) + \theta \RR_{0, 1} \left( s \right) \right) -f' \left( \AA_{0} \left( s \right) \right)}_{\infty} d \theta ds \\
		&\hspace{2cm} +C \int_{t/2}^{t} \int_{0}^{1} \left( t-s \right)^{- \frac{1}{2}} \norm{\RR_{0, 1} \left( s \right)}_{q} \norm{f' \left( \AA_{0} \left( s \right) + \theta \RR_{0, 1} \left( s \right) \right) -f' \left( \AA_{0} \left( s \right) \right)}_{\infty} d \theta ds \\
		&\hspace{1cm} \leq C \int_{1}^{t/2} \int_{0}^{1} \left( t-s \right)^{- \frac{n}{2} \left( 1- \frac{1}{q} \right) - \frac{1}{2}} \norm{\RR_{0, 1} \left( s \right)}_{1} \left( \norm{\AA_{0} \left( s \right)}_{\infty}^{p-2} + \norm{\theta \RR_{0, 1} \left( s \right)}_{\infty}^{p-2} \right) \norm{\theta \RR_{0, 1} \left( s \right)}_{\infty} d \theta ds \\
		&\hspace{2cm} +C \int_{t/2}^{t} \int_{0}^{1} \left( t-s \right)^{- \frac{1}{2}} \norm{\RR_{0, 1} \left( s \right)}_{q} \left( \norm{\AA_{0} \left( s \right)}_{\infty}^{p-2} + \norm{\theta \RR_{0, 1} \left( s \right)}_{\infty}^{p-2} \right) \norm{\theta \RR_{0, 1} \left( s \right)}_{\infty} d \theta ds \\
		&\hspace{1cm} \leq C \int_{1}^{t/2} \left( t-s \right)^{- \frac{n}{2} \left( 1- \frac{1}{q} \right) - \frac{1}{2}} s^{- \sigma - \frac{n}{2} \left( p-2 \right) - \frac{n}{2} - \sigma} ds \\
		&\hspace{2cm} +C \int_{t/2}^{t} \left( t-s \right)^{- \frac{1}{2}} s^{- \frac{n}{2} \left( 1- \frac{1}{q} \right) - \sigma - \frac{n}{2} \left( p-2 \right) - \frac{n}{2} - \sigma} ds \\
		&\hspace{1cm} \leq Ct^{- \frac{n}{2} \left( 1- \frac{1}{q} \right) - \frac{1}{2}} \int_{1}^{t/2} s^{- \frac{1}{2} -3 \sigma} ds+Ct^{- \frac{n}{2} \left( 1- \frac{1}{q} \right) - \frac{1}{2} -3 \sigma} \int_{t/2}^{t} \left( t-s \right)^{- \frac{1}{2}} ds \\
		&\hspace{1cm} \leq \begin{dcases}
			Ct^{- \frac{n}{2} \left( 1- \frac{1}{q} \right) -3 \sigma}, &\qquad 3 \sigma < \frac{1}{2}, \\
			Ct^{- \frac{n}{2} \left( 1- \frac{1}{q} \right) - \frac{1}{2}} \log t+Ct^{- \frac{n}{2} \left( 1- \frac{1}{q} \right) - 3 \sigma}, &\qquad 3 \sigma = \frac{1}{2}, \\
			Ct^{- \frac{n}{2} \left( 1- \frac{1}{q} \right) - \frac{1}{2}} +Ct^{- \frac{n}{2} \left( 1- \frac{1}{q} \right) - 3 \sigma}, &\qquad 3 \sigma > \frac{1}{2}
		\end{dcases}
	\end{align*}
	for $t>2$.
	Taking into account the fact that $p<1+3/ \left( 2n \right)$ if and only if $2 \sigma <1/2$, we can derive \eqref{eq:sub-1_1}.
	We next prove
	\begin{align}
		\label{eq:sub-1_2}
		\lim_{t \to + \infty} t^{\frac{n}{2} \left( 1- \frac{1}{q} \right) +2 \sigma} \norm{\int_{0}^{1} a \cdot \nabla e^{\left( t-s \right) \Delta} \left( \RR_{0, 1} \left( s \right) f' \left( \AA_{0} \left( s \right) \right) \right) ds}_{q} =0.
	\end{align}
	By Lemmas \ref{lem:heat_LpLq} and \ref{lem:P_asymp_sub_gene}, we obtain
	\begin{align*}
		\norm{\int_{0}^{1} a \cdot \nabla e^{\left( t-s \right) \Delta} \left( \RR_{0, 1} \left( s \right) f' \left( \AA_{0} \left( s \right) \right) \right) ds}_{q} &\leq C \int_{0}^{1} \norm{\nabla e^{\left( t-s \right) \Delta} \left( \RR_{0, 1} \left( s \right) f' \left( \AA_{0} \left( s \right) \right) \right)}_{q} ds \\
		&\leq C \int_{0}^{1} \left( t-s \right)^{- \frac{n}{2} \left( 1- \frac{1}{q} \right) - \frac{1}{2}} \norm{\RR_{0, 1} \left( s \right) f' \left( \AA_{0} \left( s \right) \right)}_{1} ds \\
		&\leq C \left( t-1 \right)^{- \frac{n}{2} \left( 1- \frac{1}{q} \right) - \frac{1}{2}} \int_{0}^{1} \norm{\RR_{0, 1} \left( s \right)}_{1} \norm{\AA_{0} \left( s \right)}_{\infty}^{p-1} ds \\
		&\leq Ct^{- \frac{n}{2} \left( 1- \frac{1}{q} \right) - \frac{1}{2}} \int_{0}^{1} s^{- \sigma - \frac{n}{2} \left( p-1 \right)} ds \\
		&=Ct^{- \frac{n}{2} \left( 1- \frac{1}{q} \right) - \frac{1}{2}} \int_{0}^{1} s^{- \frac{1}{2} -2 \sigma} ds \\
		&\leq Ct^{- \frac{n}{2} \left( 1- \frac{1}{q} \right) - \frac{1}{2}}
	\end{align*}
	for $t>2$, whence follows \eqref{eq:sub-1_2}.
	Since $f'$ is homogeneous of order $\left( p-1 \right)$, we have
	\begin{align}
		\label{eq:sub-1_3}
		&\int_{0}^{t} a \cdot \nabla e^{\left( t-s \right) \Delta} \left( \RR_{0, 1} \left( s \right) f' \left( \AA_{0} \left( s \right) \right) \right) ds \nonumber \\
		&\hspace{1cm} = \int_{0}^{t} a \cdot \nabla e^{\left( t-s \right) \Delta} \left( s^{- \sigma} \delta_{s} \RR_{0, 1} \left( 1 \right) f' \left( \delta_{s} \AA_{0} \left( 1 \right) \right) \right) ds \nonumber \\
		&\hspace{1cm} =t \int_{0}^{1} a \cdot \nabla e^{t \left( 1- \theta \right) \Delta} \left( \left( t \theta \right)^{- \sigma} \delta_{t \theta} \RR_{0, 1} \left( 1 \right) f' \left( \delta_{t \theta} \AA_{0} \left( 1 \right) \right) \right) d \theta \nonumber \\
		&\hspace{1cm} =t^{1- \sigma} \int_{0}^{1} a \cdot \nabla e^{t \left( 1- \theta \right) \Delta} \left( \theta^{- \sigma} \delta_{t} \delta_{\theta} \RR_{0, 1} \left( 1 \right) f' \left( \delta_{t} \delta_{\theta} \AA_{0} \left( 1 \right) \right) \right) d \theta \nonumber \\
		&\hspace{1cm} =t^{1- \sigma - \frac{n}{2} \left( p-1 \right) + \frac{n}{2}} \int_{0}^{1} a \cdot \nabla e^{t \left( 1- \theta \right) \Delta} \left( \delta_{t} \RR_{0, 1} \left( \theta \right) \delta_{t} \left( f' \left( \AA_{0} \left( \theta \right) \right) \right) \right) d \theta \nonumber \\
		&\hspace{1cm} =t^{\frac{1}{2} -2 \sigma} \int_{0}^{1} a \cdot \nabla e^{t \left( 1- \theta \right) \Delta} \delta_{t} \left( \RR_{0, 1} \left( \theta \right) f' \left( \AA_{0} \left( \theta \right) \right) \right) d \theta \nonumber \\
		&\hspace{1cm} =t^{-2 \sigma} \delta_{t} \left( \int_{0}^{1} a \cdot \nabla e^{\left( 1- \theta \right) \Delta} \left( \RR_{0, 1} \left( \theta \right) f' \left( \AA_{0} \left( \theta \right) \right) \right) d \theta \right) \nonumber \\
		&\hspace{1cm} =t^{-2 \sigma} \delta_{t} \RR_{0, 2}^{*}.
	\end{align}
	Combining Theorem \ref{th:P_asymp_sub_gene} with $k=2$, \eqref{eq:sub-1_1}, \eqref{eq:sub-1_2}, and \eqref{eq:sub-1_3} yields
	\begin{align*}
		&\limsup_{t \to + \infty} t^{\frac{n}{2} \left( 1- \frac{1}{q} \right) +2 \sigma} \norm{u \left( t \right) - \AA_{0, 1} \left( t \right)}_{q} \\
		&\hspace{1cm} \leq \limsup_{t \to + \infty} t^{\frac{n}{2} \left( 1- \frac{1}{q} \right) +2 \sigma} \norm{\AA_{0, 2} \left( t \right) - \AA_{0, 1} \left( t \right)}_{q} + \limsup_{t \to + \infty} t^{\frac{n}{2} \left( 1- \frac{1}{q} \right) +2 \sigma} \norm{u \left( t \right) - \AA_{0, 2} \left( t \right)}_{q} \\
		&\hspace{1cm} = \limsup_{t \to + \infty} t^{\frac{n}{2} \left( 1- \frac{1}{q} \right) +2 \sigma} \norm{\AA_{0, 2} \left( t \right) - \AA_{0, 1} \left( t \right)}_{q} \\
		&\hspace{1cm} \leq \limsup_{t \to + \infty} t^{\frac{n}{2} \left( 1- \frac{1}{q} \right) +2 \sigma} \norm{\int_{0}^{t} a \cdot \nabla e^{\left( t-s \right) \Delta} \left( \RR_{0, 1} \left( s \right) f' \left( \AA_{0} \left( s \right) \right) \right) ds}_{q} \\
		&\hspace{2cm} + \limsup_{t \to + \infty} t^{\frac{n}{2} \left( 1- \frac{1}{q} \right) +2 \sigma} \norm{\int_{0}^{1} a \cdot \nabla e^{\left( t-s \right) \Delta} \left( \RR_{0, 1} \left( s \right) f' \left( \AA_{0} \left( s \right) \right) \right) ds}_{q} \\
		&\hspace{2cm} + \limsup_{t \to + \infty} t^{\frac{n}{2} \left( 1- \frac{1}{q} \right) +2 \sigma} \norm{\AA_{0, 2} \left( t \right) - \AA_{0, 1} \left( t \right) - \int_{1}^{t} a \cdot \nabla e^{\left( t-s \right) \Delta} \left( \RR_{0, 1} \left( s \right) f' \left( \AA_{0} \left( s \right) \right) \right) ds}_{q} \\
		&\hspace{1cm} = \limsup_{t \to + \infty} t^{\frac{n}{2} \left( 1- \frac{1}{q} \right)} \norm{\delta_{t} \RR_{0, 2}^{*}}_{q} \\
		&\hspace{1cm} = \norm{\RR_{0, 2}^{*}}_{q}, \\
		&\liminf_{t \to + \infty} t^{\frac{n}{2} \left( 1- \frac{1}{q} \right) +2 \sigma} \norm{u \left( t \right) - \AA_{0, 1} \left( t \right)}_{q} \\
		&\hspace{1cm} \geq \liminf_{t \to + \infty} t^{\frac{n}{2} \left( 1- \frac{1}{q} \right) +2 \sigma} \norm{\AA_{0, 2} \left( t \right) - \AA_{0, 1} \left( t \right)}_{q} - \limsup_{t \to + \infty} t^{\frac{n}{2} \left( 1- \frac{1}{q} \right) +2 \sigma} \norm{u \left( t \right) - \AA_{0, 2} \left( t \right)}_{q} \\
		&\hspace{1cm} = \liminf_{t \to + \infty} t^{\frac{n}{2} \left( 1- \frac{1}{q} \right) +2 \sigma} \norm{\AA_{0, 2} \left( t \right) - \AA_{0, 1} \left( t \right)}_{q} \\
		&\hspace{1cm} \geq \liminf_{t \to + \infty} t^{\frac{n}{2} \left( 1- \frac{1}{q} \right) +2 \sigma} \norm{\int_{0}^{t} a \cdot \nabla e^{\left( t-s \right) \Delta} \left( \RR_{0, 1} \left( s \right) f' \left( \AA_{0} \left( s \right) \right) \right) ds}_{q} \\
		&\hspace{2cm} - \limsup_{t \to + \infty} t^{\frac{n}{2} \left( 1- \frac{1}{q} \right) +2 \sigma} \norm{\int_{0}^{1} a \cdot \nabla e^{\left( t-s \right) \Delta} \left( \RR_{0, 1} \left( s \right) f' \left( \AA_{0} \left( s \right) \right) \right) ds}_{q} \\
		&\hspace{2cm} - \limsup_{t \to + \infty} t^{\frac{n}{2} \left( 1- \frac{1}{q} \right) +2 \sigma} \norm{\AA_{0, 2} \left( t \right) - \AA_{0, 1} \left( t \right) - \int_{1}^{t} a \cdot \nabla e^{\left( t-s \right) \Delta} \left( \RR_{0, 1} \left( s \right) f' \left( \AA_{0} \left( s \right) \right) \right) ds}_{q} \\
		&\hspace{1cm} = \liminf_{t \to + \infty} t^{\frac{n}{2} \left( 1- \frac{1}{q} \right)} \norm{\delta_{t} \RR_{0, 2}^{*}}_{q} \\
		&\hspace{1cm} = \norm{\RR_{0, 2}^{*}}_{q}.
	\end{align*}
	As a consequence, we can deduce the desired result.
\end{proof}

\begin{rem} \label{rem:S_02}
	Let $1+1/n<p<1+3/ \left( 2n \right)$.
	We define a function $S_{0, 2}^{*} \colon \R^{n} \to \R$ by
	\begin{align*}
		S_{0, 2}^{*} \left( x \right) \coloneqq \left( \int_{0}^{1} a \cdot \nabla e^{\left( 1- \theta \right) \Delta} \left( G_{\theta}^{p-1} \int_{0}^{\theta} a \cdot \nabla e^{\left( \theta - \tau \right) \Delta} G_{\tau}^{p} d \tau \right) d \theta \right) \left( x \right), \qquad x \in \R^{n}.
	\end{align*}
	By a simple computation, we see that $S_{0, 2}^{*}$ is continuous and bounded.
	In addition, it is represented as
	\begin{align*}
		S_{0, 2}^{*} \left( x \right) =- \int_{0}^{1} \int_{\R^{n}} \frac{a \cdot \left( x-y \right)}{2 \left( 1- \theta \right)} G_{1- \theta} \left( x-y \right) G_{\theta} \left( y \right)^{p-1} S_{0, 1} \left( \theta, y \right) dyd \theta,
	\end{align*}
	where
	\begin{align*}
		S_{0, 1} \left( \theta, y \right) \coloneqq - \frac{1}{2} p^{- \frac{n}{2}} \left( a \cdot y \right) \int_{0}^{\theta} \left( 4 \pi \tau \right)^{- \frac{n}{2} \left( p-1 \right)} \frac{1}{\theta - \tau + \frac{\tau}{p}} G_{\theta - \tau + \tau /p} \left( y \right) d \tau.
	\end{align*}
	Substituting $x=0$ yields
	\begin{align*}
		S_{0, 2}^{*} \left( 0 \right) = \int_{0}^{1} \int_{\R^{n}} \frac{a \cdot y}{2 \left( 1- \theta \right)} G_{1- \theta} \left( y \right) G_{\theta} \left( y \right)^{p-1} S_{0, 1} \left( \theta, y \right) dyd \theta.
	\end{align*}
	Since
	\begin{align*}
		\left( a \cdot y \right) S_{0, 1} \left( \theta, y \right) =- \frac{1}{2} p^{- \frac{n}{2}} \left( a \cdot y \right)^{2} \int_{0}^{\theta} \left( 4 \pi \tau \right)^{- \frac{n}{2} \left( p-1 \right)} \frac{1}{\theta - \tau + \frac{\tau}{p}} G_{\theta - \tau + \tau /p} \left( y \right) d \tau \leq 0
	\end{align*}
	for any $\left( \theta, y \right) \in \left( 0, 1 \right) \times \R^{n}$, we have $S_{0, 2}^{*} \left( 0 \right) \leq 0$.
	Taking into the assumption that $a \neq 0$ and the fact that $\left( a \cdot y \right) S_{0, 1} \left( \theta, y \right) <0$ if and only if $a \cdot y \neq 0$, we obtain $S_{0, 2}^{*} \left( 0 \right) <0$, which in turn implies $S_{0, 2}^{*} \not\equiv 0$.
\end{rem}

We next give the proof of Theorem \ref{th:P_asymp_critical-1_optimal}.

\begin{proof}[Proof of Theorem \ref{th:P_asymp_critical-1_optimal}]
	We note that
	\begin{align*}
		\widetilde{\AA}_{0, 2} \left( t \right) - \AA_{0, 1} \left( t \right) =- \frac{1}{2} \left( \int_{1}^{t} \MM_{0} \left( f \left( \AA_{0, 1} \left( s \right) \right) - f \left( \AA_{0} \left( s \right) \right) \right) ds \right) t^{- \frac{1}{2}} \sum_{j=1}^{n} a_{j} \delta_{t} \left( x_{j} G_{1} \right).
	\end{align*}
	We first show that there exists $C>0$ such that
	\begin{align}
		\label{eq:critical}
		\sup_{t>1} \abs{\int_{1}^{t} \MM_{0} \left( f \left( \AA_{0, 1} \left( s \right) \right) - f \left( \AA_{0} \left( s \right) \right) \right) ds} \leq C.
	\end{align}
	Since $f'$ is homogeneous of order $\left( p-1 \right)$ and $p=1+3/ \left( 2n \right) \Leftrightarrow \sigma =1/4$, we have
	\begin{align*}
		\RR_{0, 1} \left( s \right) f' \left( \AA_{0} \left( s \right) \right)
		&=s^{- \frac{1}{4}} \delta_{s} \RR_{0, 1} \left( 1 \right) f' \left( \delta_{s} \AA_{0} \left( 1 \right) \right) \\
		&=s^{- \frac{1}{4} - \frac{n}{2} \left( p-1 \right) + \frac{n}{2}} \delta_{s} \RR_{0, 1} \left( 1 \right) \delta_{s} \left( f' \left( \AA_{0} \left( 1 \right) \right) \right) \\
		&=s^{- \frac{1}{4} - \frac{n}{2} \left( p-1 \right)} \delta_{s} \left[ \RR_{0, 1} \left( 1 \right) f' \left( \AA_{0} \left( 1 \right) \right) \right] \\
		&=s^{-1} \delta_{s} \left[ \RR_{0, 1} \left( 1 \right) f' \left( \AA_{0} \left( 1 \right) \right) \right]
	\end{align*}
	for any $s>1$.
	In addition, from the fact that the function $\RR_{0, 1} \left( 1 \right) f' \left( \AA_{0} \left( 1 \right) \right)$ on $\R^{n}$, represented as
	\begin{align*}
		\left( \RR_{0, 1} \left( 1 \right) f' \left( \AA_{0} \left( 1 \right) \right) \right) \left( x \right) =f \left( \MM_{0} \left( u_{0} \right) \right) f' \left( \MM_{0} \left( u_{0} \right) \right) G_{1} \left( x \right)^{p-1} \left( \int_{0}^{1} a \cdot \nabla e^{\left( 1- \theta \right) \Delta} G_{\theta}^{p} d \theta \right) \left( x \right),
	\end{align*}
	is odd, we see that $\MM_{0} \left( \RR_{0, 1} \left( 1 \right) f' \left( \AA_{0} \left( 1 \right) \right) \right) =0$.
	Hence, we obtain
	\begin{align*}
		&\int_{1}^{t} \MM_{0} \left( f \left( \AA_{0, 1} \left( s \right) \right) -f \left( \AA_{0} \left( s \right) \right) \right) ds \\
		&\hspace{1cm} = \int_{1}^{t} \MM_{0} \left( f \left( \AA_{0, 1} \left( s \right) \right) - f \left( \AA_{0} \left( s \right) \right) \right) ds- \left( \log t \right) \MM_{0} \left( \RR_{0, 1} \left( 1 \right) f' \left( \AA_{0} \left( 1 \right) \right) \right) \\
		&\hspace{1cm} = \int_{1}^{t} \left\{ \MM_{0} \left( f \left( \AA_{0, 1} \left( s \right) \right) - f \left( \AA_{0} \left( s \right) \right) \right) - \MM_{0} \left( s^{-1} \delta_{s} \left[ \RR_{0, 1} \left( 1 \right) f' \left( \AA_{0} \left( 1 \right) \right) \right] \right) \right\} ds \\
		&\hspace{1cm} = \int_{1}^{t} \MM_{0} \left( f \left( \AA_{0, 1} \left( s \right) \right) - f \left( \AA_{0} \left( s \right) \right) - \RR_{0, 1} \left( s \right) f' \left( \AA_{0} \left( s \right) \right) \right) ds \\
		&\hspace{1cm} = \int_{1}^{t} \MM_{0} \left( \RR_{0, 1} \left( s \right) \int_{0}^{1} \left( f' \left( \AA_{0} \left( s \right) + \theta \RR_{0, 1} \left( s \right) \right) -f' \left( \AA_{0} \left( s \right) \right) \right) d \theta \right) ds
	\end{align*}
	for any $t>1$.
	By Lemma \ref{lem:P_asymp_sub_gene}, we get
	\begin{align*}
		&\abs{\int_{1}^{t} \MM_{0} \left( f \left( \AA_{0, 1} \left( s \right) \right) - f \left( \AA_{0} \left( s \right) \right) \right) ds} \\
		&\hspace{1cm} \leq \int_{1}^{t} \norm{\RR_{0, 1} \left( s \right) \int_{0}^{1} \left( f' \left( \AA_{0} \left( s \right) + \theta \RR_{0, 1} \left( s \right) \right) -f' \left( \AA_{0} \left( s \right) \right) \right) d \theta}_{1} ds \\
		&\hspace{1cm} \leq \int_{1}^{t} \int_{0}^{1} \norm{\RR_{0, 1} \left( s \right)}_{1} \norm{f' \left( \AA_{0} \left( s \right) + \theta \RR_{0, 1} \left( s \right) \right) -f' \left( \AA_{0} \left( s \right) \right)}_{\infty} d \theta ds \\
		&\hspace{1cm} \leq \begin{dcases}
			C \int_{1}^{t} \int_{0}^{1} \norm{\RR_{0, 1} \left( s \right)}_{1} \left( \norm{\AA_{0} \left( s \right)}_{\infty}^{p-2} + \norm{\theta \RR_{0, 1} \left( s \right)}_{\infty}^{p-2} \right) \norm{\theta \RR_{0, 1} \left( s \right)}_{\infty} d \theta ds, &\quad p>2, \\
			C \int_{1}^{t} \int_{0}^{1} \norm{\RR_{0, 1} \left( s \right)}_{1} \norm{\theta \RR_{0, 1} \left( s \right)}_{\infty}^{p-1} d \theta ds, &\quad 1<p \leq 2
		\end{dcases} \\
		&\hspace{1cm} \leq \begin{dcases}
			C \int_{1}^{t} s^{- \frac{1}{4} - \frac{n}{2} \left( p-2 \right) - \frac{n}{2} - \frac{1}{4}} ds=C \int_{1}^{t} s^{-1- \frac{1}{4}} ds, &\qquad p>2, \\
			C \int_{1}^{t} s^{- \frac{1}{4} - \frac{n}{2} \left( p-1 \right) - \frac{1}{4} \left( p-1 \right)} ds=C \int_{1}^{t} s^{-1- \frac{1}{4} \left( p-1 \right)} ds, &\qquad 1<p \leq 2 \\
		\end{dcases} \\
		&\hspace{1cm} \leq C.
	\end{align*}
	Combining Theorem \ref{th:P_asymp_critical_gene} with $k=1$ and \eqref{eq:critical} yields
	\begin{align*}
		&\norm{u \left( t \right) - \AA_{0, 1} \left( t \right)}_{q} \\
		&\hspace{1cm} \leq \norm{u \left( t \right) - \widetilde{\AA}_{0, 2} \left( t \right)}_{q} + \norm{\widetilde{\AA}_{0, 2} \left( t \right) - \AA_{0, 1} \left( t \right)}_{q} \\
		&\hspace{1cm} \leq Ct^{- \frac{n}{2} \left( 1- \frac{1}{q} \right) - \frac{1}{2}} +Ct^{- \frac{1}{2}} \abs{\int_{1}^{t} \MM_{0} \left( f \left( \AA_{0, 1} \left( s \right) \right) - f \left( \AA_{0} \left( s \right) \right) \right) ds} \sum_{j=1}^{n} \norm{ \delta_{t} \left( x_{j} G_{1} \right)}_{q} \\
		&\hspace{1cm} \leq Ct^{- \frac{n}{2} \left( 1- \frac{1}{q} \right) - \frac{1}{2}}
	\end{align*}
	for any $q \in \left[ 1, + \infty \right]$ and $t>4$.
\end{proof}

\appendix
\section{Proofs of Propositions \ref{pro:P_asymp_F-critical} and \ref{pro:P_asymp_F-super}} \label{app:Zuazua}
\subsection{Proof of Proposition \ref{pro:P_asymp_F-critical}}
First of all, we remark that if $u \in X$ satisfies \eqref{eq:P_decay}, then we have
\begin{align}
	\label{eq:P_decay_reg}
	\sup_{q \in \left[ 1, + \infty \right]} \sup_{t>0} \left( 1+t \right)^{\frac{n}{2} \left( 1- \frac{1}{q} \right)} \norm{u \left( t \right)}_{q} <+ \infty.
\end{align}
Let $q \in \left[ 1, + \infty \right]$ and let $t>2$.
Using \eqref{I}, we decompose the remainder into seven parts:
\begin{align*}
	u \left( t \right) - \widetilde{\AA}_{0, 1} \left( t \right)
	&= \left( e^{t \Delta} u_{0} - \Lambda_{0, 0} \left( t; u_{0} \right) \right) + \int_{0}^{1} a \cdot \nabla e^{\left( t-s \right) \Delta} f \left( u \left( s \right) \right) ds+ \int_{t/2}^{t} a \cdot \nabla e^{\left( t-s \right) \Delta} f \left( u \left( s \right) \right) ds \\
	&\hspace{1cm} + \int_{1}^{t/2} a \cdot \nabla e^{\left( t-s \right) \Delta} \left( f \left( u \left( s \right) \right) -f \left( \AA_{0} \left( s \right) \right) \right) ds \\
	&\hspace{1cm} + \sum_{j=1}^{n} a_{j} \int_{1}^{t/2} \int_{0}^{1} \left( -s \Delta \right) \partial_{j} e^{\left( t-s \theta \right) \Delta} f \left( \AA_{0} \left( s \right) \right) d \theta ds \\
	&\hspace{1cm} + \sum_{j=1}^{n} a_{j} \int_{1}^{t/2} \left( \partial_{j} e^{t \Delta} f \left( \AA_{0} \left( s \right) \right) - \Lambda_{e_{j}, 0} \left( t; f \left( \AA_{0} \left( s \right) \right) \right) \right) ds \\
	&\hspace{1cm} - \sum_{j=1}^{n} a_{j} \int_{t/2}^{t} \Lambda_{e_{j}, 0} \left( t; f \left( \AA_{0} \left( s \right) \right) \right) ds \\
	&\eqqcolon \sum_{\ell =1}^{7} \widetilde{R}_{0, 1, \ell} \left( t \right).
\end{align*}
By Lemma \ref{lem:heat_LpLq} and \eqref{eq:P_decay_reg}, we have
\begin{align*}
	\norm{\widetilde{R}_{0, 1, 2} \left( t \right)}_{q}
	&\leq C \int_{0}^{1} \norm{\nabla e^{\left( t-s \right) \Delta} f \left( u \left( s \right) \right)}_{q} ds \\
	&\leq C \int_{0}^{1} \left( t-s \right)^{- \frac{n}{2} \left( 1- \frac{1}{q} \right) - \frac{1}{2}} \norm{u \left( s \right)}_{p}^{p} ds \\
	&\leq C \left( t-1 \right)^{- \frac{n}{2} \left( 1- \frac{1}{q} \right) - \frac{1}{2}} \int_{0}^{1} \left( 1+s \right)^{- \frac{n}{2} \left( p-1 \right)} ds \\
	&\leq Ct^{- \frac{n}{2} \left( 1- \frac{1}{q} \right) - \frac{1}{2}}, \\
	\norm{\widetilde{R}_{0, 1, 3} \left( t \right)}_{q}
	&\leq C \int_{t/2}^{t} \norm{\nabla e^{\left( t-s \right) \Delta} f \left( u \left( s \right) \right)}_{q} ds \\
	&\leq C \int_{t/2}^{t} \left( t-s \right)^{- \frac{1}{2}} \norm{u \left( s \right)}_{pq}^{p} ds \\
	&\leq C \int_{t/2}^{t} \left( t-s \right)^{- \frac{1}{2}} s^{- \frac{n}{2} \left( 1- \frac{1}{q} \right) - \frac{n}{2} \left( p-1 \right)} ds \\
	&\leq Ct^{- \frac{n}{2} \left( 1- \frac{1}{q} \right) - \frac{n}{2} \left( p-1 \right)} \int_{t/2}^{t} \left( t-s \right)^{- \frac{1}{2}} ds \\
	&\leq Ct^{- \frac{n}{2} \left( 1- \frac{1}{q} \right) - \frac{n}{2} \left( p-1 \right) + \frac{1}{2}} \\
	&=Ct^{- \frac{n}{2} \left( 1- \frac{1}{q} \right) - \frac{1}{2}}, \\
	\norm{\widetilde{R}_{0, 1, 5} \left( t \right)}_{q}
	&\leq C \sum_{j=1}^{n} \int_{1}^{t/2} \int_{0}^{1} s \norm{\partial_{j} \Delta e^{\left( t-s \theta \right) \Delta} f \left( \AA_{0} \left( s \right) \right)}_{q} d \theta ds \\
	&\leq C \int_{1}^{t/2} \int_{0}^{1} s \left( t-s \theta \right)^{- \frac{n}{2} \left( 1- \frac{1}{q} \right) - \frac{3}{2}} \norm{\AA_{0} \left( s \right)}_{p}^{p} d \theta ds \\
	&\leq Ct^{- \frac{n}{2} \left( 1- \frac{1}{q} \right) - \frac{3}{2}} \int_{1}^{t/2} s^{1- \frac{n}{2} \left( p-1 \right)} ds \\
	&=Ct^{- \frac{n}{2} \left( 1- \frac{1}{q} \right) - \frac{3}{2}} \int_{1}^{t/2} ds \\
	&\leq Ct^{- \frac{n}{2} \left( 1- \frac{1}{q} \right) - \frac{1}{2}}, \\
	\norm{\widetilde{R}_{0, 1, 7} \left( t \right)}_{q}
	&\leq C \sum_{j=1}^{n} \int_{t/2}^{t} \norm{\Lambda_{e_{j}, 0} \left( f \left( t; \AA_{0} \left( s \right) \right) \right)}_{q} ds \\
	&\leq Ct^{- \frac{1}{2}} \sum_{j=1}^{n} \int_{t/2}^{t} \norm{\AA_{0} \left( s \right)}_{p}^{p} \norm{\delta_{t} \left( x_{j} G_{1} \right)}_{q} ds \\
	&\leq Ct^{- \frac{n}{2} \left( 1- \frac{1}{q} \right) - \frac{1}{2}} \int_{t/2}^{t} s^{- \frac{n}{2} \left( p-1 \right)} ds \\
	&=Ct^{- \frac{n}{2} \left( 1- \frac{1}{q} \right) - \frac{1}{2}} \int_{t/2}^{t} s^{-1} ds \\
	&\leq Ct^{- \frac{n}{2} \left( 1- \frac{1}{q} \right) - \frac{1}{2}}.
\end{align*}
It follows from Proposition \ref{pro:heat_asymp} that
\begin{align*}
	\norm{\widetilde{R}_{0, 1, 1} \left( t \right)}_{q}
	&\leq Ct^{- \frac{n}{2} \left( 1- \frac{1}{q} \right) - \frac{1}{2}} \sum_{\abs{\alpha} =1} \norm{x^{\alpha} u_{0}}_{1}, \\
	\norm{\widetilde{R}_{0, 1, 6} \left( t \right)}_{q}
	&\leq Ct^{- \frac{n}{2} \left( 1- \frac{1}{q} \right) -1} \sum_{\abs{\alpha} =1} \int_{1}^{t/2} \norm{\AA_{0} \left( s \right)}_{\infty}^{p-1} \norm{x^{\alpha} \AA_{0} \left( s \right)}_{1} ds \\
	&\leq Ct^{- \frac{n}{2} \left( 1- \frac{1}{q} \right) -1} \int_{1}^{t/2} s^{- \frac{n}{2} \left( p-1 \right) + \frac{1}{2}} ds \\
	&=Ct^{- \frac{n}{2} \left( 1- \frac{1}{q} \right) -1} \int_{1}^{t/2} s^{- \frac{1}{2}} ds \\
	&\leq Ct^{- \frac{n}{2} \left( 1- \frac{1}{q} \right) - \frac{1}{2}}.
\end{align*}
By \eqref{eq:P_decay}, Proposition \ref{pro:P_asymp-0}, and Lemma \ref{lem:heat_LpLq}, we obtain
\begin{align*}
	\norm{\widetilde{R}_{0, 1, 4} \left( t \right)}_{q}
	&\leq C \int_{1}^{t/2} \norm{\nabla e^{\left( t-s \right) \Delta} \left( f \left( u \left( s \right) \right) -f \left( \AA_{0} \left( s \right) \right) \right)}_{q} ds \\
	&\leq C \int_{1}^{t/2} \left( t-s \right)^{- \frac{n}{2} \left( 1- \frac{1}{q} \right) - \frac{1}{2}} \norm{f \left( u \left( s \right) \right) -f \left( \AA_{0} \left( s \right) \right)}_{1} ds \\
	&\leq Ct^{- \frac{n}{2} \left( 1- \frac{1}{q} \right) - \frac{1}{2}} \int_{1}^{t/2} \left( \norm{u \left( s \right)}_{\infty}^{p-1} + \norm{\AA_{0} \left( s \right)}_{\infty}^{p-1} \right) \norm{u \left( s \right) - \AA_{0} \left( s \right)}_{1} ds \\
	&\leq Ct^{- \frac{n}{2} \left( 1- \frac{1}{q} \right) - \frac{1}{2}} \int_{1}^{t/2} s^{- \frac{n}{2} \left( p-1 \right) - \frac{1}{2}} \log \left( 2+s \right) ds \\
	&=Ct^{- \frac{n}{2} \left( 1- \frac{1}{q} \right) - \frac{1}{2}} \int_{1}^{t/2} s^{- \frac{3}{2}} \log \left( 2+s \right) ds \\
	&\leq Ct^{- \frac{n}{2} \left( 1- \frac{1}{q} \right) - \frac{1}{2}} \int_{1}^{t/2} s^{- \frac{5}{4}} ds \\
	&\leq Ct^{- \frac{n}{2} \left( 1- \frac{1}{q} \right) - \frac{1}{2}}.
\end{align*}
Here, we have used the fact that
\begin{align*}
	\sup_{s>1} s^{- \frac{1}{4}} \log \left( 2+s \right) <+ \infty.
\end{align*}
Combining these estimates, we arrive at the desired result. \qed

\subsection{Proof of Proposition \ref{pro:P_asymp_F-super}}

We first prove the following lemma which describes the asymptotic behavior of the Duhamel term in \eqref{I}.

\begin{lem} \label{lem:P_asymp_F-super}
	Under the same assumption as in Proposition \ref{pro:P_asymp_F-super},
	\begin{align*}
		\lim_{t \to + \infty} t^{\frac{n}{2} \left( 1- \frac{1}{q} \right) + \frac{1}{2}} \norm{\int_{0}^{t} a \cdot \nabla e^{\left( t-s \right) \Delta} f \left( u \left( s \right) \right) ds- a \cdot \nabla e^{t \Delta} \psi_{0, 0}}_{q} =0
	\end{align*}
	holds for any $q \in \left[ 1, + \infty \right]$.
\end{lem}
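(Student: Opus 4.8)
The plan is to show that the Duhamel term
\[
	D \left( t \right) \coloneqq \int_{0}^{t} a \cdot \nabla e^{\left( t-s \right) \Delta} f \left( u \left( s \right) \right) ds
\]
differs from $a \cdot \nabla e^{t \Delta} \psi_{0, 0}$, in $L^{q} \left( \R^{n} \right)$, by a quantity which is $o \bigl( t^{- \frac{n}{2} \left( 1- \frac{1}{q} \right) - \frac{1}{2}} \bigr)$ as $t \to + \infty$. First I would check that $\psi_{0, 0} \in L^{1} \left( \R^{n} \right)$: since $f \left( 0 \right) =0$, the Lipschitz assumption gives $\norm{f \left( u \left( s \right) \right)}_{1} \leq C \norm{u \left( s \right)}_{p}^{p}$, and by \eqref{eq:P_decay_reg} we have $\norm{u \left( s \right)}_{p}^{p} \leq C \left( 1+s \right)^{- \frac{n}{2} \left( p-1 \right)}$; because $p>1+2/n$ forces $\frac{n}{2} \left( p-1 \right) >1$, the function $g \left( s \right) \coloneqq \norm{f \left( u \left( s \right) \right)}_{1}$ lies in $L^{1} \left( 0, + \infty \right)$ and satisfies $g \left( s \right) \leq C \left( 1+s \right)^{- \frac{n}{2} \left( p-1 \right)}$, so $\psi_{0, 0}$ is a well-defined element of $L^{1} \left( \R^{n} \right)$. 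This is also the only place where the regularization near $s=0$ (hence $u_{0} \in L^{\infty}$, via \eqref{eq:P_decay_reg}) is used.

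Next I would write $a \cdot \nabla e^{t \Delta} \psi_{0, 0} = \int_{0}^{+ \infty} a \cdot \nabla e^{t \Delta} f \left( u \left( s \right) \right) ds$ (legitimate since the integrand is $L^{q}$-integrable in $s$ by Lemma \ref{lem:heat_LpLq} and Step 1) and decompose
\[
	D \left( t \right) - a \cdot \nabla e^{t \Delta} \psi_{0, 0} = I \left( t \right) + II \left( t \right) - III \left( t \right),
\]
where $I \left( t \right) \coloneqq \int_{0}^{t/2} \bigl( a \cdot \nabla e^{\left( t-s \right) \Delta} - a \cdot \nabla e^{t \Delta} \bigr) f \left( u \left( s \right) \right) ds$, $II \left( t \right) \coloneqq \int_{t/2}^{t} a \cdot \nabla e^{\left( t-s \right) \Delta} f \left( u \left( s \right) \right) ds$, and $III \left( t \right) \coloneqq \int_{t/2}^{+ \infty} a \cdot \nabla e^{t \Delta} f \left( u \left( s \right) \right) ds$. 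For $III$, Lemma \ref{lem:heat_LpLq} gives $\norm{III \left( t \right)}_{q} \leq C t^{- \frac{n}{2} \left( 1- \frac{1}{q} \right) - \frac{1}{2}} \int_{t/2}^{+ \infty} g \left( s \right) ds$, and the tail integral vanishes as $t \to + \infty$ since $g \in L^{1}$. For $II$, Lemma \ref{lem:heat_LpLq} and \eqref{eq:P_decay_reg} give $\norm{a \cdot \nabla e^{\left( t-s \right) \Delta} f \left( u \left( s \right) \right)}_{q} \leq C \left( t-s \right)^{- \frac{1}{2}} \norm{u \left( s \right)}_{pq}^{p} \leq C \left( t-s \right)^{- \frac{1}{2}} s^{- \frac{n}{2} \left( 1- \frac{1}{q} \right) - \frac{n}{2} \left( p-1 \right)}$; bounding $s^{-\cdots}$ by its value at $t/2$ and integrating $\left( t-s \right)^{-1/2}$ over $\left[ t/2, t \right]$ yields $\norm{II \left( t \right)}_{q} \leq C t^{- \frac{n}{2} \left( 1- \frac{1}{q} \right) - \frac{n}{2} \left( p-1 \right) + \frac{1}{2}}$, which is $o \bigl( t^{- \frac{n}{2} \left( 1- \frac{1}{q} \right) - \frac{1}{2}} \bigr)$ because $\frac{n}{2} \left( p-1 \right) >1$.

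The main point is $I \left( t \right)$, i.e. controlling the replacement of $e^{\left( t-s \right) \Delta}$ by $e^{t \Delta}$ for small $s$. Here I would use the operator identity $a \cdot \nabla e^{\left( t-s \right) \Delta} - a \cdot \nabla e^{t \Delta} = \sum_{j=1}^{n} a_{j} \int_{0}^{1} \left( -s \Delta \right) \partial_{j} e^{\left( t-s \theta \right) \Delta} d \theta$, already used in the body of the paper. Applying Lemma \ref{lem:heat_LpLq} to the third-order derivative $\partial_{j} \Delta$, and noting $t-s \theta \geq t/2$ when $0 \leq s \leq t/2$, $\theta \in \left[ 0, 1 \right]$, we get $\norm{I \left( t \right)}_{q} \leq C t^{- \frac{n}{2} \left( 1- \frac{1}{q} \right) - \frac{3}{2}} \int_{0}^{t/2} s\, g \left( s \right) ds$. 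Finally, since $g \in L^{1} \left( 0, + \infty \right)$, the splitting
\[
	\frac{1}{t} \int_{0}^{t/2} s\, g \left( s \right) ds \leq \frac{1}{\sqrt{t}} \int_{0}^{\sqrt{t}} g \left( s \right) ds+ \frac{1}{2} \int_{\sqrt{t}}^{+ \infty} g \left( s \right) ds \to 0
\]
shows $\int_{0}^{t/2} s\, g \left( s \right) ds=o \left( t \right)$, hence $t^{\frac{n}{2} \left( 1- \frac{1}{q} \right) + \frac{1}{2}} \norm{I \left( t \right)}_{q} \leq C t^{-1} \int_{0}^{t/2} s\, g \left( s \right) ds \to 0$. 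Combining the three estimates gives the claim. The only subtlety (not a real obstacle) is this last averaging estimate together with the fact that $\frac{n}{2} \left( p-1 \right) >1$ is exactly what is needed, so the argument is specific to the Fujita-supercritical case; the case $k=1$ of Lemma \ref{lem:P_asymp_super_gene} could alternatively be invoked, but the direct computation above is shorter.
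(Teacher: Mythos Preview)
Your proof is correct and follows essentially the same route as the paper's: the same three-term decomposition $I+II-III$ (the paper's $R_{1,0,1}$, $R_{1,0,2}$, $R_{1,0,3}$), the same operator identity $e^{(t-s)\Delta}-e^{t\Delta}=\int_0^1(-s\Delta)e^{(t-s\theta)\Delta}d\theta$ for $I$, and the same $L^p$--$L^q$ smoothing for $II$ and $III$. The one genuine difference is in the treatment of $\int_0^{t/2} s\,g(s)\,ds$: the paper inserts the explicit pointwise bound $g(s)\le C(1+s)^{-\frac{n}{2}(p-1)}$ and splits into the three cases $p\lessgtr 1+4/n$, obtaining a quantitative rate in each; your averaging trick $\frac{1}{t}\int_0^{t/2} s\,g(s)\,ds\le \frac{1}{\sqrt{t}}\int_0^{\sqrt{t}}g+\frac{1}{2}\int_{\sqrt{t}}^{\infty}g$ uses only $g\in L^1$ and avoids the case distinction entirely, at the price of yielding only $o(1)$ rather than an explicit power. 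Since the lemma asserts only a limit, your version is cleaner and fully sufficient; the paper's version would be needed only if one wanted a rate for $u(t)-\AA_{1,0}(t)$.
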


\begin{proof}
	It follows from \eqref{eq:P_decay_reg} that
	\begin{align*}
		\norm{\psi_{0, 0}}_{1} \leq C \int_{0}^{+ \infty} \norm{u \left( s \right)}_{p}^{p} ds \leq C \int_{0}^{+ \infty} \left( 1+s \right)^{- \frac{n}{2} \left( p-1 \right)} ds \leq C,
	\end{align*}
	which implies $\psi_{0, 0} \in L^{1} \left( \R^{n} \right)$.
	Let $q \in \left[ 1, + \infty \right]$ and let $t>1$.
	We decompose the remainder into three parts:
	\begin{align*}
		\int_{0}^{t} a \cdot \nabla e^{\left( t-s \right) \Delta} f \left( u \left( s \right) \right) ds- a \cdot \nabla e^{t \Delta} \psi_{0, 0}
		&= \sum_{j=1}^{n} a_{j} \int_{0}^{t/2} \int_{0}^{1} \left( -s \Delta \right) \partial_{j} e^{\left( t-s \theta \right) \Delta} f \left( u \left( s \right) \right) d \theta ds \\
		&\hspace{1cm} + \int_{t/2}^{t} a \cdot \nabla e^{\left( t-s \right) \Delta} f \left( u \left( s \right) \right) ds \\
		&\hspace{1cm} - \int_{t/2}^{+ \infty} a \cdot \nabla e^{t \Delta} f \left( u \left( s \right) \right) ds \\
		&\eqqcolon \sum_{\ell =1}^{3} R_{1, 0, \ell} \left( t \right).
	\end{align*}
	By Lemma \ref{lem:heat_LpLq} and \eqref{eq:P_decay_reg}, each term is estimated as
	\begin{align*}
		\norm{R_{1, 0, 1} \left( t \right)}_{q}
		&\leq C \sum_{j=1}^{n} \int_{0}^{t/2} \int_{0}^{1} s \norm{\partial_{j} \Delta e^{\left( t-s \theta \right) \Delta} f \left( u \left( s \right) \right)}_{q} d \theta ds \\
		&\leq C \int_{0}^{t/2} \int_{0}^{1} s \left( t-s \theta \right)^{- \frac{n}{2} \left( 1- \frac{1}{q} \right) - \frac{3}{2}} \norm{u \left( s \right)}_{p}^{p} d \theta ds \\
		&\leq Ct^{- \frac{n}{2} \left( 1- \frac{1}{q} \right) - \frac{3}{2}} \int_{0}^{t/2} \left( 1+s \right)^{1- \frac{n}{2} \left( p-1 \right)} ds \\
		&\leq Ct^{- \frac{n}{2} \left( 1- \frac{1}{q} \right) - \frac{3}{2}} \times \begin{dcases}
			\left( 1+t \right)^{2- \frac{n}{2} \left( p-1 \right)}, &\qquad p<1+ \frac{4}{n}, \\
			\log \left( 2+t \right), &\qquad p=1+ \frac{4}{n}, \\
			1, &\qquad p>1+ \frac{4}{n}
		\end{dcases} \\
		&\leq Ct^{- \frac{n}{2} \left( 1- \frac{1}{q} \right) - \frac{1}{2}} \times \begin{dcases}
			t^{- \left( \sigma - \frac{1}{2} \right)}, &\qquad p<1+ \frac{4}{n}, \\
			t^{-1} \log \left( 2+t \right), &\qquad p=1+ \frac{4}{n}, \\
			t^{-1}, &\qquad p>1+ \frac{4}{n},
		\end{dcases} \\
		\norm{R_{1, 0, 2} \left( t \right)}_{q}
		&\leq C \int_{t/2}^{t} \norm{\nabla e^{\left( t-s \right) \Delta} f \left( u \left( s \right) \right)}_{q} ds \\
		&\leq C \int_{t/2}^{t} \left( t-s \right)^{- \frac{1}{2}} \norm{u \left( s \right)}_{pq}^{p} ds \\
		&\leq C \int_{t/2}^{t} \left( t-s \right)^{- \frac{1}{2}} s^{- \frac{n}{2} \left( 1- \frac{1}{q} \right) - \frac{n}{2} \left( p-1 \right)} ds \\
		&\leq Ct^{- \frac{n}{2} \left( 1- \frac{1}{q} \right) - \frac{n}{2} \left( p-1 \right)} \int_{t/2}^{t} \left( t-s \right)^{- \frac{1}{2}} ds \\
		&\leq Ct^{- \frac{n}{2} \left( 1- \frac{1}{q} \right) - \frac{n}{2} \left( p-1 \right) + \frac{1}{2}} \\
		&=Ct^{- \frac{n}{2} \left( 1- \frac{1}{q} \right) - \sigma}, \\
		\norm{R_{1, 0, 3} \left( t \right)}_{q}
		&\leq C \int_{t/2}^{+ \infty} \norm{\nabla e^{t \Delta} f \left( u \left( s \right) \right)}_{q} ds \\
		&\leq Ct^{- \frac{n}{2} \left( 1- \frac{1}{q} \right) - \frac{1}{2}} \int_{t/2}^{+ \infty} \norm{u \left( s \right)}_{p}^{p} ds \\
		&\leq Ct^{- \frac{n}{2} \left( 1- \frac{1}{q} \right) - \frac{1}{2}} \int_{t/2}^{+ \infty} s^{- \frac{n}{2} \left( p-1 \right)} ds \\
		&\leq Ct^{- \frac{n}{2} \left( 1- \frac{1}{q} \right) + \frac{1}{2} - \frac{n}{2} \left( p-1 \right)} \\
		&=Ct^{- \frac{n}{2} \left( 1- \frac{1}{q} \right) - \sigma}.
	\end{align*}
	Since $p>1+2/n \Leftrightarrow \sigma >1/2$, we conclude that
	\begin{align*}
		\lim_{t \to + \infty} t^{\frac{n}{2} \left( 1- \frac{1}{q} \right) + \frac{1}{2}} \norm{\int_{0}^{t} a \cdot \nabla e^{\left( t-s \right) \Delta} f \left( u \left( s \right) \right) ds- a \cdot \nabla e^{t \Delta} \psi_{0, 0}}_{q} =0.
	\end{align*}
\end{proof}

We next give the proof of Proposition \ref{pro:P_asymp_F-super}.

\begin{proof}[Proof of Proposition \ref{pro:P_asymp_F-super}]
	By virtue of \eqref{I}, Proposition \ref{pro:heat_asymp_lim}, and Lemma \ref{lem:P_asymp_F-super}, we have
	\begin{align*}
		t^{\frac{n}{2} \left( 1- \frac{1}{q} \right) + \frac{1}{2}} \norm{u \left( t \right) - \AA_{1, 0} \left( t \right)}_{q} &\leq t^{\frac{n}{2} \left( 1- \frac{1}{q} \right) + \frac{1}{2}} \norm{e^{t \Delta} u_{0} - \Lambda_{0, 1} \left( t; u_{0} \right)}_{q} \\
		&\hspace{1cm} +t^{\frac{n}{2} \left( 1- \frac{1}{q} \right) + \frac{1}{2}} \norm{\int_{0}^{t} a \cdot \nabla e^{\left( t-s \right) \Delta} f \left( u \left( s \right) \right) ds- a \cdot \nabla e^{t \Delta} \psi_{0, 0}}_{q} \\
		&\hspace{1cm} + \sum_{j=1}^{n} \abs{a_{j}} t^{\frac{n}{2} \left( 1- \frac{1}{q} \right) + \frac{1}{2}} \norm{\partial_{j} e^{t \Delta} \psi_{0, 0} - \Lambda_{e_{j}, 0} \left( t; \psi_{0, 0} \right)}_{q} \\
		&\to 0
	\end{align*}
	as $t \to + \infty$.
\end{proof}

\section{Optimal decay rate of the 0th order asymptotic expansion} \label{app:optimal}

This section is devoted to proving the optimality for the decay rates of the remainder $u \left( t \right) - \AA_{0} \left( t \right)$ given in Proposition \ref{pro:P_asymp-0}.
The self-similarity of the asymptotic profiles given by Propositions \ref{pro:P_asymp_F-sub}, \ref{pro:P_asymp_F-critical}, and \ref{pro:P_asymp_F-super} plays a crucial role in the proof of the optimality.
To obtain this property in the Fujita-subcritical and Fujita-critical cases, we need to assume that $f$ is homogeneous of order $p$, namely,
\begin{align*}
	f \left( \lambda \xi \right) = \lambda^{p} f \left( \xi \right)
\end{align*}
for any $\lambda >0$ and $\xi \in \R$.

\subsection{Fujita-subcritical case}

\begin{prop} \label{pro:P_asymp_F-sub_optimal}
	In addition to the assumption in Proposition \ref{pro:P_asymp_F-sub}, we suppose that $f$ is homogeneous of order $p$.
	Then,
	\begin{align*}
		\lim_{t \to + \infty} t^{\frac{n}{2} \left( 1- \frac{1}{q} \right) + \sigma} \norm{u \left( t \right) - \AA_{0} \left( t \right)}_{q} &= \norm{\AA_{0, 1} \left( 1 \right) - \AA_{0} \left( 1 \right)}_{q}
	\end{align*}
	holds for any $q \in \left[ 1, + \infty \right]$.
	In particular, if $f \left( \MM_{0} \left( u_{0} \right) \right) \neq 0$, then $\AA_{0, 1} \left( 1 \right) - \AA_{0} \left( 1 \right) \not\equiv 0$, and therefore
	\begin{align*}
		t^{\frac{n}{2} \left( 1- \frac{1}{q} \right)} \norm{u \left( t \right) - \AA_{0} \left( t \right)}_{q} =t^{- \sigma} \norm{\AA_{0, 1} \left( 1 \right) - \AA_{0} \left( 1 \right)}_{q} \left( 1+o \left( 1 \right) \right)
	\end{align*}
	for all $q \in \left[ 1, + \infty \right]$ as $t \to + \infty$.
\end{prop}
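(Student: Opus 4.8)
The plan is to decompose the remainder as
\begin{align*}
	u \left( t \right) - \AA_{0} \left( t \right) = \left( u \left( t \right) - \AA_{0, 1} \left( t \right) \right) + \left( \AA_{0, 1} \left( t \right) - \AA_{0} \left( t \right) \right)
\end{align*}
and to treat the two brackets separately, exactly as in the proof of Theorem \ref{th:P_asymp_super_optimal}. For the first bracket, Proposition \ref{pro:P_asymp_F-sub} gives
\begin{align*}
	\lim_{t \to + \infty} t^{\frac{n}{2} \left( 1- \frac{1}{q} \right) + \sigma} \norm{u \left( t \right) - \AA_{0, 1} \left( t \right)}_{q} =0
\end{align*}
for every $q \in \left[ 1, + \infty \right]$. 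For the second bracket, I would use the self-similarity
\begin{align*}
	\AA_{0, 1} \left( t \right) - \AA_{0} \left( t \right) =t^{- \sigma} \delta_{t} \left( \AA_{0, 1} \left( 1 \right) - \AA_{0} \left( 1 \right) \right),
\end{align*}
established in Lemma \ref{lem:P_asymp_F-sub_structure}, together with the scaling property $\norm{\delta_{t} \varphi}_{q} =t^{- \frac{n}{2} \left( 1- \frac{1}{q} \right)} \norm{\varphi}_{q}$ of the dilations, to obtain, for all $t>0$,
\begin{align*}
	t^{\frac{n}{2} \left( 1- \frac{1}{q} \right) + \sigma} \norm{\AA_{0, 1} \left( t \right) - \AA_{0} \left( t \right)}_{q} =t^{\frac{n}{2} \left( 1- \frac{1}{q} \right)} \norm{\delta_{t} \left( \AA_{0, 1} \left( 1 \right) - \AA_{0} \left( 1 \right) \right)}_{q} = \norm{\AA_{0, 1} \left( 1 \right) - \AA_{0} \left( 1 \right)}_{q},
\end{align*}
which is independent of $t$. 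The triangle inequality then gives $\limsup_{t \to + \infty} t^{\frac{n}{2} \left( 1- \frac{1}{q} \right) + \sigma} \norm{u \left( t \right) - \AA_{0} \left( t \right)}_{q} \leq \norm{\AA_{0, 1} \left( 1 \right) - \AA_{0} \left( 1 \right)}_{q}$ together with the matching lower bound for the $\liminf$, whence the asserted limit.

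For the second part of the statement, Lemma \ref{lem:P_asymp_F-sub_structure} expresses $\AA_{0, 1} \left( 1 \right) - \AA_{0} \left( 1 \right) =f \left( \MM_{0} \left( u_{0} \right) \right) S$, where
\begin{align*}
	S \coloneqq \int_{0}^{1} a \cdot \nabla e^{\left( 1- \theta \right) \Delta} G_{\theta}^{p} {\ } d \theta
\end{align*}
is a fixed function on $\R^{n}$ not depending on $u_{0}$, so it suffices to verify $S \not\equiv 0$. I would do this by computing its first moments: integrating by parts and using the mass conservation $\int_{\R^{n}} e^{\left( 1- \theta \right) \Delta} \varphi {\ } dx= \int_{\R^{n}} \varphi {\ } dx$, one finds
\begin{align*}
	\int_{\R^{n}} x_{j} S \left( x \right) dx=-a_{j} \int_{0}^{1} \norm{G_{\theta}}_{p}^{p} {\ } d \theta =-a_{j} \norm{G_{1}}_{p}^{p} \int_{0}^{1} \theta^{- \frac{n}{2} \left( p-1 \right)} d \theta,
\end{align*}
and the last integral is finite and strictly positive precisely because $\frac{n}{2} \left( p-1 \right) = \sigma + \frac{1}{2} <1$. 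Since $a \neq 0$ there exists $j$ with $a_{j} \neq 0$, so this moment is nonzero and hence $S \not\equiv 0$; combined with $f \left( \MM_{0} \left( u_{0} \right) \right) \neq 0$ this yields $\AA_{0, 1} \left( 1 \right) - \AA_{0} \left( 1 \right) \not\equiv 0$, and rewriting the limit identity gives the stated $\left( 1+o \left( 1 \right) \right)$ asymptotics.

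The substantive input is Proposition \ref{pro:P_asymp_F-sub}, which is assumed; the only genuinely new ingredients are the observation that the limit is produced by the scale-invariant correction $\AA_{0, 1} \left( t \right) - \AA_{0} \left( t \right)$, and the nondegeneracy $S \not\equiv 0$. I expect the latter to be the more delicate point: one must justify the integration by parts and the interchange of $\int_{\R^{n}}$ with $\int_{0}^{1}$ (this uses that $G_{\theta}^{p} \in L^{1}_{1} \left( \R^{n} \right)$ locally uniformly in $\theta$ away from $0$, together with the integrability near $\theta =0$ coming from $p<1+2/n$, i.e. $\sigma <1/2$), and one must check that the resulting moment does not vanish — which is exactly where the hypothesis $a \neq 0$ enters.
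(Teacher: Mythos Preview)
Your argument for the limit identity is essentially identical to the paper's: the same decomposition $u-\AA_{0}=(u-\AA_{0,1})+(\AA_{0,1}-\AA_{0})$, the same appeal to Proposition~\ref{pro:P_asymp_F-sub} for the first bracket, and the same use of the self-similarity from Lemma~\ref{lem:P_asymp_F-sub_structure} together with a triangle-inequality $\limsup/\liminf$ sandwich for the second.

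The one place you diverge is the nondegeneracy check $S\not\equiv 0$. The paper argues pointwise: it writes $S$ explicitly as a Gaussian-weighted integral and evaluates $S(a)<0$. Your moment computation $\int_{\R^{n}} x_{j} S\,dx=-a_{j}\norm{G_{1}}_{p}^{p}\int_{0}^{1}\theta^{-\frac{n}{2}(p-1)}d\theta$ is a clean alternative; it avoids the explicit kernel formula and makes the role of the subcritical hypothesis $\sigma<1/2$ (equivalently $\frac{n}{2}(p-1)<1$) transparent as an integrability condition. The justification you flag (Fubini and integration by parts) goes through via Lemma~\ref{lem:heat_weight}, which gives $\norm{\abs{x}\,\partial_{j}e^{(1-\theta)\Delta}G_{\theta}^{p}}_{1}\leq C\bigl((1-\theta)^{-1/2}\theta^{-\sigma}+\theta^{-1/2-\sigma}\bigr)$, integrable on $(0,1)$ precisely because $\sigma<1/2$. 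Both approaches are short; yours has the advantage of being more robust (it would adapt to settings where a closed-form pointwise value is unavailable), while the paper's has the advantage of giving sign information on $S$ itself.
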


\begin{lem} \label{lem:P_asymp_F-sub_structure}
	Under the same assumption as in Proposition \ref{pro:P_asymp_F-sub_optimal}, the identity
	\begin{align*}
		\AA_{0, 1} \left( t \right) - \AA_{0} \left( t \right) =t^{- \sigma} \delta_{t} \left( \AA_{0, 1} \left( 1 \right) - \AA_{0} \left( 1 \right) \right)
	\end{align*}
	holds for any $t>0$.
	Furthermore, $\AA_{0, 1} \left( 1 \right) - \AA_{0} \left( 1 \right)$ is represented as
	\begin{align*}
		\AA_{0, 1} \left( 1 \right) - \AA_{0} \left( 1 \right) &= \int_{0}^{1} a \cdot \nabla e^{\left( 1- \theta \right) \Delta} f \left( \AA_{0} \left( \theta \right) \right) d \theta \\
		&=f \left( \MM_{0} \left( u_{0} \right) \right) \int_{0}^{1} a \cdot \nabla e^{\left( 1- \theta \right) \Delta} G_{\theta}^{p} d \theta.
	\end{align*}
	In particular, $\AA_{0, 1} \left( 1 \right) - \AA_{0} \left( 1 \right) \not\equiv 0$ if and only if $f \left( \MM_{0} \left( u_{0} \right) \right) \neq 0$.
\end{lem}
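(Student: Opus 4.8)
The plan is to write the difference $\AA_{0,1}(t)-\AA_{0}(t)$ as $t^{-\sigma}$ times a dilation of a fixed function, exploiting the self-similarity $G_{ts}=\delta_{t}G_{s}$ of the Gauss kernel together with the homogeneity of $f$. Recall that, taking $k=1$ in the definition of $\AA_{0,k}$ and using $\AA_{0,0}=\AA_{0}$,
\[
	\AA_{0,1}(t)-\AA_{0}(t)=\int_{0}^{1}a\cdot\nabla e^{(t-s)\Delta}f(\AA_{0}(s))\,ds+\int_{1}^{t}a\cdot\nabla e^{(t-s)\Delta}f(\AA_{0}(s))\,ds=\int_{0}^{t}a\cdot\nabla e^{(t-s)\Delta}f(\AA_{0}(s))\,ds,
\]
the integral being absolutely convergent for every $t>0$ by (the proof of) Lemma~\ref{lem:P_asymp_sub_gene}.

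First I would record the elementary commutation rules for the dilations: $\partial_{j}\delta_{t}=t^{-1/2}\delta_{t}\partial_{j}$, $e^{\tau\Delta}\delta_{t}=\delta_{t}e^{(\tau/t)\Delta}$, and, since $f$ is homogeneous of order $p$, $f(\delta_{t}\varphi)=t^{-\frac{n}{2}(p-1)}\delta_{t}(f(\varphi))$ for all $t>0$. Since $\AA_{0}(t\theta)=\MM_{0}(u_{0})G_{t\theta}=\delta_{t}\AA_{0}(\theta)$, substituting $s=t\theta$ and applying these identities gives
\[
	a\cdot\nabla e^{t(1-\theta)\Delta}f(\AA_{0}(t\theta))=t^{-\frac{n}{2}(p-1)-\frac{1}{2}}\,\delta_{t}\bigl[a\cdot\nabla e^{(1-\theta)\Delta}f(\AA_{0}(\theta))\bigr],
\]
so that, absorbing the factor $t$ coming from $ds=t\,d\theta$ and using $1-\frac{n}{2}(p-1)-\frac{1}{2}=-\sigma$,
\[
	\AA_{0,1}(t)-\AA_{0}(t)=t^{-\sigma}\delta_{t}\!\left(\int_{0}^{1}a\cdot\nabla e^{(1-\theta)\Delta}f(\AA_{0}(\theta))\,d\theta\right).
\]
Setting $t=1$ identifies the bracketed function with $\AA_{0,1}(1)-\AA_{0}(1)$, which yields both the self-similarity and the first integral representation; the second representation follows from the pointwise identity $f(\AA_{0}(\theta))(x)=f(\MM_{0}(u_{0})G_{\theta}(x))=G_{\theta}(x)^{p}f(\MM_{0}(u_{0}))$, using homogeneity once more with $\lambda=G_{\theta}(x)>0$.

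It remains to decide when $\AA_{0,1}(1)-\AA_{0}(1)\not\equiv0$. If $f(\MM_{0}(u_{0}))=0$, the representation above shows the difference is identically zero. Conversely, assume $f(\MM_{0}(u_{0}))\neq0$. I would compute first-order moments: integrating by parts and using that $e^{(1-\theta)\Delta}$ preserves the integral,
\[
	\int_{\R^{n}}x_{i}\,\partial_{j}e^{(1-\theta)\Delta}G_{\theta}^{p}\,dx=-\delta_{ij}\int_{\R^{n}}e^{(1-\theta)\Delta}G_{\theta}^{p}\,dx=-\delta_{ij}\norm{G_{\theta}}_{p}^{p},
\]
whence $\MM_{e_{i}}(\AA_{0,1}(1)-\AA_{0}(1))=-a_{i}f(\MM_{0}(u_{0}))\int_{0}^{1}\norm{G_{\theta}}_{p}^{p}\,d\theta$. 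The interchange of the $\theta$- and $x$-integrations is justified by Lemma~\ref{lem:heat_weight} together with the scaling $\norm{\abs{x}\,\partial_{j}e^{(1-\theta)\Delta}G_{\theta}^{p}}_{1}\leq C\bigl((1-\theta)^{-1/2}\theta^{-\sigma}+\theta^{-1/2-\sigma}\bigr)$, which is integrable over $(0,1)$ because $\sigma<1/2$; likewise $\int_{0}^{1}\norm{G_{\theta}}_{p}^{p}\,d\theta=\norm{G_{1}}_{p}^{p}\int_{0}^{1}\theta^{-1/2-\sigma}\,d\theta\in(0,+\infty)$ precisely because $p<1+2/n$ (i.e.\ $\sigma<1/2$). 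Since $a\neq0$, picking $i$ with $a_{i}\neq0$ makes this moment nonzero, hence $\AA_{0,1}(1)-\AA_{0}(1)\not\equiv0$.

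The only slightly delicate points are keeping exact track of the power of $t$ in the dilation computation (the commutation rules must be applied in the correct order) and the Fubini step in the moment argument; both hinge on the Fujita-subcritical constraint $\sigma<1/2$, which is exactly what makes $\int_{0}^{1}\theta^{-1/2-\sigma}\,d\theta$ finite. Beyond this bookkeeping I expect no genuine obstacle.
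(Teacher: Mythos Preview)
Your proof is correct and, for the self-similarity identity and the two integral representations, follows essentially the same route as the paper: change of variables $s=t\theta$, then the commutation rules $\partial_{j}\delta_{t}=t^{-1/2}\delta_{t}\partial_{j}$, $e^{\tau\Delta}\delta_{t}=\delta_{t}e^{(\tau/t)\Delta}$, and the homogeneity of $f$ to pull out the power $t^{-\sigma}$ and the dilation $\delta_{t}$.

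Where your argument genuinely differs is the proof that $S_{0,1}\coloneqq\int_{0}^{1}a\cdot\nabla e^{(1-\theta)\Delta}G_{\theta}^{p}\,d\theta\not\equiv0$. The paper computes an explicit closed form for $S_{0,1}(x)$ (via $G_{1-\theta}\ast G_{\theta}^{p}=p^{-n/2}(4\pi\theta)^{-n(p-1)/2}G_{1-\theta+\theta/p}$) and then evaluates at $x=a$ to find $S_{0,1}(a)<0$. You instead compute the first-order moments $\MM_{e_{i}}(S_{0,1})=-a_{i}\int_{0}^{1}\lVert G_{\theta}\rVert_{p}^{p}\,d\theta$ via integration by parts and conclude from $a\neq0$. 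Both are short; your moment argument avoids the explicit convolution identity and fits naturally with the paper's moment-based framework, while the paper's pointwise evaluation yields the extra information that $S_{0,1}$ is everywhere nonpositive along the direction of $a$ (indeed strictly negative off the hyperplane $a\cdot x=0$), which is slightly more than what is needed here. Your Fubini justification via Lemma~\ref{lem:heat_weight} and the bound $(1-\theta)^{-1/2}\theta^{-\sigma}+\theta^{-1/2-\sigma}\in L^{1}(0,1)$ is clean and makes transparent exactly where the subcritical constraint $\sigma<1/2$ enters.
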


\begin{rem}
	Let $1+1/n<p<1+2/n$.
	We define a function $S_{0, 1} \colon \R^{n} \to \R$ by
	\begin{align*}
		S_{0, 1} \left( x \right) \coloneqq \left( \int_{0}^{1} a \cdot \nabla e^{\left( 1- \theta \right) \Delta} G_{\theta}^{p} d \theta \right) \left( x \right), \qquad x \in \R^{n}.
	\end{align*}
	By a simple computation, we see that $S_{0, 1}$ is continuous and bounded.
	Moreover, it is represented as
	\begin{align*}
		S_{0, 1} \left( x \right) &=a \cdot \nabla \left( p^{- \frac{n}{2}} \int_{0}^{1} \left( 4 \pi \theta \right)^{- \frac{n}{2} \left( p-1 \right)} G_{1- \theta + \theta /p} \left( x \right) d \theta \right) \\
		&=- \frac{1}{2} p^{- \frac{n}{2}} \left( a \cdot x \right) \int_{0}^{1} \left( 4 \pi \theta \right)^{- \frac{n}{2} \left( p-1 \right)} \frac{1}{1- \theta + \frac{\theta}{p}} G_{1- \theta + \theta /p} \left( x \right) d \theta.
	\end{align*}
	Taking $x=a$, we obtain
	\begin{align*}
		S_{0, 1} \left( a \right) =- \frac{1}{2} p^{- \frac{n}{2}} \abs{a}^{2} \int_{0}^{1} \left( 4 \pi \theta \right)^{- \frac{n}{2} \left( p-1 \right)} \frac{1}{1- \theta + \frac{\theta}{p}} G_{1- \theta + \theta /p} \left( a \right) d \theta.
	\end{align*}
	Since $a \neq 0$ and the integrand on the right hand side of the above identity is strictly positive, we have $S_{0, 1} \left( a \right) <0$, which in turn implies $S_{0, 1} \not\equiv 0$.
\end{rem}

\begin{proof}[Proof of Lemma \ref{lem:P_asymp_F-sub_structure}]
	For any $t>0$, we have
	\begin{align*}
		\AA_{0, 1} \left( t \right) - \AA_{0} \left( t \right)
		&= \int_{0}^{t} a \cdot \nabla e^{\left( t-s \right) \Delta} f \left( \AA_{0} \left( s \right) \right) ds \\
		&=a \cdot \nabla \int_{0}^{t} \left( \delta_{t-s} G_{1} \right) \ast f \left( \delta_{s} \AA_{0} \left( 1 \right) \right) ds \\
		&=ta \cdot \nabla \int_{0}^{1} \left( \delta_{t \left( 1- \theta \right)} G_{1} \right) \ast f \left( \delta_{t \theta} \AA_{0} \left( 1 \right) \right) d \theta \\
		&=ta \cdot \nabla \int_{0}^{1} \left( \delta_{t} \delta_{1- \theta} G_{1} \right) \ast f \left( \delta_{t} \delta_{\theta} \AA_{0} \left( 1 \right) \right) d \theta \\
		&=ta \cdot \nabla \int_{0}^{1} \left( \delta_{t} G_{1- \theta} \right) \ast \left( t^{- \frac{n}{2} p+ \frac{n}{2}} \delta_{t} \left( f \left( \AA_{0} \left( \theta \right) \right) \right) \right) d \theta \\
		&=t^{1- \frac{n}{2} \left( p-1 \right)} a \cdot \nabla \int_{0}^{1} \delta_{t} \left( G_{1- \theta} \ast f \left( \AA_{0} \left( \theta \right) \right) \right) d \theta \\
		&=t^{\frac{1}{2} - \frac{n}{2} \left( p-1 \right)} \delta_{t} \left( \int_{0}^{1} a \cdot \nabla \left( G_{1- \theta} \ast f \left( \AA_{0} \left( \theta \right) \right) \right) d \theta \right) \\
		&=t^{- \sigma} \delta_{t} \left( \int_{0}^{1} a \cdot \nabla e^{\left( 1- \theta \right) \Delta} f \left( \AA_{0} \left( \theta \right) \right) d \theta \right) \\
		&=t^{- \sigma} \delta_{t} \left( \int_{0}^{1} a \cdot \nabla e^{\left( 1- \theta \right) \Delta} f \left( \MM_{0} \left( u_{0} \right) G_{\theta} \right) d \theta \right) \\
		&=f \left( \MM_{0} \left( u_{0} \right) \right) t^{- \sigma} \delta_{t} \left( \int_{0}^{1} a \cdot \nabla e^{\left( 1- \theta \right) \Delta} G_{\theta}^{p} d \theta \right).
	\end{align*}
	Substituting $t=1$, we obtain the explicit formula of $\AA_{0, 1} \left( 1 \right) - \AA_{0} \left( 1 \right)$.
\end{proof}

\begin{proof}[Proof of Proposition \ref{pro:P_asymp_F-sub_optimal}]
	By Lemma \ref{lem:P_asymp_F-sub_structure}, we have
	\begin{align*}
		u \left( t \right) - \AA_{0} \left( t \right) &= u \left( t \right) - \AA_{0, 1} \left( t \right) + \left( \AA_{0, 1} \left( t \right) - \AA_{0} \left( t \right) \right) \\
		&=u \left( t \right) - \AA_{0, 1} \left( t \right) +t^{- \sigma} \delta_{t} \left( \AA_{0, 1} \left( 1 \right) - \AA_{0} \left( 1 \right) \right)
	\end{align*}
	for any $t>0$.
	Proposition \ref{pro:P_asymp_F-sub} implies
	\begin{align*}
		\limsup_{t \to + \infty} t^{\frac{n}{2} \left( 1- \frac{1}{q} \right) + \sigma} \norm{u \left( t \right) - \AA_{0} \left( t \right)}_{q}
		&\leq \limsup_{t \to + \infty} t^{\frac{n}{2} \left( 1- \frac{1}{q} \right)} \norm{\delta_{t} \left( \AA_{0, 1} \left( 1 \right) - \AA_{0} \left( 1 \right) \right)}_{q} \\
		&\hspace{1cm} + \limsup_{t \to + \infty} t^{\frac{n}{2} \left( 1- \frac{1}{q} \right) + \sigma} \norm{u \left( t \right) - \AA_{0, 1} \left( t \right)}_{q} \\
		&= \norm{\AA_{0, 1} \left( 1 \right) - \AA_{0} \left( 1 \right)}_{q}, \\
		\liminf_{t \to + \infty} t^{\frac{n}{2} \left( 1- \frac{1}{q} \right) + \sigma} \norm{u \left( t \right) - \AA_{0} \left( t \right)}_{q}
		&\geq \liminf_{t \to + \infty} t^{\frac{n}{2} \left( 1- \frac{1}{q} \right)} \norm{\delta_{t} \left( \AA_{0, 1} \left( 1 \right) - \AA_{0} \left( 1 \right) \right)}_{q} \\
		&\hspace{1cm} - \limsup_{t \to + \infty} t^{\frac{n}{2} \left( 1- \frac{1}{q} \right) + \sigma} \norm{u \left( t \right) - \AA_{0, 1} \left( t \right)}_{q} \\
		&= \norm{\AA_{0, 1} \left( 1 \right) - \AA_{0} \left( 1 \right)}_{q}
	\end{align*}
	for any $q \in \left[ 1, + \infty \right]$, whence follows
	\begin{align*}
		\lim_{t \to + \infty} t^{\frac{n}{2} \left( 1- \frac{1}{q} \right) + \sigma} \norm{u \left( t \right) - \AA_{0} \left( t \right)}_{q} = \norm{\AA_{0, 1} \left( 1 \right) - \AA_{0} \left( 1 \right)}_{q}.
	\end{align*}
\end{proof}

\subsection{Fujita-critical case}

\begin{prop} \label{pro:P_asymp_F-critical_optimal}
	In addition to the assumption in Proposition \ref{pro:P_asymp_F-critical}, we suppose that $f$ is homogeneous of order $p$.
	Then,
	\begin{align*}
		\lim_{t \to + \infty} \frac{t^{\frac{n}{2} \left( 1- \frac{1}{q} \right) + \frac{1}{2}}}{\log t} \norm{u \left( t \right) - \AA_{0} \left( t \right)}_{q} = \bigl\lVert \widetilde{\RR}_{0, 1}^{*} \bigr\rVert_{q}
	\end{align*}
	holds for any $q \in \left[ 1, + \infty \right]$, where
	\begin{align*}
		\widetilde{\RR}_{0, 1}^{*} &\coloneqq \sum_{j=1}^{n} a_{j} \Lambda_{e_{j}, 0} \left( 1; f \left( \AA_{0} \left( 1 \right) \right) \right).
	\end{align*}
	In particular, if $f \left( \MM_{0} \left( u_{0} \right) \right) \neq 0$, then $\widetilde{\RR}_{0, 1}^{*} \not\equiv 0$, and hence
	\begin{align*}
		t^{\frac{n}{2} \left( 1- \frac{1}{q} \right)} \norm{u \left( t \right) - \AA_{0} \left( t \right)}_{q} =t^{- \frac{1}{2}} \left( \log t \right) \bigl\lVert \widetilde{\RR}_{0, 1}^{*} \bigr\rVert_{q} \left( 1+o \left( 1 \right) \right)
	\end{align*}
	for all $q \in \left[ 1, + \infty \right]$ as $t \to + \infty$.
\end{prop}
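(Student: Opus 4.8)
The plan is to follow the Fujita-subcritical optimality argument (Proposition~\ref{pro:P_asymp_F-sub_optimal}), with the power-type correction replaced by a logarithmic one. First I would write $u(t)-\AA_0(t) = \bigl(u(t)-\tilde{\AA}_{0,1}(t)\bigr) + \bigl(\tilde{\AA}_{0,1}(t)-\AA_0(t)\bigr)$. By Proposition~\ref{pro:P_asymp_F-critical}, $\norm{u(t)-\tilde{\AA}_{0,1}(t)}_q \le Ct^{-\frac{n}{2}(1-\frac{1}{q})-\frac{1}{2}}$ for $t>2$, so $\frac{t^{\frac{n}{2}(1-\frac{1}{q})+\frac{1}{2}}}{\log t}\norm{u(t)-\tilde{\AA}_{0,1}(t)}_q \le C/\log t \to 0$; hence, by the triangle inequality, the limit in the statement is entirely governed by $\tilde{\AA}_{0,1}(t)-\AA_0(t)$, and the whole problem reduces to an explicit structure lemma for this difference, in the spirit of Lemmas~\ref{lem:P_asymp_F-sub_structure} and~\ref{lem:P_asymp_super_structure}.

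For that structure lemma I would start from $\tilde{\AA}_{0,1}(t)-\AA_0(t) = \sum_{j=1}^{n} a_j \int_1^t \Lambda_{e_j,0}\bigl(t; f(\AA_0(s))\bigr)\,ds$. Since $\AA_0(s) = \MM_0(u_0)\delta_s G_1 = \MM_0(u_0)G_s$ and $G_s>0$, homogeneity of $f$ of order $p$ gives $f(\AA_0(s)) = f(\MM_0(u_0))\,G_s^{p}$, hence $\MM_0\bigl(f(\AA_0(s))\bigr) = f(\MM_0(u_0))\norm{G_s}_p^{p} = f(\MM_0(u_0))\norm{G_1}_p^{p}\,s^{-\frac{n}{2}(p-1)}$. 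The Fujita-critical relation $p = 1+2/n$ makes this exponent exactly $-1$, so $\int_1^t \MM_0\bigl(f(\AA_0(s))\bigr)\,ds = f(\MM_0(u_0))\norm{G_1}_p^{p}\log t$. Inserting this together with $\Lambda_{e_j,0}(t;\varphi) = -\tfrac12 t^{-1/2}\MM_0(\varphi)\,\delta_t(x_j G_1)$ (cf.~Proposition~\ref{pro:P_asymp_F-critical}), and using linearity of $\delta_t$ with $\sum_j a_j\delta_t(x_j G_1) = \delta_t\bigl(\sum_j a_j x_j G_1\bigr)$, I expect to obtain
\[
	\tilde{\AA}_{0,1}(t)-\AA_0(t) = t^{-1/2}(\log t)\,\delta_t\tilde{\RR}_{0,1}^{*}, \qquad t>1,
\]
where $\tilde{\RR}_{0,1}^{*} = -\tfrac12 f(\MM_0(u_0))\norm{G_1}_p^{p} \sum_{j=1}^{n} a_j x_j G_1$; this last identity is just the $s=1$ instance of the moment computation applied to $\tilde{\RR}_{0,1}^{*} = \sum_j a_j\Lambda_{e_j,0}(1; f(\AA_0(1)))$. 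Because $a\ne0$ and $G_1>0$, the function $\sum_j a_j x_j G_1$ is not identically zero, so $\tilde{\RR}_{0,1}^{*}\not\equiv0$ precisely when $f(\MM_0(u_0))\ne0$.

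Granting the structure lemma, the scaling property of $\delta_t$ gives $\norm{\tilde{\AA}_{0,1}(t)-\AA_0(t)}_q = t^{-\frac{n}{2}(1-\frac{1}{q})-\frac{1}{2}}(\log t)\norm{\tilde{\RR}_{0,1}^{*}}_q$ for $t>1$, that is, $\frac{t^{\frac{n}{2}(1-\frac{1}{q})+\frac{1}{2}}}{\log t}\norm{\tilde{\AA}_{0,1}(t)-\AA_0(t)}_q \equiv \norm{\tilde{\RR}_{0,1}^{*}}_q$; combined with the vanishing of the remainder from the first step, this yields the asserted limit $\norm{\tilde{\RR}_{0,1}^{*}}_q$, and the ``in particular'' part follows at once since $f(\MM_0(u_0))\ne0$ forces $\norm{\tilde{\RR}_{0,1}^{*}}_q>0$ for every $q\in[1,+\infty]$. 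I do not anticipate a genuine obstacle here: the argument is essentially a transcription of the Fujita-subcritical optimality proof, and the only new point is the observation that criticality forces $\MM_0\bigl(f(\AA_0(s))\bigr)$ to decay exactly like $s^{-1}$, so that its time integral produces precisely the logarithm matching the remainder estimate of Proposition~\ref{pro:P_asymp_F-critical}; the step needing a little care is the bookkeeping of scaling exponents to confirm the self-similar form $t^{-1/2}(\log t)\delta_t\tilde{\RR}_{0,1}^{*}$ and that $\tilde{\RR}_{0,1}^{*}$ coincides with the function in the statement.
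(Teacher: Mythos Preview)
Your proposal is correct and follows essentially the same route as the paper: you split $u(t)-\AA_0(t)$ into $u(t)-\tilde{\AA}_{0,1}(t)$ (handled by Proposition~\ref{pro:P_asymp_F-critical}) and $\tilde{\AA}_{0,1}(t)-\AA_0(t)$, then derive the structure identity $\tilde{\AA}_{0,1}(t)-\AA_0(t)=t^{-1/2}(\log t)\,\delta_t\tilde{\RR}_{0,1}^{*}$ exactly as in Lemma~\ref{lem:P_asymp_F-critical_structure}, using homogeneity and the critical exponent relation to produce the logarithm. Your explicit form $-\tfrac12 f(\MM_0(u_0))\norm{G_1}_p^{p}\sum_j a_j x_j G_1$ coincides with the paper's $-\tfrac{1}{8\pi}(1+2/n)^{-n/2}f(\MM_0(u_0))\sum_j a_j x_j G_1$, since $\norm{G_1}_p^{p}=(4\pi)^{-1}(1+2/n)^{-n/2}$ when $p=1+2/n$.
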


\begin{lem} \label{lem:P_asymp_F-critical_structure}
	Under the same assumption as in Proposition \ref{pro:P_asymp_F-critical_optimal}, the identity
	\begin{align*}
		\widetilde{\AA}_{0, 1} \left( t \right) - \AA_{0} \left( t \right) =t^{- \frac{1}{2}} \left( \log t \right) \delta_{t} \widetilde{\RR}_{0, 1}^{*}
	\end{align*}
	holds for any $t>1$.
	Moreover, $\widetilde{\RR}_{0, 1}^{*}$ is represented as
	\begin{align*}
		\widetilde{\RR}_{0, 1}^{*} =- \frac{1}{8 \pi} \left( 1+ \frac{2}{n} \right)^{- \frac{n}{2}} f \left( \MM_{0} \left( u_{0} \right) \right) \sum_{j=1}^{n} a_{j} x_{j} G_{1}.
	\end{align*}
	In particular, $\widetilde{\RR}_{0, 1}^{*} \not\equiv 0$ if and only if $f \left( \MM_{0} \left( u_{0} \right) \right) \neq 0$.
\end{lem}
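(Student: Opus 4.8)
The plan is to compute the difference $\tilde{\AA}_{0,1}(t) - \AA_0(t)$ directly from the definition in Proposition~\ref{pro:P_asymp_F-critical} and to read off its self-similar structure, following the same template as the proof of Lemma~\ref{lem:P_asymp_F-sub_structure}. First I would unfold the definition using $\Lambda_{e_j,0}(t;\varphi) = -\frac{1}{2} t^{-1/2}\MM_0(\varphi)\,\delta_t(x_j G_1)$; since neither $t^{-1/2}$ nor $\delta_t(x_j G_1)$ depends on $s$, this gives
\[
	\tilde{\AA}_{0,1}(t) - \AA_0(t) = -\frac{1}{2}\, t^{-\frac{1}{2}}\left( \int_1^t \MM_0\bigl( f(\AA_0(s)) \bigr)\, ds \right) \sum_{j=1}^n a_j\, \delta_t(x_j G_1),
\]
so the entire statement comes down to evaluating the scalar $\int_1^t \MM_0(f(\AA_0(s)))\, ds$.

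Second, I would exploit the self-similarity $\AA_0(s) = \delta_s\AA_0(1)$ together with the homogeneity of $f$ of order $p$. Unwinding $(\delta_s\varphi)(x) = s^{-n/2}\varphi(s^{-1/2}x)$ and applying $f(\mu\xi) = \mu^p f(\xi)$ with $\mu = s^{-n/2}>0$ yields $f(\AA_0(s)) = s^{-n(p-1)/2}\,\delta_s\bigl(f(\AA_0(1))\bigr)$, and then the dilation property $\MM_0(\delta_s\varphi) = \MM_0(\varphi)$ gives $\MM_0(f(\AA_0(s))) = s^{-n(p-1)/2}\,\MM_0(f(\AA_0(1)))$. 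At the Fujita-critical exponent $p = 1 + 2/n$ the power $n(p-1)/2$ equals exactly $1$, so $\int_1^t \MM_0(f(\AA_0(s)))\,ds = \MM_0(f(\AA_0(1)))\log t$. On the other hand, since $\delta_1$ is the identity, the definition of $\tilde{\RR}_{0,1}^{*}$ reads $\tilde{\RR}_{0,1}^{*} = -\frac{1}{2}\MM_0(f(\AA_0(1)))\sum_{j=1}^n a_j x_j G_1$, hence $\delta_t\tilde{\RR}_{0,1}^{*} = -\frac{1}{2}\MM_0(f(\AA_0(1)))\sum_j a_j\delta_t(x_j G_1)$ by linearity of $\delta_t$. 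Comparing with the displayed identity above, I read off $\tilde{\AA}_{0,1}(t) - \AA_0(t) = t^{-1/2}(\log t)\,\delta_t\tilde{\RR}_{0,1}^{*}$ for $t>1$.

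Third, for the explicit formula I would evaluate $f(\AA_0(1)) = f(\MM_0(u_0) G_1)$. Because $G_1(x)>0$ for every $x$, homogeneity applied with the \emph{positive} factor $G_1(x)$ gives $f(\MM_0(u_0)G_1(x)) = G_1(x)^p f(\MM_0(u_0))$, hence $\MM_0(f(\AA_0(1))) = f(\MM_0(u_0))\int_{\R^n} G_1^p\,dx$. The Gaussian integral evaluates to $\int_{\R^n} G_1^p\,dx = (4\pi)^{-n(p-1)/2}p^{-n/2}$, which with $p-1 = 2/n$ collapses to $\frac{1}{4\pi}(1+2/n)^{-n/2}$. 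Plugging this into $\tilde{\RR}_{0,1}^{*} = -\frac{1}{2}\MM_0(f(\AA_0(1)))\sum_{j=1}^n a_j x_j G_1$ produces the claimed expression $-\frac{1}{8\pi}(1+2/n)^{-n/2}f(\MM_0(u_0))\sum_j a_j x_j G_1$. Finally, since $\tilde{\RR}_{0,1}^{*}$ is a nonzero scalar multiple of $f(\MM_0(u_0))(a\cdot x)G_1$ and $G_1>0$ with $a\neq 0$, we conclude $\tilde{\RR}_{0,1}^{*}\not\equiv 0 \iff f(\MM_0(u_0))\neq 0$; alternatively this follows from the orthogonality of the Hermite polynomials recorded in Section~\ref{sec:heat}.

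The computation is essentially routine; the only genuinely delicate point — and the reason a $\log t$ appears rather than a power of $t$ — is the exact cancellation $n(p-1)/2 = 1$ at the Fujita-critical exponent, which turns $\int_1^t s^{-n(p-1)/2}\,ds$ into $\log t$. The one other place to take care is to invoke the homogeneity of $f$ only for positive scalars: in $f(\MM_0(u_0)G_1(x))$ it is essential to pull out the factor $G_1(x)>0$ and not the possibly-negative mass $\MM_0(u_0)$.
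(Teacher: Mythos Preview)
Your proof is correct and follows essentially the same approach as the paper's: both unfold the definition of $\tilde{\AA}_{0,1}(t)-\AA_0(t)$ via $\Lambda_{e_j,0}$, use the self-similarity $\AA_0(s)=\delta_s\AA_0(1)$ together with the homogeneity of $f$ and the invariance $\MM_0(\delta_s\varphi)=\MM_0(\varphi)$ to reduce $\int_1^t\MM_0(f(\AA_0(s)))\,ds$ to $\MM_0(f(\AA_0(1)))\log t$ at the critical exponent, and then evaluate $\MM_0(f(\AA_0(1)))=f(\MM_0(u_0))\lVert G_1\rVert_p^p$ with the explicit Gaussian integral. Your extra care in only pulling out the positive factor $G_1(x)$ from $f$ is a nice touch the paper leaves implicit.
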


\begin{proof}
	For any $t>1$, we have
	\begin{align*}
		\int_{1}^{t} \MM_{0} \left( f \left( \AA_{0} \left( s \right) \right) \right) ds
		&= \int_{1}^{t} \MM_{0} \left( f \left( \delta_{s} \AA_{0} \left( 1 \right) \right) \right) ds \\
		&= \int_{1}^{t} s^{- \frac{n}{2} p+ \frac{n}{2}} \MM_{0} \left( \delta_{s} \left( f \left( \AA_{0} \left( 1 \right) \right) \right) \right) ds \\
		&= \MM_{0} \left( f \left( \AA_{0} \left( 1 \right) \right) \right) \int_{1}^{t} s^{-1} ds \\
		&= \MM_{0} \left( f \left( \AA_{0} \left( 1 \right) \right) \right) \log t,
	\end{align*}
	whence follows
	\begin{align*}
		\widetilde{\AA}_{0, 1} \left( t \right) - \AA_{0} \left( t \right) &= \sum_{j=1}^{n} a_{j} \int_{1}^{t} \Lambda_{e_{j}, 0} \left( t; f \left( \AA_{0} \left( s \right) \right) \right) ds \\
		&= \sum_{j=1}^{n} a_{j} \left[ - \frac{1}{2} t^{- \frac{1}{2}} \left( \int_{1}^{t} \MM_{0} \left( f \left( \AA_{0} \left( s \right) \right) \right) ds \right) \delta_{t} \left( x_{j} G_{1} \right) \right] \\
		&=t^{- \frac{1}{2}} \left( \log t \right) \sum_{j=1}^{n} a_{j} \left[ - \frac{1}{2} \MM_{0} \left( f \left( \AA_{0} \left( 1 \right) \right) \right) \delta_{t} \left( x_{j} G_{1} \right) \right] \\
		&=t^{- \frac{1}{2}} \left( \log t \right) \delta_{t} \left( \sum_{j=1}^{n} a_{j} \left[ - \frac{1}{2} \MM_{0} \left( f \left( \AA_{0} \left( 1 \right) \right) \right) x_{j} G_{1} \right] \right) \\
		&=t^{- \frac{1}{2}} \left( \log t \right) \delta_{t} \left( \sum_{j=1}^{n} a_{j} \Lambda_{e_{j}, 0} \left( 1; f \left( \AA_{0} \left( 1 \right) \right) \right) \right) \\
		&=t^{- \frac{1}{2}} \left( \log t \right) \delta_{t} \widetilde{\RR}_{0, 1}^{*}.
	\end{align*}
	Moreover, we obtain
	\begin{align*}
		\widetilde{\RR}_{0, 1}^{*} &=- \frac{1}{2} \MM_{0} \left( f \left( \MM_{0} \left( u_{0} \right) G_{1} \right) \right) \sum_{j=1}^{n} a_{j} x_{j} G_{1} \\
		&=- \frac{1}{2} \MM_{0} \left( G_{1}^{p} f \left( \MM_{0} \left( u_{0} \right) \right) \right) \sum_{j=1}^{n} a_{j} x_{j} G_{1} \\
		&=- \frac{1}{2} \norm{G_{1}}_{p}^{p} f \left( \MM_{0} \left( u_{0} \right) \right) \sum_{j=1}^{n} a_{j} x_{j} G_{1} \\
		&=- \frac{1}{8 \pi} \left( 1+ \frac{2}{n} \right)^{- \frac{n}{2}} f \left( \MM_{0} \left( u_{0} \right) \right) \sum_{j=1}^{n} a_{j} x_{j} G_{1}.
	\end{align*}
\end{proof}

\begin{proof}[Proof of Proposition \ref{pro:P_asymp_F-critical_optimal}]
	By Lemma \ref{lem:P_asymp_F-critical_structure}, we obtain
	\begin{align*}
		u \left( t \right) - \AA_{0} \left( t \right) &=u \left( t \right) - \widetilde{\AA}_{0, 1} \left( t \right) + \left( \widetilde{\AA}_{0, 1} \left( t \right) - \AA_{0} \left( t \right) \right) \\
		&=u \left( t \right) - \widetilde{\AA}_{0, 1} \left( t \right) +t^{- \frac{1}{2}} \left( \log t \right) \delta_{t} \widetilde{\RR}_{0, 1}^{*}
	\end{align*}
	for any $t>1$.
	It follows from Proposition \ref{pro:P_asymp_F-critical} that
	\begin{align*}
		\limsup_{t \to + \infty} \frac{t^{\frac{n}{2} \left( 1- \frac{1}{q} \right) + \frac{1}{2}}}{\log t} \norm{u \left( t \right) - \AA_{0} \left( t \right)}_{q}
		&\leq \limsup_{t \to + \infty} t^{\frac{n}{2} \left( 1- \frac{1}{q} \right)} \bigl\lVert \delta_{t} \widetilde{\RR}_{0, 1}^{*} \bigr\rVert_{q} \\
		&\hspace{1cm} + \limsup_{t \to + \infty} \frac{t^{\frac{n}{2} \left( 1- \frac{1}{q} \right) + \frac{1}{2}}}{\log t} \norm{u \left( t \right) - \widetilde{\AA}_{0, 1} \left( t \right)}_{q} \\
		&= \bigl\lVert \widetilde{\RR}_{0, 1}^{*} \bigr\rVert_{q}, \\
		\liminf_{t \to + \infty} \frac{t^{\frac{n}{2} \left( 1- \frac{1}{q} \right) + \frac{1}{2}}}{\log t} \norm{u \left( t \right) - \AA_{0} \left( t \right)}_{q}
		&\geq \liminf_{t \to + \infty} t^{\frac{n}{2} \left( 1- \frac{1}{q} \right)} \bigl\lVert \delta_{t} \widetilde{\RR}_{0, 1}^{*} \bigr\rVert_{q} \\
		&\hspace{1cm} - \limsup_{t \to + \infty} \frac{t^{\frac{n}{2} \left( 1- \frac{1}{q} \right) + \frac{1}{2}}}{\log t} \norm{u \left( t \right) - \widetilde{\AA}_{0, 1} \left( t \right)}_{q} \\
		&= \bigl\lVert \widetilde{\RR}_{0, 1}^{*} \bigr\rVert_{q}
	\end{align*}
	for any $q \in \left[ 1, + \infty \right]$, which implies
	\begin{align*}
		\lim_{t \to + \infty} \frac{t^{\frac{n}{2} \left( 1- \frac{1}{q} \right) + \frac{1}{2}}}{\log t} \norm{u \left( t \right) - \AA_{0} \left( t \right)}_{q} = \bigl\lVert \widetilde{\RR}_{0, 1}^{*} \bigr\rVert_{q}.
	\end{align*}
\end{proof}

\subsection{Fujita-supercritical case}

\begin{prop} \label{pro:P_asymp_F-super_optimal}
	Under the same assumption as in Proposition \ref{pro:P_asymp_F-super},
	\begin{align*}
		\lim_{t \to + \infty} t^{\frac{n}{2} \left( 1- \frac{1}{q} \right) + \frac{1}{2}} \norm{u \left( t \right) - \AA_{0} \left( t \right)}_{q} = \norm{\AA_{1, 0} \left( 1 \right) - \AA_{0} \left( 1 \right)}_{q}
	\end{align*}
	holds for any $q \in \left[ 1, + \infty \right]$.
	In particular, if there exists $j \in \left\{ 1, \ldots, n \right\}$ such that
	\begin{align*}
		\MM_{e_{j}} \left( u_{0} \right) -a_{j} \MM_{0} \left( \psi_{0, 0} \right) \neq 0,
	\end{align*}
	then $\AA_{1, 0} \left( 1 \right) - \AA_{0} \left( 1 \right) \not\equiv 0$, and therefore
	\begin{align*}
		t^{\frac{n}{2} \left( 1- \frac{1}{q} \right)} \norm{u \left( t \right) - \AA_{0} \left( t \right)}_{q} =t^{- \frac{1}{2}} \norm{\AA_{1, 0} \left( 1 \right) - \AA_{0} \left( 1 \right)}_{q} \left( 1+o \left( 1 \right) \right)
	\end{align*}
	for all $q \in \left[ 1, + \infty \right]$ as $t \to + \infty$.
\end{prop}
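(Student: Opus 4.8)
The plan is to mirror the proof of the Fujita-subcritical optimality (Proposition~\ref{pro:P_asymp_F-sub_optimal}): first exhibit the exact self-similar structure of the first correction $\AA_{1,0}(t)-\AA_0(t)$, then split $u(t)-\AA_0(t)$ into this explicit leading term plus the remainder $u(t)-\AA_{1,0}(t)$, which is already controlled by Proposition~\ref{pro:P_asymp_F-super}, and finally pass to $\limsup$ and $\liminf$ by the triangle inequality.

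The first step is a structure lemma, parallel to Lemma~\ref{lem:P_asymp_super_structure}, asserting that
\[
	\AA_{1,0}(t)-\AA_0(t)=t^{-\frac{1}{2}}\delta_t\bigl(\AA_{1,0}(1)-\AA_0(1)\bigr),\qquad t>0,
\]
together with the explicit representation
\[
	\AA_{1,0}(1)-\AA_0(1)=\frac{1}{2}\sum_{j=1}^{n}\bigl(\MM_{e_j}(u_0)-a_j\MM_0(\psi_{0,0})\bigr)x_jG_1 .
\]
This is a direct computation. Since $\AA_0(t)=\Lambda_{0,0}(t;u_0)$ and $\Lambda_{0,1}(t;u_0)=\Lambda_{0,0}(t;u_0)+\frac{1}{2}t^{-1/2}\sum_j\MM_{e_j}(u_0)\delta_t(x_jG_1)$, and since $\Lambda_{e_j,0}(t;\psi_{0,0})=-\frac{1}{2}t^{-1/2}\MM_0(\psi_{0,0})\delta_t(x_jG_1)$, the difference collapses to $\frac{1}{2}t^{-1/2}\sum_j(\MM_{e_j}(u_0)-a_j\MM_0(\psi_{0,0}))\delta_t(x_jG_1)$, and one then recognizes $\delta_t(x_jG_1)$ as $\delta_t$ applied to the $t=1$ value. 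Here $\MM_0(\psi_{0,0})$ is well defined because $\psi_{0,0}\in L^1(\R^n)$, which was already verified in the proof of Lemma~\ref{lem:P_asymp_F-super} using $\tfrac{n}{2}(p-1)>1$. Finally, because $x_jG_1=\bm{h}_{e_j}G_1$ and the Hermite functions $\bm{h}_{e_1}G_1,\dots,\bm{h}_{e_n}G_1$ are linearly independent by the orthogonality relations, $\AA_{1,0}(1)-\AA_0(1)\not\equiv0$ if and only if $\MM_{e_j}(u_0)-a_j\MM_0(\psi_{0,0})\neq0$ for some $j$.

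Given the structure lemma, the proposition follows at once. For $t>0$ write
\[
	u(t)-\AA_0(t)=\bigl(u(t)-\AA_{1,0}(t)\bigr)+t^{-\frac{1}{2}}\delta_t\bigl(\AA_{1,0}(1)-\AA_0(1)\bigr).
\]
By the scaling identity $\norm{\delta_t\varphi}_q=t^{-\frac{n}{2}(1-1/q)}\norm{\varphi}_q$, the second term equals $t^{-\frac{n}{2}(1-1/q)-1/2}\norm{\AA_{1,0}(1)-\AA_0(1)}_q$ in $L^q$, whereas Proposition~\ref{pro:P_asymp_F-super} gives $t^{\frac{n}{2}(1-1/q)+1/2}\norm{u(t)-\AA_{1,0}(t)}_q\to0$. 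Multiplying the decomposition by $t^{\frac{n}{2}(1-1/q)+1/2}$ and using the triangle inequality in both directions (upper bound: $\limsup$ of the sum; lower bound: $\liminf$ of the explicit term minus $\limsup$ of the remainder) yields
\[
	\lim_{t\to+\infty}t^{\frac{n}{2}(1-1/q)+\frac{1}{2}}\norm{u(t)-\AA_0(t)}_q=\norm{\AA_{1,0}(1)-\AA_0(1)}_q .
\]
Under the stated non-degeneracy hypothesis the structure lemma gives $\AA_{1,0}(1)-\AA_0(1)\not\equiv0$, hence $\norm{\AA_{1,0}(1)-\AA_0(1)}_q>0$, and the limit may be recast as $t^{\frac{n}{2}(1-1/q)}\norm{u(t)-\AA_0(t)}_q=t^{-1/2}\norm{\AA_{1,0}(1)-\AA_0(1)}_q(1+o(1))$ as $t\to+\infty$.

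I do not expect a genuine obstacle here; the argument is entirely parallel to the Fujita-subcritical case and to Theorem~\ref{th:P_asymp_super_optimal}. The only points requiring care are the bookkeeping of the $t^{-1/2}$ factors in $\Lambda_{e_j,0}$ and $\Lambda_{0,1}$ inside the structure-lemma computation, and recording that $\psi_{0,0}$ is a genuine element of $L^1(\R^n)$ so that its zeroth moment is defined; both are already available from earlier in the paper.
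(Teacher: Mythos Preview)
Your proposal is correct and matches the paper's approach essentially line for line: the paper states that Proposition~\ref{pro:P_asymp_F-super_optimal} is proved ``with the same argument as in the proof of Theorem~\ref{th:P_asymp_super_optimal}'', records exactly the structure identity $\AA_{1,0}(t)-\AA_0(t)=t^{-1/2}\delta_t(\AA_{1,0}(1)-\AA_0(1))$ and the explicit formula you derived, and then the limsup/liminf triangle-inequality argument you outline is precisely that of Theorem~\ref{th:P_asymp_super_optimal} with Proposition~\ref{pro:P_asymp_F-super} in place of Theorem~\ref{th:P_asymp_super_gene}.
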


We can prove this proposition with the same argument as in the proof of Theorem \ref{th:P_asymp_super_optimal}.
We remark that under the same assumption as in Proposition \ref{pro:P_asymp_F-super_optimal}, the identity
\begin{align*}
	\AA_{1, 0} \left( t \right) - \AA_{0} \left( t \right) =t^{- \frac{1}{2}} \delta_{t} \left( \AA_{1, 0} \left( 1 \right) - \AA_{0} \left( 1 \right) \right)
\end{align*}
holds for any $t>0$ and that $\AA_{1, 0} \left( 1 \right) - \AA_{0} \left( 1 \right)$ is represented as
\begin{align*}
	\AA_{1, 0} \left( 1 \right) - \AA_{0} \left( 1 \right) = \frac{1}{2} \sum_{j=1}^{n} \left( \MM_{e_{j}} \left( u_{0} \right) -a_{j} \MM_{0} \left( \psi_{0, 0} \right) \right) x_{j} G_{1}.
\end{align*}
In particular, $\AA_{1, 0} \left( 1 \right) - \AA_{0} \left( 1 \right) \not\equiv 0$ if and only if there exists $j \in \left\{ 1, \ldots, n \right\}$ such that
\begin{align*}
	\MM_{e_{j}} \left( u_{0} \right) -a_{j} \MM_{0} \left( \psi_{0, 0} \right) \neq 0.
\end{align*}

\section*{Acknowledgments}

This work was supported by Grant-in-Aid for JSPS Fellows \# JP24KJ2084.
The author is deeply grateful to Professor Vladimir Georgiev for his valuable comments and suggestions on this paper.
The author would also like to thank the anonymous referees for their helpful comments.


\end{document}